\newcommand{\blind}{1}
\newtheorem{theorem}{Theorem}
\newtheorem{proposition}{Proposition}
\newtheorem{lemma}{Lemma}
\newtheorem{definition}{Definition}
\newtheorem{corollary}{Corollary}
\newtheorem{condition}{Condition}
\newcommand{\hpfe}{p_\textnormal{fe}^{\textnormal{h}}}
\newcommand{\se}{\textnormal{se}}
\newcommand{\lmm}{\operatorname{lm}}
\newcommand{\sign}{\operatorname{sign}}
\newcommand{\sss}{\textnormal{ss}}
\newcommand{\Lin}{\textnormal{L}}
\newcommand{\Prob}{\mathbb{P}}
\newcommand{\cov}{\operatorname{cov}}
\newcommand{\adj}{\operatorname{adj}}
\newcommand{\diag}{\operatorname{diag}}
\newcommand{\var}{\operatorname{var}}
\newcommand{\op}{o_{\mathbb{P}}}
\newcommand{\Op}{O_{\mathbb{P}}}
\newcommand{\wconv}{\stackrel{d}{\rightarrow}}
\newcommand{\mk}{\mathcal{K}}
\newcommand{\remss}{\textnormal{ss-rem}}
\newcommand{\repss}{\textnormal{ss-rep}}
\newcommand{\repfe}{\textnormal{fe-rep}}
\DeclareMathOperator*{\argmin}{arg\,min}
\newcommand{\lm}{\textnormal{lm}}
\newcommand{\bs}{\boldsymbol}
\newcommand{\vtt}{V_{\tau\tau}}
\newcommand{\sx}{\textnormal{sx}}
\newcommand{\sumi}{\sum_{i=1}^n}
\newcommand{\pinf}{\mathbf{P}_\infty}
\newcommand{\hpL}{p_\textnormal{L}^{\textnormal{h}}}
\newcommand{\ma}{\mathcal{A}}
\newcommand{\rep}{\textnormal{rep}}
\newcommand{\rem}{\textnormal{rem}}
\newcommand{\ms}{\mathcal{S}}
\newcommand{\fe }{\textnormal{fe}}
\def\spacingset#1{\renewcommand{\baselinestretch}
	{#1}\small\normalsize} \spacingset{1}
\begin{document}
	
	\if1\blind
{
  \title{\bf Rerandomization for covariate balance mitigates p-hacking in regression adjustment}
  \author{Xin Lu, Peng Ding\thanks{
    Xin Lu, Department of Statistics and Data Science, Beijing, China (E-mail:
 lux20@mails.tsinghua.edu.cn). Peng Ding, Department of Statistics, University of California, Berkeley, CA 94720
 (E-mail: pengdingpku@berkeley.edu). Peng Ding is partially supported by the National Science Foundation
 grant \# 1945136}}
    \date{}
  \maketitle
}\fi

\if0\blind
{
	\title{\bf Rerandomization for covariate balance mitigates p-hacking in regression adjustment}
	\date{}
	\maketitle
}\fi

\bigskip
\begin{abstract}
Rerandomization enforces covariate balance across treatment groups in the design stage of experiments. Despite its intuitive appeal, its theoretical justification remains unsatisfying because its benefits of improving efficiency for estimating the average treatment effect diminish if we use regression adjustment in the analysis stage. To strengthen the theory of rerandomization, we show that it mitigates false discoveries resulting from $p$-hacking, the practice of 
strategically selecting covariates to get more significant $p$-values. Moreover, we show that rerandomization with a sufficiently stringent threshold can resolve  $p$-hacking. As a byproduct, our theory offers guidance for choosing the threshold in rerandomization in practice.

\end{abstract}

\noindent
{\it Keywords:}   experiment design; covariate adjustment; completely randomized experiment

\newpage
\spacingset{1.9}

\section{Introduction}

Randomized experiments are the gold standard for estimating the average treatment effect (ATE). They balance observed and unobserved covariates on average. Chance covariate imbalances are common in realized treatment allocations, which may complicate the interpretation of the estimated ATE.
Rerandomization, termed by \cite{cox1982randomization} and \cite{morgan2012rerandomization}, enforces covariate balance in the design stage by rejecting allocations with covariate imbalances, while regression adjustment addresses covariate imbalances in the analysis stage \citep{fisher1935design,freedman2008regression,Lin2013Agnostic}.

\cite{bruhn2009pursuit} conducted a survey of leading experimental researchers in development economics,
and suggested that rerandomization is commonly used yet often poorly documented. 
 \cite{gerber2012field} recommended rerandomization as a way to approximate blocking. 
Although rerandomization is intuitive and has been widely implemented in practice, its necessity has not been fully justified. Since \cite{morgan2012rerandomization}, most papers on rerandomization have focused on the aspect of efficiency for estimating the ATE. That is, rerandomization decreases the variances of the estimators for the ATE, if the covariates are predictive of the outcomes \citep{branson2016improving,morgan2012rerandomization,Li9157,wang2022rerandomization,wang2021rerandomization,2020Rerandomization}. However, those papers demonstrated that when we use the same set of covariates in rerandomization and regression adjustment, rerandomization cannot further improve efficiency compared with solely regression adjustment. Given regression adjustment, the additional benefits of rerandomization remain unclear if the criterion is the efficiency for estimating the ATE.
 
 We depart from the current literature on efficiency and offer a new perspective. In particular, we demonstrate that rerandomization can mitigate  $p$-hacking caused by strategically selecting covariates in regression adjustment. The term $p$-hacking, coined by \cite{simmons2011false}, refers to the practices that a researcher might use to
generate more significant $p$-values \citep{brodeur2020methods}.  It has become a pervasive and profound issue, undermining the credibility and reproducibility of scientific discoveries \citep{john2012measuring,ioannidis2005most}. 
 Among the practices of $p$-hacking, strategically selecting covariates is especially noteworthy, as it is common for researchers to explore various covariate combinations in the analysis stage \citep{simmons2011false}. 

 Our contributions are twofold. First,  we study two rerandomization schemes: rerandomization with Mahalanobis distance (ReM) \citep{cox1982randomization,morgan2012rerandomization} and rerandomization based on the $p$-values of marginal $t$-tests (ReP) \citep{gerber2012field,zdrep}. We demonstrate that rerandomization can lower the type I error rate due to $p$-hacking, and as rerandomization enforces nearly perfect balancing of the covariates, it can control the type I error rate. The intuition is that when we balance the covariates by rerandomization, the $p$-values from regression adjustments with different covariate combinations tend to be coherent with each other; see \cite{zdrep} for more discussion about rerandomization and coherence. Therefore, $p$-hacking will not distort the $p$-values too much under rerandomization. Second, our theory gives recommendations for the rerandomization thresholds to control the type I error rates.

\textbf{Notation:}  Let $\Prob_{\infty}$ denote the probability measure with respect to the asymptotic distribution. 
For two finite-population arrays $\{\bs{a}_i\}_{i=1}^n$ and $\{\bs{b}_i\}_{i=1}^n$, define the finite-population mean $\bar{\bs{a}} = n^{-1}\sumi \bs{a}_i$ and  covariance $    \bs{S}_{\bs{a}\bs{b}} = (n-1)^{-1}\sum_{i=1}^n (\bs{a}_i-\bar{\bs{a}})(\bs{b}_i-\bar{\bs{b}})^\top.$
Let $\Phi(\cdot)$ be the cumulative distribution function of a standard normal distribution. Let $z_{1-\alpha}$ be the  $\alpha$th upper quantile of a standard normal distribution. Let $\chi^2_{K}$ be the $\chi^2$ distribution with degrees of freedom $K$. Let $\chi_{K,1-\alpha}^2$ be the upper $\alpha$th quantile of $\chi^2_{K}$. For a set of tuples $\{(u_i,\bs{V}_{i1},\ldots,\bs{V}_{iL}):u_i\in\mathbb{R},\bs{V}_{il}\in\mathbb{R}^{K_l},i=1,\ldots,n,l=1,\ldots,L\}$, let $\operatorname{lm}(u_i\sim \bs{V}_{i1}+\cdots+\bs{V}_{iL})$ be the ordinary least squares (OLS) fit of $u_i$ on  $(\bs{V}_{i1},\ldots,\bs{V}_{iL})$. 

\section{Design-based framework for hacked p-values}

Consider a completely randomized experiment (CRE) with $n_z$ units, $z=0,1$, assigned to the treatment group $z$, and $n_1+n_0=n$. Let $Y_i(z)$ be the potential outcome of unit $i$ under the treatment group $z$ and $Z_i\in\{0,1\}$ be the treatment indicator of unit $i$. The observed outcome is $Y_i = Y_i(1)Z_i + Y_i(0)(1-Z_i)$. Let $\tau_i = Y_i(1)-Y_i(0)$ be the individual level treatment effect. For unit $i$, we observe $K$ covariates, $\bs{x}_i = ({x}_{i1},\ldots,{x}_{iK})$.  We are interested in estimating the ATE:
 $$\bar{\tau} = n^{-1}\sumi \tau_i.$$  
 
 We focus on the design-based inference framework, which relies solely on the
randomness of treatment assignment, conditional on the potential outcomes and covariates \citep{neyman1923,freedman2008regression,imbens2015causal}. Consequently, conclusions drawn from this framework do not depend on the underlying stochastic process of the potential outcomes and covariates; see \cite{li2017general} for a review.  \cite{fisher1935design} considered testing the sharp null hypothesis:
\[
H_{0\textsc{f}}: Y_i(1)=Y_i(0),\quad i=1,\ldots,n.
\]
Although we observe only one of the two potential outcomes for each unit, the sharp null hypothesis enables us to impute all the missing potential outcomes and implement the Fisher randomization test. Specifically, for any test statistic, we can compute its finite-sample exact $p$-value by randomly permuting the treatment indicators to simulate the randomization distribution. 

\cite{neyman1923} considered the weak null hypothesis: $$H_{0\textsc{n}}: \bar{\tau} = 0.$$
The common choices of the test statistics for the weak null hypothesis are the studentized ATE estimators. For example, the coefficient estimator of $Z_i$ from the following OLS regression is an unbiased estimator of $\bar{\tau}$ \citep[Theorem 1]{freedman2008regression}:
\begin{align}
\label{eq:unadjusted-OLS}
     \lmm(Y_i\sim 1+Z_i),
\end{align}
 which we denote by $\hat{\tau} = n_1^{-1}\sum_{i:Z_i=1}Y_i-n_0^{-1}\sum_{i:Z_i=0}Y_i$.
The corresponding $t$ statistic using the associated Eicker–Huber–White (EHW) standard error is 
asymptotically valid for testing $H_{0\textsc{n}}$.
    
With covariates $\bs{x}_i$, regression adjustment is often used in the analysis stage to reduce the variance of the ATE estimator, therefore improving the power of the $t$ test \citep{fisher1935design,freedman2008regression,Lin2013Agnostic}. 
Assume that the covariates are centered such that $\bar{\bs{x}} = 0$ and that the covariance matrix, $\bs{S}_{\bs{x}\bs{x}}$, is nonsingular to exclude collinearity. Lin proposed the interacted regression~\citep{Lin2013Agnostic} for regression adjustment, which adds covariates and the treatment-covariates interaction in the OLS regression:
\begin{align}
\label{eq:formula-Lin}
  \lmm(  Y_i \sim 1 + Z_i +\bs{x}_i + Z_i\bs{x}_i).
\end{align}
 It guarantees the asymptotic variance reduction of the ATE estimator, i.e., its coefficient estimator of $Z_i$ has smaller asymptotic variance than $\hat{\tau}$. Let $\hat{\tau}_{\Lin}$, $\hat{\se}_{\Lin}$, $T_{\Lin}=\hat{\tau}_{\Lin}/\hat{\se}_{\Lin}$ be the ATE estimator, the EHW standard error, the $t$-statistic obtained from Lin's interacted regression. \cite{Lin2013Agnostic} showed that the $p$-value of the two-sided $t$-test $p_{\Lin} =  2(1-\Phi(|T_{\Lin}|))$ is valid 
for testing the weak null hypothesis $H_{0\textsc{n}}$ even when the linear model in \eqref{eq:formula-Lin} is misspecified.

However, $p_{\Lin}$ can be susceptible to $p$-hacking caused by strategically selecting covariates, driven by the pressure on researchers to publish statistically significant results. Even if the researchers have no intention of fraud, it is common for them to explore various covariate combinations, and report only favorable $p$-values \citep{simmons2011false}. 

We now formally define the hacked $p$-value caused by strategically selecting covariates. Let $[K] = \{1,2,\ldots,K\}$. Given $\mk\subseteq [K]$, let $\bs{x}_{i\mk} = ({x}_{ik},k\in \mk)$ be the subset of covariates in $\mk$. Let $\hat{\tau}_{\Lin,\mk}$, $\hat{\se}_{\Lin,\mk}$, $T_{\Lin,\mk}$, $p_{\Lin,\mk}$ be the ATE estimator, the EHW standard error, the $t$-statistic, the two-sided $p$-value, obtained through \eqref{eq:formula-Lin}, replacing $\bs{x}_i$ with $\bs{x}_{i\mk}$.  \eqref{eq:unadjusted-OLS} corresponds to $\mk = \emptyset$ and \eqref{eq:formula-Lin} corresponds to $\mk = [K]$.
The researcher might search through all possible 
covariate combination $\mk$ in $[K]$ to report the most significant $p$-value.  Define the hacked $p$-value for testing $H_{0\textsc{n}}$ as the minimal $p$-value over all possible covariate combinations:
      \[
  \hpL = \min_{\mk\subseteq [K]} p_{\Lin,\mk}.
    \]

Let $r_z = n_z/n$, $z=0,1$, be the proportion of treatment group $z$. Let $e_i(z)$, $i=1,\ldots,n$, be the residual of  $\lmm(Y_i(z)\sim 1+ \bs{x}_{i})$, and $R^2_{\bs{x}}$ be the $R^2$ of $\lmm(r_1^{-1}Y_i(1)+r_0^{-1}Y_i(0)\sim 1+ \bs{x}_{i})$, which are both finite-population quantities. 
We study the asymptotic property of $\hpL$ under the following standard assumption for asymptotic analysis \citep{li2017general}.
\begin{condition}
   \label{a:crt-regularity-condition}
   As $n \rightarrow \infty$, for $z=0,1$, (i) $r_z$ has a positive limit; (ii)  $S_{Y(z)Y(z)}$,  $\bs{S}_{\bs{x}\bs{x}}$, $\bs{S}_{\bs{x} Y(z)}$, $S_{\tau\tau}$ have finite limits, the limit of $S_{r_1^{-1}e(1)+r_0^{-1}e(0),r_1^{-1}e(1)+r_0^{-1}e(0)}$ is positive, and the limit of $\bs{S}_{\bs{x}\bs{x}}$ is nonsingular; and (iii) $\max_{1 \leq i \leq n} |Y_i(z)-\bar{Y}(z)|^2 = o(n)$, $\max_{1 \leq i \leq n} \|\bs{x}_i\|_{\infty}^2 = o(n)$. 
 \end{condition} 

\begin{proposition}
\label{prop:type I-error-rate-asymptotic}
Under $H_{0\textsc{f}}$, we have $\Prob_{\infty}(\hpL \leq \alpha) \geq \alpha$,  the equality holds if and only if $R^2_{\bs{x}}=0$.
\end{proposition}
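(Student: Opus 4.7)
The plan is to exploit the trivial dominance $\hpL \le p_{\Lin,\emptyset}$ to get the lower bound, then characterize when the bound is asymptotically tight by comparing just $\mk=\emptyset$ and $\mk=[K]$.

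For the lower bound, observe that $\hpL=\min_{\mk \subseteq [K]} p_{\Lin,\mk}\le p_{\Lin,\emptyset}$, where $\mk=\emptyset$ reduces \eqref{eq:formula-Lin} to the unadjusted OLS \eqref{eq:unadjusted-OLS}. Under $H_{0\textsc{f}}$ the sharp null fixes the potential outcomes, and standard finite-population arguments under Condition \ref{a:crt-regularity-condition} give $T_{\Lin,\emptyset}\cd N(0,1)$ with an asymptotically unbiased EHW variance, so $\Prob_\infty(p_{\Lin,\emptyset}\le\alpha)=\alpha$ and therefore $\Prob_\infty(\hpL\le\alpha)\ge\alpha$.

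For the direction ``$R^2_{\bs{x}}=0$ implies equality'', note that $H_{0\textsc{f}}$ identifies $Y_i(1)=Y_i(0)=Y_i$, so the weighted finite-population regression defining $R^2_{\bs{x}}$ reduces to $\lmm(Y_i\sim 1+\bs{x}_i)$, and $R^2_{\bs{x}}=0$ is equivalent to $\bs{S}_{\bs{x}Y}=\bs{0}$; hence $\bs{S}_{\bs{x}_\mk Y}=\bs{0}$ for every $\mk\subseteq[K]$. The within-group OLS slopes in \eqref{eq:formula-Lin} then converge to zero and, combined with $\bar{\bs{x}}_{\mk,1}-\bar{\bs{x}}_{\mk,0}=\Op(n^{-1/2})$, force $\hat{\tau}_{\Lin,\mk}-\hat{\tau}=\op(n^{-1/2})$ and $\hat{\se}_{\Lin,\mk}-\hat{\se}_{\Lin,\emptyset}=\op(n^{-1/2})$ for every $\mk$. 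Since $\{\mk\subseteq[K]\}$ is finite, $T_{\Lin,\mk}=T_{\Lin,\emptyset}+\op(1)$ uniformly, so $\hpL=p_{\Lin,\emptyset}+\op(1)$ and $\Prob_\infty(\hpL\le\alpha)=\alpha$.

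For the direction ``$R^2_{\bs{x}}>0$ implies strict inequality'', it suffices to restrict the minimum to $\mk\in\{\emptyset,[K]\}$. Under $H_{0\textsc{f}}$ the two within-group population slopes coincide at $\bs{\beta}=\bs{S}_{\bs{x}\bs{x}}^{-1}\bs{S}_{\bs{x}Y}$, and standard expansions give $\sqrt{n}\,\hat{\tau}_{\Lin,[K]}=\sqrt{n}(\bar{e}_1-\bar{e}_0)+\op(1)$ with $e_i=Y_i-\bs{x}_i^\top\bs{\beta}$, while $\sqrt{n}(\hat{\tau}-\hat{\tau}_{\Lin,[K]})=\sqrt{n}(\bar{\bs{x}}_1-\bar{\bs{x}}_0)^\top\bs{\beta}+\op(1)$. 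The OLS orthogonality $\bs{S}_{\bs{x}e}=\bs{0}$ asymptotically decorrelates these two pieces, yielding $\cov(\hat{\tau},\hat{\tau}_{\Lin,[K]})=\var(\hat{\tau}_{\Lin,[K]})=(1-R^2_{\bs{x}})\var(\hat{\tau})$ in the limit. A finite-population multivariate CLT under Condition \ref{a:crt-regularity-condition} then makes $(T_{\Lin,\emptyset},T_{\Lin,[K]})$ asymptotically bivariate normal with $N(0,1)$ marginals and correlation $\rho=\sqrt{1-R^2_{\bs{x}}}<1$; for any such bivariate normal $\Prob(|T_1|<z_{1-\alpha/2},\,|T_2|\ge z_{1-\alpha/2})>0$, so $\Prob_\infty(\hpL\le\alpha)\ge \Prob_\infty(\min(p_{\Lin,\emptyset},p_{\Lin,[K]})\le\alpha)>\alpha$.

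The main obstacle is the last step: justifying the joint bivariate normal limit of $(\hat{\tau},\hat{\tau}_{\Lin,[K]})$ with the explicit covariance structure $\cov_\infty(\hat{\tau},\hat{\tau}_{\Lin,[K]})=(1-R^2_{\bs{x}})\var_\infty(\hat{\tau})$ in the finite-population framework. Once this structure is in hand, both the $\op(1)$ collapse in the degenerate case and the tail calculation for a sub-unit-correlated bivariate normal are routine.
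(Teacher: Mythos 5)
Your proposal is correct and follows essentially the same route as the paper: both rest on the joint finite-population CLT for the studentized statistics, the fact that each $T_{\Lin,\mk}$ is asymptotically standard normal under $H_{0\textsc{f}}$ (because $S_{\tau_e\tau_e}=0$ makes the EHW variance exact in the limit), the trivial lower bound from $\mk=\emptyset$, and the characterization of equality through the asymptotic correlation $\cov(\mathcal{N}_{\Lin,\mk},\mathcal{N}_{\Lin,\emptyset})=(1-R^2_{\mk})^{1/2}$. The only cosmetic difference is that you establish strictness using just the pair $(\emptyset,[K])$ with correlation $(1-R^2_{\bs{x}})^{1/2}<1$, whereas the paper states the equivalent condition that the correlation equals one for all $\mk$; the joint normality and covariance structure you flag as the "main obstacle" is exactly what the paper supplies via its supplementary CLT lemmas.
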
 

 Proposition \ref{prop:type I-error-rate-asymptotic} demonstrates that under $H_{0\textsc{f}}$, the type I error rate of the hacked $p$-value is greater than the significance level unless $R^2_{\bs{x}}=0$. Under  $H_{0\textsc{n}}$, the problem will be worse: since $H_{0\textsc{f}}$ implies $H_{0\textsc{n}}$, we have $\sup_{H_{0\textsc{n}}}\Prob_{\infty}(\hpL \leq \alpha) \geq \sup_{H_{0\textsc{f}}}\Prob_{\infty}(\hpL \leq \alpha)$.

In the next section, we show that rerandomization can mitigate the type I error rate inflation due to $p$-hacking.

\section{Rerandomization mitigates $p$-hacking}

In the design stage, covariate imbalances may occur which complicates the interpretation of the ATE estimators. Rerandomization has been used in the design stage to avoid covariate imbalances. It draws a desired assignment by rejective sampling until the assignment satisfies a given requirement of covariate balance measure \citep{cox1982randomization,morgan2012rerandomization}.  Most of the existing papers on rerandomization focused on its benefits for improving efficiency, namely its ability to reduce the asymptotic variance of the unadjusted ATE estimators; see, for example, \cite{Li9157,wang2021rerandomization}. However, the benefits diminish if we use regression adjustment in the analysis stage \cite{2020Rerandomization}. \cite{zdrep} provided a different angle, demonstrating that rerandomization improves coherence between regression adjusted estimators. 

In this section, we quantify the benefit of rerandomization under a different statistical framework. In particular, we demonstrate that rerandomization can mitigate $p$-hacking caused by strategically selecting covariates in regression adjustment and can even resolve it entirely when the threshold is stringent enough. To illustrate the concept, we focus on two
rerandomization schemes, ReM and ReP, and the theory extends to other rerandomization schemes.

\subsection{ReM mitigates $p$-hacking}

  The difference in covariate means $\hat{\bs{\tau}}_{\bs{x}} = n_1^{-1}\sum_{i:Z_i=1}\bs{x}_i-n_0^{-1}\sum_{i:Z_i=0}\bs{x}_i$ provides an intuitive measure of imbalance. Rerandomization based on Mahalanobis distance accepts an assignment if and only if the Mahalanobis distance of $\hat{\bs{\tau}}_{\bs{x}}$ is below a given threshold $a$ \citep{morgan2012rerandomization}: $$\ma_{\rem}(a) = \{\hat{\bs{\tau}}_{\bs{x}}^\top \cov^{-1}(\hat{\bs{\tau}}_{\bs{x}})\hat{\bs{\tau}}_{\bs{x}}\leq a\}.$$ 

\begin{theorem}
\label{thm:rem-mitigates-p-hacking}
   Under Condition \ref{a:crt-regularity-condition} and $H_{0\textsc{n}}$, we have, for any $a>0 $ and $\alpha\in (0,1) $, (i)
    $\Prob_{\infty}( \hpL \leq \alpha\mid \ma_{\rem}(a)) -\Prob_{\infty}( \hpL \leq \alpha) \leq 0;$
    (ii) $ \lim_{a\rightarrow 0}\Prob_{\infty}( \hpL \leq \alpha \mid \ma_{\rem}(a)) \leq \alpha.$
\end{theorem}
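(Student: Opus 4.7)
The plan is to obtain a joint asymptotic linear representation of the $t$-statistics $\{T_{\Lin,\mk}\}_{\mk\subseteq[K]}$ in terms of two asymptotically independent Gaussian sources: a scalar $\epsilon_0\sim N(0,1)$ capturing the component of $\sqrt{n}\hat{\tau}$ orthogonal to the covariate imbalance, and a vector $\bs{W}=\vxx^{-1/2}\sqrt{n}\hat{\bs{\tau}}_{\bs{x}}\sim N(0,I_K)$ capturing the standardized imbalance. Since ReM truncates $\bs{W}$ to a Mahalanobis ball but leaves $\epsilon_0$ untouched, both conclusions reduce to probability statements about the jointly Gaussian vector $(\epsilon_0,\bs{W})$ intersected with centrally symmetric convex events: part (i) will follow from Royen's Gaussian correlation inequality, and part (ii) from a degenerate-limit calculation as $a\to 0$.

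Concretely, I would apply the finite-population central limit theorem to $(\sqrt{n}\hat{\tau},\sqrt{n}\hat{\bs{\tau}}_{\bs{x}})$ and orthogonally decompose $\sqrt{n}\hat{\tau} = \sqrt{V_{\tau\tau\mid\bs{x}}}\,\epsilon_0 + \bs{\gamma}^\top\vxx^{1/2}\bs{W} + \op(1)$, then substitute into the Lin identity $\sqrt{n}\hat{\tau}_{\Lin,\mk} = \sqrt{n}\hat{\tau}-\hat{\bs{\beta}}_\mk^\top\sqrt{n}\hat{\bs{\tau}}_{\bs{x}_\mk}$ with $\hat{\bs{\beta}}_\mk\to\bs{\beta}_\mk$ in probability to obtain
$$\sqrt{n}\hat{\tau}_{\Lin,\mk} = \sqrt{V_{\tau\tau\mid\bs{x}}}\,\epsilon_0 + \bs{c}_\mk^\top\bs{W} + \op(1),$$
for deterministic $\bs{c}_\mk\in\mathbb{R}^K$ with $\bs{c}_{[K]}=0$ (because $\bs{\beta}_{[K]}=\bs{\gamma}$ for Lin's interacted regression). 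Separately, $\sqrt{n}\hat{\se}_{\Lin,\mk}$ converges in probability to a positive deterministic limit $\sigma_\mk^{\ehw}$ determined by within-group sample moments, hence unaffected by ReM. The asymptotic validity of Lin's $t$-test under $H_{0\textsc{n}}$ applied to each $\mk$ in turn translates into $(\sigma_\mk^{\ehw})^2 \geq V_{\tau\tau\mid\bs{x}}+\|\bs{c}_\mk\|^2$, so setting $A_\mk = \sqrt{V_{\tau\tau\mid\bs{x}}}/\sigma_\mk^{\ehw}$ and $\bs{B}_\mk=\bs{c}_\mk/\sigma_\mk^{\ehw}$ yields $T_{\Lin,\mk} = A_\mk\epsilon_0 + \bs{B}_\mk^\top\bs{W}+\op(1)$ with $A_\mk\in(0,1]$ uniformly in $\mk$.

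For part (i), the event $\{\hpL > \alpha\}$ is asymptotically $\{(\epsilon_0,\bs{W})\in S\}$ with $S = \bigcap_{\mk\subseteq[K]}\{(\epsilon,\bs{w}):|A_\mk\epsilon+\bs{B}_\mk^\top\bs{w}|\leq z_{1-\alpha/2}\}$, an intersection of symmetric slabs, hence centrally symmetric and convex; similarly $\ma_{\rem}(a)$ corresponds to $(\epsilon_0,\bs{W})\in\mathbb{R}\times B_a$ with $B_a=\{\bs{w}:\bs{w}^\top\bs{w}\leq a\}$, also centrally symmetric and convex. Royen's Gaussian correlation inequality then gives $\Prob_\infty(S\cap\ma_{\rem}(a))\geq \Prob_\infty(S)\Prob_\infty(\ma_{\rem}(a))$, which rearranges to $\Prob_\infty(\hpL\leq\alpha\mid\ma_{\rem}(a))\leq \Prob_\infty(\hpL\leq\alpha)$ and establishes (i). For part (ii), as $a\to 0$ the conditional law of $\bs{W}$ concentrates at the origin while $\epsilon_0\sim N(0,1)$ remains unchanged, so $T_{\Lin,\mk}\mid\ma_{\rem}(a) \wconv A_\mk\epsilon_0$ jointly in $\mk$, and with $M=\max_{\mk}A_\mk$,
$$\lim_{a\to 0}\Prob_\infty(\hpL\leq\alpha\mid\ma_{\rem}(a)) = \Prob\bigl(|\epsilon_0|\geq z_{1-\alpha/2}/M\bigr)\leq \alpha,$$
since $M\leq 1$.

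The main obstacle lies in the first step: establishing the joint asymptotic representation uniformly over the $2^K$ subsets, and verifying that the EHW conservativeness for Lin's interacted regression applied to each $\bs{x}_{\mk}$ yields the uniform bound $A_\mk\leq 1$ under the general weak null $H_{0\textsc{n}}$ (not merely under $H_{0\textsc{f}}$). Once that representation is in place, the Gaussian correlation inequality step and the degenerate-limit computation are essentially immediate, and the same template should extend to any rerandomization scheme whose acceptance set is centrally symmetric and convex in the covariate imbalance, notably ReP.
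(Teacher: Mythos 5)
Your proposal is correct and follows essentially the same route as the paper: part (i) is exactly the paper's argument (represent $\{\hpL>\alpha\}$ as an intersection of centrally symmetric slabs in the limiting Gaussian $(\epsilon_0,\bs{W})$, note the ReM acceptance region is a symmetric convex cylinder, and apply Royen's Gaussian correlation inequality), and the key conservativeness fact $A_\mk\le 1$ is the paper's observation that $n\tilde{\se}_{\Lin,\mk}^2=\sigma^2_{\adj,\mk}+S_{\tau_\mk\tau_\mk}\ge\var(\varepsilon_\tau-\bs{\beta}_{\Lin,\mk}^\top\bs{\varepsilon}_\mk)$, which holds under the weak null. For part (ii) you compute the exact degenerate limit $M|\epsilon_0|$ with $M=\max_\mk A_\mk\le 1$, whereas the paper instead lets $a\to 0$ in the Cauchy--Schwarz bound $\max_\mk|\mathcal{N}_{\Lin,\mk}|\le(\epsilon_0^2+\|\bs{W}\|_2^2)^{1/2}$ of its Theorem~\ref{thm:ReM-bound-of-I-error-rate-inflation}; both are valid and rest on the same ingredients.
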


\Cref{thm:rem-mitigates-p-hacking} is our key result. \Cref{thm:rem-mitigates-p-hacking} (i) states that ReM mitigates $p$-hacking, because the asymptotic type I error rate of the hacked $p$-value under ReM never exceeds that under CRE. \Cref{thm:rem-mitigates-p-hacking} (ii) states that when the threshold $a$ of ReM tends to $0$, ReM resolves $p$-hacking, controlling the type I error rate. 

We explain the intuition behind the results of \Cref{thm:rem-mitigates-p-hacking}. As we will demonstrate in the Supplementary Material: 
\begin{align}
\label{eq:decompose-of-hat-tau-lin}
    \hat{\tau}_{\Lin, \mk} \approx \hat{\tau} - \bs{\beta}_{\Lin,\mk}^\top \hat{\tau}_{\mk}.
\end{align}
where $\hat{\bs{\tau}}_{\mk}$ is the subvector of $\hat{\bs{\tau}}_{\bs{x}}$ in $\mk$ and $\bs{\beta}_{\Lin,\mk}$ is the regression coefficients of $\bs{x}_{i\mk}$ in $\lmm(r_1^{-1}Y_i(1)+r_0^{-1}Y_i(0)\sim 1+ \bs{x}_{i\mk})$. Rerandomization ensures that $\hat{\tau}_{\bs{x}}$ is small and $\hat{\tau}_{\Lin,\mk}$, $\mk \in [K]$, are close to each other \citep{zdrep}. Therefore, under rerandomization, $p$-hacking will not change the $p$-values too much.

\subsection{Guidance on selecting rerandomization threshold}
\Cref{thm:rem-mitigates-p-hacking} motivates a natural question: assume that in the design stage, we do not observe 
 $(Y_i(1),Y_i(0))_{i=1}^n$, we only observe $\bs{x}_i$, and we wish to control the type I error rate due to $p$-hacking in the future. How should we specify the threshold of ReM $a$ in the design stage? We address this question in this section.

  Let $R^2_{\mk}$ be the $R^2$ of $\lmm(r_1^{-1}Y_i(1)+r_0^{-1}Y_i(0)\sim 1+ \bs{x}_{i\mk})$. By definition, $R^2_{\bs{x}} = R^2_{[K]}$. Let $[-k] = [K]\backslash \{k\}$. For $k\in [K]$, define
 \[
\Delta_k = R^2_{\bs{x}}-R_{[-k]}^2
 \]
as the difference in $R^2$ of covariate sets $[K]$ and $[-k]$. By definition, $\Delta_k/(1-R_{[-k]}^2)$ equals to the partial correlation of the full model $\lmm(r_1^{-1} Y_i(1)+r_0^{-1} Y_i(0)\sim 1 + \bs{x}_{i})$ versus the reduced model $\lmm(r_1^{-1} Y_i(1)+r_0^{-1} Y_i(0)\sim 1 + \bs{x}_{i[-k]})$. Hence, $\Delta_k$ can be interpreted as the scaled partial $R^2$ of covariate $k$.  Our subsequent discussion depends on $R^2_{\bs{x}}$ and $\underline{\Delta} = \min_{k\in [K]} \Delta_k$.
 
  Assume that in addition, we have prior knowledge about  $\underline{\Delta}$ and $R^2_{\bs{x}}$ in the design stage. Define $\mathbb{M}(R^2_{\bs{x}}, \underline{\Delta})$ as the set of possible potential outcomes $(Y_i(1), Y_i(0))_{i=1}^n$ given $\underline{\Delta}$ and $R^2_{\bs{x}}$. For a meaningful discussion, we only consider $(R^2_{\bs{x}},\underline{\Delta}) \in (0,1)\times (0,1)$ such that $\mathbb{M}(R^2_{\bs{x}}, \underline{\Delta})\ne \emptyset$. We denote the set of such $(R^2_{\bs{x}},\underline{\Delta})$ by $\mathcal{B} \in (0,1)\times (0,1)$.

  \Cref{thm:iff-for-rem_control-TIE} gives sufficient and necessary condition to universally control the type I error rates for all $(Y_i(1),Y_i(0))_{i=1}^n \in \mathbb{M}(R^2_{\bs{x}}, \underline{\Delta})$.

\begin{theorem}
\label{thm:iff-for-rem_control-TIE}
Under \Cref{a:crt-regularity-condition} and $H_{0\textsc{n}}$, for any $(R^2_{\bs{x}},\underline{\Delta})\in \mathcal{B}$, $\alpha \in (0,1)$, we have
\begin{align*}
\max_{(Y_i(1),Y_i(0))_{i=1}^n\in\mathbb{M}(R^2_{\bs{x}},\underline{\Delta})}\Prob_{\infty}( \hpL \leq \alpha\mid \ma_{\rem}(a)) \leq \alpha,
\end{align*}
if and only if $a \leq \bar{a}_{\rem}(\alpha,R^2_{\bs{x}}, \underline{\Delta})$ where
    \begin{align}
    \label{eq:bound-for-ReM}
     \bar{a}_{\rem}(\alpha,R^2_{\bs{x}}, \underline{\Delta}) =  z_{1-\alpha/2}^2\frac{\underline{\Delta}}{\Big(\sqrt{1-R^2_{\bs{x}}}+\sqrt{1-R^2_{\bs{x}}+ \underline{\Delta}}  \Big)^2}.
    \end{align}
\end{theorem}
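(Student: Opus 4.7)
My plan is to pass to the asymptotic linearization~\eqref{eq:decompose-of-hat-tau-lin}, express each $T_{\Lin,\mk}$ as a linear functional of the joint Gaussian limit of $\sqrt n(\hat\tau,\hat{\bs\tau}_{\bs x})$ truncated to the Mahalanobis ball, and then solve a worst-case optimisation over $\mathbb{M}(R^2_{\bs x},\underline\Delta)$ to recover the explicit envelope~\eqref{eq:bound-for-ReM}. Combining~\eqref{eq:decompose-of-hat-tau-lin} with the standard ReM central limit theorem (\citealp{Li9157}) and the consistency of the EHW standard error against the (possibly conservative) CRE limit, I would first derive the representation
\[
T_{\Lin,\mk}\;\cd\;\frac{V-\bs\beta_{\Lin,\mk}^\top\bs W_{\mk}}{\sigma\sqrt{1-R^2_{\mk}}},
\]
where $(V,\bs W)$ is the centred Gaussian limit of $\sqrt n(\hat\tau,\hat{\bs\tau}_{\bs x})$ and under $\ma_{\rem}(a)$ the vector $\bs W$ is conditioned to the ellipsoid $\{\bs w:\bs w^\top\cov(\bs W)^{-1}\bs w\le a\}$. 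Decomposing $V=U+\bs\gamma^\top\bs W$ with $\bs\gamma=\bs\beta_{\Lin,[K]}$, the full-model statistic collapses to $U/(\sigma\sqrt{1-R^2_{\bs x}})$, and each reduced-model statistic equals this ``honest'' term plus a hacker gain that is a known linear functional of the bounded $\bs W$.

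Next, I would show that the adversary's worst choice reduces to a one-covariate drop $\mk=[-k^\star]$ with $k^\star=\argmin_k\Delta_k$. Heuristically, because the Mahalanobis ball caps the absolute gain by $\sqrt{a\cdot\var(\text{gain})}$, and the partial-$R^2$ identity yields $\var(\text{gain for }\mk=[-k])=\sigma^2\Delta_k$ while the denominator only inflates from $\sqrt{1-R^2_{\bs x}}$ to $\sqrt{1-R^2_{\bs x}+\Delta_k}$, the ratio of available numerator enlargement to denominator inflation is maximised precisely at the smallest $\Delta_k$. A monotonicity argument on the additive structure of $R^2_{\mk}$ then rules out multi-covariate drops.

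The last step is an explicit constrained Gaussian tail calculation. Since $U\perp\bs W$ with $\var(U)=\sigma^2(1-R^2_{\bs x})$ and the hacker gain is a linear functional $\bs f^\top\bs W$ of variance $\sigma^2\underline\Delta$, the sup of $|\bs f^\top\bs W|$ over the ellipsoid equals $\sigma\sqrt{a\,\underline\Delta}$; plugging this into $|T_{\Lin,[-k^\star]}|\le(|U|+\sigma\sqrt{a\,\underline\Delta})/(\sigma\sqrt{1-R^2_{\bs x}+\underline\Delta})$ and requiring the worst-case asymptotic tail to be $\le\alpha$ forces
\[
z_{1-\alpha/2}\Big(\sqrt{1-R^2_{\bs x}+\underline\Delta}-\sqrt{1-R^2_{\bs x}}\Big)\;\ge\;\sqrt{a\,\underline\Delta},
\]
which rearranges via $\sqrt A-\sqrt B=(A-B)/(\sqrt A+\sqrt B)$ to precisely~\eqref{eq:bound-for-ReM}. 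Sufficiency (``if'') follows directly from this inequality. For necessity (``only if'') I would construct a sequence of potential outcomes in $\mathbb{M}(R^2_{\bs x},\underline\Delta)$ whose regression coefficients $\bs\gamma$ are supported on a single covariate $x_{k^\star}$ and aligned with the extremal axis of the ellipsoid, so the Cauchy--Schwarz bound above is saturated and any $a>\bar a_{\rem}$ pushes the asymptotic type I error strictly above $\alpha$.

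\textbf{Main obstacle.} The hardest part is rigorously reducing the hacker's optimisation over all $\mk\subseteq[K]$ to the single-covariate drop at the minimiser of $\Delta_k$: multi-covariate drops can in principle release more degrees of freedom of $\bs W$ into the numerator at the cost of inflating the denominator, and the Mahalanobis geometry and the additivity of the partial-$R^2$ decomposition must be combined carefully to show the single-drop option is always worst. A secondary subtlety is exhibiting the saturating potential-outcome arrays uniformly in $n$, so that the envelope is provably tight over the entire class $\mathbb{M}(R^2_{\bs x},\underline\Delta)$ rather than merely an asymptotic upper bound.
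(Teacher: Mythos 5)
Your sufficiency argument is essentially the paper's. The paper passes to the Gaussian limit of $(\hat\tau,\hat{\bs\tau}_{\bs x})$ conditional on the Mahalanobis ball, splits the rejection event into the full-model rejection (whose conditional probability is exactly $\alpha$, since the full-model statistic is asymptotically independent of $\hat{\bs\tau}_{\bs x}$) and the event that some reduced model rejects while the full model does not, bounds the hacker gain for $\mk$ via Cauchy--Schwarz by $\sqrt{a}\,(R^2_{\bs x}-R^2_{\mk})^{1/2}$ in the appropriate units, and uses that $t\mapsto\sqrt{t}/\big(\sqrt{1-R^2_{\bs x}+t}+\sqrt{1-R^2_{\bs x}}\big)$ is increasing together with the nesting bound $R^2_{\bs x}-R^2_{\mk}\ge\underline{\Delta}$ for every proper $\mk$; this yields exactly your displayed inequality and \eqref{eq:bound-for-ReM}. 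One step you should make explicit: you need this event decomposition (full model fails to reject $\Rightarrow$ no reduced model can reject when $a\le\bar a_{\rem}$), not a per-$\mk$ tail bound, since a union bound over the $2^K$ subsets would not give $\alpha$.

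The necessity half of your plan has a genuine flaw. You propose potential outcomes whose Lin coefficients are supported on a single covariate $x_{k^\star}$; but if the full-model coefficient of $x_k$ vanishes for $k\ne k^\star$, then $R^2_{[-k]}=R^2_{\bs x}$ and $\Delta_k=0$, so such arrays lie outside $\mathbb{M}(R^2_{\bs x},\underline{\Delta})$ whenever $K\ge 2$ and $\underline{\Delta}>0$. The extremality that saturates Cauchy--Schwarz must live in the realization of the limiting Gaussian imbalance, i.e., in aligning $\bs\xi/\|\bs\xi\|_2$ with the direction $\bs V_{\bs x\bs x}^{1/2}(\bs\beta_{\Lin,[K]}-\tilde{\bs\beta}_{\Lin,[-k_0]})$ at the minimizing covariate, not in the potential outcomes; because the resulting bad event is a nonempty open set, it automatically has positive Gaussian measure, which also disposes of your worry about exhibiting saturating arrays uniformly in $n$. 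What the potential outcomes must supply instead---and what you omit---is elimination of the conservativeness of the EHW variance: for a generic member of $\mathbb{M}(R^2_{\bs x},\underline{\Delta})$ under $H_{0\textsc{n}}$ the full-model rejection probability is strictly below $\alpha$, leaving slack that can absorb the hacker gain. The paper therefore takes an arbitrary member of the class and symmetrizes it to $Y_i(1)=Y_i(0)=r_1\breve Y_i(0)+r_0\breve Y_i(1)$, which preserves $(R^2_{\bs x},\underline{\Delta})$ and enforces the sharp null so the limiting statistics are exactly standard normal, and only then shows that the excess event has positive conditional probability for any $a>\bar a_{\rem}(\alpha,R^2_{\bs x},\underline{\Delta})$.
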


By \Cref{thm:iff-for-rem_control-TIE}, $\bar{a}_{\rem}(\alpha, R^2_{\bs{x}},\underline{\Delta})$ is the minimal requirement to resolve $p$-hacking given $(\alpha, R^2_{\bs{x}},\underline{\Delta})$. It is intuitive that the threshold $\bar{a}_{\rem}$ should depend on $R^2_{\bs{x}}$, since the magnitude of $\bs{\beta}_{\Lin,\mk}$ in \eqref{eq:decompose-of-hat-tau-lin} is influenced by $R^2_{\bs{x}}$.  While $\hpL$ is not invariant to non-degenerate linear transformations of $\bs{x}_i$, $R^2_{\bs{x}}$ is. Therefore, besides $R^2_{\bs{x}}$, we also need to know $\underline{\Delta}$ to resolve $p$-hacking.
 
We can show that $\bar{a}_{\rem}$ is an increasing function of $\underline{\Delta}$ and $R^2_{\bs{x}}$, and a decreasing function of $\alpha$. A small value of $\bar{a}_{\rem}$ suggests that resolving $p$-hacking is challenging. The implications are threefold. First, it is easier to resolve $p$-hacking for a test requiring a lower significance level $\alpha$. Second, it is easier to resolve $p$-hacking when the covariates are predictive of the potential outcomes. Third, it is challenging to resolve $p$-hacking when $\underline{\Delta}$ is small, such as when there are redundant covariates or highly correlated covariates. 
 
Given $R^2_{\bs{x}}$, the $\underline{\Delta}$ can be arbitrarily close to $0$ if there are redundant covariates. 
Consequently, $\bar{a}_{\rem}$ can also be arbitrarily close to $0$. Therefore, when we only know $R^2_{\bs{x}}$, we need additional assumptions to get a nontrivial threshold $\bar{a}_{\rem}\ne 0$.  We consider a set of orthogonal and equally important covariates summarised by the following Assumption.
 \begin{condition}
 \label{a:orthogonal-covariates-equally-importance}
     $\bs{S}_{\bs{x}\bs{x}}$ is a diagonal matrix and $R^2_{k} = R^2_1$ for $k\in [K]$.
 \end{condition}

Under \Cref{a:orthogonal-covariates-equally-importance}, we have  $\underline{\Delta} = R^2_{\bs{x}}/K$. By \Cref{thm:iff-for-rem_control-TIE}, we require $a\leq \bar{a}_\rem(\alpha,R^2_{\bs{x}},R^2_{\bs{x}}/K)$ to resolve $p$-hacking, which requires only specification of $R^2_{\bs{x}}$ and $\alpha$. We can check that $\bar{a}_\rem(\alpha,R^2_{\bs{x}},R^2_{\bs{x}}/K)$ is an increasing function of $R^2_{\bs{x}}$ and a decreasing function of $\alpha$, suggesting that ReM more easily resolves  $p$-hacking if $\alpha$ is small and $R_{\bs{x}}^2$ is large. Consider $K=5$, if we specify $\alpha=0.05$, $R^2_{\bs{x}} = 0.6$, we require $a \leq 0.252 \approx \chi_{5,0.15\%}^2$. Namely, we require an asymptotic acceptance probability of $0.15\%$, which is less stringent than the recommended acceptance probability of $0.1\%$ by \cite{Li9157}.

By the finite-population central limit theorem \citep{li2017general}, the asymptotic acceptance probability of $\ma_\rem(a)$ is $\Prob_\infty(\ma_\rem(a)) = \Prob(\chi^2_K\leq a)$.  \Cref{fig:R2_prob} plots the 
asymptotic acceptance probability of $\ma_\rem(a)$ with $a = \bar{a}_{\rem}(\alpha,R^2_{\bs{x}},R^2_{\bs{x}}/K)$ under $\alpha\in \{0.05,0.1\}$, $R_{\bs{x}}^2\in [0.1,1]$ and $K\in \{1,2,3,4,5,6\}$. The asymptotic acceptance probability decreases dramatically as $R^2_{\bs{x}}$ decreases. For example, when $K=5$ and $\alpha = 0.05$, as $R^2_{\bs{x}}$ decreases from $0.7$ to $0.1$, the asymptotic acceptance probability decreases from about $4\times 10^{-3}$ to an extremely stringent threshold of $3\times 10^{-6}$. So resolving $p$-hacking is challenging when the covariates are not predictive of the potential outcomes.

When we know neither $R^2_{\bs{x}}$ nor $\underline{\Delta}$, for any threshold of ReM $a\ne 0$, there always exists type I error inflation for the worst case of $(Y_i(1),Y_i(0))_{i=1}^n$. However, to choose a threshold  $a\ne 0$ of ReM, it is useful to obtain an upper bound on the type I error rate to assess the potential inflation under this threshold. 

\begin{theorem}
\label{thm:ReM-bound-of-I-error-rate-inflation}
Under \Cref{a:crt-regularity-condition} and $H_{0\textsc{n}}$, we have
    \begin{align*}
        \Prob_{\infty}( \hpL \leq \alpha\mid \ma_{\rem}(a)) \leq \Prob\Big((\varepsilon^{2}+V)^{1/2} \geq z_{1-\alpha/2}~\Big | ~ V \leq a\Big),
    \end{align*}
    where $\varepsilon$ and $V$ are independent standard Gaussian and $\chi^2_{K}$ random variables, respectively.
\end{theorem}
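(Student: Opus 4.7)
The plan is to show that, asymptotically, $\max_{\mk\subseteq[K]}T_{\Lin,\mk}^2$ is dominated by $\varepsilon^2+V$, where $\varepsilon\sim N(0,1)$ is independent of $V=\|W\|^2\sim\chi^2_K$ and $W=\cov^{-1/2}(\hat{\bs{\tau}}_{\bs{x}})\hat{\bs{\tau}}_{\bs{x}}$ is the standardized covariate imbalance. The advertised inequality will then follow immediately because $\ma_\rem(a)=\{V\leq a\}$ is a function of $W$ alone, so conditioning on $\ma_\rem(a)$ preserves the independence of $\varepsilon$ from $W$ while confining $V$ to $[0,a]$.

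The first step is to set up a joint asymptotic Gaussian representation. Under \Cref{a:crt-regularity-condition} and $H_{0\textsc{n}}$, the finite-population central limit theorem for CRE gives $W\cd N(0,I_K)$ together with
\[
\sqrt{n}\,\hat{\tau}/\sigma_0 = \sqrt{1-R^2_{\bs{x}}}\,\varepsilon + \bs{u}^\top W + \op(1),
\]
where $\sigma_0^2$ is the asymptotic variance of $\sqrt{n}\hat{\tau}$, $\varepsilon$ is a standard Gaussian asymptotically independent of $W$, and $\|\bs{u}\|^2=R^2_{\bs{x}}$. Substituting this and $\sqrt{n}\hat{\bs{\tau}}_\mk$ (a linear function of $W$) into \eqref{eq:decompose-of-hat-tau-lin} gives, for every $\mk\subseteq[K]$, a deterministic vector $\bs{w}_\mk\in\mathbb{R}^K$ such that
\[
\sqrt{n}\,\hat{\tau}_{\Lin,\mk}/\sigma_0 = \sqrt{1-R^2_{\bs{x}}}\,\varepsilon + \bs{w}_\mk^\top W + \op(1),
\]
whose asymptotic variance $1-R^2_{\bs{x}}+\|\bs{w}_\mk\|^2$ equals $\sigma_{\Lin,\mk}^2/\sigma_0^2$. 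Because the EHW standard error is asymptotically conservative under $H_{0\textsc{n}}$, one has $T_{\Lin,\mk}^2\leq (\sqrt{n}\hat{\tau}_{\Lin,\mk}/\sigma_{\Lin,\mk})^2+\op(1)$.

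The second step is a Cauchy--Schwarz bound applied uniformly in $\mk$. Setting $b_\mk=\|\bs{w}_\mk\|$ and $\eta_\mk=\bs{w}_\mk^\top W/b_\mk$,
\[
\frac{\bigl(\sqrt{1-R^2_{\bs{x}}}\,\varepsilon+b_\mk\eta_\mk\bigr)^2}{1-R^2_{\bs{x}}+b_\mk^2}\leq \varepsilon^2+\eta_\mk^2\leq \varepsilon^2+\|W\|^2=\varepsilon^2+V,
\]
where the first inequality is Cauchy--Schwarz on the two-term sum in the numerator and the second is Cauchy--Schwarz on $\bs{w}_\mk^\top W$. The right-hand side $\varepsilon^2+V$ does not depend on $\mk$, so $\max_\mk T_{\Lin,\mk}^2\leq \varepsilon^2+V+\op(1)$. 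Combining $\{\hpL\leq\alpha\}=\{\max_\mk T_{\Lin,\mk}^2\geq z_{1-\alpha/2}^2\}$ with $\ma_\rem(a)=\{V\leq a\}$ and the retained independence of $\varepsilon$ and $V$ under the conditioning yields $\Prob_\infty(\hpL\leq\alpha\mid\ma_\rem(a))\leq \Prob(\varepsilon^2+V\geq z_{1-\alpha/2}^2\mid V\leq a)$, which is the claim.

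The main obstacle will be the uniform bookkeeping in the first step: verifying that \eqref{eq:decompose-of-hat-tau-lin} holds with an $\op(1)$ remainder uniformly across all $2^K$ subsets $\mk$, pinning down $\bs{w}_\mk$ explicitly enough that $1-R^2_{\bs{x}}+\|\bs{w}_\mk\|^2$ reproduces $\sigma_{\Lin,\mk}^2/\sigma_0^2$, and confirming the asymptotic conservativeness of the EHW standard error under $H_{0\textsc{n}}$ (where it is only an upper bound on the asymptotic variance, rather than exact as under $H_{0\textsc{f}}$). Once those ingredients are in place, the Cauchy--Schwarz step and the ReM conditioning become transparent and the theorem follows.
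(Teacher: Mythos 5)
Your proposal is correct and follows essentially the same route as the paper's proof (which derives the result from its stratified version with $H=1$): represent each $t$-statistic asymptotically as a normalized combination of a standard Gaussian $\varepsilon$ independent of the standardized imbalance $W$, use the conservativeness of the EHW variance under $H_{0\textsc{n}}$, and apply Cauchy--Schwarz twice to bound every $T_{\Lin,\mk}^2$ by $\varepsilon^2+\|W\|_2^2$ before conditioning on $\{V\leq a\}$. The "obstacles" you flag are minor in the paper's framework: $K$ is fixed so joint weak convergence over the finitely many subsets $\mk$ is routine, and the EHW conservativeness is exactly the inequality $n\tilde{\se}_{\Lin,\mk}^2 \geq \var(\varepsilon_\tau-\bs{\beta}_{\Lin,\mk}^\top\bs{\varepsilon}_{\mk})$ established in the paper's supporting lemmas.
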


The bound in \Cref{thm:ReM-bound-of-I-error-rate-inflation} does not depend on $R^2_{\bs{x}}$ and $\underline{\Delta}$. It is also independent of the covariance structure of the covariates, owing to the invariance of ReM under non-degenerate linear transformation of $\bs{x}$. As $a$ approaches $0$, the right-hand side of the inequality tends to $ \Prob(|\varepsilon| \geq z_{1-\alpha/2})=\alpha$, echoing \Cref{thm:rem-mitigates-p-hacking} (ii).
When we know neither $R^2_{\bs{x}}$ nor $\underline{\Delta}$, we can determine a threshold with acceptable inflation of the type I error rate based on \Cref{thm:ReM-bound-of-I-error-rate-inflation}. When $K=5$, $\alpha=0.05$ and $a  = \chi^2_{5,0.01}$, the type I error bound obtained by  \Cref{thm:ReM-bound-of-I-error-rate-inflation} is approximately $0.063$,  reflecting an inflation of $0.013$ over the significance level of $0.05$.  Given the above bound, we can specify a test which rejects $H_{0\textsc{n}}$ if $\hpL \leq \alpha-\gamma$ so that the type I error is less than $\alpha$, where $\gamma$ is the solution of 
\[
\Prob\Big((\varepsilon^{2}+V)^{1/2} \geq z_{1-\alpha/2+\gamma/2}~\Big | ~ V \leq a\Big) = \alpha.
\]
Continuing the example with $K=5$, $\alpha=0.05$ and $a  = \chi^2_{5,0.01}$, the corresponding $\gamma\approx 0.01$.

\begin{figure}
    \centering
    \includegraphics[width = \linewidth]{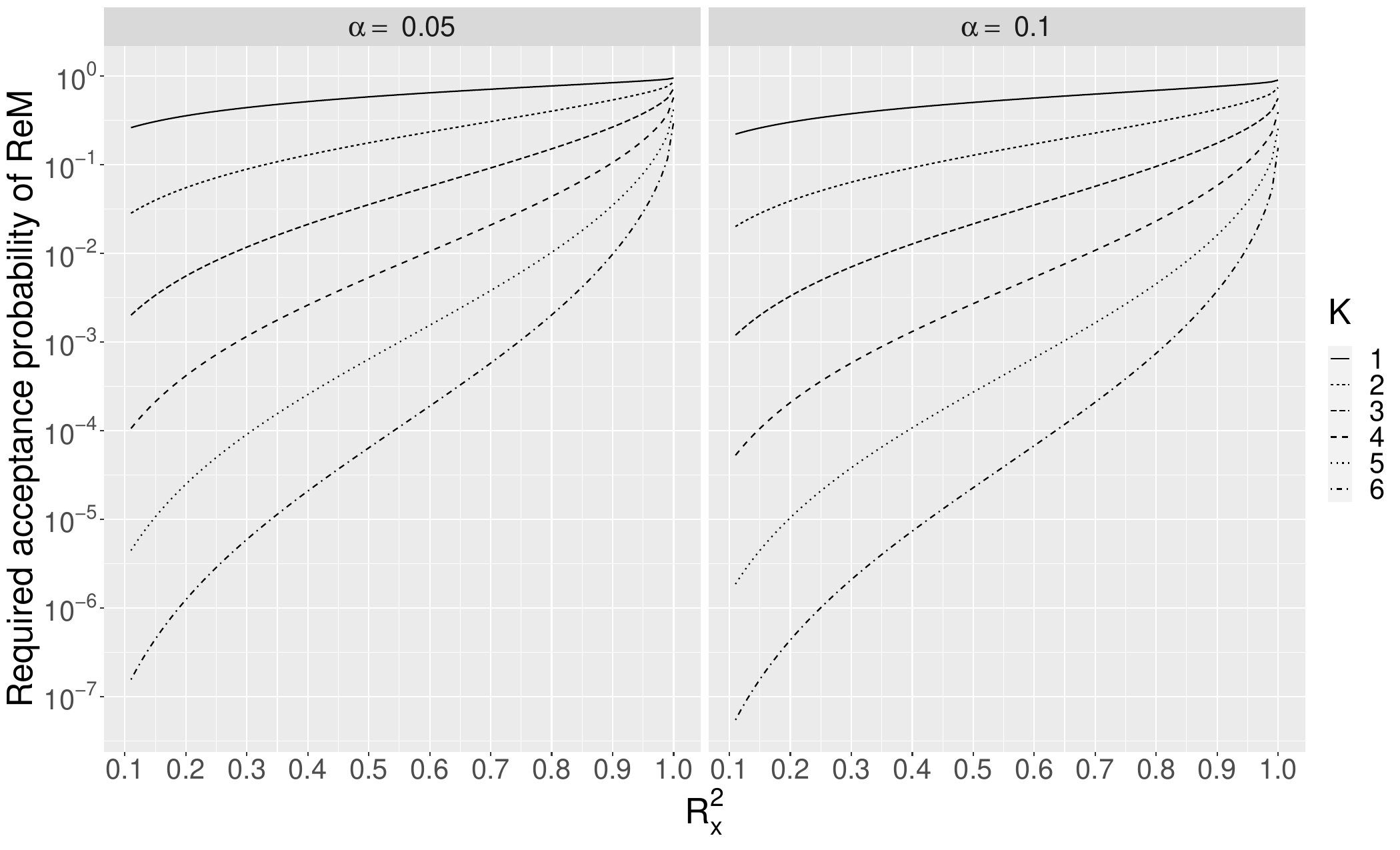}
    \caption{Required acceptance probability of ReM to resolve $p$-hacking under different $\alpha$, $R_{\bs{x}}^2$ and $K$ when \Cref{a:orthogonal-covariates-equally-importance} holds. The acceptance probability is plotted under the $\log_{10}$ transformation.}
    \label{fig:R2_prob}
\end{figure}

\subsection{ReP mitigates $p$-hacking}
\label{sec:rep}

Another popular way of checking the balance is to perform two-sample $t$-test for each covariate and report all the associated $p$-values \citep{zdrep,bruhn2009pursuit,gerber2012field}. Let $p_{t,k}$ be the $p$-value of the two-sided two-sample $t$-test using EHW standard error based on data $\{x_{ik},Z_i\}_{i=1}^n$, $k\in [K]$. To balance the covariates, an intuitive and possibly widely used method is to perform rejective sampling until every $p$-value passes the balance check: $\min_{k\in [K]} p_{t,k}\geq \alpha_t$, for some prespecified threshold $\alpha_t$. Denote this rerandomization scheme by $$\ma_\rep(\alpha_t) = \{\min_{k\in [K]} p_{t,k}\geq \alpha_t\}.$$  For a matrix $\bs{V}\in \mathbb{R}^{S\times S}$, let $\bs{\sigma}(\bs{V}) = \diag(V_{ss}^{1/2})_{s=1}^S$ and $\bs{D}(\bs{V}) = \bs{\sigma}(\bs{V})^{-1}\bs{V}\bs{\sigma}(\bs{V})^{-1}$.

\Cref{thm:rep-mitigates-p-hacking} parallels \Cref{thm:rem-mitigates-p-hacking}.
\begin{theorem}
    \label{thm:rep-mitigates-p-hacking}
    Under \Cref{a:crt-regularity-condition} and $H_{0\textsc{n}}$, for any $\alpha_t \in (0,1)$ and $\alpha \in (0,1)$, we have (i)
    $\pinf( \hpL \leq \alpha\mid \ma_\rep(\alpha_t)) -\pinf( \hpL \leq \alpha) \leq 0,$ and (ii) $\lim_{\alpha_t\rightarrow 1}\Prob_{\infty}(\hpL \leq \alpha  \mid \ma_{\rep}(\alpha_t)) \leq \alpha.$
\end{theorem}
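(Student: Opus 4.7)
The plan is to follow the template of \Cref{thm:rem-mitigates-p-hacking}, replacing the ellipsoidal Mahalanobis constraint by the box constraint arising from ReP; the crucial new tool is the Gaussian correlation inequality, which substitutes for the rotation-invariance argument available in the Mahalanobis case. I would first extract a joint asymptotic representation. Using the decomposition \eqref{eq:decompose-of-hat-tau-lin} together with the finite-population CLT for $(\sqrt{n}\hat{\tau},\sqrt{n}\hat{\bs{\tau}}_{\bs{x}})$ under $H_{0\textsc{n}}$ and the continuous mapping theorem, one obtains, simultaneously over all $\mk\subseteq[K]$,
\[
T_{\Lin,\mk} \;\cd\; \frac{\sigma_R R^{*}+\bs{c}_{\mk}^{\top}W}{\sigma_{\se,\mk}},\qquad T_{t,k} \;\cd\; \frac{W_k}{\sqrt{(\Sigma_W)_{kk}}},
\]
where $R^{*}\sim N(0,1)$ is the standardized residual from projecting the Gaussian limit of $\sqrt{n}\hat{\tau}$ onto the Gaussian limit $W\sim N(\bs{0},\Sigma_W)$ of $\sqrt{n}\hat{\bs{\tau}}_{\bs{x}}$, $R^{*}\perp W$, and $\bs{c}_{\mk}$, $\sigma_{\se,\mk}$, $\sigma_R$ are finite-population limits read off from the decomposition (in particular $\bs{c}_{[K]}=\bs{0}$). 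Consequently $\ma_\rep(\alpha_t)$ converges to the symmetric convex box $B(\alpha_t)=\{\bs{w}:\max_k|w_k|/\sqrt{(\Sigma_W)_{kk}}\leq z_{1-\alpha_t/2}\}$.

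For part (i), write the asymptotic event as
\[
\{\hpL>\alpha\}\;=\;\bigcap_{\mk\subseteq[K]}\bigl\{\,|\sigma_R R^{*}+\bs{c}_{\mk}^{\top}W|<z_{1-\alpha/2}\,\sigma_{\se,\mk}\,\bigr\},
\]
an intersection of symmetric slabs in $(R^{*},W)$, hence a centrally symmetric convex set. The lift of $\ma_\rep(\alpha_t)$ to the $(R^{*},W)$ space, namely $\{W\in B(\alpha_t)\}$, is also centrally symmetric and convex. Because $(R^{*},W)$ is jointly centered Gaussian, the Gaussian correlation inequality applied to these two sets gives $\Prob_\infty(\hpL>\alpha,\;\ma_\rep(\alpha_t))\geq \Prob_\infty(\hpL>\alpha)\cdot\Prob_\infty(\ma_\rep(\alpha_t))$, which upon division yields part (i).

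For part (ii), as $\alpha_t\to 1$ the box $B(\alpha_t)$ shrinks to $\{\bs{0}\}$, so the conditional law of $W$ given $W\in B(\alpha_t)$ concentrates weakly at $\bs{0}$. On this limiting event every $T_{\Lin,\mk}$ collapses to $\sigma_R R^{*}/\sigma_{\se,\mk}$, giving $\max_{\mk}|T_{\Lin,\mk}|=|R^{*}|\,\sigma_R/\min_{\mk}\sigma_{\se,\mk}$. Applying \cite{Lin2013Agnostic}'s EHW conservativeness subset by subset yields $\sigma_{\se,\mk}^{2}\geq\sigma_R^{2}+\bs{c}_{\mk}^{\top}\Sigma_W\bs{c}_{\mk}\geq\sigma_R^{2}$ for every $\mk$; hence $\sigma_R/\min_{\mk}\sigma_{\se,\mk}\leq 1$ and $\Prob_\infty(\hpL\leq\alpha\mid W=\bs{0})\leq\Prob(|R^{*}|\geq z_{1-\alpha/2})=\alpha$. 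A bounded-convergence step promotes this pointwise bound to $\lim_{\alpha_t\to 1}\Prob_\infty(\hpL\leq\alpha\mid\ma_\rep(\alpha_t))\leq\alpha$.

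The main obstacle is part (i). For ReM the ellipsoid matches the level sets of quadratic forms in the Gaussian limit, enabling a direct rotation-invariant calculation, but the ReP box admits no such geometric reduction. Bypassing this requires the full strength of the Gaussian correlation inequality, and its applicability hinges on verifying that the asymptotic event $\{\hpL>\alpha\}$ is centrally symmetric and convex in the joint $(R^{*},W)$ space. That verification, carried out through the slab representation above once \eqref{eq:decompose-of-hat-tau-lin} is established with joint convergence across all subsets $\mk$, is the delicate but tractable step.
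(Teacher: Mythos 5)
Your proposal matches the paper's own argument: part (i) is proved exactly as you describe, by writing the asymptotic event $\{\hpL>\alpha\}$ as an intersection of symmetric slabs in the joint Gaussian limit, noting that the ReP acceptance region is a symmetric convex cylinder, and invoking the Gaussian correlation inequality (the paper's Lemma S15, applied in Lemma S16). Part (ii) is likewise the paper's route in substance: the paper first derives the explicit bound of Theorem \ref{thm:ReP-bound-of-I-error-rate-inflation} via the same conservative-standard-error plus Cauchy--Schwarz estimate you use, and then lets $\alpha_t\to 1$ so that the box constraint forces the imbalance component to vanish, leaving a statistic stochastically dominated by a standard normal.
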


\Cref{thm:rep-mitigates-p-hacking} (i) states that ReP mitigates $p$-hacking. \Cref{thm:rep-mitigates-p-hacking} (ii) states that when $\alpha_t\rightarrow 1$, ReP resolves $p$-hacking.

Define $\|\bs{V}\|_{\infty,2} = \max_{\|\bs{u}\|_\infty=1}\|\bs{V}\bs{u}\|_2$. 
ReP restricts the $l_\infty$ norm of $\hat{\bs{\tau}}_{\bs{x}}$ while ReM restricts the $l_2$ norm of $\hat{\bs{\tau}}_{\bs{x}}$. Since $\hat{\bs{\tau}}_{\bs{x}}$ in $\ma_{\rem}(\bar{a}_{\rem})$ is balanced enough to resolve $p$-hacking, we can use $\ma_{\rep}(\alpha_t)$ to resolve $p$-hacking  if $\ma_{\rep}(\alpha_t)\subseteq \ma_{\rem}(\bar{a}_{\rem})$. \Cref{cor:bound-of-rep-derived-by-direct-inequality}, which serves
both as a corollary of \Cref{thm:iff-for-rem_control-TIE} and a parallel result to it, gives such a threshold of ReP to resolve $p$-hacking depending on $(\alpha,R^2_{\bs{x}}, \underline{\Delta})$. 

\begin{corollary}
\label{cor:bound-of-rep-derived-by-direct-inequality}
 Under \Cref{a:crt-regularity-condition} and $H_{0\textsc{n}}$, for any $(R^2_{\bs{x}},\underline{\Delta})\in \mathcal{B}$, $\alpha \in (0,1)$, if $\alpha_t$ is large enough to satisfy
\begin{align}
\label{eq:rep-threshold}
    z_{1-\alpha_t/2} \leq  \|\bs{D}(\bs{S}_{\bs{x}\bs{x}})^{-1/2}\|^{-1}_{\infty,2} \Big\{\bar{a}_{\rem}(\alpha,R^2_{\bs{x}}, \underline{\Delta})\Big\}^{1/2},
\end{align}
we have
\begin{align*}    \max_{(Y_i(1),Y_i(0))_{i=1}^n\in\mathbb{M}(R^2_{\bs{x}}, \underline{\Delta})} \Prob_{\infty}(\hpL \leq \alpha  \mid \ma_{\rep}(\alpha_t))\leq \alpha.
    \end{align*}
\end{corollary}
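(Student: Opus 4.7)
The plan is the one hinted at before the corollary: establish the asymptotic containment $\ma_{\rep}(\alpha_t) \subseteq \ma_{\rem}(\bar{a}_{\rem}(\alpha, R^2_{\bs{x}}, \underline{\Delta}))$ and then transfer the type I error bound from \Cref{thm:iff-for-rem_control-TIE}.

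First I would rewrite ReP as a bound on standardized covariate mean differences. Let $\bs{V} = \cov(\hat{\bs{\tau}}_{\bs{x}})$; under a CRE, $\bs{V}$ is a scalar multiple of $\bs{S}_{\bs{x}\bs{x}}$, so $\bs{D}(\bs{V}) = \bs{D}(\bs{S}_{\bs{x}\bs{x}})$. Let $\bs{u} = \bs{\sigma}(\bs{V})^{-1}\hat{\bs{\tau}}_{\bs{x}}$, whose $k$th coordinate is $\hat{\tau}_{x,k}/\sqrt{V_{kk}}$. Under \Cref{a:crt-regularity-condition}, the EHW standard error used in the marginal $t$-test for covariate $k$ consistently estimates $\sqrt{V_{kk}}$, so $T_{t,k} = u_k + \op(1)$ uniformly in the finite index $k$, and $\ma_{\rep}(\alpha_t)$ is asymptotically equivalent to $\{\|\bs{u}\|_\infty \leq z_{1-\alpha_t/2}\}$. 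A direct computation then gives
\[
\hat{\bs{\tau}}_{\bs{x}}^\top \bs{V}^{-1}\hat{\bs{\tau}}_{\bs{x}} = \bigl\|\bs{D}(\bs{S}_{\bs{x}\bs{x}})^{-1/2}\bs{u}\bigr\|_2^2 \le \bigl\|\bs{D}(\bs{S}_{\bs{x}\bs{x}})^{-1/2}\bigr\|_{\infty,2}^2\,\|\bs{u}\|_\infty^2
\]
by the definition of $\|\cdot\|_{\infty,2}$. On $\ma_{\rep}(\alpha_t)$ the assumed inequality \eqref{eq:rep-threshold} forces the Mahalanobis distance to be at most $\bar{a}_{\rem}(\alpha, R^2_{\bs{x}}, \underline{\Delta})$, yielding the desired asymptotic containment.

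The main obstacle is converting this containment into conditional type I error control. A naive inclusion of events does not suffice, since $\Prob_\infty(\ma_{\rem})/\Prob_\infty(\ma_{\rep}) > 1$ would inflate the ratio defining the conditional probability. I would instead appeal to a pointwise version of \Cref{thm:iff-for-rem_control-TIE} extracted from its proof. Using the decomposition \eqref{eq:decompose-of-hat-tau-lin} together with the joint Gaussian limit of $(\hat{\tau}, \hat{\bs{\tau}}_{\bs{x}})$ under $H_{0\textsc{n}}$, I expect to show that $\Prob_\infty(\hpL \leq \alpha \mid \hat{\bs{\tau}}_{\bs{x}} = \bs{t})$ depends on $\bs{t}$ only through a quadratic form controlled by $\bs{t}^\top \bs{V}^{-1}\bs{t}$, so that it is at most $\alpha$ whenever $\bs{t}^\top \bs{V}^{-1}\bs{t} \leq \bar{a}_{\rem}$, uniformly over $(Y_i(1), Y_i(0))_{i=1}^n \in \mathbb{M}(R^2_{\bs{x}}, \underline{\Delta})$. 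Integrating this pointwise bound against the conditional distribution of $\hat{\bs{\tau}}_{\bs{x}}$ given $\ma_{\rep}(\alpha_t)$---supported inside $\ma_{\rem}(\bar{a}_{\rem})$ by the previous step---then yields the corollary. The key technical point is asymptotic independence of the projection residual $\hat{\tau} - \bs{\beta}^\top \hat{\bs{\tau}}_{\bs{x}}$ from $\hat{\bs{\tau}}_{\bs{x}}$ under $H_{0\textsc{n}}$, which makes the conditional analysis blind to whether the conditioning event is shaped by ReM or by ReP.
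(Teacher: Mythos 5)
Your proposal is correct and follows essentially the same route as the paper: the $\|\cdot\|_{\infty,2}$ inequality giving $\ma_{\rep}^{\infty}(\alpha_t)\subseteq \ma_{\rem}^{\infty}(\bar{a}_{\rem})$, combined with the ``pointwise'' bound you describe, which is exactly the paper's decomposition in \eqref{eq:me-sufficient-condition} of \Cref{lem:if-statement-for-rem-sre} --- the exceedance event splits into $\{|\varepsilon_0|\geq z_{1-\alpha/2}\}$ (independent of $\bs{\varepsilon}_{\bs{x}}$, hence contributing exactly $\alpha$ under any conditioning on $\bs{\varepsilon}_{\bs{x}}$) and a residual event supported where the Mahalanobis norm exceeds $\bar{a}_{\rem}^{1/2}$, which the containment annihilates. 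One minor imprecision: the conditional exceedance probability given $\hat{\bs{\tau}}_{\bs{x}}=\bs{t}$ is not a function of $\bs{t}^\top\bs{V}^{-1}\bs{t}$ alone (it also depends on the direction of $\bs{t}$ relative to $\bs{\beta}_{\Lin,\bs{x}}-\tilde{\bs{\beta}}_{\Lin,\mk}$), but the upper bound is, which is all your argument needs.
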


\eqref{eq:rep-threshold} is motivated by the condition that $\ma_{\rep}(\alpha_t)\subseteq \ma_{\rem}(\bar{a}_{\rem})$.
  By \Cref{cor:bound-of-rep-derived-by-direct-inequality}, similar to the discussion of \Cref{thm:iff-for-rem_control-TIE}, the threshold is more stringent with a larger $\alpha$, a smaller $\underline{\Delta}$ and a smaller $R^2_{\bs{x}}$. Therefore, our discussion for ReM under \Cref{thm:iff-for-rem_control-TIE} can apply to the setting of ReP. 

\Cref{cor:bound-of-rep-derived-by-direct-inequality} is a sufficient condition to resolve $p$-hacking, which can be overly stringent. With \Cref{a:orthogonal-covariates-equally-importance}, we can obtain a less stringent threshold. Since $\underline{\Delta} = R^2_{\bs{x}}/K$ under \Cref{a:orthogonal-covariates-equally-importance}, the set $\mathbb{M}(R^2_{\bs{x}},\underline{\Delta})$ is determined solely by $R^2_{\bs{x}}$. Hence, we denote $\mathbb{M}(R^2_{\bs{x}},\underline{\Delta})$ simply by $\mathbb{M}(R^2_{\bs{x}})$.

\begin{theorem}
\label{thm:rep-orthogonal-covariates-equally-importance}
  Let $g(\alpha) = z_{1-\alpha/2}$, for $\alpha \in (0,1)$. Under \Cref{a:crt-regularity-condition},\ref{a:orthogonal-covariates-equally-importance} and $H_{0\textsc{n}}$, for $R^2_{\bs{x}} \in (0,1)$, we have
    \begin{align*}
 \max_{(Y_i(1),Y_i(0))_{i=1}^n\in\mathbb{M}(R^2_{\bs{x}})}\Prob_{\infty}( \hpL \leq \alpha\mid \ma_{\rep}(\alpha_t)) \leq \alpha,
\end{align*}
if and only if $\alpha_t$ is large enough to satisfy $\alpha_t \geq \underline{\alpha}_{t}(R^2_{\bs{x}},\alpha)$, where
    \begin{align*}
       \underline{\alpha}_{t}(R^2_{\bs{x}},\alpha)= g^{-1}(c_{\rep}(R^2_{\bs{x}})g(\alpha)),\quad c_{\rep}(R^2_{\bs{x}}) = \frac{(R^2_{\bs{x}}/K)^{1/2}}{1+(1-R^2_{\bs{x}})^{1/2}}.
    \end{align*}
\end{theorem}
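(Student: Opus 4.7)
The plan is to reduce the problem, via the asymptotic representation underlying \Cref{thm:rem-mitigates-p-hacking}--\Cref{thm:iff-for-rem_control-TIE}, to an explicit geometric comparison between the $l_\infty$ ball defining $\ma_{\rep}(\alpha_t)$ and the level sets of the hacked test statistic, and then to identify analytically the worst-case subset $\mk$.

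First, I would establish the joint asymptotic representation. Combining the decomposition \eqref{eq:decompose-of-hat-tau-lin}, the finite-population central limit theorem, and the invariance of both $\hpL$ and $\ma_{\rep}(\alpha_t)$ under per-covariate sign flips, one shows under $H_{0\textsc{n}}$, \Cref{a:crt-regularity-condition}, and \Cref{a:orthogonal-covariates-equally-importance} that
\[
T_{\Lin,\mk}\ \cd\ \frac{\sqrt{1-R^2_{\bs{x}}}\,\varepsilon_0\ +\ \sqrt{R^2_{\bs{x}}/K}\sum_{k\notin\mk}\tilde{q}_k}{\sqrt{1-|\mk|R^2_{\bs{x}}/K}},\qquad \ma_{\rep}(\alpha_t)\ \cd\ \Bigl\{\max_{k\in[K]}|\tilde q_k|\le s\Bigr\},
\]
with $\varepsilon_0,\tilde q_1,\ldots,\tilde q_K$ i.i.d.\ $N(0,1)$ and $s=z_{1-\alpha_t/2}$. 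The inputs are diagonality of $\bs{S}_{\bs{x}\bs{x}}$ (yielding independence of the $\tilde q_k$'s and $R^2_\mk=|\mk|R^2_{\bs{x}}/K$) together with $R^2_k=R^2_{\bs{x}}/K$ (equalizing $|\cor(\tilde\tau,\tilde q_k)|=\sqrt{R^2_{\bs{x}}/K}$; the sign flips align signs). Because the asymptotic joint law depends on the potential outcomes only through $R^2_{\bs{x}}$, the maximum over $\mathbb{M}(R^2_{\bs{x}})$ is attained at every admissible configuration and the problem reduces to this canonical Gaussian model.

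Second, writing $c=z_{1-\alpha/2}$ and $S_\mk=\sum_{k\notin\mk}\tilde q_k$, the event $\{|T_{\Lin,\mk}|\ge c\}$ becomes $\{\varepsilon_0\ge U^+_\mk\}\cup\{\varepsilon_0\le U^-_\mk\}$ with
\[
U^\pm_\mk\ =\ \frac{\pm c\sqrt{1-|\mk|R^2_{\bs{x}}/K}\ -\ \sqrt{R^2_{\bs{x}}/K}\,S_\mk}{\sqrt{1-R^2_{\bs{x}}}}.
\]
Since $\mk=[K]$ forces $U^+_{[K]}=c$ and $U^-_{[K]}=-c$, for every $\tilde{\bs q}$ the conditional rejection probability is at least $\alpha$, and it equals exactly $\alpha$ iff $U^+_\mk\ge c$ and $U^-_\mk\le -c$ for every $\mk$. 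Hence control at level $\alpha$ is equivalent to the almost-sure inclusion $\ma_{\rep}(\alpha_t)\subseteq\bigcap_{\mk\subseteq[K]}\{U^+_\mk\ge c,\ U^-_\mk\le -c\}$.

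Third, applying $\sqrt A-\sqrt B=(A-B)/(\sqrt A+\sqrt B)$ with $A=1-|\mk|R^2_{\bs{x}}/K$ and $B=1-R^2_{\bs{x}}$, the inequality $U^+_\mk\ge c$ rearranges to
\[
S_\mk\ \le\ c\,\sqrt{R^2_{\bs{x}}/K}\cdot\frac{|\mk^c|}{\sqrt{1-|\mk|R^2_{\bs{x}}/K}+\sqrt{1-R^2_{\bs{x}}}}.
\]
On $\ma_{\rep}(\alpha_t)$ we have $|S_\mk|\le|\mk^c|s$; dividing by $|\mk^c|$ shows the binding $\mk$ is $\mk=\emptyset$ (largest denominator), giving the sharp sufficient condition $s\le c_{\rep}(R^2_{\bs{x}})\,c$, equivalently $\alpha_t\ge\underline{\alpha}_t(R^2_{\bs{x}},\alpha)$. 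The symmetric analysis of $U^-_\mk\le-c$ yields the same threshold. Conversely, when $\alpha_t<\underline{\alpha}_t$ the corner $\tilde q_1=\cdots=\tilde q_K=s$ lies in $\ma_{\rep}(\alpha_t)$ and satisfies $U^+_\emptyset<c$ strictly by the same algebra; continuity of $U^+_\emptyset$ in $\tilde{\bs q}$ together with positive truncated-Gaussian mass on a neighborhood of this corner then forces $\pinf(\hpL\le\alpha\mid\ma_{\rep}(\alpha_t))>\alpha$.

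The main obstacle is the first step: rigorously securing the clean independent-Gaussian representation of $(T_{\Lin,\mk})_{\mk\subseteq[K]}$ jointly with $\hat{\bs q}$ under \Cref{a:orthogonal-covariates-equally-importance}, in particular confirming that the EHW-based denominator of $T_{\Lin,\mk}$ converges to $\sqrt{1-|\mk|R^2_{\bs{x}}/K}$ and that a single per-covariate sign flip can align $\cor(\tilde\tau,\tilde q_k)$ simultaneously for all $\mk\subseteq[K]$. Once this representation is in hand, the identification of $\mk=\emptyset$ versus $\mk=[K]$ as the binding pair is the key conceptual observation and the rest is elementary algebra on a truncated Gaussian.
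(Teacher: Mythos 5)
Your Steps 2 and 3 are correct and in fact recover the paper's own computation. Conditioning on $\tilde{\bs q}$ (independent of $\varepsilon_0$), characterizing exact level-$\alpha$ control as the almost-sure inclusion $\ma_{\rep}(\alpha_t)\subseteq\bigcap_{\mk}\{U^+_\mk\ge c,\;U^-_\mk\le -c\}$, the $\sqrt{A}-\sqrt{B}$ rationalization, and the identification of $\mk=\emptyset$ as the binding subset are all equivalent to what the paper does in proving Theorem \ref{thm:sufficient-complete-condition-for-orthogonal-equal-importance}(ii) with $H=1$: there your bound $|S_\mk|\le |\mk^c|\,s$ appears as a H\"older $\ell_1$--$\ell_\infty$ inequality applied to $\bs{\sigma}(\bs{V}_{\bs{x}\bs{x}})(\bs{\beta}_{\Lin,\bs{x}}-\tilde{\bs{\beta}}_{\Lin,\mk})$, whose $\ell_1$ norm equals $(K-|\mk|)V_{\tau\tau}^{1/2}R_1$ under Condition \ref{a:orthogonal-covariates-equally-importance}, and $\min_{\mk\subsetneq[K]}\tilde R^2_{\mk}=\tilde R^2_{\emptyset}=R^2_{\bs x}$ plays the role of your ``largest denominator.'' Your conditional-on-$\tilde{\bs q}$ framing makes the necessity direction somewhat more transparent than the paper's explicit nonempty-open-set construction, but it is substantively the same argument.

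The genuine gap is in Step 1, and it is precisely the point you flag as ``the main obstacle'': the EHW denominator of $T_{\Lin,\mk}$ does \emph{not} in general converge (after scaling by $V_{\tau\tau}^{1/2}$) to $\sqrt{1-|\mk|R^2_{\bs x}/K}$. By the definition of $\tilde{\se}^2_{\Lin,\mk}$ and Lemma \ref{lem:limit-of-standard-errors}, $n\tilde{\se}^2_{\Lin,\mk}=\var(\varepsilon_{\tau}-\bs{\beta}_{\Lin,\mk}^\top\bs{\varepsilon}_{\mk})+S_{\tau_\mk\tau_\mk}$, and the heterogeneity term $S_{\tau_\mk\tau_\mk}\ge 0$ vanishes only when the covariate-adjusted individual effects are constant. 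Hence the asymptotic joint law of $(T_{\Lin,\mk})_{\mk\subseteq[K]}$ is \emph{not} determined by $R^2_{\bs x}$ alone, your assertion that ``the maximum over $\mathbb{M}(R^2_{\bs x})$ is attained at every admissible configuration'' is false, and your canonical representation holds with equality only in the sharp-null-type case. The repair is exactly the paper's device: replace the denominator by $\var(\varepsilon_{\tau}-\bs{\beta}_{\Lin,\mk}^\top\bs{\varepsilon}_{\mk})^{1/2}$ to define $\tilde{\mathcal N}_{\Lin,\mk}$ with $|\mathcal N_{\Lin,\mk}|\le|\tilde{\mathcal N}_{\Lin,\mk}|$, so that the canonical Gaussian model dominates every element of $\mathbb{M}(R^2_{\bs x})$ (yielding the ``if'' direction uniformly), and then exhibit a constant-effects configuration with $\bs{V}_{\bs x\bs x}=\bs{I}_K$ and $\bs{V}_{\bs x\tau}\propto\bs{1}_K$ that attains the canonical model (yielding the ``only if'' direction). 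With that patch, together with the sign-flip and independence bookkeeping you already sketch (which does go through under Condition \ref{a:orthogonal-covariates-equally-importance}, since a single sign flip $\tilde q_k=\sign(V_{k\tau})q_k$ aligns all subsets $\mk$ simultaneously), your argument is complete and delivers the stated threshold.
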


 We can use $\underline{\alpha}_{t}(R^2_{\bs{x}},\alpha)$ as the threshold of ReP if \Cref{a:orthogonal-covariates-equally-importance} holds. It is easy to see that $c_{\rep}(R^2_{\bs{x}}) \leq 1$ and the equality holds if and only if $R^2_{\bs{x}}=1$ and $K=1$. Since $g(\cdot)$ is a decreasing function, we have
 \[
 \underline{\alpha}_{t}(R^2_{\bs{x}},\alpha) = g^{-1}(c_{\rep}(R^2_{\bs{x}})g(\alpha)) \geq g^{-1}(g(\alpha)) = \alpha.  
 \]
 Therefore, the common practice of setting $\alpha_t = \alpha$ is generally insufficient to resolve $p$-hacking. Moreover, $\underline{\alpha}_{t}(R^2_{\bs{x}},\alpha)$ is more stringent for a smaller $c_{\rep}(R^2_{\bs{x}})$.
 Since $c_\rep(R^2_{\bs{x}})$ is an increasing function of $R^2_{\bs{x}}$, the threshold is more stringent with a smaller $R^2_{\bs{x}}$, similar to ReM.

When we know neither $R^2_{\bs{x}}$ nor $\underline{\Delta}$, to estimate the type I error rate inflation under ReP, we have the following parallel result of \Cref{thm:ReM-bound-of-I-error-rate-inflation}.

\begin{theorem}
\label{thm:ReP-bound-of-I-error-rate-inflation}
Under \Cref{a:crt-regularity-condition} and $H_{0\textsc{n}}$, we have
    \begin{align*}
        &\Prob_{\infty}( \hpL \leq \alpha\mid \ma_{\rep}(\alpha_t)) \leq\\
        & \Prob\Big\{(\varepsilon^{2}+\|\bs{\xi}\|_2^2)^{1/2} \geq z_{1-\alpha/2}~\Big |~ \|\bs{D}(\bs{S}_{\bs{x}\bs{x}})^{1/2} \bs{\xi}\|_{\infty} \leq z_{1-\alpha_t/2}\Big\}
    \end{align*}
    where $\varepsilon \in \mathbb{R}$ and $\bs{\xi} \in \mathbb{R}^K$ are independent standard Gaussian random variables.
\end{theorem}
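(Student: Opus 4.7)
The plan is to reuse the event containment at the heart of the proof of \Cref{thm:ReM-bound-of-I-error-rate-inflation} and only replace the ReM conditioning event by its ReP counterpart expressed in standardized Gaussian coordinates. The decomposition \eqref{eq:decompose-of-hat-tau-lin} combined with the finite-population CLT under \Cref{a:crt-regularity-condition} and $H_{0\textsc{n}}$ implies that $(\hat{\tau}/\hat{\se}_{\Lin}, \hat{\bs{\tau}}_{\bs{x}})$, after appropriate studentization, jointly converges to a Gaussian vector. Splitting $\hat{\tau}/\hat{\se}_{\Lin}$ into its asymptotic projection onto $\hat{\bs{\tau}}_{\bs{x}}$ and the orthogonal complement produces independent standard Gaussians $\varepsilon \in \mathbb{R}$ and $\bs{\xi} \in \mathbb{R}^K$, with $\bs{\xi}$ the whitened covariate imbalance, i.e., the limit of $\bs{D}(\bs{S}_{\bs{x}\bs{x}})^{-1/2}\bs{\sigma}(\cov(\hat{\bs{\tau}}_{\bs{x}}))^{-1}\hat{\bs{\tau}}_{\bs{x}}$.

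For any $\mk \subseteq [K]$, the statistic $T_{\Lin,\mk}$ is asymptotically a linear combination $c_{\mk,0}\varepsilon + \bs{c}_\mk^\top \bs{\xi}$, and because $T_{\Lin,\mk}$ is marginally standard normal under $H_{0\textsc{n}}$ its coefficient vector has unit $\ell_2$-norm, $c_{\mk,0}^2 + \|\bs{c}_\mk\|_2^2 = 1$. Cauchy--Schwarz then yields the pathwise bound
\begin{equation*}
\max_{\mk \subseteq [K]} T_{\Lin,\mk}^2 \leq \varepsilon^2 + \|\bs{\xi}\|_2^2,
\end{equation*}
so $\{\hpL \leq \alpha\}$ is asymptotically contained in $\{(\varepsilon^2 + \|\bs{\xi}\|_2^2)^{1/2} \geq z_{1-\alpha/2}\}$. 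This step is identical to the one in the proof of \Cref{thm:ReM-bound-of-I-error-rate-inflation} and makes no reference to the rerandomization scheme.

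Next, I would rewrite the conditioning event in the same $\bs{\xi}$-coordinates. Under \Cref{a:crt-regularity-condition}, for each $k$ the two-sample $t$-statistic $\hat{\tau}_{x_k}/\hat{\se}_{x_k}$ converges in distribution to a standard normal, and jointly across $k$ the limit equals $\bs{D}(\bs{S}_{\bs{x}\bs{x}})^{1/2}\bs{\xi}$ with the very same $\bs{\xi}$ as above, because correlation matrices are invariant to the scalar $1/n_1 + 1/n_0$ connecting $\cov(\hat{\bs{\tau}}_{\bs{x}})$ and $\bs{S}_{\bs{x}\bs{x}}$. Since $\ma_{\rep}(\alpha_t) = \{\max_{k} |\hat{\tau}_{x_k}/\hat{\se}_{x_k}| \leq z_{1-\alpha_t/2}\}$, its asymptotic description is $\{\|\bs{D}(\bs{S}_{\bs{x}\bs{x}})^{1/2}\bs{\xi}\|_\infty \leq z_{1-\alpha_t/2}\}$. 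Combining this with the containment above via a Portmanteau argument --- which applies because the boundary of the conditioning rectangle has zero Gaussian mass --- delivers the stated inequality.

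The main obstacle is confirming that the pathwise inequality $\max_\mk T_{\Lin,\mk}^2 \leq \varepsilon^2 + \|\bs{\xi}\|_2^2$ transports verbatim once we restrict attention to the non-rectangular and anisotropic ReP region. This reduces to observing that the inequality is a deterministic algebraic consequence of the joint Gaussian linearization, not a probabilistic statement, and hence survives any conditioning whose boundary has zero Gaussian mass; this was already the case in the ReM proof. Once this is in place, the remaining steps are routine componentwise applications of the finite-population CLT to $\hat{\bs{\tau}}_{\bs{x}}$ together with the continuous mapping theorem.
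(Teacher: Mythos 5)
Your proposal follows essentially the same route as the paper's proof (which derives this result as the $H=1$ case of Theorem \ref{thm:inflation-bound-sre}(ii)): pass to the joint Gaussian limit of the studentized estimators and the covariate imbalance, bound $\max_{\mk}|T_{\Lin,\mk}|$ by $(\varepsilon^2+\|\bs{\xi}\|_2^2)^{1/2}$ via Cauchy--Schwarz on the coefficient vector of the linear representation, rewrite $\ma_{\rep}(\alpha_t)$ in whitened coordinates as $\{\|\bs{D}(\bs{S}_{\bs{x}\bs{x}})^{1/2}\bs{\xi}\|_\infty\le z_{1-\alpha_t/2}\}$ (using that the correlation matrix is invariant to the scalar relating $\cov(\hat{\bs{\tau}}_{\bs{x}})$ and $\bs{S}_{\bs{x}\bs{x}}$, and that the two whitenings differ by an orthogonal map preserving $\|\bs{\xi}\|_2$), and pass to the conditional limit via the weak-convergence-under-rerandomization lemma.

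One justification in your argument is wrong, although the bound survives. You assert that $T_{\Lin,\mk}$ is marginally standard normal under $H_{0\textsc{n}}$ and conclude $c_{\mk,0}^2+\|\bs{c}_\mk\|_2^2=1$. Under the weak null this is false in general: the EHW standard error is asymptotically conservative, so the limit $\mathcal{N}_{\Lin,\mk}$ has variance at most $1$, with equality only when the adjusted unit-level effects are constant (e.g., under $H_{0\textsc{f}}$); this is exactly Lemma \ref{lem:standard-normal-under-fisher-null} and the content of Proposition \ref{prop:type I-error-rate-asymptotic}. The paper handles this by introducing the exactly-standardized dominating variable $\tilde{\mathcal{N}}_{\Lin,\mk}$ with $|\mathcal{N}_{\Lin,\mk}|\le|\tilde{\mathcal{N}}_{\Lin,\mk}|$ and applying Cauchy--Schwarz to the latter. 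Your step still closes because you only need an upper bound and the true coefficient norm satisfies $c_{\mk,0}^2+\|\bs{c}_\mk\|_2^2\le 1$, which makes the Cauchy--Schwarz inequality only stronger; but the stated reason should be replaced by the conservativeness of $\tilde{\se}_{\Lin,\mk}$ rather than exact standard normality.
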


Similar to \Cref{thm:ReM-bound-of-I-error-rate-inflation}, the bound in \Cref{thm:ReP-bound-of-I-error-rate-inflation} does not depend on $R^2_{\bs{x}}$ or $\underline{\Delta}$. However, it depends on the covariance structure of the covariates through $\bs{D}(\bs{S}_{\bs{x}\bs{x}})$. This is because ReP is not invariant to linear transformations of $\bs{x}_i$. As $\alpha_t\rightarrow 1$, the bound tends to $\Prob(|\varepsilon| \geq z_{1-\alpha/2})=\alpha$. We can apply this bound, by specifying $\bs{D}(\bs{S}_{\bs{x}\bs{x}})$, $\alpha$ and $\alpha_t$. For example, with $K=5$, $\bs{D}(\bs{S}_{\bs{x}\bs{x}}) = 0.2\bs{I}_K + 0.8$, $\alpha = 0.05$, and $\alpha_t$ equal to the $1\%$ upper quantile of the asymptotic distribution of $\min_{k\in [K]}p_{t,k}$,  \Cref{thm:ReP-bound-of-I-error-rate-inflation} yields a bound about $0.076$. This reflects an inflation of $0.026$. Similar to the discussion of \Cref{thm:ReM-bound-of-I-error-rate-inflation}, we can specify a conservative test that controls the type I error rate based on the error bound in \Cref{thm:ReP-bound-of-I-error-rate-inflation}.

\section{Simulation study}
\label{sec:simulation}
We conduct a simulation study to evaluate  $p$-hacking with and without rerandomization. The study considers $3$ designs: CRE, ReM and ReP. For $\{a_i\}_{i=1}^n$, we define $\textnormal{Scale}(a_i) = a_i/S_{aa}^{1/2}$. The sharp null hypothesis is the most challenging case for $p$-hacking under the weak null hypothesis. Because the $p$-values of $t$ tests under the sharp null hypothesis are generally smaller than in other cases: \cite{zhao2021covariate} 
showed that we have $\Prob_{\infty}(p_{\Lin,\mk}\leq \alpha)\leq \alpha$ for all $\mk\subseteq [K]$ under the weak null hypothesis, and $\Prob_{\infty}(p_{\Lin,\mk}\leq \alpha) = \alpha$ 
holds for all $\mk\subseteq [K]$ only under the sharp null hypothesis. Therefore, we generate the potential outcomes satisfying the sharp null hypothesis with $n=1000$:
\[
Y_i(1) = Y_i(0)  = R_{\bs{x}} \cdot \textnormal{Scale}(\bs{x}_i^\top \bs{\beta}) + (1-R_{\bs{x}}^2)^{1/2}\textnormal{Scale}(\varepsilon_i),
\]
where  $\varepsilon_i$, $i=1,\ldots,n$, are i.i.d. generated from standard normal distribution. $\bs{x}_i$ are i.i.d. generated from $\mathcal{N}(\bs{0}_5, (1-\rho)\bs{I}_5 + \rho \bs{1}_{5\times 5})$. We conduct ReM and ReP with an asymptotic acceptance probability of $0.01$.

Once generated, $\{Y_i(1),Y_i(0),\bs{x}_i\}_{i=1}^n$ are fixed in the simulation. We set  $r_1=0.5$, draw random assignments under CRE, ReM and ReP, and compute the hacked $p$-values for each realization of treatment assignment. Each design is repeated $B=10^5$ times to approximate the distribution of the hacked $p$-values. We consider combinations of $\rho \in \{0, 0.8\}$, $R_{\bs{x}}^2 \in \{0.1, 0.2, \ldots,0.9\}$ and $\bs{\beta} \in \{\bs{1}_5,(1,1,0.3,0.3,0.3)\}$. $\rho$ and $\bs{\beta}$ represent correlation structure of covariates and variability in covariate importance. In particular, $\rho = 0$ and $\bs{\beta} = \bs{1}_5$ corresponds to the setting when \Cref{a:orthogonal-covariates-equally-importance} holds.

We focus on the empirical type I error rate of hacked $p$-values at level $\alpha=0.05$: $\sum_{j=1}^B I(p_{\textnormal{L},(j)}^{\textnormal{h}} \leq \alpha)/B$, where $p_{\textnormal{L},(j)}^{\textnormal{h}}$ is the hacked $p$-value under the $j$th treatment assignment. 

\Cref{fig:R2-error-plot-different-rho} and \Cref{tab:result-table} show the empirical type I error rates of different designs under different $R_{\bs{x}}^2$, $\rho$ and $\bs{\beta}$. The empirical type I error rates under ReP and ReM consistently remain lower than those under the CRE.  By \Cref{fig:R2-error-plot-different-rho}, when $\rho = 0$, ReP and ReM perform similarly, both resolve $p$-hacking for large $R_{\bs{x}}^2$. By \Cref{tab:result-table}, when $\rho = 0$, it is easier to resolve $p$-hacking for $\bs{\beta} = \bs{1}_5$ compared with $\bs{\beta} = (1,1,0.3,0.3,0.3)$: for $\bs{\beta} = \bs{1}_5$, ReM and ReP both resolve $p$-hacking if $R_{\bs{x}}^2 \geq 0.6$ while for $\bs{\beta} = (1,1,0.3,0.3,0.3)$, we need $R_{\bs{x}}^2 \geq 0.9$. When $\rho = 0.8$, neither ReM nor ReP resolves $p$-hacking, and the inflation of empirical type I error rates remains substantial. These results align with \Cref{thm:iff-for-rem_control-TIE}, demonstrating that correlated covariates and varying covariate importance present greater challenges in resolving $p$-hacking due to a small $\underline{\Delta}$. 

By \Cref{tab:result-table}, when $\bs{\beta}=\bs{1}_5$ and $\rho = 0$, i.e., \Cref{a:orthogonal-covariates-equally-importance} holds, the performance of rerandomization is better in finite samples than in asymptotic theory. Although $\chi^2_{5,0.01} \approx \bar{a}_{\rem}(0.05,0.8,0.8/5)$ and $0.01 \approx \{1-\underline{\alpha}_t(0.8,0.05)\}^5$ suggest that an acceptance probability of $0.01$ resolves $p$-hacking if $R_{\bs{x}}^2 \geq 0.8$ for both ReP and ReM, \Cref{tab:result-table} suggests that this acceptance probability can resolve $p$-hacking if $R_{\bs{x}}^2 \geq 0.6$. 

 By Theorem \ref{thm:iff-for-rem_control-TIE}, a larger $R^2_{\bs{x}}$ is more favorable for resolving $p$-hacking under ReP and ReM. However, as shown in \Cref{fig:R2-error-plot-different-rho}, it is also the setting where $p$-hacking is more problematic under the CRE. This is because a higher value of $R^2_{\bs{x}}$ is associated with larger $l_2$ norms of $\bs{\beta}_{\Lin,\mk}$, $\mk \in [K]$, in \eqref{eq:decompose-of-hat-tau-lin}. Large $\bs{\beta}_{\Lin,\mk}$ lead to great divergence among $\hat{\tau}_{\Lin,\mk}$ under the CRE, thereby exacerbating the severity of $p$-hacking.  This suggests the importance of balancing informative covariates by rerandomization.

We present the theoretical bounds of the type I error rates obtained by Theorems \ref{thm:ReM-bound-of-I-error-rate-inflation} and \ref{thm:ReP-bound-of-I-error-rate-inflation} in \Cref{fig:R2-error-plot-different-rho} and \Cref{tab:result-table}. By \Cref{fig:R2-error-plot-different-rho}, the empirical type I error rates of ReM and ReP consistently fall below their respective bounds, validating \Cref{thm:ReM-bound-of-I-error-rate-inflation} and \Cref{thm:ReP-bound-of-I-error-rate-inflation}. These bounds are close to the highest empirical type I error rates, thus providing useful guidance for selecting rerandomization thresholds.

Notably, by \Cref{fig:R2-error-plot-different-rho}, ReP exhibits higher empirical type I error rates than ReM when $\rho = 0.8$, as well as a higher theoretical bound, suggesting that ReM may be better than ReP at mitigating  $p$-hacking when the covariates are correlated. 
 
\begin{figure}
    \centering
    \includegraphics[width=\linewidth]{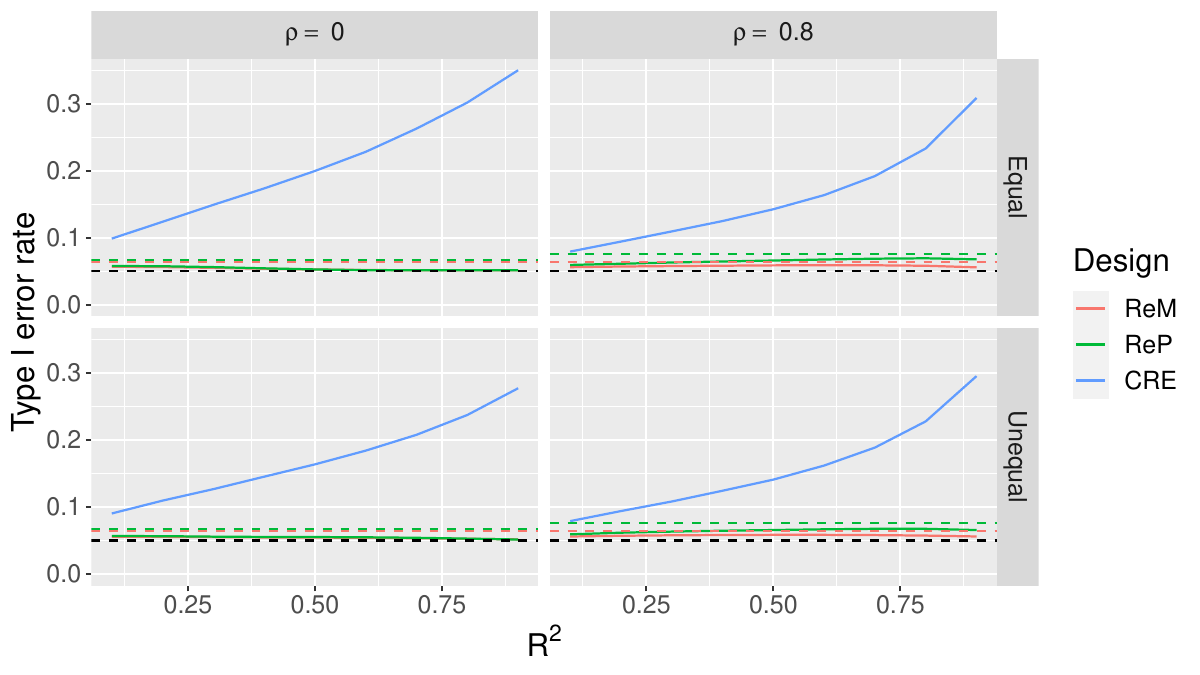}
    \caption{The plot of $R^2_{\bs{x}}$ versus empirical type I error rate computed by $10^5$ treatment assignments for different $\rho$ and $\bs{\beta}$ under different designs. $\bs{\beta} = \bs{1}_5$ is labelled as ``Equal" and $\bs{\beta} = (1,1,0.3,0.3,0.3)$ is labelled as ``Unequal", respectively. Black dashed lines signify the significance level $\alpha=0.05$. The type I error bounds under different rerandomization schemes, obtained through \Cref{thm:ReM-bound-of-I-error-rate-inflation} and \Cref{thm:ReP-bound-of-I-error-rate-inflation}, are shown as dashed lines in colors of the corresponding designs.}
    \label{fig:R2-error-plot-different-rho}
\end{figure}

\begin{table*}[t!]
    \centering
\resizebox{1\columnwidth}{!}{
\begin{tabular}{llllllllllll}
  \hline
$\bs{\beta}$ & $\rho$ & Design & $R^2 = 0.1$ & $R^2=0.2$ & $R^2=0.3$ & $R^2=0.4$ & $R^2=0.5$ & $R^2=0.6$ & $R^2=0.7$ & $R^2=0.8$ & $R^2=0.9$ \\ 
  \hline
\multirow{6}{3em}{Equal} & 0.00 & CRE & 9.85 (0.09) & 12.37 (0.10) & 14.90 (0.11) & 17.35 (0.12) & 19.95 (0.13) & 22.81 (0.13) & 26.28 (0.14) & 30.16 (0.15) & 
{34.99} (0.15) \\ 
& & ReM (6.35) & {5.66}(0.07) & 5.61 (0.07) & 5.49 (0.07) & 5.36 (0.07) & 5.25 (0.07) & 5.17 (0.07) & 5.13 (0.07) & 5.13 (0.07) & 5.13 (0.07) \\ 
&  & ReP (6.69)& {5.77} (0.07) & 5.73 (0.07) & 5.60 (0.07) & 5.42 (0.07) & 5.26 (0.07) & 5.14 (0.07) & 5.13 (0.07) & 5.13 (0.07) & 5.13 (0.07) \\ 
& 0.80 & CRE & 7.92 (0.09) & 9.41 (0.09) & 10.93 (0.10) & 12.48 (0.10) & 14.24 (0.11) & 16.35 (0.12) & 19.19 (0.12) & 23.31 (0.13) & {30.84} (0.15) \\ 
&  & ReM (6.35) & 5.59 (0.07) & 5.69 (0.07) & 5.76 (0.07) & 5.81 (0.07) & 5.86 (0.07) & {5.89} (0.07) & 5.88 (0.07) & 5.80 (0.07) & 5.57 (0.07) \\ 
&  & ReP (7.59)& 5.92 (0.07) & 6.11 (0.08) & 6.29 (0.08) & 6.44 (0.08) & 6.59 (0.08) & 6.74 (0.08) & 6.86 (0.08) & {6.92} (0.08) & 6.76 (0.08) \\ 
   \hline
  \hline
\multirow{6}{3em}{Unequal} &0.00 & CRE & 9.02 (0.09) & 10.94 (0.10) & 12.66 (0.11) & 14.51 (0.11) & 16.35 (0.12) & 18.40 (0.12) & 20.77 (0.13) & 23.72 (0.13) & 
{27.71} (0.14)  \\ 
& & ReM (6.35) & {5.56} (0.07) & 5.53 (0.07) & 5.49 (0.07) & 5.45 (0.07) & 5.43 (0.07) & 5.39 (0.07) & 5.33 (0.07) & 5.25 (0.07) & 5.15 (0.07) \\ 
& & ReP (6.69) & {5.66} (0.07) & 5.63 (0.07) & 5.56 (0.07) & 5.52 (0.07) & 5.50 (0.07) & 5.47 (0.07) & 5.39 (0.07) & 5.27 (0.07) & 5.13 (0.07) \\ 
& 0.80 & CRE & 7.88 (0.09) & 9.38 (0.09) & 10.81 (0.10) & 12.42 (0.10) & 14.08 (0.11) & 16.18 (0.12) & 18.86 (0.12) & 22.77 (0.13) & {29.52} (0.14)\\ 
& & ReM (6.35) & 5.58 (0.07) & 5.70 (0.07) & 5.76 (0.07) & 5.81 (0.07) & {5.83} (0.07) & 5.83 (0.07) & 5.79 (0.07) & 5.72 (0.07) & 5.59 (0.07) \\ 
& & ReP (7.59) & 5.91 (0.07) & 6.15 (0.08) & 6.31 (0.08) & 6.45 (0.08) & 6.55 (0.08) & 6.66 (0.08) & {6.73} (0.08) & 6.73 (0.08) & 6.55 (0.08) \\ 
   \hline
\end{tabular}}
      \caption{The table of empirical type I error rates. Values are multiplied by 100. $\bs{\beta} = \bs{1}_5$ is labelled as ``Equal" and $\bs{\beta} = (1,1,0.3,0.3,0.3)$ is labelled as ``Unequal", respectively. The numbers in parentheses in the second column represent the type I error bounds for the corresponding designs. The numbers in parentheses from the third column onward denote the standard errors of the empirical type I error rates.}
    \label{tab:result-table}
\end{table*}

\section{Discussion}

Our theory shows that rerandomization, such as ReM and ReP, can mitigate $p$-hacking caused by strategically selecting covariates in regression adjustment for estimating the average treatment effect. Additionally, we present theorems that offer guidance on selecting rerandomization thresholds to resolve $p$-hacking depending on whether we know $R^2_{\bs{x}}$ and $\underline{\Delta}$. 

Our theory demonstrates that without prior knowledge of $R^2_{\bs{x}}$, or when the 
rerandomization threshold is not stringent enough given $R^2_{\bs{x}}$, rerandomization may not resolve $p$-hacking. To address this limitation, we can adjust the inference procedure. One approach is to specify a conservative test using the error bound in \Cref{thm:ReM-bound-of-I-error-rate-inflation} and \ref{thm:ReP-bound-of-I-error-rate-inflation}. Another approach is to reinterpret the hacked $p$-value as a test statistic rather than a valid $p$-value. Then, we can use the Fisher randomization test to compute a valid $p$-value for this test statistic under the sharp null hypothesis \citep{lee2015valid}. However, this randomization test might not be valid under the weak null hypothesis. To ensure validity for both sharp null and weak null hypothesis, we need an additional prepivoting procedure \citep{cohen2021prepivoting}, which requires understanding the sampling distribution of the hacked $p$-value. It is a nontrivial theoretical task, and we leave it to future research.

Stratified randomized experiments are frequently used in the design stage to balance the discrete covariates such as gender and age. Rerandomization can be paired with stratified randomized experiments to further balance the continuous covariates. \cite{wang2021rerandomization} proved the benefits of rerandomization for improving the efficiency of ATE estimators in stratified randomized experiments. The benefits also diminish if we use regression adjustment in the analysis stage. We show that rerandomization also mitigates $p$-hacking in stratified randomized experiments. The results are similar to those under the CRE except that, in stratified randomized experiments, we usually add more regressors such as stratum-treatment-covariates interaction in OLS regression. Consequently, we will require a more stringent threshold to resolve $p$-hacking. We relegate the corresponding results to the Supplementary Material.

\bibliographystyle{agsm}
\bibliography{causal}

\newpage

\appendix

\centerline{ \Large\bf SUPPLEMENTARY MATERIAL}
\vspace{2mm}

\spacingset{1.5}

\renewcommand{\theassumption}{S\arabic{assumption}}
\setcounter{assumption}{0}
\renewcommand{\thetheorem}{S\arabic{theorem}}
\setcounter{theorem}{0}
\renewcommand{\theproposition}{S\arabic{proposition}}
\setcounter{proposition}{0}
\renewcommand{\thelemma}{S\arabic{lemma}}
\setcounter{lemma}{0}
\renewcommand{\thedefinition}{S\arabic{definition}}
\setcounter{definition}{0}
\renewcommand{\thecondition}{S\arabic{condition}}
\setcounter{condition}{0}

Section \ref{sec:stratified-CRE} provides additional theoretical results under stratified randomized experiments (SRE).

Section \ref{sec:notation} provides additional notation and lemmas for the Supplementary Material.

Section \ref{sec:hacked-p-value-limits} provides lemmas that are useful for the proofs including joint asymptotic normality for the ATE estimators and difference in means of covariates, asymptotic limits of the regression coefficients and EHW standard errors, and asymptotic limits of hacked $p$-values under SRE with $H$ strata. The results under the CRE correspond to SRE with $H=1$. 

Section \ref{sec:results-under-cre} provides the proofs of theoretical results under CRE  (\Cref{prop:type I-error-rate-asymptotic}, \Cref{thm:rem-mitigates-p-hacking}--\ref{thm:ReP-bound-of-I-error-rate-inflation}, and \Cref{cor:bound-of-rep-derived-by-direct-inequality}).

Section \ref{sec:results-under-sre} provides the proofs of theoretical results under SRE (\Cref{thm:re-mitigates-p-hacking-sre}--\ref {thm:inflation-bound-sre}).

\section{Extension to SRE}

\label{sec:stratified-CRE}
\subsection{Hacked $p$-values under SRE}
We now extend the results to the SRE. Consider $n$ units in $H$ strata. Let $n_{ h }$ be the size of stratum $h$, $h=1,\ldots,H$. An SRE conducts independent CREs across strata. In stratum $h$, $n_{ h z}$ units are assigned to the treatment group $z$, $z=0,1$, with $n_{ h 1}+n_{ h 0} = n_{ h }$ and $r_{ h z} = n_{ h z}/n_{ h }$. Let $\ms_{ h } $ be the set of units in stratum $h$. Let $\pi_{ h } = n_{ h }/n$ be the proportion of units in stratum $h$. Let $\bar{\tau}_{ h } = n_{ h }^{-1}\sum_{i\in  \ms_{ h } }\tau_i$ be the ATE of stratum $h$. We are interested in the ATE for SRE, $$\bar{\tau} = \sum_{h=1}^H \pi_{ h }\bar{\tau}_{ h }.$$  The goal is to test the weak null hypothesis for the SRE: $$H_{0\textsc{n}}: \bar{\tau} = 0.$$ An unbiased estimator of $\bar{\tau}$  is $\hat{\tau} = \sum_{h=1}^H \pi_{ h }\hat{\tau}_{ h }$ with $\hat{\tau}_{ h } = n_{h1}^{-1}\sum_{i:i\in \ms_h,Z_i=1} Y_i - n_{h0}^{-1}\sum_{i:i\in \ms_h,Z_i=0} Y_i$. Let $h=1$ be the reference level and $\bs{S}_{i} = (I(i\in \ms_{ h })-\pi_{ h })_{h=2}^H$  be the vector of centered strata indicators, where $I(\cdot)$ is the indicator function. The $\hat{\tau}$ is equal to the coefficient estimator of $Z_i$ in the regression of $Y_i$ on the treatment indicator, the centered strata indicators, and their interaction terms \citep{Lin2013Agnostic,miratrix2013adjusting,liu2020regression}:
\begin{align}
\label{eq:formul-stra}
    \lmm(Y_i \sim 1+ Z_i+\bs{S}_i + Z_i \bs{S}_i).
\end{align}
We can use the corresponding studentized two-sided $p$-value of the coefficient estimator of $Z_i$ to test $H_{0\textsc{n}}$.

For two finite-population arrarys $\{\bs{a}_i\}_{i=1}^n$, $\{\bs{b}_i\}_{i=1}^n$, define $\bar{\bs{a}}_{ h } = n_{ h }^{-1}\sum_{i\in \ms_{ h }} \bs{a}_i$ as its stratum-specific mean and $\bs{S}_{ h, \bs{a}\bs{b}} = (n_{ h }-1)^{-1}\sum_{i\in \ms_{ h }} (\bs{a}_i-\bar{\bs{a}}_{ h })(\bs{b}_i-\bar{\bs{b}}_{ h })^\top,$
as their stratum-specific covariance, $h=1,\ldots,H$. Let $[H] = \{1,\dots,H\}$. Assume the covariates are centered within each stratum such that $\bar{\bs{x}}_h=\bs{0}$, for $h=1,\ldots,H$. Assume that $\bs{S}_{h,\bs{x}\bs{x}}$, $h\in [H]$, are nonsingular. Let $\otimes$ be the Kronecker product.
When there are covariates, Lin's regression further adds covariates and their interaction with $Z_i$, $\bs{S}_i$, $Z_i\bs{S}_i$ in the regression,
\begin{align}
\label{eq:formula-Lin-stratified}
  \lmm(  Y_i \sim 1+ Z_i+\bs{S}_i + Z_i \bs{S}_i + \bs{x}_i+ Z_i\bs{x}_i+\bs{S}_i\otimes\bs{x}_i + Z_i (\bs{S}_i\otimes \bs{x}_i)).
\end{align}

Let $\hat{\tau}_{\Lin,\mk}$, $\hat{\se}_{\Lin,\mk}$, $T_{\Lin,\mk}=\hat{\tau}_{\Lin,\mk}/\hat{\se}_{\Lin,\mk}$, $ p_{\Lin,\mk} = 2\big\{1-\Phi(|T_{\Lin, \mk}|)\big\}$ be the ATE estimator, the EHW standard error, the $t$-statistic, the two-sided $p$-value obtained through \eqref{eq:formula-Lin-stratified}, replacing $\bs{x}_i$ with $\bs{x}_{i\mk}$.
Define the hacked $p$-value as $\hpL = \min_{\mk \in [K]} p_{\Lin, \mk}$.

Another popular ATE estimator is obtained through the fixed effects regression model:
\begin{align}
\label{eq:formula-stra-no-interaction}
    \lmm(Y_i \sim 1+ Z_i+\bs{S}_i + \bs{x}_i).
\end{align}
\eqref{eq:formula-stra-no-interaction} excludes all interaction terms in the regression \eqref{eq:formula-Lin-stratified} to avoid overparametrization \citep{imbens2015causal,duflo2007using,angrist2014opportunity}.  When $\bs{x}_i = \emptyset$, we denote the estimated coefficient of $Z_i$ in this fixed effects regression by $\hat{\tau}_{\omega}$.  \cite{ding2021frisch} has shown that
$\hat{\tau}_{\omega} = \sum_{h=1}^H \omega_{ h } \hat{\tau}_{h}$, where $ \omega_h = \pi_{ h } r_{ h 1}r_{ h 0}/\sum_{h^\prime=1}^H \pi_{h^\prime} r_{h^\prime 1}r_{h^\prime 0}$. Therefore $\hat{\tau}_{\omega}$ is an unbiased estimator of $$\bar{\tau}_{\omega} = \sum_{h=1}^H \omega_{ h }  \bar{\tau}_{ h }.$$

Hence, we use \eqref{eq:formula-stra-no-interaction} to test $$H_{0\omega}: \bar{\tau}_{\omega} = 0.$$ Let $\hat{\tau}_{\fe,\mk}$, $\hat{\se}_{\fe,\mk}$, $T_{\fe,\mk}=\hat{\tau}_{\fe,\mk}/\hat{\se}_{\fe,\mk}$, $ p_{\fe,\mk} = 2\big\{1-\Phi(|T_{\fe,\mk}|)\big\}$ be the ATE estimator, the EHW standard error, the $t$-statistic, the two-sided $p$-value obtained through \eqref{eq:formula-stra-no-interaction}, replacing $\bs{x}_i$ with $\bs{x}_{i\mk}$.
Define the hacked $p$-value under the fixed effects regression as $\hpfe = \min_{\mk \in [K]} p_{\fe, \mk}$.

\subsection{Rerandomization under SRE}
Since \eqref{eq:formula-Lin-stratified} is equivalent to performing Lin's regression separately within each stratum and then combining the adjusted estimators from each stratum by the stratum proportion $\pi_h$, a natural way to extend $\ma_{\rem}(a)$ and $\ma_{\rep}(\alpha_t)$ is to perform them separately within each stratum, termed as stratum-specific ReM (SS-ReM), stratum-specific ReP (SS-ReP). These two rerandomization schemes are studied in \cite{wang2021rerandomization} and \cite{zdrep} and are shown to reduce the variance of the unadjusted estimator.

Let $\bs{V}_{ h,  \bs{x}\bs{x}} =  (r_{ h 1}r_{ h 0})^{-1}\bs{S}_{ h,  \bs{x}\bs{x}}$ be the covariance matrix of $n_h^{1/2}\hat{\bs{\tau}}_{ h, \bs{x}}$ where $\hat{\bs{\tau}}_{ h, \bs{x}} = n_{h1}^{-1}$ $\sum_{i: i\in\ms_{ h }, Z_i=1}\bs{x}_i-n_{h0}^{-1}\sum_{i:i\in\ms_{ h },Z_i=0}\bs{x}_i$. 
By the finite-population CLT, the asymptotic distributions of $n_{ h }\hat{\bs{\tau}}_{ h,  \bs{x}}^\top\bs{V}_{ h,  \bs{x}\bs{x}}^{-1}\hat{\bs{\tau}}_{ h,  \bs{x}}$, $h\in  [H],  $ follow the distribution of $\chi^2_{K}$.  We define the SS-ReM as
\begin{align*}
    \ma_{\remss}(a) =\{ n_{ h }\hat{\bs{\tau}}_{ h,  \bs{x}}^\top\bs{V}_{ h,  \bs{x}\bs{x}}^{-1}\hat{\bs{\tau}}_{ h,  \bs{x}}\leq a, h\in   [H]  \}.
\end{align*}
 
Let $p_{t, hk}$ be the $p$-value of the two-sample $t$ test under stratum $h$ for covariate $x_{ik}$. We define the SS-ReP as 
 $$\ma_{\repss}(\alpha_t) = \{p_{t, hk}\geq \alpha_t,h\in  [H] , k\in [K]\}.$$   
The published articles, which use \eqref{eq:formula-stra-no-interaction} to estimate the ATEs, commonly present balance tables showing the two-sided $p$-values obtained from $t$ tests for the coefficient estimators of $Z_i$ in the fixed effects regression:
\begin{align}
\label{eq:balance-test-additive}
    \lmm(x_{ik} \sim 1+ Z_i+\bs{S}_i),\quad k\in [K].
\end{align}

A large $p$-value from \eqref{eq:balance-test-additive} suggests that the covariate is well balanced. A commonly used rerandomization scheme is to rerandomize until the $p$-value for each covariate passes the balance check \citep{firpo2020balancing}. We refer to this rerandomization scheme as FE-ReP where FE represents fixed effects regression adjustment.

  Define $p_{\fe,k}^{x}$ as the two-sided  $p$-value of the $t$ statistic for the $k$th covariate. Define FE-ReP as
  $$
  \ma_{\repfe}(\alpha_t) = \Big\{\min_{k\in [K]} p_{\fe,k}^{x} \geq \alpha_t\Big\}.
  $$

\subsection{Hacked $p$-values under rerandomization in SRE}

We consider the type I error rate of $\hpL$ and $\hpfe$ under rerandomization. Let $\sigma^2_{h,\adj} = V_{h,\tau\tau}- \bs{V}_{h,\tau \bs{x}} \bs{V}_{h,\bs{x}\bs{x}}^{-1} \bs{V}_{h,\bs{x}\tau}$ with $\bs{V}_{ h,  \bs{x}\tau}^\top = \bs{V}_{ h,  \tau\bs{x}} = n_{ h }\cov(\hat{\tau}_{h},\hat{\bs{\tau}}_{h,\bs{x}})$, $V_{h,\tau\tau} = n_{ h }\var(\hat{\tau}_{h})$ and $\bs{V}_{h,\bs{x}\bs{x}} = n_{ h }\cov(\hat{\bs{\tau}}_{h, \bs{x}})$. Let $\tilde{\se}^2_{\fe }$ be the asymptotic limit of $\hat{\se}_{\fe}^2$ which we will define in the Section \ref{sec:hacked-p-value-limits}.

 Condition \ref{a:CLT-sre-for-rerandomization} extends Condition \ref{a:crt-regularity-condition}.

\begin{condition}
\label{a:CLT-sre-for-rerandomization}
 $H$ is fixed. For $h=1,\ldots,H$, $z=0,1$ (i) $\pi_h$ and $r_{h1}$ have limits in $(0,1)$; (ii)  $S_{h,Y(z)Y(z)}$,  $\bs{S}_{h,\bs{x}\bs{x}}$, $\bs{S}_{h,\bs{x} Y(z)}$, $S_{h,\tau\tau}$ have finite limits, $\sigma^2_{h,\adj}$ has a positive limit, and the limit of $\bs{S}_{h,\bs{x}\bs{x}}$ is positive-definite; (iii) $\max_{h\in[H]}\max_{i\in \ms_h} |Y_i(z)-\bar{Y}_h(z)|^2 = o(n)$, $\max_{h\in[H]}\max_{i\in \ms_h} \|\bs{x}_i\|_{\infty}^2 = o(n)$; (iv) $n\tilde{\se}^2_{\fe }$ has a positive limit. 
\end{condition}

\begin{theorem}
\label{thm:re-mitigates-p-hacking-sre}
Under SRE, assume Condition \ref{a:CLT-sre-for-rerandomization} holds. We have (i) Theorem \ref{thm:rem-mitigates-p-hacking} holds with $\ma_{\rem}(a)$ replaced by $\ma_{\remss}(a)$. (ii) Theorem \ref{thm:rep-mitigates-p-hacking} holds with $\ma_{\rep}(\alpha_t)$ replaced by $\ma_{\repss}(\alpha_t)$; Theorem \ref{thm:rep-mitigates-p-hacking} holds with $H_{0\textsc{n}}$, $\hpL$, $\ma_{\rep}(\alpha_t)$ replaced by $H_{0\omega}$, $\hpfe$, $\ma_{\repfe}(\alpha_t)$, respectively.
\end{theorem}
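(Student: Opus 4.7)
The plan is to reduce the SRE analysis to the CRE case stratum by stratum, exploiting the fact that an SRE is a family of independent CREs across strata and that Lin's interacted regression \eqref{eq:formula-Lin-stratified} decomposes as the $\pi_h$-weighted sum of stratum-wise Lin fits. First I would establish the stratified counterpart of the key identity \eqref{eq:decompose-of-hat-tau-lin}: applying the CRE decomposition within each stratum gives $\hat{\tau}_{h,\lmm,\mk} \approx \hat{\tau}_h - \bs{\beta}_{h,\lmm,\mk}^\top \hat{\bs{\tau}}_{h,\mk}$, and combining across strata through $\hat{\tau}_{\lmm,\mk} = \sum_{h=1}^H \pi_h \hat{\tau}_{h,\lmm,\mk}$ yields
\[
\hat{\tau}_{\lmm,\mk} \approx \hat{\tau} - \sum_{h=1}^H \pi_h \bs{\beta}_{h,\lmm,\mk}^\top \hat{\bs{\tau}}_{h,\mk}.
\]
The finite-population CLT applied within each stratum under Condition \ref{a:CLT-sre-for-rerandomization}, combined with independence of treatment draws across strata, makes $(\hat{\bs{\tau}}_{h,\bs{x}})_{h\in[H]}$ jointly asymptotically Gaussian with block-diagonal covariance. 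Together with consistency of the EHW standard errors, this delivers the joint asymptotic law of the vector $(T_{\lmm,\mk})_{\mk\subseteq[K]}$ coupled with the rerandomization-defining balance measures.

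For part (i), the SS-ReM event factors as a product of centrally symmetric quadratic constraints in the normalized stratum-wise imbalances $\bs{\eta}_h = n_h^{1/2}\bs{V}_{h,\bs{x}\bs{x}}^{-1/2}\hat{\bs{\tau}}_{h,\bs{x}}$, and the SS-ReP event factors as a product of coordinate-wise symmetric constraints in the same variables. Because the limiting event $\{\hpL \leq \alpha\}$ is exactly the maximum-of-absolute-values form appearing in the CRE proofs, now expressed in the joint Gaussian limit of $(\hat{\tau},\bs{\eta}_1,\ldots,\bs{\eta}_H)$ and its EHW studentization, the same Anderson-type convexity argument that underlies \Cref{thm:rem-mitigates-p-hacking}(i) and \Cref{thm:rep-mitigates-p-hacking}(i) in the CRE case applies unchanged to give $\pinf(\hpL \leq \alpha \mid \ma_{\remss}(a)) \leq \pinf(\hpL \leq \alpha)$ and its SS-ReP counterpart.

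For part (ii), I would let the threshold become stringent: as $a\to 0$ (respectively $\alpha_t\to 1$) the constraints force every $\bs{\eta}_h\to\bs{0}$, so each $\hat{\bs{\tau}}_{h,\mk}$ is $\op(n_h^{-1/2})$ and the displayed decomposition gives $T_{\lmm,\mk} = T_{\lmm,\emptyset} + \op(1)$ uniformly in $\mk$. Consequently $\hpL$ has the same asymptotic limit as the unadjusted two-sided EHW $p$-value, whose type I error rate is bounded by $\alpha$ under $H_{0\textsc{n}}$ by the standard conservativeness of the $t$-test for stratified randomized experiments.

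The main obstacle is the fixed-effects variant for $\hpfe$. There the target is $\bar{\tau}_\omega$ rather than $\bar{\tau}$, and since \eqref{eq:formula-stra-no-interaction} omits the stratum-by-treatment interactions, a Frisch--Waugh--Lovell argument in the spirit of \cite{ding2021frisch} produces a pooled rather than stratum-wise decomposition:
\[
\hat{\tau}_{\fe,\mk} \approx \hat{\tau}_\omega - \bs{b}_{\fe,\mk}^\top \sum_{h=1}^H \omega_h \hat{\bs{\tau}}_{h,\mk},
\]
so that only the pooled imbalance $\sum_h \omega_h \hat{\bs{\tau}}_{h,\bs{x}}$ enters. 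The key step is to verify that the balance check \eqref{eq:balance-test-additive} defining $\ma_{\repfe}(\alpha_t)$ is precisely a coordinate-wise studentization of this same pooled vector up to $\op(1)$, so $\ma_{\repfe}(\alpha_t)$ imposes coordinate-wise symmetric constraints on exactly the Gaussian limit of $\sum_h \omega_h \hat{\bs{\tau}}_{h,\bs{x}}$. Once this alignment is established, the same convexity and vanishing-imbalance arguments used for SS-ReP yield parts (i) and (ii) for $\hpfe$ under $H_{0\omega}$.
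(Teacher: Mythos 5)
Your treatment of part (i) is essentially the paper's argument: in the joint Gaussian limit of $(\varepsilon_{h,\tau},\bs{\varepsilon}_{h,\bs{x}}^\top)_{h\in[H]}$ (block-diagonal across strata by independence of the stratum-wise CREs), the non-rejection event $\{\hpL>\alpha\}$ is an intersection of $2^K$ centrally symmetric slabs and each of $\ma_{\remss}(a)$, $\ma_{\repss}(\alpha_t)$, $\ma_{\repfe}(\alpha_t)$ is a centrally symmetric convex set, so conditioning can only increase the probability of non-rejection. One caution: the inequality this requires, $\Prob(\mathcal{C}_1\cap\mathcal{C}_2)\geq\Prob(\mathcal{C}_1)\Prob(\mathcal{C}_2)$ for two \emph{arbitrary} symmetric convex sets under a centered Gaussian, is Royen's Gaussian correlation inequality (\Cref{lem:gaussian-correlation-inequality}); it is not delivered by Anderson's lemma or by the classical one-slab special cases, since here neither set is a single slab. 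Your reading of the FE-ReP case, with the pooled imbalance $\sum_h\omega_h\hat{\bs{\tau}}_{h,\bs{x}}$ entering both the estimator and the balance check, is also the right one, although the design-based consistency of $\hat{\se}^2_{\fe}$ that this alignment needs is a substantial computation (\Cref{lem:limit-of-standard-errors}) that your sketch leaves implicit.

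Part (ii) contains a genuine error. You claim that as $a\to 0$ the vanishing imbalances give $T_{\Lin,\mk}=T_{\Lin,\emptyset}+\op(1)$ uniformly in $\mk$, so that $\hpL$ inherits the conservativeness of the unadjusted $t$-test. The numerators do coincide in that limit, but the studentizations do not: $n\hat{\se}^2_{\Lin,\mk}$ converges to $n\tilde{\se}^2_{\Lin,\mk}$ regardless of the conditioning, and these limits differ across $\mk$ whenever the covariates are predictive (adjustment shrinks the asymptotic EHW variance). In the zero-imbalance limit the $T_{\Lin,\mk}$ therefore become \emph{proportional} to a common Gaussian rather than equal, with $|T_{\Lin,\mk}|\geq|T_{\Lin,\emptyset}|$ for predictive $\mk$, and $\hpL$ converges to the $p$-value of whichever adjustment set minimizes the limiting standard error --- generically not $p_{\Lin,\emptyset}$. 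The conclusion survives only because $\var(\mathcal{N}_{\Lin,\mk})\leq 1$ for \emph{every} $\mk$ under $H_{0\textsc{n}}$ (\Cref{lem:standard-normal-under-fisher-null}), so the common limiting statistic still has variance at most one; your argument as written bounds the wrong object and never invokes this fact. There is also a limit-interchange issue: for fixed $a>0$ the conditioning only makes $n_h^{1/2}\hat{\bs{\tau}}_{h,\bs{x}}$ bounded, not $\op(1)$, so one must first compute the limiting conditional law for fixed $a$ and only then send $a\to 0$. The paper handles both points at once by proving the explicit inflation bounds of \Cref{thm:inflation-bound-sre} via Cauchy--Schwarz, $|\mathcal{N}_{\Lin,\mk}|\leq(\varepsilon_0^{2}+\sum_{h}\|\bs{\xi}_h\|_2^2)^{1/2}$ with $\varepsilon_0$ independent of the imbalances, and then letting $a\to 0$ (resp.\ $\alpha_t\to 1$) inside the bound; I would recommend repairing your part (ii) along those lines.
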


Parallel to Theorem \ref{thm:rem-mitigates-p-hacking}, Theorem \ref{thm:re-mitigates-p-hacking-sre} demonstrates that the aforementioned rerandomization schemes mitigate $p$-hacking under SRE.  The asymptotic distributions of these hacked $p$-values are relegated to Section \ref{sec:hacked-p-value-limits}. As $a\rightarrow 0$ and $\alpha_t\rightarrow 1$, SS-ReM and SS-ReP resolve $p$-hacking for testing $H_{0\textsc{n}}$ while FE-ReP resolves $p$-hacking for testing $H_{0\omega}$. To establish Theorem \ref{thm:re-mitigates-p-hacking-sre}, we need to obtain asymptotic limits of the $t$-statistics obtained from Lin's regression and the fixed effects regression. \cite{zdrep} has shown the consistency of the estimator of the fixed effects regression. However, there is no current result for its asymptotic normality under the design-based inference framework.  We fill this gap in Section \ref{sec:hacked-p-value-limits}.

We study how to resolve $p$-hacking by rerandomization. Define the stratum-specific $R^2_{\bs{x}}$ for stratum $h$ as $R^2_{ h, \bs{x}} = \bs{V}_{ h, \tau \bs{x}}\bs{V}_{ h,  \bs{x}\bs{x}}^{-1}\bs{V}_{ h,  \bs{x}\tau}/V_{h,\tau\tau}$. Define $R^2_{\bs{x}} = \sum_{h=1}^H R^2_{ h, \bs{x}} \pi_{ h } V_{h,\tau\tau}/\{\sum_{h=1}^H \pi_{ h } V_{h,\tau\tau}\}$ as the $R^2$ of $\hat{\tau}$ on $(\hat{\bs{\tau}}_{h, \bs{x}})_{h\in [H]}$. 
$R^2_{\bs{x}}$ measures the extent to which $\hat{\tau}$ can be explained by $(\hat{\bs{\tau}}_{h, \bs{x}})_{h\in [H]}$. When $H=1$, $R^2_{\bs{x}}$ is equal to the $R^2$ of $\lmm(r_1^{-1}Y_i(1) + r_0^{-1}Y_i(0)\sim 1+ \bs{x}_i)$. Define
$\mathbb{M}_{\sss}(R^2_{\bs{x}})$ as the set of all possible potential outcomes given $R^2_{\bs{x}}$. We now obtain 
rerandomization thresholds for SS-ReM and SS-ReP to universally control the type I error rates for $(Y_i(1),Y_i(0))_{i=1}^n\in\mathbb{M}_{\sss}(R^2_{\bs{x}})$.

\begin{theorem}
 \label{thm:sufficient-complete-condition-for-orthogonal-equal-importance}
 Recall $g(\alpha)$ defined in \Cref{thm:rep-orthogonal-covariates-equally-importance}. Under SRE, assume Condition \ref{a:CLT-sre-for-rerandomization} holds. If Condition \ref{a:orthogonal-covariates-equally-importance} holds for every stratum, then we have, under $H_{0\textsc{n}}$, for $R^2_{\bs{x}} \in (0,1)$,
    \begin{align*}
(i)& \max_{(Y_i(1),Y_i(0))_{i=1}^n\in\mathbb{M}_{\sss}(R^2_{\bs{x}})}\Prob_{\infty}( \hpL \leq \alpha\mid \ma_{\remss}(a)) \leq \alpha,\quad \text{if and only if} \quad
       a \leq \frac{\bar{a}_\rem(\alpha,R^2_{\bs{x}}, R^2_{\bs{x}}/K)}{H};\\
(ii)&       \max_{(Y_i(1),Y_i(0))_{i=1}^n\in\mathbb{M}_{\sss}(R^2_{\bs{x}})}\Prob_{\infty}( \hpL \leq \alpha\mid \ma_{\repss}(\alpha_t)) \leq \alpha,\quad \text{if and only if} \quad
      \alpha_t \geq g^{-1}\Big(\frac{c_{\rep}(R^2_{\bs{x}})}{H^{1/2}}g(\alpha)\Big).
    \end{align*}
\end{theorem}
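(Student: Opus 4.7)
My plan is to reduce the stratified problem to the CRE thresholds in \Cref{thm:iff-for-rem_control-TIE} and \Cref{thm:rep-orthogonal-covariates-equally-importance} via cross-stratum Cauchy--Schwarz inequalities that generate the factors $H$ and $H^{1/2}$, respectively. The starting point is to extend the decomposition \eqref{eq:decompose-of-hat-tau-lin} to SRE. Because Lin's regression \eqref{eq:formula-Lin-stratified} amounts to running Lin within each stratum and averaging with weights $\pi_h$, I would write
\begin{align*}
\hat{\tau}_{\Lin,\mk} \approx \hat{\tau} - \sum_{h=1}^H \pi_h \bs{\beta}_{h,\Lin,\mk}^\top \hat{\bs{\tau}}_{h,\mk},
\end{align*}
where $\bs{\beta}_{h,\Lin,\mk}$ is the stratum-$h$ analogue of $\bs{\beta}_{\Lin,\mk}$. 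With normalized imbalances $\tilde{\bs{\tau}}_{h,\bs{x}} = n_h^{1/2}\bs{V}_{h,\bs{x}\bs{x}}^{-1/2}\hat{\bs{\tau}}_{h,\bs{x}}$ and $\bs{\gamma}_{h,\mk} = \bs{V}_{h,\bs{x}\bs{x},\mk}^{1/2}\bs{\beta}_{h,\Lin,\mk}$, the Gaussian decomposition of $\hat{\tau}_h$ along $\tilde{\bs{\tau}}_{h,\bs{x}}$ (with an independent residual) yields, after algebraic manipulation, a numerator of the form $\sum_h \pi_h^{1/2}\bs{\gamma}_{h,\mk^c}^\top \tilde{\bs{\tau}}_{h,\bs{x},\mk^c}$ plus an independent Gaussian with variance $V_{\tau\tau}(1-R^2_{\bs{x}})$, where $\mk^c = [K]\setminus\mk$ and $V_{\tau\tau} = \sum_h \pi_h V_{h,\tau\tau}$. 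Condition \ref{a:orthogonal-covariates-equally-importance} per stratum furnishes the clean identities $\|\bs{\gamma}_{h,\mk^c}\|_2^2 = (|\mk^c|/K)R^2_{h,\bs{x}}V_{h,\tau\tau}$ and $n\hat{\se}_{\Lin,\mk}^2 \to V_{\tau\tau}\{1-(|\mk|/K)R^2_{\bs{x}}\}$.

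For the sufficiency in (i), I would apply Cauchy--Schwarz twice. Within stratum $h$, SS-ReM forces $|\bs{\gamma}_{h,\mk^c}^\top \tilde{\bs{\tau}}_{h,\bs{x},\mk^c}| \leq a^{1/2}\|\bs{\gamma}_{h,\mk^c}\|_2$. Across strata, using $\sum_h \pi_h \|\bs{\gamma}_{h,\mk^c}\|_2^2 = (|\mk^c|/K)R^2_{\bs{x}} V_{\tau\tau}$ (which follows from the definition of $R^2_{\bs{x}}$),
\begin{align*}
\Big|\sum_h \pi_h^{1/2}\bs{\gamma}_{h,\mk^c}^\top\tilde{\bs{\tau}}_{h,\bs{x},\mk^c}\Big| \leq a^{1/2}\sum_h \pi_h^{1/2}\|\bs{\gamma}_{h,\mk^c}\|_2 \leq \{aH(|\mk^c|/K)R^2_{\bs{x}}V_{\tau\tau}\}^{1/2}.
\end{align*}
This coincides exactly with the CRE cross-term bound when the CRE Mahalanobis threshold is $aH$. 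Substituting into the studentized ratio and optimizing over $|\mk^c|\in\{1,\ldots,K\}$ (with optimum at $|\mk^c|=1$, mirroring the extremal step in \Cref{thm:iff-for-rem_control-TIE}) yields the requirement $aH \leq \bar{a}_{\rem}(\alpha,R^2_{\bs{x}},R^2_{\bs{x}}/K)$.

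For the sufficiency in (ii), the $\ell_\infty$ constraint $|\tilde{\tau}_{h,\bs{x},k}| \leq z_{1-\alpha_t/2}$ under SS-ReP, together with $|\gamma_{h,k}| = (R^2_{h,\bs{x}}V_{h,\tau\tau}/K)^{1/2}$ under Condition \ref{a:orthogonal-covariates-equally-importance}, bounds the cross term by $|\mk^c|z_{1-\alpha_t/2}(HR^2_{\bs{x}}V_{\tau\tau}/K)^{1/2}$ after the analogous cross-stratum Cauchy--Schwarz. This replicates the CRE ReP extremal inequality in \Cref{thm:rep-orthogonal-covariates-equally-importance} with $z_{1-\alpha_t/2}$ replaced by $H^{1/2}z_{1-\alpha_t/2}$, so the extremal $|\mk^c|=K$ case gives $H^{1/2}z_{1-\alpha_t/2} \leq c_{\rep}(R^2_{\bs{x}})z_{1-\alpha/2}$, equivalently $\alpha_t \geq g^{-1}(c_{\rep}(R^2_{\bs{x}})g(\alpha)/H^{1/2})$.

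For the necessity direction, I would construct extremal potential outcomes that saturate both Cauchy--Schwarz inequalities: take all strata identical with $\pi_h = 1/H$, $R^2_{h,\bs{x}} = R^2_{\bs{x}}$, equal $V_{h,\tau\tau}$, and matching sign patterns of the correlation vectors across strata, and within each stratum embed the tightness construction underlying \Cref{thm:iff-for-rem_control-TIE} or \Cref{thm:rep-orthogonal-covariates-equally-importance}. The main obstacle is that the conditional law under SS-ReM (respectively SS-ReP) is a product of $H$ independent truncated Gaussians rather than a single one, so the aligned extremal event must have positive asymptotic conditional probability; this follows from stratum independence together with the fact that the CRE tightness is realized on a positive-probability subset of the single-stratum acceptance region, a property preserved under the $H$-fold product. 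Combining sufficiency and necessity then delivers the stated if-and-only-if characterizations.
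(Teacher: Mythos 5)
Your sufficiency argument follows essentially the same route as the paper's: the paper's Lemma \ref{lem:if-statement-for-rem-sre} splits the rejection event into the full-model piece (which contributes exactly $\alpha$ by independence) and a residual event $\mathcal{E}_2$, and then bounds the cross term by exactly your two Cauchy--Schwarz steps --- within stratum against $\|\bs{\xi}_h\|_2$ (or the $\ell_\infty$ constraint for SS-ReP) and then $\sum_h \pi_h^{1/2}\tilde{R}_{h,\mk}V_{h,\tau\tau}^{1/2}\le \sqrt{H}\,(\sum_h\pi_h\tilde{R}^2_{h,\mk}V_{h,\tau\tau})^{1/2}$ across strata, which is precisely where the factors $H$ and $H^{1/2}$ come from. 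One caveat you gloss over: $n\tilde{\se}^2_{\Lin,\mk}$ equals $V_{\tau\tau}\{1-(|\mk|/K)R^2_{\bs{x}}\}$ only under the sharp null; in general it is larger (by the $S_{h,\tau_\mk\tau_\mk}$ term), so you need the paper's device of replacing $\mathcal{N}_{\Lin,\mk}$ by the inflated $\tilde{\mathcal{N}}_{\Lin,\mk}$ with the smaller denominator. The inequality goes the right way, so this is an imprecision rather than an error.

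The necessity direction has a concrete flaw: you propose to ``take all strata identical with $\pi_h=1/H$,'' but $\pi_h$ (and $r_{hz}$) are fixed by the design; the maximum in the theorem is only over potential outcomes in $\mathbb{M}_{\sss}(R^2_{\bs{x}})$. For a design with unequal stratum sizes your extremal configuration is simply not in the feasible set. The paper instead equalizes the per-stratum contributions through the potential outcomes: it sets $\bs{V}_{h,\bs{x}\bs{x}}=\bs{I}_K$, $\pi_h^{1/2}\bs{V}_{h,\bs{x}\tau}=\bs{1}_K$ and $V_{h,\tau\tau}=(\pi_h R^2_{\bs{x}})^{-1}K$, so that $\pi_h\tilde{R}^2_{h,\mk}V_{h,\tau\tau}$ is constant in $h$ and the cross-stratum Cauchy--Schwarz is tight for every $\mk$, whatever the $\pi_h$. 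Your positive-probability claim for the aligned extremal event is also only asserted; the paper makes it rigorous by taking $Y_i(1)=Y_i(0)$ (so the limiting $t$-statistics are exactly standard normal), introducing sign-alignment variables $s_{h,k}$, and exhibiting a nonempty open subset of $\mathcal{E}_2\cap\ma_{\remss}^{\infty}(a)$ (respectively $\ma_{\repss}^{\infty}(\alpha_t)$) via continuity of the threshold function $\psi$ at $|\varepsilon_0|=z_{1-\alpha/2}$. With these two repairs your outline coincides with the paper's proof.
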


Theorem \ref{thm:sufficient-complete-condition-for-orthogonal-equal-importance} shows the required thresholds for SS-ReM and SS-ReP to resolve $p$-hacking of Lin's regression given $R^2_{\bs{x}}$. Compared to the thresholds under CRE in Theorem \ref{thm:iff-for-rem_control-TIE} and \ref{thm:rep-orthogonal-covariates-equally-importance}, the thresholds required for  Lin's regression under SRE are more stringent; see the factor $H$ in the expressions. 
  
  It is more challenging to resolve $p$-hacking of the fixed effects regression by rerandomization. Let $\hat{\bs{\tau}}_{\omega, \bs{x}} = \sum_{h=1}^H \omega_{ h } \hat{\tau}_{h,\bs{x}}$.  Let $\bs{V}_{\omega,\bs{x}\bs{x}} = n\cov( \hat{\bs{\tau}}_{\omega, \bs{x}})$, $\bs{V}_{\omega,\tau\bs{x}} = \bs{V}_{\omega,\bs{x}\tau}^\top = n\cov( \hat{\bs{\tau}}_{\omega, \bs{x}},\hat{\tau}_{\omega})$, ${V}_{\omega,\tau\tau} = n\var(\hat{\tau}_{\omega}).$ Define $R^2_{\omega, \bs{x}} =  \bs{V}_{\omega,\tau\bs{x}}\bs{V}_{\omega,\bs{x}\bs{x}}^{-1}\bs{V}_{\omega,\bs{x}\tau}/{V}_{\omega,\tau\tau}$ as the $R^2$ of $\hat{\tau}_{\omega}$ on $\hat{\bs{\tau}}_{\omega, \bs{x}}$. Let $\hat{\tau}_{\fe}$ be the coefficient estimator of $Z_i$ by \eqref{eq:formula-stra-no-interaction}. We will show in Section \ref{sec:hacked-p-value-limits} that
\begin{align*}
     \hat{\tau}_{\fe} \approx \hat{\tau}_{\omega} - \bs{\beta}_{\fe,\bs{x}}^\top \hat{\bs{\tau}}_{\omega,\bs{x}},\quad \bs{\beta}_{\fe,\bs{x}} = \Big(\sum_{h=1}^H \pi_{ h }\bs{S}_{h, \bs{x}\bs{x}}\Big)^{-1}\Big\{\sum_{h=1}^H \pi_{ h }\big(r_{h1}\bs{S}_{h,\bs{x} Y(1)}+r_{h0}\bs{S}_{h,\bs{x} Y(0)}\big)\Big\}.
\end{align*}
Define the optimal projection coefficient, $\bs{\beta}_{\omega,\bs{x}} = \cov(\hat{\bs{\tau}}_{\omega,\bs{x}})^{-1}\cov(\hat{\bs{\tau}}_{\omega,\bs{x}},\hat{\tau}_{\omega})$. Therefore, we can decompose
\[
\hat{\tau}_{\fe} \approx (\hat{\tau}_{\omega} - \bs{\beta}_{\omega,\bs{x}}^\top \hat{\bs{\tau}}_{\omega,\bs{x}}) + (\bs{\beta}_{\omega,\bs{x}} - \bs{\beta}_{\fe,\bs{x}})^\top \hat{\bs{\tau}}_{\omega,\bs{x}}.
\]
The first term and the second term are asymptotically independent. Rerandomization only affects the second term. The variance of the first term is proportional to $(1-R^2_{\omega,\bs{x}})$ while the variance of the second term depends on $\bs{\beta}_{\omega,\bs{x}}-\bs{\beta}_{\fe,\bs{x}}$.  However, since $\bs{\beta}_{\omega,\bs{x}}-\bs{\beta}_{\fe,\bs{x}}$ is typically unknown before the experiment, we lack the information to assess how rerandomization might influence $\hat{\tau}_{\fe}$, let alone its effect on the hacked $p$-value. Therefore, it is challenging to obtain a rerandomization threshold to resolve the $p$-hacking of the fixed effects regression by rerandomization. Given the limitation of FE-ReM, a more feasible approach is to conduct the Fisher randomization test using the hacked $p$-value statistic in the analysis stage.

Theorem \ref{thm:inflation-bound-sre} gives the type I error bound, parallel to Theorem \ref{thm:ReM-bound-of-I-error-rate-inflation} and Theorem \ref{thm:ReP-bound-of-I-error-rate-inflation}.

\begin{theorem}
\label{thm:inflation-bound-sre}
Under SRE, assume Condition \ref{a:CLT-sre-for-rerandomization} holds. We have, under $H_{0\textsc{n}}$
    \begin{align*}
     (i)\quad   &\Prob_{\infty}( \hpL \leq \alpha~ | ~\ma_{\remss}(a))   \leq \Prob\Big\{\big(\varepsilon^{2}+\sum_{h=1}^H \|\bs{\xi}_h\|_2^2 \big)^{1/2} \geq z_{1-\alpha/2}~\Big | ~ \max_{h\in  [H] } \|\bs{\xi}_h\|_2^2 \leq a\Big\};\\
      (ii)\quad   &\Prob_{\infty}( \hpL \leq \alpha\mid \ma_{\repss}(\alpha_t)) \leq   \Prob \Big\{\big(\varepsilon^{2}+\sum_{h=1}^H \|\bs{\xi}_h\|_2^2 \big)^{1/2} \geq z_{1-\alpha/2} ~ \Big | \\
      &\qquad \qquad \qquad \qquad \qquad \qquad \qquad \qquad ~ \max_{h\in  [H] } \|\bs{D}(\bs{V}_{h,\bs{x}\bs{x}})^{1/2}\bs{\xi}_h\|_{\infty} \leq z_{1-\alpha_t/2}\Big\};
    \end{align*}
    and  under $H_{0\omega}$,
    \begin{align*}
             (iii)\quad    & \Prob_{\infty}( \hpfe \leq \alpha\mid \ma_{\repfe}(\alpha_t)) \leq 
 \Prob\big\{(\varepsilon^{2}+ \|\bs{\xi}_0\|_2^2 )^{1/2} \geq z_{1-\alpha/2}~ \big | ~ \|\bs{D}(\bs{V}_{\omega,\bs{x}\bs{x}})^{1/2}\bs{\xi}_0\|_{\infty} \leq z_{1-\alpha_t/2} \big\};
    \end{align*}
    where $\varepsilon \in \mathbb{R}$ and $\bs{\xi}_{h} \in \mathbb{R}^K$, $h=0,1,\ldots,H$ are independent standard Gaussian random variables.
\end{theorem}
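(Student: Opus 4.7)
The plan is to obtain each of the three bounds by combining three ingredients: the joint finite-population central limit theorem for $\hat{\tau}$ (or $\hat{\tau}_\omega$) together with the stratum-level covariate imbalances, the asymptotic linear representations of the regression-adjusted estimators (the stratified analogue of \eqref{eq:decompose-of-hat-tau-lin} and the display for $\hat{\tau}_\fe$ already stated in the text), and the conservativeness of the EHW standard errors. All three ingredients will be assembled as lemmas in Section \ref{sec:hacked-p-value-limits}; the core of the argument is then a single Cauchy--Schwarz step that uniformizes over $\mk$ and produces a bound that is independent of $R^2_{\bs{x}}$ and $\underline{\Delta}$, exactly in the spirit of the CRE-level Theorems \ref{thm:ReM-bound-of-I-error-rate-inflation}--\ref{thm:ReP-bound-of-I-error-rate-inflation}.

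For (i), let $\bs{\xi}_h$ denote the asymptotic limit of the standardized stratum imbalance $n_h^{1/2}\bs{V}_{h,\bs{x}\bs{x}}^{-1/2}\hat{\bs{\tau}}_{h,\bs{x}}$ and let $\varepsilon$ be the standardized limit of the component of $\hat{\tau}$ orthogonal to the $\bs{\xi}_h$'s, so that $(\varepsilon,\bs{\xi}_1,\ldots,\bs{\xi}_H)$ is jointly standard Gaussian by the finite-population CLT. Substituting into the stratified version of \eqref{eq:decompose-of-hat-tau-lin} yields an expansion
\[
\hat{\tau}_{\Lin,\mk} \approx \sigma_{\adj}\,\varepsilon + \sum_{h=1}^H \bs{\gamma}_{h,\mk}^\top \bs{\xi}_h
\]
for some coefficient vectors $\bs{\gamma}_{h,\mk}$, and Cauchy--Schwarz gives
\[
\hat{\tau}_{\Lin,\mk}^2 \leq \Big(\sigma_{\adj}^2 + \sum_{h=1}^H \|\bs{\gamma}_{h,\mk}\|_2^2\Big)\Big(\varepsilon^2 + \sum_{h=1}^H \|\bs{\xi}_h\|_2^2\Big).
\]
The conservativeness of the EHW standard error for Lin's interacted regression under SRE (to be established in Section \ref{sec:hacked-p-value-limits}) shows that $\hat{\se}_{\Lin,\mk}^2 \geq \sigma_{\adj}^2 + \sum_h \|\bs{\gamma}_{h,\mk}\|_2^2$ asymptotically, so dividing the two displays yields $T_{\Lin,\mk}^2 \leq \varepsilon^2 + \sum_h \|\bs{\xi}_h\|_2^2$ uniformly in $\mk$. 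Hence $\{\hpL \leq \alpha\}$ is contained asymptotically in $\{(\varepsilon^2 + \sum_h \|\bs{\xi}_h\|_2^2)^{1/2} \geq z_{1-\alpha/2}\}$, and since $\ma_{\remss}(a)$ translates to $\{\max_h \|\bs{\xi}_h\|_2^2 \leq a\}$ in these normalized variables, conditioning delivers (i).

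Part (ii) repeats the same chain of reasoning with only the rerandomization event changed: in the normalized coordinates, $\ma_{\repss}(\alpha_t)$ becomes $\{\max_h \|\bs{D}(\bs{V}_{h,\bs{x}\bs{x}})^{1/2}\bs{\xi}_h\|_\infty \leq z_{1-\alpha_t/2}\}$, producing the stated conditioning set. Part (iii) uses the fixed-effects decomposition $\hat{\tau}_{\fe,\mk} \approx \hat{\tau}_\omega - \bs{\beta}_{\fe,\mk}^\top \hat{\bs{\tau}}_{\omega,\mk}$ appearing before Condition \ref{a:CLT-sre-for-rerandomization}; a single Gaussian vector $\bs{\xi}_0$ now encodes the asymptotic standardized weighted imbalance, the same Cauchy--Schwarz step yields $T_{\fe,\mk}^2 \leq \varepsilon^2 + \|\bs{\xi}_0\|_2^2$, and the event $\ma_{\repfe}(\alpha_t)$ becomes the $\infty$-norm truncation $\{\|\bs{D}(\bs{V}_{\omega,\bs{x}\bs{x}})^{1/2}\bs{\xi}_0\|_\infty \leq z_{1-\alpha_t/2}\}$.

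The main obstacle is the conservativeness step in part (iii). Because the fixed-effects regression omits the treatment--covariate interactions, it is misspecified under heterogeneous treatment effects, so the EHW standard error no longer admits the transparent ``fitted plus residual'' variance decomposition that makes the analogous step for Lin's regression essentially automatic. Establishing the required lower bound on the asymptotic limit $\tilde{\se}_\fe^2$ (introduced before Condition \ref{a:CLT-sre-for-rerandomization}) demands an explicit identification of $\tilde{\se}_\fe^2$ as the probability limit of the EHW sandwich and a verification that its gap with the design-based variance of $\hat{\tau}_\fe$ is nonnegative and dominates the Cauchy--Schwarz coefficient $\|\bs{\gamma}_\mk\|_2^2$; I plan to carry this out inside the lemmas of Section \ref{sec:hacked-p-value-limits}, after which the Cauchy--Schwarz step and the conditional truncation combine mechanically to close (iii).
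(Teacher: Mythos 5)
Your proposal follows essentially the same route as the paper's proof: reduce to the Gaussian limits via the finite-population CLT and a weak-convergence-under-rerandomization lemma, decompose each studentized statistic into the residual from the full covariate projection plus stratum-level imbalance terms, apply Cauchy--Schwarz uniformly over $\mk$, invoke the conservativeness of the EHW variance (whose gap is $\sum_{h}\pi_h S_{h,\tau_\mk\tau_\mk}\geq 0$ for Lin's regression and the nonnegative extra terms in $\tilde{\se}^2_{\fe}$ for the fixed-effects regression), and rewrite the rerandomization events in the normalized coordinates. You also correctly identify the genuinely hard ingredient, namely the explicit identification of the probability limit of the fixed-effects EHW sandwich under misspecification, which is exactly where the paper spends most of its effort in the supporting lemmas.
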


 Similar to Theorem \ref{thm:ReM-bound-of-I-error-rate-inflation} and Theorem \ref{thm:ReP-bound-of-I-error-rate-inflation}, Theorem \ref{thm:inflation-bound-sre} holds for all $(Y_i(1),Y_i(0))_{i=1}^n \in \mathbb{R}^{n\times 2}$. As $a\rightarrow 0$ and $\alpha_t \rightarrow 1$, the right-hand sides of the inequalities all tend to $\Prob(|\varepsilon|>z_{1-\alpha/2}) = \alpha$, thus echoing Theorem \ref{thm:re-mitigates-p-hacking-sre}. We can use Theorem \ref{thm:inflation-bound-sre} to obtain a type I error bound for a given rerandomization threshold, thus providing some risk control. 

\section{Additional notation and lemmas}
\label{sec:notation}
In the CRE, for observed quantities $\{\bs{a}_i\}_{i=1}^n$, $\{\bs{b}_i\}_{i=1}^n$ and treatment assignments $\{Z_i\}_{i=1}^n$, let $\bar{\bs{a}}_z =  n_z^{-1}\sum_{i:Z_i=z} \bs{a}_i$, $\bs{s}_{z,\bs{a}\bs{b}} = (n_z-1)^{-1}\sum_{i:Z_i=z}(\bs{a}_i-\bar{\bs{a}}_z)(\bs{b}_i-\bar{\bs{b}}_z)^\top$. 
In SRE, for observed quantities $\{\bs{a}_i\}_{i=1}^n$, $\{\bs{b}_i\}_{i=1}^n$ and treatment assignments $\{Z_i\}_{i=1}^n$, let $\bar{\bs{a}}_{hz} =  n_{hz}^{-1}\sum_{i:Z_i=z, i\in \ms_h} \bs{a}_i$, $\bs{s}_{hz,\bs{a}\bs{b}} = (n_{hz}-1)^{-1}\sum_{i:Z_i=z,i\in \ms_h}(\bs{a}_i-\bar{\bs{a}}_{hz})(\bs{b}_i-\bar{\bs{b}}_{hz})^\top$. To simplify the notation, when $x_{k}$ and $\bs{x}_{\mk}$ appear as the subscript, we denote them simply by $k$ and $\mk$. For example, we write $\bs{s}_{z,Y\bs{x}_{\mk}}$ and $s_{z,Y x_k}$ as $\bs{s}_{z,Y\mk}$ and $\bs{s}_{z,Y k}$, respectively. Let $\bs{V}_{h,\mk\mk}$ and $\bs{V}_{h,\mk\tau}$ be the submatrices of $\bs{V}_{h,\bs{x}\bs{x}}$ and $\bs{V}_{h,\bs{x}\tau}$ corresponding to $\mk$, respectively. Let $\bs{V}_{\omega,\mk\mk}$ and $\bs{V}_{\omega,\mk\tau}$ be the submatrices of $\bs{V}_{\omega,\bs{x}\bs{x}}$ and $\bs{V}_{\omega,\bs{x}\tau}$ corresponding to $\mk$, respectively.

For $k\in [K]$, $h\in [H]$, let $\hat{\sx}^2_{h,\Lin,k}$ be the EHW standard error of the coefficient estimator of $Z_i$ obtained from $\lm(x_{ik}\sim 1+Z_i)$, $i\in \ms_h$. Let $\hat{\sx}^2_{\fe ,k}$ be the EHW standard error of $Z_i$ obtained from $\lm(x_{ik}\sim 1+Z_i+\bs{S}_i)$. Here, we use $\hat{\sx}$ instead of $\hat{\se}$ to distinguish them from the EHW standard errors of $Z_i$ obtained from the regression $\lm(Y_i \sim 1+Z_i + x_{ik} + Z_i x_{ik})$, $i\in \ms_h$ and $\lm(Y_i \sim 1+Z_i+\bs{S}_i + x_{ik})$, respectively. \Cref{lem:ehw-of-covariate} gives the expression of $\hat{\sx}^2_{h,\Lin,k}$ and $\hat{\sx}^2_{\fe ,k}$.
 
 Let $\hat{\tau}_{\Lin}$ and $\hat{\se}_{\Lin}$ be the coefficient estimator and the EHW standard error of $Z_i$  obtained through Lin's regression with the full covariate vector, respectively. Let $\hat{\tau}_{h,\Lin}$ and $\hat{\se}_{h,\Lin}$ be the coefficient estimator and the EHW standard error of $Z_i$ obtained through $\lm(Y_i \sim 1 + Z_i + \bs{x}_i + Z_i\bs{x}_i)$, $i\in \ms_h$. Let $\hat{\tau}_{\fe }$ and $\hat{\se}_{\fe }$ be the coefficient estimator and the EHW standard error of $Z_i$  obtained through the fixed effects regression \eqref{eq:formula-stra-no-interaction} with the full covariate vector, respectively.

 Let $\tau_{e,i} = \tau_i - (\bs{\beta}_{h(i)}(1) -\bs{\beta}_{h(i)}(0))^\top \bs{x}_i$. Define the asymptotic limits of $\hat{\se}^2_{h,\Lin}$, $\hat{\se}^2_{\Lin}$ and $\hat{\se}^2_{\fe}$ as follows:
\begin{align*}
    & \tilde{\se}^2_{h,\Lin} = n_h^{-1}\sigma_{h, \adj}^2 + n_h^{-1}S_{h,\tau_e \tau_e},\quad \tilde{\se}^2_{\Lin} = \sum_{h=1}^H \pi_h^2 \tilde{\se}^2_{h,\Lin},\\
    &      \tilde{\se}^2_{\fe } = \cov(\hat{\tau}_{\omega} - \bs{\beta}^\top_{\fe, \bs{x}}\hat{\bs{\tau}}_{\omega,\bs{x}}) + \sum_{h=1}^H \frac{\omega_h^2(r_{h1}^{-1}r_{h0}^{-1}-3)}{n\pi_h}(\bar{\tau}_h-\bar{\tau}_{\omega})^2 + \sum_{h=1}^H \frac{\omega_h^2}{n\pi_h}S_{h,\tau\tau}.
\end{align*} 

Let $[n] = \{1,\ldots,n\}$. Let $\bs{0}_{S_1\times S_2} \in \mathbb{R}^{S_1\times S_2}$ and $\bs{1}_{S_1\times S_2} \in \mathbb{R}^{S_1\times S_2}$ be the matrices of all $0$'s and $1$'s, respectively. Let $\bs{0}_{S}\in \mathbb{R}^{S}$ and $\bs{1}_{S} \in \mathbb{R}^{S}$ be vectors of all $0$'s and $1$'s, respectively. When there is no ambiguity, we may omit their subscripts. For $\bs{A}_l \in \mathbb{R}^{S_l\times S_l}$, $l\in [L]$, let $\diag(\bs{A}_l)_{l\in [L]}$ and $\diag(\bs{A}_l)_{l=1}^L$ be the block diagonal matrix of $\bs{A}_l$. Let $\diag(\bs{A}_1,\bs{A}_2)$ be the block diagonal matrix of $\bs{A}_1$ and $\bs{A}_2$. For row vectors $\bs{V}_l$, $l\in [L]$, let $(\bs{V}_l)_{l\in [L]} = (\bs{V}_1,\ldots,\bs{V}_L)$ be the aggregated row vector of $\bs{V}_l$. For column vectors $\bs{U}_l$, $l\in [L]$, let $(\bs{U}_l)_{l\in [L]} = (\bs{U}_1^\top,\ldots,\bs{U}_L^\top)^\top$ be the aggregated column vector of $\bs{U}_l$. For scalars $a_l$, $l\in [L]$, let $(a_l)_{l\in [L]}$ be the column vector of $a_l$. For a matrix $\bs{A} \in \mathbb{R}^{n_r\times n_c}$ and $1\leq a\leq b \leq n_r, 1\leq c\leq d \leq n_c$, let $\bs{A}_{a:b,c:d}$ be the submatrix of $\bs{A}$ from $a$th to $b$th rows and from $c$th to $d$th columns. For a vector $\bs{v} \in \mathbb{R}^{s}$ and $1\leq a\leq b \leq s$, let $\bs{v}_{a:b}$ be the subvector of $\bs{v}$ from $a$th to $b$th entries.
 
 We will use Frisch–Waugh–
Lovell (FWL) theorems \citep{ding2021frisch} for both the regression coefficients and standard errors. We will use the invariance of OLS to compute ATE estimators and EHW standard errors.

\begin{lemma}[Invariance of OLS]
\label{lem:invariance-principle}
    For OLS regression of $\bs{Y} \in \mathbb{R}^{n}$ on $\bs{X}_1 \in \mathbb{R}^{n\times J}$, let $\hat{\bs{\beta}}_1$ be the coefficient estimator, $\hat{e}_{i,(1)}$ be the regression residual for unit $i$, $i \in [n]$, $\bs{V}_{(1)}$ be the EHW covariance matrix:
    \[
    \bs{V}_{(1)} = (\bs{X}_1^\top \bs{X}_1)^{-1}(\bs{X}_1^\top \diag(\hat{e}^2_{i,(1)})_{i=1}^n \bs{X}_1)(\bs{X}_1^\top \bs{X}_1)^{-1}.
    \]
    Let $\bs{X}_2 = \bs{X}_1\bs{P}$ where $\bs{P} \in \mathbb{R}^{J\times J}$ is an invertible matrix. Define, analogously,  $\hat{\bs{\beta}}_2$, $\hat{e}_{i,(2)}$ and $\bs{V}_{(2)}$ as the coefficient estimator, regression residual and EHW covariance matrix of OLS regression of $\bs{Y}$ on $\bs{X}_2$. We have: (i) $  \hat{\bs{\beta}}_1 =  \bs{P}\hat{\bs{\beta}}_2$; $(ii)$ for $i\in [n]$, $\hat{e}_{i,(1)} = \hat{e}_{i,(2)}$; (iii) $ \bs{V}_{(1)}  =  \bs{P}\bs{V}_{(2)} \bs{P}^\top .$
\end{lemma}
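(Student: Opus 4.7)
The proof is elementary linear algebra based on the normal equations and the sandwich form of the Eicker--Huber--White covariance, so my plan is to just unwind the definitions in order. For part (i), I would start from the closed-form expressions $\hat{\bs{\beta}}_1 = (\bs{X}_1^\top \bs{X}_1)^{-1}\bs{X}_1^\top \bs{Y}$ and $\hat{\bs{\beta}}_2 = (\bs{X}_2^\top \bs{X}_2)^{-1}\bs{X}_2^\top \bs{Y}$, substitute $\bs{X}_2 = \bs{X}_1\bs{P}$, and use invertibility of $\bs{P}$ together with $(\bs{A}\bs{B})^{-1} = \bs{B}^{-1}\bs{A}^{-1}$. The factors $\bs{P}^\top$ cancel cleanly between $(\bs{P}^\top \bs{X}_1^\top \bs{X}_1 \bs{P})^{-1}$ and $\bs{P}^\top \bs{X}_1^\top \bs{Y}$, yielding $\hat{\bs{\beta}}_2 = \bs{P}^{-1}(\bs{X}_1^\top \bs{X}_1)^{-1}\bs{X}_1^\top \bs{Y} = \bs{P}^{-1}\hat{\bs{\beta}}_1$, which rearranges to (i).

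For part (ii), I would compute the two vectors of fitted values and observe that $\bs{X}_2\hat{\bs{\beta}}_2 = \bs{X}_1\bs{P}\cdot \bs{P}^{-1}\hat{\bs{\beta}}_1 = \bs{X}_1\hat{\bs{\beta}}_1$. Hence the fitted values coincide unit by unit, so $\hat e_{i,(2)} = Y_i - (\bs{X}_2\hat{\bs{\beta}}_2)_i = Y_i - (\bs{X}_1\hat{\bs{\beta}}_1)_i = \hat e_{i,(1)}$ for every $i$. This is really just the observation that OLS projects $\bs{Y}$ onto the column space of the design matrix, which is unchanged by post-multiplication by an invertible $\bs{P}$.

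For part (iii), I would plug $\bs{X}_2 = \bs{X}_1\bs{P}$ into the sandwich formula for $\bs{V}_{(2)}$, and invoke part (ii) to replace $\hat e_{i,(2)}^2$ by $\hat e_{i,(1)}^2$ in the diagonal ``meat'' matrix. The outer ``bread'' factor $(\bs{X}_2^\top \bs{X}_2)^{-1} = \bs{P}^{-1}(\bs{X}_1^\top \bs{X}_1)^{-1}(\bs{P}^\top)^{-1}$ supplies factors of $\bs{P}^{-1}$ and $(\bs{P}^\top)^{-1}$ on the outside, while the meat matrix $\bs{X}_2^\top \diag(\hat e_{i,(1)}^2)\bs{X}_2$ supplies an extra $\bs{P}^\top$ on the left and $\bs{P}$ on the right. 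These cancel against the inner $(\bs{P}^\top)^{-1}$ and $\bs{P}^{-1}$ from the two bread factors, leaving $\bs{V}_{(2)} = \bs{P}^{-1}\bs{V}_{(1)}(\bs{P}^\top)^{-1}$, which rearranges to (iii).

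There is no substantive obstacle; the only prerequisites are invertibility of $\bs{P}$ and of $\bs{X}_1^\top \bs{X}_1$, the latter being implicit in the assumption that $\hat{\bs{\beta}}_1$ exists. The only thing to be careful about is the algebraic bookkeeping of the $\bs{P}$ and $\bs{P}^\top$ factors in the sandwich product, which is a purely mechanical check.
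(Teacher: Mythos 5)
Your proposal is correct and matches the paper's approach: the paper's own proof is just the one-line remark that the result follows ``by simple linear algebra'' (with a citation to a textbook), and your explicit computation with the normal equations and the sandwich formula is exactly the linear algebra being alluded to. The bookkeeping of the $\bs{P}$ and $\bs{P}^\top$ factors in all three parts checks out.
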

\begin{proof}[Proof of \Cref{lem:invariance-principle}]
The proof is by simple linear algebra; see \cite{ding2024linear}.
\end{proof}

\section{Asymptotic limits of hacked $p$-values}
\label{sec:hacked-p-value-limits}

\subsection{Joint asymptotic normality for 
the ATE estimators and difference in means of covariates}
 
 Let us consider (fixed) $d$-dimensional potential outcomes $\bs{R}_i(z)=(R_{i, 1}(z), $ $\ldots, R_{i, d}(z))^\top$, $i=1, \ldots, n$ and $z=0,1$. For example, $\bs{R}_i(z) \in \{Y_i(z), \bs{x}_i, (Y_i(z), \bs{x}_i^\top)^\top\}$. Define the vector-form average treatment effect: $\bar{\bs{\tau}}_{\bs{R}} = n^{-1}\sum_{i=1}^n \bs{\tau}_{\bs{R},i}$, where $\bs{\tau}_{\bs{R},i} = \bs{R}_i(1)-\bs{R}_i(0)$, and its stratified difference-in-means estimator: \[\hat{\bs{\tau}}_{\bs{R}} = \sum_{h=1}^h \pi_{h}\hat{\bs{\tau}}_{h,\bs{R}},\quad \hat{\bs{\tau}}_{h,\bs{R}} = n_{h1}^{-1}\sum_{i:i\in\ms_h, Z_i=1}\bs{R}_i(1) - n_{h0}^{-1}\sum_{i:i\in\ms_h, Z_i=0}\bs{R}_i(0). \]

\Cref{lem:covariance-of-hat-tau-vector-sre}, from the Proposition 1 of \cite{wang2021rerandomization}, gives the expression of $\cov\{$ $ n^{1 / 2}\left(\hat{\bs{\tau}}_{\bs{R}}-\bar{\bs{\tau}}_{\bs{R}}\right)\}$. 
\begin{lemma}
\label{lem:covariance-of-hat-tau-vector-sre}
We have, under SRE,
$$
\cov\{n^{1 / 2}\left(\hat{\bs{\tau}}_{\bs{R}}-\bar{\bs{\tau}}_{\bs{R}}\right)\}=\bs{\Sigma}_{\bs{R}},\quad \text{where}~\bs{\Sigma}_{\bs{R}} =\sum_{h=1}^H \pi_{h}\left\{\frac{\bs{S}_{h, \bs{R}(1)\bs{R}(1)}}{r_{h1}}+\frac{\bs{S}_{h, \bs{R}(0) \bs{R}(0)}}{r_{h0}}-\bs{S}_{h, \bs{\tau_R} \bs{\tau_R}}\right\} .
$$
\end{lemma}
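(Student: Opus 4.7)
The plan is to exploit the fact that an SRE consists of $H$ mutually independent CREs, one per stratum, which reduces the problem to Neyman's classical vector-valued variance formula applied within each stratum.

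First I would write
$$\hat{\bs{\tau}}_{\bs{R}} - \bar{\bs{\tau}}_{\bs{R}} = \sum_{h=1}^H \pi_h\bigl(\hat{\bs{\tau}}_{h,\bs{R}} - \bar{\bs{\tau}}_{h,\bs{R}}\bigr),$$
and use independence of the stratum-level assignment vectors, together with the fact that $\bar{\bs{\tau}}_{h,\bs{R}}$ is a finite-population constant, to obtain
$$\cov\bigl\{n^{1/2}(\hat{\bs{\tau}}_{\bs{R}} - \bar{\bs{\tau}}_{\bs{R}})\bigr\} = \sum_{h=1}^H n\pi_h^2\,\cov(\hat{\bs{\tau}}_{h,\bs{R}}).$$
The cross-stratum covariance terms vanish because the SRE assigns treatment independently in each stratum.

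Next, within each stratum I would invoke the Neyman vector variance formula for a CRE with $n_h$ units and treatment proportions $r_{h1},r_{h0}$. This is a standard finite-population computation: writing $\hat{\bs{\tau}}_{h,\bs{R}}$ as a linear functional of the treatment indicator vector restricted to $\ms_h$ and evaluating second moments of that indicator vector under sampling without replacement gives
$$n_h\,\cov(\hat{\bs{\tau}}_{h,\bs{R}}) = \frac{\bs{S}_{h,\bs{R}(1)\bs{R}(1)}}{r_{h1}} + \frac{\bs{S}_{h,\bs{R}(0)\bs{R}(0)}}{r_{h0}} - \bs{S}_{h,\bs{\tau_R}\bs{\tau_R}}.$$
Substituting this into the previous display and using $n_h = n\pi_h$, so that $n\pi_h^2/n_h = \pi_h$, yields exactly the claimed expression for $\bs{\Sigma}_{\bs{R}}$.

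There is no real obstacle: cross-stratum independence is immediate from the design, and the within-stratum vector Neyman identity is an exact (not merely asymptotic) combinatorial calculation already encapsulated in Proposition 1 of \cite{wang2021rerandomization}. The only thing to state carefully is that $\bs{S}_{h,\bs{\tau_R}\bs{\tau_R}}$ is the finite-population covariance of the individual-level vector treatment effects $\bs{\tau}_{\bs{R},i}=\bs{R}_i(1)-\bs{R}_i(0)$ within stratum $h$.
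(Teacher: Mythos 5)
Your proposal is correct. The paper does not prove this lemma itself; it simply imports it as Proposition 1 of \cite{wang2021rerandomization}, and your argument --- decomposing $\hat{\bs{\tau}}_{\bs{R}}-\bar{\bs{\tau}}_{\bs{R}}$ over strata, dropping cross-stratum covariances by independence of the stratum-level assignments, applying the exact finite-population Neyman vector variance formula within each stratum, and simplifying via $n\pi_h^2/n_h=\pi_h$ --- is precisely the standard derivation that the cited result encapsulates, as you yourself note.
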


Applying \Cref{lem:covariance-of-hat-tau-vector-sre} with $\bs{R}_i(z)=(Y_i(z),\bs{x}_i^\top)^\top$ for $i \in \ms_h$, we have
\begin{align*}
    n_h \cov\begin{pmatrix}
        \hat{\tau}_h -\bar{\tau}_h\\
        \hat{\bs{\tau}}_{h,\bs{x}}
    \end{pmatrix}  = \begin{pmatrix}
            V_{h,\tau\tau}& \bs{V}_{h, \tau \bs{x}}\\
            \bs{V}_{h, \bs{x} \tau} & \bs{V}_{h, \bs{x}\bs{x}}
        \end{pmatrix},
\end{align*}
where
\begin{gather*}
   V_{h,\tau\tau}  = \frac{S_{h,Y(1)Y(1)}}{r_{h1}} + \frac{S_{h,Y(0)Y(0)}}{r_{h0}}-S_{h,\tau\tau},\quad
 \bs{V}_{h,\tau \bs{x}}  =  \frac{\bs{S}_{h,Y(1)\bs{x}}}{r_{h1}} + \frac{\bs{S}_{h,Y(0)\bs{x}}}{r_{h0}},\quad \bs{V}_{h,\bs{x}\bs{x}} =  \frac{\bs{S}_{h,\bs{x}\bs{x}}}{r_{h1}r_{h0}}.
\end{gather*}

Let $h(i)$ be the stratum which $i$ belongs to. Applying \Cref{lem:covariance-of-hat-tau-vector-sre} with $\bs{R}_i(z)=$ $\omega_{h(i)}/\pi_{h(i)}(Y_i(z),\bs{x}_i^\top)^\top$, we have

\begin{align*}
    n \cov\begin{pmatrix}
        \hat{\tau}_{\omega} -\bar{\tau}_{\omega}\\
        \hat{\bs{\tau}}_{\omega,\bs{x}}
    \end{pmatrix}  = \begin{pmatrix}
            V_{\omega,\tau\tau}& \bs{V}_{\omega, \tau \bs{x}}\\
            \bs{V}_{\omega, \bs{x} \tau} & \bs{V}_{\omega, \bs{x}\bs{x}}
        \end{pmatrix},
\end{align*}
where
\begin{align*}
    V_{\omega,\tau\tau} = \sum_{h=1}^H 
    \frac{\omega_h^2}{\pi_{h}}{V}_{h,\tau\tau},\quad \bs{V}_{\omega, \tau \bs{x}} = \sum_{h=1}^H 
    \frac{\omega_h^2}{\pi_{h}}\bs{V}_{h,\tau \bs{x}},\quad
    \bs{V}_{\omega, \bs{x}\bs{x}} = \sum_{h=1}^H 
    \frac{\omega_h^2}{\pi_{h}}\bs{V}_{h,\bs{x}\bs{x}}.
\end{align*}

We require Condition \ref{a:assumption-for-CLT-sre} from \cite{wang2021rerandomization} to establish the CLT. 
\begin{condition}
\label{a:assumption-for-CLT-sre}
As $n \to \infty$, (i) $r_{h1}$, $h=1,\ldots,H$, have limits in $(0,1)$.
(ii) For $z=0,1$, $\max_{h\in [H]}$ $\max_{i\in \ms_h}\|\bs{R}_{i}(z)- \bar{\bs{R}}_{h}(z)\|_\infty^2 = o(n).$ (iii) The following three matrices have finite limits:
$$\sum_{h=1}^H \pi_{h}\frac{\bs{S}_{h, \bs{R}(1)\bs{R}(1)}}{r_{h1}},\quad \sum_{h=1}^H \pi_{h}\frac{\bs{S}_{h, \bs{R}(0)\bs{R}(0)}}{r_{h0}},\quad \sum_{h=1}^H \pi_{h}\bs{S}_{h, \bs{\tau_R} \bs{\tau_R}},$$
and the limit of $\bs{\Sigma}_{\bs{R}}$ is (strictly) positive definite.
\end{condition}
We use the same notation to denote the limits of the finite-population quantities when there is no ambiguity.

We require \Cref{lem:CLT-for-sre} from Theorem 1 of \cite{wang2021rerandomization}, to establish the CLT for $n^{1/2}(\hat{\bs{\tau}}_{\bs{R}}-\bar{\bs{\tau}}_{\bs{R}})$.
\begin{lemma}
\label{lem:CLT-for-sre}
    Under Condition \ref{a:assumption-for-CLT-sre} and SRE, $n^{1/2}(\hat{\bs{\tau}}_{\bs{R}}-\bar{\bs{\tau}}_{\bs{R}})\wconv\mathcal{N}(\bs{0},\bs{\Sigma}_{\bs{R}})$.
\end{lemma}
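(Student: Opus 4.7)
The proof plan is to reduce Lemma \ref{lem:CLT-for-sre} to the standard finite-population central limit theorem for a completely randomized experiment applied within each stratum, and then to combine the stratum-level limits using the fact that the treatment assignments across strata are mutually independent under an SRE.

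First I would write the decomposition
\[
n^{1/2}\bigl(\hat{\bs{\tau}}_{\bs{R}}-\bar{\bs{\tau}}_{\bs{R}}\bigr) \;=\; \sum_{h=1}^{H}\pi_{h}\, n^{1/2}\bigl(\hat{\bs{\tau}}_{h,\bs{R}}-\bar{\bs{\tau}}_{h,\bs{R}}\bigr) \;=\; \sum_{h=1}^{H} \pi_{h}^{1/2}\cdot n_{h}^{1/2}\bigl(\hat{\bs{\tau}}_{h,\bs{R}}-\bar{\bs{\tau}}_{h,\bs{R}}\bigr),
\]
using $n_h = n\pi_h$. The vector $n_{h}^{1/2}(\hat{\bs{\tau}}_{h,\bs{R}}-\bar{\bs{\tau}}_{h,\bs{R}})$ is the stratified difference-in-means statistic for a CRE on the $n_h$ units of stratum $h$, so the classical finite-population CLT (e.g., Li and Ding 2017, Theorem 5) applies within each stratum. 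The relevant regularity conditions are supplied by \Cref{a:assumption-for-CLT-sre}: (i) gives $r_{h1},r_{h0}$ bounded away from $0$ and $1$, (ii) supplies the Lindeberg-type uniform bound $\max_{i\in\ms_h}\|\bs{R}_i(z)-\bar{\bs{R}}_h(z)\|_\infty^2 = o(n) = o(n_h/\pi_h)$, and (iii) ensures the stratum-level covariance has a finite limit. Hence, for each fixed $h$,
\[
n_{h}^{1/2}\bigl(\hat{\bs{\tau}}_{h,\bs{R}}-\bar{\bs{\tau}}_{h,\bs{R}}\bigr) \;\wconv\; \mathcal{N}\!\bigl(\bs{0},\, \bs{\Sigma}_{h,\bs{R}}\bigr),
\]
where $\bs{\Sigma}_{h,\bs{R}} = \bs{S}_{h,\bs{R}(1)\bs{R}(1)}/r_{h1}+\bs{S}_{h,\bs{R}(0)\bs{R}(0)}/r_{h0}-\bs{S}_{h,\bs{\tau_R}\bs{\tau_R}}$.

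Next, I would use the defining feature of an SRE, namely that the treatment assignment vectors are drawn independently across the $H$ strata conditional on the fixed stratum sizes $(n_{h1},n_{h0})$. Consequently the stratum-level vectors $\{n_{h}^{1/2}(\hat{\bs{\tau}}_{h,\bs{R}}-\bar{\bs{\tau}}_{h,\bs{R}})\}_{h=1}^{H}$ are mutually independent. Because $H$ is fixed, joint asymptotic normality follows from marginal asymptotic normality: the stacked vector converges in distribution to a Gaussian with block-diagonal covariance $\operatorname{diag}(\bs{\Sigma}_{1,\bs{R}},\ldots,\bs{\Sigma}_{H,\bs{R}})$. Applying the continuous mapping theorem to the linear combination $\sum_{h=1}^{H}\pi_h^{1/2}(\cdot)$ yields
\[
n^{1/2}\bigl(\hat{\bs{\tau}}_{\bs{R}}-\bar{\bs{\tau}}_{\bs{R}}\bigr) \;\wconv\; \mathcal{N}\!\Bigl(\bs{0},\; \sum_{h=1}^{H}\pi_{h}\,\bs{\Sigma}_{h,\bs{R}}\Bigr),
\]
and the right-hand variance coincides exactly with $\bs{\Sigma}_{\bs{R}}$ as given by \Cref{lem:covariance-of-hat-tau-vector-sre}, finishing the proof.

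The only delicate step is verifying that condition (ii) of \Cref{a:assumption-for-CLT-sre}, which is stated uniformly across strata as $o(n)$, is strong enough to yield the stratum-wise Lindeberg condition used by the CRE CLT (which would normally be phrased as $o(n_h)$). This is immediate because $n_h = \pi_h n$ with $\pi_h$ bounded away from $0$ for fixed $H$, so $o(n) = o(n_h)$ for every $h$; I would note this explicitly. The rest of the argument is a bookkeeping exercise: independence across strata plus fixed $H$ makes the joint CLT follow from marginal CLTs, with no Cramér--Wold device needed beyond the one already built into the CRE CLT for vector-valued potential outcomes.
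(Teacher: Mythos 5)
The paper does not actually prove this lemma: it imports it wholesale from Theorem 1 of \cite{wang2021rerandomization}. Your proposal reconstructs what is essentially the standard argument behind that result for fixed $H$ — stratum-wise finite-population CLT for a CRE, mutual independence of assignments across strata, and a linear combination with weights $\pi_h^{1/2}$ whose covariance matches $\bs{\Sigma}_{\bs{R}}$ from Lemma \ref{lem:covariance-of-hat-tau-vector-sre} — and the decomposition, the variance bookkeeping, and the $o(n)=o(n_h)$ remark are all correct. Two small caveats are worth recording. First, your stratum-by-stratum step assumes each $\bs{\Sigma}_{h,\bs{R}}$ has a limit, but Condition \ref{a:assumption-for-CLT-sre}(iii) only guarantees finite limits for the $\pi_h$-weighted \emph{sums} over $h$; individual stratum covariances could in principle oscillate while the sums converge, and a given $\bs{\Sigma}_{h,\bs{R}}$ could also have a singular limit. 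Both issues are repaired by a routine subsequence argument (the stratum-level quantities are bounded, every subsequence has a further subsequence along which they all converge, and the limit law of the aggregate is always $\mathcal{N}(\bs{0},\lim\bs{\Sigma}_{\bs{R}})$, which is the same along all subsequences and is non-degenerate by assumption), together with a version of the CRE CLT that tolerates degenerate marginal limits; you should say this explicitly. Second, your argument is genuinely a fixed-$H$ argument; Condition \ref{a:assumption-for-CLT-sre} as stated does not forbid $H$ growing with $n$, in which case one would need a Lindeberg--Feller triangular-array CLT rather than ``finitely many independent pieces.'' Since every application of this lemma in the paper occurs under Condition \ref{a:CLT-sre-for-rerandomization}, which fixes $H$, this restriction is harmless here, but it means your proof establishes a slightly narrower statement than the cited theorem.
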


\begin{lemma}
\label{lem:V-omega-positive}
We have
    \[
V_{\omega,\tau\tau} - 
\bs{V}_{\omega,\tau\bs{x}}\bs{V}_{\omega,\bs{x}\bs{x}}^{-1}\bs{V}_{\omega,\bs{x}\tau} \geq  \sum_{h=1}^H \frac{\omega_h^2}{\pi_{h}} \sigma^2_{h,\adj}.
    \]
\end{lemma}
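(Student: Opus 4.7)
The plan is to prove the inequality via the variational (quadratic-optimization) characterization of the Schur complement. Set $c_h = \omega_h^2/\pi_h > 0$, so that by the definitions in the statement,
\[
V_{\omega,\tau\tau} = \sum_{h=1}^H c_h V_{h,\tau\tau},\quad \bs{V}_{\omega,\tau\bs{x}} = \sum_{h=1}^H c_h \bs{V}_{h,\tau\bs{x}},\quad \bs{V}_{\omega,\bs{x}\bs{x}} = \sum_{h=1}^H c_h \bs{V}_{h,\bs{x}\bs{x}}.
\]
The key analytic fact I would use is the identity: for any positive-definite $\bs{V} \in \mathbb{R}^{K\times K}$, vector $\bs{a} \in \mathbb{R}^K$, and scalar $\alpha$,
\[
\alpha - \bs{a}^\top \bs{V}^{-1} \bs{a} = \min_{\bs{\beta} \in \mathbb{R}^K}\bigl(\alpha - 2\bs{\beta}^\top \bs{a} + \bs{\beta}^\top \bs{V} \bs{\beta}\bigr),
\]
with the minimum attained at $\bs{\beta} = \bs{V}^{-1}\bs{a}$.

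First I would apply this identity stratum-by-stratum: define $q_h(\bs{\beta}) = V_{h,\tau\tau} - 2\bs{\beta}^\top \bs{V}_{h,\bs{x}\tau} + \bs{\beta}^\top \bs{V}_{h,\bs{x}\bs{x}}\bs{\beta}$, so that $\sigma^2_{h,\adj} = \min_{\bs{\beta} \in \mathbb{R}^K} q_h(\bs{\beta})$. In particular, $q_h(\bs{\beta}) \geq \sigma^2_{h,\adj}$ for every choice of $\bs{\beta}$, not just the stratum-specific optimizer. Second, I would apply the same identity at the aggregated level: letting $Q(\bs{\beta}) = V_{\omega,\tau\tau} - 2\bs{\beta}^\top \bs{V}_{\omega,\bs{x}\tau} + \bs{\beta}^\top \bs{V}_{\omega,\bs{x}\bs{x}}\bs{\beta}$, I have
\[
V_{\omega,\tau\tau} - \bs{V}_{\omega,\tau\bs{x}}\bs{V}_{\omega,\bs{x}\bs{x}}^{-1}\bs{V}_{\omega,\bs{x}\tau} = \min_{\bs{\beta} \in \mathbb{R}^K} Q(\bs{\beta}).
\]

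The crux is then purely linear: because each of $V_{\omega,\tau\tau}$, $\bs{V}_{\omega,\tau\bs{x}}$, $\bs{V}_{\omega,\bs{x}\bs{x}}$ is the $c_h$-weighted sum of its stratum-level counterpart, I get the identity $Q(\bs{\beta}) = \sum_{h=1}^H c_h q_h(\bs{\beta})$ for every common $\bs{\beta}$. Using the stratum-level lower bound $q_h(\bs{\beta}) \geq \sigma^2_{h,\adj}$ termwise,
\[
Q(\bs{\beta}) \geq \sum_{h=1}^H c_h \sigma^2_{h,\adj} = \sum_{h=1}^H \frac{\omega_h^2}{\pi_h}\sigma^2_{h,\adj}
\]
for every $\bs{\beta}$, and minimizing the left-hand side over $\bs{\beta}$ delivers the claim.

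There is no genuine obstacle: the argument is the standard observation that restricting a joint optimization to a common variable across strata can only raise the optimum, which translates to super-additivity of the Schur complement along nonnegative linear combinations of the stratum-level block matrices $\bs{M}_h = \bigl(\begin{smallmatrix} V_{h,\tau\tau} & \bs{V}_{h,\tau\bs{x}} \\ \bs{V}_{h,\bs{x}\tau} & \bs{V}_{h,\bs{x}\bs{x}} \end{smallmatrix}\bigr)$. The only technical point worth checking is that $\bs{V}_{\omega,\bs{x}\bs{x}}$ is invertible so that its Schur complement is well-defined; this is immediate because each $\bs{V}_{h,\bs{x}\bs{x}}$ is positive definite under \Cref{a:CLT-sre-for-rerandomization}(ii) and the coefficients $c_h$ are strictly positive.
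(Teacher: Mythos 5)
Your proof is correct, but it takes a different route from the paper's. The paper proves the same inequality by applying the Cauchy--Schwarz inequality $(\bs{b}^\top\bs{b})(\bs{d}^\top\bs{d})\geq(\bs{b}^\top\bs{d})^2$ to two explicitly constructed stacked vectors in $\mathbb{R}^{HK}$ built from $\bs{V}_{h,\bs{x}\bs{x}}^{1/2}$ and $\bs{V}_{h,\bs{x}\bs{x}}^{-1/2}$, which yields $\sum_{h}(\omega_h^2/\pi_h)\bs{V}_{h,\tau\bs{x}}\bs{V}_{h,\bs{x}\bs{x}}^{-1}\bs{V}_{h,\bs{x}\tau}\geq\bs{V}_{\omega,\tau\bs{x}}\bs{V}_{\omega,\bs{x}\bs{x}}^{-1}\bs{V}_{\omega,\bs{x}\tau}$ directly, and then subtracts from $V_{\omega,\tau\tau}=\sum_h(\omega_h^2/\pi_h)V_{h,\tau\tau}$. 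You instead use the variational characterization of the Schur complement, $\alpha-\bs{a}^\top\bs{V}^{-1}\bs{a}=\min_{\bs{\beta}}(\alpha-2\bs{\beta}^\top\bs{a}+\bs{\beta}^\top\bs{V}\bs{\beta})$, exploit the fact that the $\omega$-level quadratic is the $c_h$-weighted sum of the stratum-level quadratics at a \emph{common} $\bs{\beta}$, and bound each term from below by its own unrestricted minimum. Both arguments are two-line proofs of the same superadditivity property of the Schur complement along nonnegative combinations of the block matrices; yours has the mild advantages of avoiding matrix square roots, avoiding the implicit division by $\bs{V}_{\omega,\tau\bs{x}}\bs{V}_{\omega,\bs{x}\bs{x}}^{-1}\bs{V}_{\omega,\bs{x}\tau}$ that the paper's squared Cauchy--Schwarz bound requires (with the degenerate zero case handled separately), and making the statistical interpretation transparent: forcing a single regression coefficient across strata can only increase the residual variance. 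The paper's version is marginally more self-contained in that it produces the intermediate inequality on the $\bs{V}\bs{V}^{-1}\bs{V}$ terms explicitly. Your observation that $\bs{V}_{\omega,\bs{x}\bs{x}}$ is invertible because each $\bs{V}_{h,\bs{x}\bs{x}}$ is positive definite and the weights are positive is the right technical check; note that for finite $n$ this rests on the paper's standing assumption that each $\bs{S}_{h,\bs{x}\bs{x}}$ is nonsingular rather than on the limiting statement in Condition \ref{a:CLT-sre-for-rerandomization}(ii).
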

\begin{proof}[Proof of \Cref{lem:V-omega-positive}]
    Using the Cauchy--Schwarz inequality, $(\bs{b}^\top\bs{b})(\bs{d}^\top\bs{d}) \geq (\bs{b}^\top\bs{d})^2$ with 
    \begin{align*}
        \bs{b} &= (\frac{\omega_1}{\pi_{1}^{1/2}}\bs{V}_{1, \bs{x}\bs{x}}^{1/2}\bs{V}_{\omega,\bs{x}\bs{x}}^{-1}\bs{V}_{\omega,\bs{x}\tau}, \ldots, \frac{\omega_H}{\pi_{H}^{1/2}}\bs{V}_{H, \bs{x}\bs{x}}^{1/2}\bs{V}_{\omega,\bs{x}\bs{x}}^{-1}\bs{V}_{\omega,\bs{x}\tau})^\top;\\
        \bs{d} &= \Big(\frac{\omega_1}{\pi_{1}^{1/2}}\bs{V}_{1, \bs{x}\bs{x}}^{-1/2}\bs{V}_{1,\bs{x}\tau}, \ldots, \frac{\omega_H}{\pi_{H}^{1/2}}\bs{V}_{H, \bs{x}\bs{x}}^{-1/2}\bs{V}_{H,\bs{x}\tau}\Big)^\top,
    \end{align*}
we have
\begin{align*}
 \sum_{h=1}^H \frac{\omega_h^2}{\pi_{h}}\bs{V}_{h,\tau\bs{x}}\bs{V}_{h,\bs{x}\bs{x}}^{-1}\bs{V}_{h,\bs{x}\tau} \geq \bs{V}_{\omega,\tau\bs{x}}\bs{V}_{\omega,\bs{x}\bs{x}}^{-1}\bs{V}_{\omega,\bs{x}\tau}.
\end{align*}
Recall that
    $\sigma^2_{h, \adj} = V_{h,\tau\tau}- \bs{V}_{h,\tau\bs{x}}\bs{V}_{h,\bs{x}\bs{x}}^{-1}\bs{V}_{h,\bs{x}\tau}$.
Therefore, 
\begin{align*}
 V_{\omega,\tau\tau} - \bs{V}_{\omega,\tau\bs{x}}\bs{V}_{\omega,\bs{x}\bs{x}}^{-1}\bs{V}_{\omega,\bs{x}\tau} \geq  \sum_{h=1}^H \frac{\omega_h^2}{\pi_{h}} \Big(V_{h,\tau\tau} -\bs{V}_{h,\tau\bs{x}}\bs{V}_{h,\bs{x}\bs{x}}^{-1}\bs{V}_{h,\bs{x}\tau}\Big) = \sum_{h=1}^H \frac{\omega_h^2}{\pi_{h}} \sigma^2_{h,\adj}.
\end{align*}
\end{proof}

Let $(\varepsilon_{h,\tau},$ $\bs{\varepsilon}_{h,\bs{x}}^\top)^\top$, $h=1,\ldots,H$, be independent random vectors with  \begin{align*}
    \begin{pmatrix}
      \varepsilon_{h,\tau} \\
      \bs{\varepsilon}_{h,\bs{x}}
    \end{pmatrix} \sim \mathcal{N}\left(0,\begin{pmatrix}
      V_{h,\tau\tau} & \bs{V}_{h,\tau\bs{x}}\\
      \bs{V}_{h,\bs{x}\tau} & \bs{V}_{h,\bs{x}\bs{x}}
      \end{pmatrix}\right).
  \end{align*}
Let $(\varepsilon_{\omega,\tau},
      \bs{\varepsilon}_{\omega,\bs{x}})$ be random vector with
\begin{align*}
    \begin{pmatrix}
      \varepsilon_{\omega,\tau} \\
      \bs{\varepsilon}_{\omega,\bs{x}}
    \end{pmatrix} \sim \mathcal{N}\left(\bs{0},\begin{pmatrix}
      V_{\omega, \tau\tau} & \bs{V}_{\omega, \tau\bs{x}}\\
      \bs{V}_{\omega,\bs{x}\tau} & \bs{V}_{\omega,\bs{x}\bs{x}}
      \end{pmatrix}\right).
  \end{align*}

\begin{lemma}
\label{lem:CLT-for-rerandomization}
    Under Condition \ref{a:CLT-sre-for-rerandomization} (i)--(iii) and SRE, we have (i)
  $(n_1^{1/2}(\hat{\tau}_1-\bar{\tau}_1, \hat{\bs{\tau}}_{1,\bs{x}}^\top),\ldots,n_H^{1/2}(\hat{\tau}_H-\bar{\tau}_H, \hat{\bs{\tau}}_{H,\bs{x}}^\top))^\top \wconv ((\varepsilon_{1,\tau},\bs{\varepsilon}_{1,\bs{x}}^\top),\ldots, (\varepsilon_{H,\tau},\bs{\varepsilon}_{H,\bs{x}}^\top))^\top $ and
 (ii) $n^{1/2}(\hat{\tau}_{\omega} - \bar{\tau}_{\omega}, \hat{\bs{\tau}}_{\omega,\bs{x}}^\top)^\top \wconv (\varepsilon_{\omega,\tau},\bs{\varepsilon}_{\omega,\bs{x}}^\top) $.
\end{lemma}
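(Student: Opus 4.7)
The plan is to derive part (i) by applying \Cref{lem:CLT-for-sre} separately within each stratum and then stacking the resulting marginal limits using the cross-stratum independence inherent to the SRE, and then to deduce part (ii) from part (i) by a deterministic linear map and the continuous mapping theorem.

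First I would fix a stratum $h$ and view the treatment assignment restricted to $\ms_h$ as a single CRE on $n_h$ units with treated proportion $r_{h1}$. Applying \Cref{lem:CLT-for-sre} with $\bs{R}_i(z)=(Y_i(z),\bs{x}_i^\top)^\top$ for $i\in\ms_h$ (and $H=1$ there), I need to verify \Cref{a:assumption-for-CLT-sre}. Its item (i) follows from \Cref{a:CLT-sre-for-rerandomization}(i); item (ii) follows from \Cref{a:CLT-sre-for-rerandomization}(iii), since $\pi_h$ has a positive limit implies $n_h\asymp n$ so that $o(n)=o(n_h)$; the three moment limits in item (iii) are covered by \Cref{a:CLT-sre-for-rerandomization}(ii) via the identities $V_{h,\tau\tau}=r_{h1}^{-1}S_{h,Y(1)Y(1)}+r_{h0}^{-1}S_{h,Y(0)Y(0)}-S_{h,\tau\tau}$, $\bs{V}_{h,\tau\bs{x}}=r_{h1}^{-1}\bs{S}_{h,Y(1)\bs{x}}+r_{h0}^{-1}\bs{S}_{h,Y(0)\bs{x}}$, and $\bs{V}_{h,\bs{x}\bs{x}}=(r_{h1}r_{h0})^{-1}\bs{S}_{h,\bs{x}\bs{x}}$. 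The remaining point is the positive definiteness of the $(K+1)\times(K+1)$ limiting covariance, which I would establish by a Schur complement argument: the lower-right block $\bs{V}_{h,\bs{x}\bs{x}}$ is positive definite by \Cref{a:CLT-sre-for-rerandomization}(ii) and \Cref{a:CLT-sre-for-rerandomization}(i), while the Schur complement $V_{h,\tau\tau}-\bs{V}_{h,\tau\bs{x}}\bs{V}_{h,\bs{x}\bs{x}}^{-1}\bs{V}_{h,\bs{x}\tau}$ equals $\sigma_{h,\adj}^2$, which is assumed to have a positive limit. These yield $n_h^{1/2}(\hat{\tau}_h-\bar{\tau}_h,\hat{\bs{\tau}}_{h,\bs{x}}^\top)^\top\wconv(\varepsilon_{h,\tau},\bs{\varepsilon}_{h,\bs{x}}^\top)^\top$ for each $h$. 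To pass from marginal to joint convergence, I would invoke the mutual independence of the $H$ stratum-specific assignments under SRE, which makes the prelimit vectors jointly independent across $h$; convergence of the product of characteristic functions to the product of the marginal limits then delivers joint convergence to the block-diagonal Gaussian in the statement.

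For part (ii), using $\bar{\tau}_\omega=\sum_h\omega_h\bar{\tau}_h$ and that $\bar{\bs{x}}_h=\bs{0}$ makes $\E[\hat{\bs{\tau}}_{h,\bs{x}}]=\bs{0}$, I would write
\[
n^{1/2}\begin{pmatrix}\hat{\tau}_\omega-\bar{\tau}_\omega\\ \hat{\bs{\tau}}_{\omega,\bs{x}}\end{pmatrix}=\sum_{h=1}^H \frac{\omega_h}{\pi_h^{1/2}}\, n_h^{1/2}\begin{pmatrix}\hat{\tau}_h-\bar{\tau}_h\\ \hat{\bs{\tau}}_{h,\bs{x}}\end{pmatrix},
\]
where $\omega_h/\pi_h^{1/2}$ has a finite limit since $\pi_h,r_{h1},r_{h0}$ all do. The continuous mapping theorem applied to the joint limit from (i), combined with independence across $h$ of the Gaussian summands, produces a Gaussian limit whose covariance is $\sum_h(\omega_h^2/\pi_h)$ times the stratum blocks, which agrees exactly with $(V_{\omega,\tau\tau},\bs{V}_{\omega,\tau\bs{x}};\bs{V}_{\omega,\bs{x}\tau},\bs{V}_{\omega,\bs{x}\bs{x}})$ as defined. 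The only mild obstacle here is the Schur-complement verification and the independence-to-joint-convergence step; both are standard and do not require additional assumptions beyond those already listed.
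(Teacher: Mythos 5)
Your proof is correct. Part (i) follows the paper's route exactly: apply \Cref{lem:CLT-for-sre} stratum by stratum with $\bs{R}_i(z)=(Y_i(z),\bs{x}_i^\top)^\top$ and stack via cross-stratum independence; your explicit verification of \Cref{a:assumption-for-CLT-sre}, including the Schur-complement check that the stratum covariance limit is positive definite because $\bs{V}_{h,\bs{x}\bs{x}}$ is positive definite and the complement equals $\sigma^2_{h,\adj}$, is detail the paper leaves implicit but is exactly what is needed. For part (ii) you diverge from the paper: the paper re-invokes \Cref{lem:CLT-for-sre} with the rescaled potential outcomes $\bs{R}_i(z)=\omega_{h(i)}/\pi_{h(i)}(Y_i(z),\bs{x}_i^\top)^\top$, which forces it to verify strict positive definiteness of the limit of $\bs{\Sigma}_{\bs{R}}$ via the Cauchy--Schwarz bound in \Cref{lem:V-omega-positive}, whereas you obtain (ii) as a corollary of (i) by writing $n^{1/2}(\hat{\tau}_\omega-\bar{\tau}_\omega,\hat{\bs{\tau}}_{\omega,\bs{x}}^\top)^\top=\sum_h(\omega_h/\pi_h^{1/2})\,n_h^{1/2}(\hat{\tau}_h-\bar{\tau}_h,\hat{\bs{\tau}}_{h,\bs{x}}^\top)^\top$ and passing to the limit. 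Your route is slightly more elementary and sidesteps \Cref{lem:V-omega-positive} entirely, since the covariance of the limiting linear combination is $\sum_h(\omega_h^2/\pi_h)$ times the stratum blocks by construction; the only care needed, which you correctly note, is that the coefficients $\omega_h/\pi_h^{1/2}$ depend on $n$, so one splits off the $o(1)\cdot O_{\Prob}(1)$ remainder and applies Slutsky rather than a bare continuous mapping theorem. The paper's approach has the minor advantage of reusing a single black-box CLT and of producing the positive-definiteness fact (needed elsewhere, e.g.\ in \Cref{lem:asymptotic-limit-of-type-I-error-rate}) as a by-product.
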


\begin{proof}[Proof of \Cref{lem:CLT-for-rerandomization}]
    For Lemma $(i)$, applying \Cref{lem:CLT-for-sre} with $\bs{R}_i(z) = (Y_i(z),\bs{x}_i^\top)^\top$ for $i\in \ms_{h}$, we have, under Condition \ref{a:CLT-sre-for-rerandomization} $(i)$--$(iii)$,
 \begin{align*}
     n_h^{1/2}(\hat{\tau}_h-\bar{\tau}_h, \hat{\bs{\tau}}_{h,\bs{x}}^\top)^\top \wconv (\varepsilon_{h,\tau},\bs{\varepsilon}_{h,\bs{x}}^\top)^\top.
  \end{align*}
   Because for $h=1,\ldots,H$,  $n_h^{1/2}(\hat{\tau}_h-\bar{\tau}_h, \hat{\bs{\tau}}_{h,\bs{x}}^\top)^\top$ are independent, the conclusion follows.

   For Lemma $(ii)$, we see that under Condition \ref{a:CLT-sre-for-rerandomization} $(ii)$, \Cref{lem:V-omega-positive} implies that $V_{\omega,\tau\tau} - 
\bs{V}_{\omega,\tau\bs{x}}\bs{V}_{\omega,\bs{x}\bs{x}}^{-1}\bs{V}_{\omega,\bs{x}\tau}$ has a positive limit. 
Applying \Cref{lem:CLT-for-sre} with $\bs{R}_i(z)=\omega_{h(i)}/\pi_{h(i)}(Y_i(z),\bs{x}_i^\top)^\top$, the conclusion follows.
\end{proof}


\subsection{Asymptotic limits of the regression coefficients and EHW standard errors}
Let $\tilde{S}_i = (I(i\in \ms_1),\ldots, I(i\in \ms_H))$.
 
\begin{lemma}
    \label{lem:ehw-of-covariate}
    We have
    \begin{align*}
        &\hat{\sx}^2_{h,\Lin,k} =  \frac{n_1-1}{n_1^2}s_{h1,kk}  + \frac{n_0-1}{n_0^2}s_{h0,kk};  \quad \hat{\sx}^2_{\fe ,k} = \sum_{h=1}^H \omega_h^2 \Big\{\hat{\sx}^2_{h,\Lin,k} + \frac{r_{h1}^{-1}r_{h0}^{-1}-3}{n_h}(\hat{\tau}_{h,k}-\hat{\tau}_{k})^2\Big\}.
    \end{align*}
\end{lemma}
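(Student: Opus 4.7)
The plan is to compute both EHW variances directly, using \Cref{lem:invariance-principle} and the Frisch--Waugh--Lovell (FWL) theorem to reduce each regression to a tractable univariate form.

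For part (i), the regression $\lmm(x_{ik}\sim 1+Z_i)$ restricted to $\ms_h$ is the familiar two-sample setup. I would evaluate $(X^\top X)^{-1}$ for $X=(\bs{1},Z)_{i\in\ms_h}$ explicitly, assemble the meat $X^\top \diag(\hat{e}_i^2) X$ using the fact that the residuals are $x_{ik}-\bar{x}_{hz,k}$ on group $z$, and extract the $(2,2)$ entry of the sandwich. Substituting $\sum_{i:Z_i=z,\,i\in\ms_h}(x_{ik}-\bar{x}_{hz,k})^2=(n_{hz}-1)s_{hz,kk}$ and simplifying produces the claimed expression.

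For part (ii), I would apply FWL to $\lmm(x_{ik}\sim 1+Z_i+\bs{S}_i)$ by residualizing both $x_{ik}$ and $Z_i$ on $(1,\bs{S}_i)$. Because $\bar{\bs{x}}_h=\bs{0}$, the residualized $x_{ik}$ equals $x_{ik}$ itself, and for $i\in\ms_h$ the residualized $Z_i$ equals $\tilde{Z}_i=Z_i-r_{h1}$. The $Z_i$-coefficient and its EHW variance in the full regression match those from the simple regression of $x_{ik}$ on $\tilde{Z}_i$, so $\hat{\sx}_{\fe,k}^2 = \sum_i \tilde{Z}_i^2 \hat{e}_i^2 / (\sum_i \tilde{Z}_i^2)^2$, where $\hat{e}_i$ is the residual of that simple regression. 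I would also record $\sum_{i\in\ms_h}\tilde{Z}_i^2 = n_h r_{h1}r_{h0}$ and the identity $\sum_{i\in\ms_h}\tilde{Z}_i^2 \hat{e}_{i,h}^2 = (n_h r_{h1}r_{h0})^2 \hat{\sx}_{h,\Lin,k}^2$ for the stratum-$h$ simple-regression residual $\hat{e}_{i,h}$, which follows from part (i) together with FWL applied inside $\ms_h$.

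The key algebraic step is the decomposition $\hat{e}_i = \hat{e}_{i,h} - (\hat{\tau}_{\fe,k}-\hat{\tau}_{h,k})\tilde{Z}_i$ for $i\in\ms_h$. Squaring and multiplying by $\tilde{Z}_i^2$, the cross term $\sum_{i\in\ms_h}\tilde{Z}_i^3 \hat{e}_{i,h}$ vanishes because $\tilde{Z}_i^3$ takes only the two values $r_{h0}^3$ and $-r_{h1}^3$ across the two arms while the within-arm sums of $\hat{e}_{i,h}$ are zero. The surviving within-stratum square piece yields $\omega_h^2 \hat{\sx}_{h,\Lin,k}^2$ after dividing by $(\sum_i \tilde{Z}_i^2)^2$ and using $\omega_h = n_h r_{h1}r_{h0}/\sum_{h'} n_{h'} r_{h'1}r_{h'0}$. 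For the heterogeneity piece, computing $\sum_{i\in\ms_h}\tilde{Z}_i^4 = n_h r_{h1}r_{h0}(r_{h0}^3 + r_{h1}^3) = n_h r_{h1}r_{h0}(1 - 3 r_{h1} r_{h0})$ and rewriting the leading constant via $n_h r_{h1}r_{h0}/(\sum_{h'} n_{h'} r_{h'1}r_{h'0})^2 = \omega_h^2/(n_h r_{h1}r_{h0})$ delivers the $\omega_h^2(r_{h1}^{-1}r_{h0}^{-1}-3)/n_h$ factor. I expect the main obstacle to be precisely this bookkeeping: carefully tracking the powers of $r_{hz}$, verifying the cross term vanishes, and rearranging $1/(\sum_i \tilde{Z}_i^2)^2$ to match the claimed $\omega_h^2$ structure and the $(r_{h1}^{-1}r_{h0}^{-1}-3)/n_h$ coefficient.
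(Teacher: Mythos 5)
Your proof is correct. Part (i) is essentially the paper's own computation: the paper partials out the intercept via FWL and evaluates $\sum_{i}\tilde{Z}_i^2\hat{e}_i^2/(\sum_i\tilde{Z}_i^2)^2$, while you evaluate the sandwich directly; the arithmetic is identical. For part (ii) the paper gives no computation at all---it simply invokes Theorem 6 of \cite{ding2021frisch} with $(Y_i(1),Y_i(0))\equiv(x_{ik},x_{ik})$---whereas you reprove that special case from scratch, and every step checks out: the FWL reduction to the regression of $x_{ik}$ on $\tilde{Z}_i=Z_i-r_{h1}$ is legitimate for both the coefficient and the EHW variance (the paper's \Cref{lem:invariance-principle} and its stated use of FWL for standard errors cover this); the residual decomposition $\hat{e}_i=\hat{e}_{i,h}-(\hat{\tau}_{\fe,k}-\hat{\tau}_{h,k})\tilde{Z}_i$ is exactly the device the paper itself deploys in the proof of \Cref{lem:limit-of-standard-errors}; the cross term vanishes since $\tilde{Z}_i^3$ is constant within each arm and the within-arm residual sums are zero; $\sum_{i\in\ms_h}\tilde{Z}_i^2\hat{e}_{i,h}^2=(n_hr_{h1}r_{h0})^2\hat{\sx}^2_{h,\Lin,k}$ follows from part (i); and $\sum_{i\in\ms_h}\tilde{Z}_i^4=n_hr_{h1}r_{h0}(1-3r_{h1}r_{h0})$ yields the $(r_{h1}^{-1}r_{h0}^{-1}-3)/n_h$ factor after dividing by $(\sum_{h'}n_{h'}r_{h'1}r_{h'0})^2$. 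What your route buys is self-containedness, at the cost of the bookkeeping you anticipated. One notational remark: your $\hat{\tau}_{\fe,k}$ is the coefficient of $Z_i$ in $\lmm(x_{ik}\sim 1+Z_i+\bs{S}_i)$, i.e.\ $\hat{\tau}_{\omega,k}=\sum_h\omega_h\hat{\tau}_{h,k}$, which is what the $\hat{\tau}_{k}$ in the lemma statement must denote (it is the quantity produced by the cited theorem), so your formula agrees with the intended one; likewise the lemma's first display should read $n_{h1},n_{h0}$ rather than $n_1,n_0$, as both your derivation and the paper's own proof make clear.
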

\begin{proof}[Proof of \Cref{lem:ehw-of-covariate}]
  Consider the CRE.  Let $\tilde{Z}_i = Z_i - r_1$, and $\hat{e}_i = Z_i(x_{ik}-\bar{x}_{k,1}) + (1-Z_i)(x_{ik}-\bar{x}_{k,0})$. Using FWL, we have 
    \begin{align*}
    \hat{\sx}^2_{\Lin,k} & = \frac{\sum_{i=1}^n \tilde{Z}_i^2 \hat{e}_i^2}{(\sum_{i=1}^n \tilde{Z}_i^2)^2} = \frac{r_0^2 \sum_{i:Z_i=1}(x_{ik}-\bar{x}_{k,1})^2 + r_1^2 \sum_{i:Z_i=0}(x_{ik}-\bar{x}_{k,0})^2}{(n_0 r_1^2 + n_1 r_0^2)^2}\\
    & = \frac{n_1-1}{n_1^2}s_{1,kk}  + \frac{n_0-1}{n_0^2}s_{0,kk}. 
    \end{align*}
    Thus, for stratum $h$, we have
    \[
    \hat{\sx}^2_{h,\Lin,k} =  \frac{n_{h1}-1}{n_{h1}^2}s_{h1,kk}  + \frac{n_{h0}-1}{n_{h0}^2}s_{h0,kk}.
    \]
    The equality for $\hat{\sx}^2_{\fe ,k}$ follows from Theorem $6$ of \cite{ding2021frisch} with $(Y_i(1),Y_i(0))\equiv (x_{ik}, x_{ik})$.
\end{proof}

\Cref{lem:limit-of-sample-covariance} is from Lemma A16 of \cite{Li9157}.
\begin{lemma}
\label{lem:limit-of-sample-covariance}
    Assume Condition \ref{a:CLT-sre-for-rerandomization} (i)--(iii) holds, under SRE, we have, for $z\in \{0,1\}$, $h\in [H]$,
    \begin{gather*}
         s_{hz,YY} - S_{h, Y(z)Y(z)} = \op(1),\quad \bs{s}_{hz,\bs{x}\bs{x}}- \bs{S}_{h,\bs{x}\bs{x}}  = \op(1),\quad\bs{s}_{hz,\bs{x}Y}- \bs{S}_{h,\bs{x}Y(z)}  = \op(1).
    \end{gather*}
\end{lemma}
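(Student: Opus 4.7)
The plan is to reduce to a within-stratum analysis and then invoke standard finite-population moment bounds for simple random sampling without replacement. Under SRE, treatment assignments are mutually independent across strata, so it suffices to establish the three convergence statements for a fixed stratum $h$ and fixed $z \in \{0,1\}$; the full conclusion follows because $H$ is fixed. Within stratum $h$, the set $\{i \in \ms_h : Z_i = z\}$ is a simple random sample of size $n_{hz}$ drawn without replacement from $\ms_h$ of size $n_h$.

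First, I would verify that each of $s_{hz,YY}$, $\bs{s}_{hz,\bs{x}\bs{x}}$, $\bs{s}_{hz,\bs{x}Y}$ is unbiased for $S_{h,Y(z)Y(z)}$, $\bs{S}_{h,\bs{x}\bs{x}}$, $\bs{S}_{h,\bs{x}Y(z)}$, respectively. This is a standard identity for SRS without replacement: the sample covariance on a without-replacement subsample of size $n_{hz}$ has expectation equal to the population covariance on $\ms_h$, regardless of the subsample size (a consequence of computing $E[(\bs{a}_i-\bar{\bs{a}}_{hz})(\bs{b}_i-\bar{\bs{b}}_{hz})^\top]$ using the joint inclusion probabilities). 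Since covariates and potential outcomes under treatment $z$ are fixed finite-population quantities once $Z_i=z$ is observed, this unbiasedness applies directly to all three estimators.

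Second, I would bound the variance of each estimator and apply Chebyshev's inequality. For SRS without replacement, the variance of a sample variance $s^2$ of a scalar quantity $\{w_i\}_{i\in\ms_h}$ with finite-population variance $S^2_w$ admits a bound of the form $O(n_h^{-1})$ multiplied by a linear combination of $S_w^4$ and the average fourth central moment $n_h^{-1}\sum_{i\in\ms_h}(w_i-\bar{w}_h)^4$. Under Condition \ref{a:CLT-sre-for-rerandomization}(ii), the limit of $S^2_w$ is finite for $w_i\in\{Y_i(z), x_{ik}\}$; under Condition \ref{a:CLT-sre-for-rerandomization}(iii), $\max_{i\in\ms_h}(w_i-\bar{w}_h)^2 = o(n)$ gives
\begin{align*}
\frac{1}{n_h}\sum_{i\in\ms_h}(w_i-\bar{w}_h)^4 \leq \max_{i\in\ms_h}(w_i-\bar{w}_h)^2\cdot\frac{1}{n_h}\sum_{i\in\ms_h}(w_i-\bar{w}_h)^2 = o(n)\cdot O(1),
\end{align*}
so the variance of $s_{hz,YY}$ and of each diagonal entry of $\bs{s}_{hz,\bs{x}\bs{x}}$ is $o(1)$. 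An analogous Cauchy--Schwarz bound handles the cross-term variance for $\bs{s}_{hz,\bs{x}Y}$ and the off-diagonal entries of $\bs{s}_{hz,\bs{x}\bs{x}}$. Combined with unbiasedness, Chebyshev's inequality then delivers $\op(1)$ convergence entrywise, which is equivalent to the stated vector/matrix convergence since $K$ is fixed.

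The most delicate step is the SRS-without-replacement variance bound for the sample covariance, which requires careful fourth-order moment bookkeeping through the second-order inclusion probabilities; this is precisely the calculation carried out in Lemma A16 of \cite{Li9157}, and the argument above essentially applies that CRE-level result stratum by stratum. I expect no serious obstacle beyond invoking that bound and verifying that Condition \ref{a:CLT-sre-for-rerandomization}(iii) supplies exactly the Lindeberg-type control needed to make the average fourth-moment term negligible relative to $n_h$.
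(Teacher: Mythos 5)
Your proposal is correct and matches the paper's approach: the paper simply imports this result as Lemma A16 of \cite{Li9157} applied stratum by stratum (each stratum being a CRE under SRE), which is exactly the reduction and the citation you arrive at. Your sketch of the underlying argument (unbiasedness of the sample covariance under simple random sampling without replacement, a fourth-moment variance bound made $o(1)$ by Condition \ref{a:CLT-sre-for-rerandomization}(iii), then Chebyshev) is the standard proof of that cited lemma and is sound.
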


\begin{lemma}
\label{lem:lin-estimator-expression}
 We have
    \[
     \hat{\se}^2_{\Lin} = \sum_{h=1}^H\pi_{h}^2\hat{\se}^2_{h,\Lin}, \quad   \hat{\tau}_{\Lin} = \sum_{h=1}^H \pi_h \hat{\tau}_{h,\Lin}.
    \]
    Moreover, we have $\hat{\tau}_{\Lin} = \hat{\tau} - \sum_{h=1}^H \pi_{h} \hat{\bs{\beta}}_{h,\Lin,\bs{x}}^\top\hat{\tau}_{h,\bs{x}}$ where $\hat{\bs{\beta}}_{h,\Lin,\bs{x}} =r_0\hat{\bs{\beta}}_{h}(1)+r_1\hat{\bs{\beta}}_{h}(0)$, $\hat{\bs{\beta}}_h(z) = \bs{s}_{hz,\bs{x}\bs{x}}^{-1} \bs{s}_{hz,\bs{x}{Y}}$.
\end{lemma}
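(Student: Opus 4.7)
The plan is to apply the invariance of OLS (\Cref{lem:invariance-principle}) to reparameterize the stratified Lin regression \eqref{eq:formula-Lin-stratified} into an equivalent regression whose design matrix decouples across strata. The original regressor set $\{1,\,Z_i,\,\bs{S}_i,\,Z_i\bs{S}_i,\,\bs{x}_i,\,Z_i\bs{x}_i,\,\bs{S}_i\otimes\bs{x}_i,\,Z_i(\bs{S}_i\otimes\bs{x}_i)\}$ spans the same column space as the stratum-indexed basis $\{I(i\in\ms_h),\,Z_i I(i\in\ms_h),\,\bs{x}_i I(i\in\ms_h),\,Z_i\bs{x}_i I(i\in\ms_h)\}_{h=1}^H$, and the two bases are related by an invertible $2H(1+K)\times 2H(1+K)$ linear transformation $\bs{P}$. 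Under the stratum-indexed parameterization, each block of coefficients coincides with those of the within-stratum Lin regression $\lmm(Y_i\sim 1+Z_i+\bs{x}_i+Z_i\bs{x}_i)$ on $\ms_h$, since observations from $\ms_{h'}$ produce zero regressor vectors in the block for stratum $h\ne h'$.

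For the second identity, I would match linear predictors across the two parameterizations. Writing the $Z_i$-related piece of the original regression as $\tau Z_i+Z_i\bs{S}_i^\top\bs{\eta}$ and equating it to $Z_i\sum_{h=1}^H \tau_h I(i\in\ms_h)$, which must hold for every $i$, and using $\bs{S}_i=(I(i\in\ms_h)-\pi_h)_{h=2}^H$, one reads off $\eta_h=\tau_h-\tau_1$ for $h\geq 2$, whence $\tau=\sum_{h=1}^H \pi_h\tau_h$. Evaluating at the OLS solution gives $\hat{\tau}_{\Lin}=\sum_{h=1}^H \pi_h\hat{\tau}_{h,\Lin}$. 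The decomposition $\hat{\tau}_{\Lin}=\hat{\tau}-\sum_{h=1}^H\pi_h\hat{\bs{\beta}}_{h,\Lin,\bs{x}}^\top\hat{\bs{\tau}}_{h,\bs{x}}$ then follows by substituting the within-stratum Lin formula $\hat{\tau}_{h,\Lin}=\hat{\tau}_h-\hat{\bs{\beta}}_{h,\Lin,\bs{x}}^\top\hat{\bs{\tau}}_{h,\bs{x}}$ with $\hat{\bs{\beta}}_{h,\Lin,\bs{x}}=r_{h0}\hat{\bs{\beta}}_h(1)+r_{h1}\hat{\bs{\beta}}_h(0)$ (a standard consequence of FWL applied to Lin's regression on $\ms_h$) and using $\hat{\tau}=\sum_h\pi_h\hat{\tau}_h$.

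For the EHW standard error identity, observe that in the stratum-indexed parameterization the design matrix $\bs{X}_2$ is block diagonal across strata: each unit's regressor vector is supported on the block for its own stratum. Consequently both $\bs{X}_2^\top\bs{X}_2$ and $\bs{X}_2^\top\diag(\hat{e}_i^2)\bs{X}_2$, and hence the EHW covariance $\bs{V}_{(2)}$, are block diagonal, with the $h$-th block being the EHW covariance of the within-stratum Lin regression. By \Cref{lem:invariance-principle}(ii) the residuals are the same under both parameterizations, so the EHW variance of any linear combination of coefficients is unambiguous. Applying this to the combination $\hat{\tau}_{\Lin}=\sum_h\pi_h\hat{\tau}_{h,\Lin}$ and exploiting block-diagonality of $\bs{V}_{(2)}$ to kill cross-stratum covariances yields $\hat{\se}^2_{\Lin}=\sum_{h=1}^H \pi_h^2\hat{\se}^2_{h,\Lin}$.

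The main obstacle is bookkeeping: writing down the reparameterization $\bs{P}$ explicitly and checking invertibility, complicated slightly by the centering of $\bs{S}_i$ (which is precisely what forces the $\pi_h$ weights to appear rather than dummy-coded ones). Once the reparameterization is in hand, the three claims reduce to routine applications of \Cref{lem:invariance-principle}, FWL within each stratum, and elementary block-diagonal algebra.
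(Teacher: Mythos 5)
Your proposal is correct and is exactly the argument the paper intends: its entire proof of this lemma is the one-line remark that the result ``follows from linear algebra and Lemma \ref{lem:invariance-principle},'' and your reparameterization into the stratum-indexed basis, the within-stratum FWL step, and the block-diagonal EHW algebra are the details being elided. Your writing of the weights as $r_{h0}\hat{\bs{\beta}}_h(1)+r_{h1}\hat{\bs{\beta}}_h(0)$ (with stratum-specific proportions) is the correct form; the lemma's $r_0,r_1$ appears to be a notational slip carried over from the CRE case.
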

\begin{proof}[Proof of \Cref{lem:lin-estimator-expression}]
The conclusion follows from linear algebra and \Cref{lem:invariance-principle}.
\end{proof}

\begin{lemma}
\label{lem:fe-estimator-expression}
$\hat{\tau}_{\fe } = \hat{\tau}_{\omega} -\hat{\bs{\beta}}_{\fe, \bs{x}}^\top \hat{\bs{\tau}}_{\omega,\bs{x}}$ where
    \begin{align*}
        &\hat{\bs{\beta}}_{\fe, \bs{x}} =  \Big[n^{-1}\sum_{h=1}^H \big\{ (n_{h1}-1) \bs{s}_{h1,\bs{x}\bs{x}}+(n_{h0}-1) \bs{s}_{h0,\bs{x}\bs{x}} + n_hr_{h1}r_{h0}(\hat{\bs{\tau}}_{\omega,\bs{x}} - \hat{\bs{\tau}}_{h,\bs{x}})(\hat{\bs{\tau}}_{\omega,\bs{x}} - \hat{\bs{\tau}}_{h,\bs{x}})^\top\big\}\Big]^{-1} \\
       & \cdot\Big[n^{-1}\sum_{h=1}^H \big\{ (n_{h1}-1) \bs{s}_{h1,\bs{x}Y}+(n_{h0}-1) \bs{s}_{h0,\bs{x}Y} + n_hr_{h1}r_{h0}(\hat{\bs{\tau}}_{\omega,\bs{x}} - \hat{\bs{\tau}}_{h,\bs{x}})(\hat{\tau}_{\omega} - \hat{\tau}_{h})\big\}\Big].
    \end{align*}
\end{lemma}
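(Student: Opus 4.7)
The plan is to apply the Frisch--Waugh--Lovell (FWL) theorem to the fixed effects regression \eqref{eq:formula-stra-no-interaction} in two passes: one pass to reduce $\hat{\tau}_{\fe}$ in terms of $\hat{\bs{\beta}}_{\fe,\bs{x}}$, and one pass to obtain the closed form of $\hat{\bs{\beta}}_{\fe,\bs{x}}$ itself.

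For the first pass, I would use the fact that once the OLS coefficient $\hat{\bs{\beta}}_{\fe,\bs{x}}$ is plugged back in, the coefficient of $Z_i$ in the fit of $Y_i - \bs{x}_i^\top \hat{\bs{\beta}}_{\fe,\bs{x}}$ on $(1, Z_i, \bs{S}_i)$ still equals $\hat{\tau}_{\fe}$ by the sequential-minimization property of OLS. Applying the identity $\hat{\tau}_{\omega} = \sum_{h=1}^H \omega_h \hat{\tau}_h$ of \cite{ding2021frisch} to this residualized response gives $\hat{\tau}_{\fe} = \sum_{h=1}^H \omega_h (\hat{\tau}_h - \hat{\bs{\beta}}_{\fe,\bs{x}}^\top \hat{\bs{\tau}}_{h,\bs{x}}) = \hat{\tau}_{\omega} - \hat{\bs{\beta}}_{\fe,\bs{x}}^\top \hat{\bs{\tau}}_{\omega,\bs{x}}$.

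For the second pass, FWL gives $\hat{\bs{\beta}}_{\fe,\bs{x}} = (\sum_i \tilde{\bs{x}}_i \tilde{\bs{x}}_i^\top)^{-1}\sum_i \tilde{\bs{x}}_i \tilde{Y}_i$, where the tildes denote residuals after regressing on $(1, Z_i, \bs{S}_i)$. The auxiliary regression has only $H+1$ free parameters, so its fit must be additive in stratum and treatment; solving its normal equations produces the fit $\bar{Y}_{h(i)} + \hat{\tau}_{\omega}(Z_i - r_{h(i),1})$ for $Y_i$ and the analogous form $\hat{\bs{\tau}}_{\omega,\bs{x}}(Z_i - r_{h(i),1})$ for $\bs{x}_i$, using $\bar{\bs{x}}_h = \bs{0}$. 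The residuals $\tilde{\bs{x}}_i$ and $\tilde{Y}_i$ then take two explicit piecewise expressions depending on whether $Z_i = 1$ or $Z_i = 0$.

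The main obstacle is the algebraic bookkeeping when expanding $\sum_i \tilde{\bs{x}}_i \tilde{\bs{x}}_i^\top$ and $\sum_i \tilde{\bs{x}}_i \tilde{Y}_i$. The key simplification comes from the centering relations $\bar{\bs{x}}_{h1} = r_{h0}\hat{\bs{\tau}}_{h,\bs{x}}$ and $\bar{\bs{x}}_{h0} = -r_{h1}\hat{\bs{\tau}}_{h,\bs{x}}$ implied by $\bar{\bs{x}}_h = \bs{0}$. Splitting each sum over the two treatment arms within each stratum and substituting the explicit residuals, the pure within-arm quadratic parts collapse into $(n_{hz}-1)\bs{s}_{hz,\bs{x}\bs{x}}$ and $(n_{hz}-1)\bs{s}_{hz,\bs{x}Y}$, while the remaining cross terms involving $\hat{\bs{\tau}}_{\omega,\bs{x}}$ and $\bar{\bs{x}}_{hz}$ aggregate cleanly into $n_h r_{h1} r_{h0}(\hat{\bs{\tau}}_{\omega,\bs{x}} - \hat{\bs{\tau}}_{h,\bs{x}})(\hat{\bs{\tau}}_{\omega,\bs{x}} - \hat{\bs{\tau}}_{h,\bs{x}})^\top$ (and its analogue $n_h r_{h1} r_{h0}(\hat{\bs{\tau}}_{\omega,\bs{x}} - \hat{\bs{\tau}}_{h,\bs{x}})(\hat{\tau}_{\omega} - \hat{\tau}_{h})$ for the numerator). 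This is the index- and sign-heavy step, conceptually routine but most prone to error.
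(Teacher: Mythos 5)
Your proposal is correct and follows essentially the same route as the paper: both arguments are FWL partialling-out of $(1,Z_i,\bs{S}_i)$, both hinge on the same piecewise form of the residuals of $\lmm(Y_i\sim 1+Z_i+\bs{S}_i)$ (equivalently, the fitted values you describe, using $\bar{\bs{x}}_{h1}=r_{h0}\hat{\bs{\tau}}_{h,\bs{x}}$ and $\bar{\bs{x}}_{h0}=-r_{h1}\hat{\bs{\tau}}_{h,\bs{x}}$), and both recover $\hat{\tau}_{\fe}=\hat{\tau}_{\omega}-\hat{\bs{\beta}}_{\fe,\bs{x}}^\top\hat{\bs{\tau}}_{\omega,\bs{x}}$ by applying the identity $\hat{\tau}_{\omega}=\sum_{h}\omega_h\hat{\tau}_h$ of \cite{ding2021frisch} to the adjusted response. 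The only cosmetic difference is that you solve the FWL normal equations $\big(\sum_i\tilde{\bs{x}}_i\tilde{\bs{x}}_i^\top\big)^{-1}\sum_i\tilde{\bs{x}}_i\tilde{Y}_i$ directly, whereas the paper minimizes over $\bs{\beta}$ the profiled residual sum of squares taken from the proof of Theorem 6 of \cite{ding2021frisch} --- the gradient of that quadratic is exactly your normal equations, so the two computations coincide term by term.
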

\begin{proof}[Proof of \Cref{lem:fe-estimator-expression}]
   By \Cref{lem:invariance-principle}, $\hat{\tau}_{\fe }$ is also equal to the coefficient estimator of $Z_i$ in 
   \begin{align}
   \label{eq:formula-main-effect-regression-equivalence-form}
    \lmm(Y_i \sim Z_i+ \tilde{\bs{S}}_i + \bs{x}_i).
\end{align}

Let $\hat{e}_i$ be the residual of unit $i$ obtained from $\lmm(Y_i \sim Z_i+ \tilde{\bs{S}}_i)$.  By the proof of \cite[][Theorem 6]{ding2021frisch}, we have the sum of squares of residual
\begin{align*}
    \sum_{i=1}^n \hat{e}_i^2 =& \sum_{h=1}^H  \Big\{\sum_{i:Z_i=1,i\in \ms_{ h }}(Y_i - \bar{Y}_{h1})^2 + \sum_{i:Z_i=0,i\in \ms_{ h }}(Y_i - \bar{Y}_{h0} )^2 \Big\} + \sum_{h=1}^H n_{h} r_{h0} r_{h1} (\hat{\tau}_{\omega}-\hat{\tau}_{h})^2.
\end{align*}

Let $\hat{\bs{\beta}}_{\fe, \bs{x}}$ be the coefficient of $\bs{x}_i$ in \eqref{eq:formula-main-effect-regression-equivalence-form}. Replacing $Y_i(z)$ by $Y_i(z)-\bs{x}_i^\top\bs{\beta}$, we view $\sum_{i=1}^n \hat{e}_i^2$ as a function of $\bs{\beta}$. $\hat{\bs{\beta}}_{\fe, \bs{x}}$ minimizes 
\begin{align*}
    \sum_{i=1}^n \hat{e}_i^2 =& \sum_{h=1}^H  \Big\{\sum_{i:Z_i=1,i\in \ms_{ h }}(Y_i-\bs{x}_i^\top \bs{\beta} - \bar{Y}_{h1} + \bar{\bs{x}}_{h1}^\top \bs{\beta})^2  +   \sum_{i:Z_i=0,i\in \ms_{ h }}(Y_i-\bs{x}_i^\top \bs{\beta} - \bar{Y}_{h0} + \bar{\bs{x}}_{h0}^\top \bs{\beta})^2\Big\} + \\
    &\qquad \qquad \qquad \qquad \sum_{h=1}^H n_{h} r_{h0} r_{h1} \big\{\hat{\tau}_{\omega}-\hat{\tau}_{h}-\bs{\beta}^\top(\hat{\bs{\tau}}_{\omega,\bs{x}}-\hat{\bs{\tau}}_{h,\bs{x}})\big\}^2.
\end{align*}

Therefore, the expression of $\hat{\bs{\beta}}_{\fe, \bs{x}}$ follows.

Substituting $Y_i(z)$ with $Y_i(z) - \hat{\bs{\beta}}_{\fe, \bs{x}}^\top \bs{x}_i$ in the expression of $\hat{\tau}_{\omega}$, we have $  \hat{\tau}_{\fe } = \hat{\tau}_{\omega} -\hat{\bs{\beta}}_{\fe, \bs{x}}^\top \hat{\bs{\tau}}_{\omega,\bs{x}}.$
\end{proof}

\begin{lemma}
\label{lem:limit-of-regression-coefficient}
    Under Condition \ref{a:CLT-sre-for-rerandomization} and under SRE, we have 
    \begin{align*}
    & \hat{\bs{\beta}}_{h}(z) - \bs{\beta}_{h}(z) = \op(1), \quad \bs{\beta}_{h}(z) = \bs{S}_{h,\bs{x}\bs{x}}^{-1}\bs{S}_{h,\bs{x}Y(z)} ,\\
  & \hat{\bs{\beta}}_{h,\Lin,\bs{x}} - \bs{\beta}_{h, \Lin , \bs{x}} = \op(1), \quad \bs{\beta}_{h, \Lin , \bs{x}} = \bs{V}_{h,\bs{x}\bs{x}}^{-1}\bs{V}_{h,\bs{x}\tau},\\
    & \hat{\bs{\beta}}_{\fe, \bs{x}} - \bs{\beta}_{\fe, \bs{x}}  = \op(1), \quad \bs{\beta}_{\fe, \bs{x}} = \Big(\sum_{h=1}^H  \pi_{ h }\bs{S}_{h,\bs{x}\bs{x}}\Big)^{-1}\Big\{\sum_{h=1}^H  \pi_{ h }\big(r_{h1}\bs{S}_{h,\bs{x}Y(1)}+r_{h0}\bs{S}_{h,\bs{x}Y(0)}\big)\Big\}.
  \end{align*}
\end{lemma}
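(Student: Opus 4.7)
The plan is to combine the closed-form expressions for the three estimators (Lemmas \ref{lem:lin-estimator-expression} and \ref{lem:fe-estimator-expression}) with the sample moment convergence of Lemma \ref{lem:limit-of-sample-covariance} and the design-based CLT of Lemma \ref{lem:CLT-for-rerandomization}, then invoke the continuous mapping theorem using that matrix inversion is continuous at any nonsingular matrix. The CLT is needed only for the third claim, where it controls the cross-product correction terms in the fixed-effects formula.

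For the first claim, Lemma \ref{lem:lin-estimator-expression} identifies $\hat{\bs{\beta}}_h(z) = \bs{s}_{hz,\bs{x}\bs{x}}^{-1}\bs{s}_{hz,\bs{x}Y}$. Lemma \ref{lem:limit-of-sample-covariance} yields $\bs{s}_{hz,\bs{x}\bs{x}} - \bs{S}_{h,\bs{x}\bs{x}} = \op(1)$ and $\bs{s}_{hz,\bs{x}Y} - \bs{S}_{h,\bs{x}Y(z)} = \op(1)$, and Condition \ref{a:CLT-sre-for-rerandomization}(ii) ensures that the limit of $\bs{S}_{h,\bs{x}\bs{x}}$ is positive-definite. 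The continuous mapping theorem then produces the claim. For the second claim, Lemma \ref{lem:lin-estimator-expression} gives $\hat{\bs{\beta}}_{h,\Lin,\bs{x}}$ as a linear combination of $\hat{\bs{\beta}}_h(1)$ and $\hat{\bs{\beta}}_h(0)$ with weights $r_{hz}$ that converge to positive limits by Condition \ref{a:CLT-sre-for-rerandomization}(i); applying the first claim and Slutsky's theorem gives a probability limit, and a direct computation using $\bs{V}_{h,\bs{x}\bs{x}} = \bs{S}_{h,\bs{x}\bs{x}}/(r_{h1}r_{h0})$ together with $\bs{V}_{h,\bs{x}\tau} = \bs{S}_{h,\bs{x}Y(1)}/r_{h1} + \bs{S}_{h,\bs{x}Y(0)}/r_{h0}$ verifies that this limit equals $\bs{V}_{h,\bs{x}\bs{x}}^{-1}\bs{V}_{h,\bs{x}\tau}$.

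The third claim requires handling both brackets in Lemma \ref{lem:fe-estimator-expression} term by term. Since $(n_{hz}-1)/n = \pi_h r_{hz} + o(1)$, Lemma \ref{lem:limit-of-sample-covariance} and Slutsky's theorem give the $\bs{s}_{hz,\bs{x}\bs{x}}$-part of the inverse bracket converging in probability to $\sum_h \pi_h(r_{h1}+r_{h0})\bs{S}_{h,\bs{x}\bs{x}} = \sum_h \pi_h \bs{S}_{h,\bs{x}\bs{x}}$, and analogously the $\bs{s}_{hz,\bs{x}Y}$-part of the numerator bracket converges to $\sum_h \pi_h\{r_{h1}\bs{S}_{h,\bs{x}Y(1)} + r_{h0}\bs{S}_{h,\bs{x}Y(0)}\}$. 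The remaining cross-product terms vanish: by Lemma \ref{lem:CLT-for-rerandomization}, both $\hat{\bs{\tau}}_{h,\bs{x}}$ and $\hat{\bs{\tau}}_{\omega,\bs{x}}$ are $\Op(n^{-1/2})$, and similarly $\hat{\tau}_h - \hat{\tau}_{\omega} = \Op(n^{-1/2})$, so $(n_h r_{h1}r_{h0}/n)(\hat{\bs{\tau}}_{\omega,\bs{x}}-\hat{\bs{\tau}}_{h,\bs{x}})(\hat{\bs{\tau}}_{\omega,\bs{x}}-\hat{\bs{\tau}}_{h,\bs{x}})^\top = \Op(n^{-1}) = \op(1)$ and the corresponding numerator cross term is also $\op(1)$. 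Since $\sum_h \pi_h \bs{S}_{h,\bs{x}\bs{x}}$ is a positive linear combination of positive-definite matrices and thus has a nonsingular limit, a final application of the continuous mapping theorem gives $\hat{\bs{\beta}}_{\fe,\bs{x}} - \bs{\beta}_{\fe,\bs{x}} = \op(1)$.

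The main obstacle is essentially bookkeeping rather than any real technical difficulty: one needs to keep track of the rates carefully to ensure the quadratic cross terms in the fixed-effects formula are genuinely $\op(1)$ and not merely $\Op(1)$, and to verify that the limiting Gram matrix remains nonsingular so that continuous mapping applies. Both of these are secured by Condition \ref{a:CLT-sre-for-rerandomization}, so aside from the algebraic identification of $\bs{\beta}_{h,\Lin,\bs{x}}$ in the second claim, the proof reduces to routine applications of Slutsky and continuous mapping on top of the already-established Lemmas \ref{lem:limit-of-sample-covariance} and \ref{lem:CLT-for-rerandomization}.
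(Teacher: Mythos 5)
Your proposal follows essentially the same route as the paper: identify the closed forms from Lemmas \ref{lem:lin-estimator-expression} and \ref{lem:fe-estimator-expression}, plug in the sample-moment limits from Lemma \ref{lem:limit-of-sample-covariance}, use Lemma \ref{lem:CLT-for-rerandomization} to kill the quadratic correction terms in the fixed-effects formula, and finish with continuous mapping at the nonsingular limiting Gram matrix. The one slip is your claim that $\hat{\tau}_h - \hat{\tau}_{\omega} = \Op(n^{-1/2})$: this is false in general, because $\hat{\tau}_h$ and $\hat{\tau}_{\omega}$ center on $\bar{\tau}_h$ and $\bar{\tau}_{\omega}$ respectively, and the stratum-specific ATEs need not agree, so $\hat{\tau}_{\omega} - \hat{\tau}_h = (\bar{\tau}_{\omega}-\bar{\tau}_h) + \Op(n^{-1/2}) = \Op(1)$ only. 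The paper is careful here: it writes $\hat{\tau}_{\omega} - \hat{\tau}_h - (\bar{\tau}_{\omega}-\bar{\tau}_h) = \Op(n^{-1/2})$ and invokes Condition \ref{a:CLT-sre-for-rerandomization}(iv) to get $\bar{\tau}_{\omega}-\bar{\tau}_h = O(1)$, so the numerator cross term is $\Op(n^{-1/2})\cdot\Op(1) = \op(1)$. Your final conclusion survives for exactly this reason---the covariate-difference factor already supplies the $\Op(n^{-1/2})$---but as written your intermediate rate claim would need to be corrected, and the correction is where Condition \ref{a:CLT-sre-for-rerandomization}(iv) actually enters the argument.
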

\begin{proof}[Proof of \Cref{lem:limit-of-regression-coefficient}]
The first two lines follow  from the definitions of $\hat{\bs{\beta}}_{h}(z)$ and $\hat{\bs{\beta}}_{h,\Lin,\bs{x}}$, together with \Cref{lem:limit-of-sample-covariance}.

  It remains to prove the last line. By \Cref{lem:CLT-for-rerandomization}, we have
    \begin{align*}
\hat{\bs{\tau}}_{\omega,\bs{x}} - \hat{\bs{\tau}}_{h,\bs{x}} = \Op(n_h^{-1/2}) = \Op(n^{-1/2}),\quad \hat{\tau}_{\omega} - \hat{\tau}_{h} - (\bar{\tau}_{\omega} - \bar{\tau}_h) = \Op(n^{-1/2}).
    \end{align*}
  
Condition \ref{a:CLT-sre-for-rerandomization} $(iv)$ implies that $\bar{\tau}_{\omega} - \bar{\tau}_h = O(1)$. Therefore, we have
\begin{align*}
&\sum_{h}\pi_h r_{h1}r_{h0}(\hat{\bs{\tau}}_{\omega,\bs{x}} - \hat{\bs{\tau}}_{h,\bs{x}})(\hat{\bs{\tau}}_{\omega,\bs{x}} - \hat{\bs{\tau}}_{h,\bs{x}})^\top = \Op(n^{-1}),\\
    &\sum_{h}\pi_h r_{h1}r_{h0}(\hat{\bs{\tau}}_{\omega,\bs{x}} - \hat{\bs{\tau}}_{h,\bs{x}})(\hat{\tau}_{\omega} - \hat{\tau}_{h}) = \Op(n^{-1/2})\Op(1+n^{-1/2}) =  \Op(n^{-1/2}).
\end{align*}

    Combining this with \Cref{lem:limit-of-sample-covariance}, we complete the proof.
\end{proof}

\Cref{lem:limit-of-standard-errors} shows the asymptotic limits of the EHW standard errors.
\begin{lemma}
\label{lem:limit-of-standard-errors}
Assume Condition \ref{a:CLT-sre-for-rerandomization} holds. Under SRE, we have
\begin{align*}
    &n\hat{\se}^2_{\fe } - n\tilde{\se}^2_{\fe } = \op(1);\quad  n_h\hat{\se}^2_{h,\Lin} - n_h\tilde{\se}^2_{h,\Lin} = \op(1); \\
    & n \hat{\sx}_{\fe ,k}^2 - V_{\omega,kk} = \op(1), \quad n_h \hat{\sx}_{h,\Lin,k}^2- V_{h,kk} = \op(1), \quad k \in [K],\quad h \in [H].\\
\end{align*}
\end{lemma}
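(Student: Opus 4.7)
The plan is to handle the four convergence claims in order of difficulty, using \Cref{lem:ehw-of-covariate} to turn EHW standard errors into explicit sums of sample covariances, and then \Cref{lem:limit-of-sample-covariance} (and \Cref{lem:CLT-for-rerandomization,lem:limit-of-regression-coefficient}) to pass to the population limits.

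First, I would dispatch the two ``covariate'' statements. From \Cref{lem:ehw-of-covariate},
\[
n_h\hat{\sx}^2_{h,\Lin,k} = \frac{n_h(n_{h1}-1)}{n_{h1}^2}s_{h1,kk} + \frac{n_h(n_{h0}-1)}{n_{h0}^2}s_{h0,kk}.
\]
Since $n_h/n_{hz} \to 1/r_{hz}$ by Condition \ref{a:CLT-sre-for-rerandomization}(i) and $s_{hz,kk}\to S_{h,kk}$ by \Cref{lem:limit-of-sample-covariance} (the covariate potential outcomes coincide under $z=0,1$), the right-hand side converges in probability to $S_{h,kk}(1/r_{h1}+1/r_{h0}) = S_{h,kk}/(r_{h1}r_{h0}) = V_{h,kk}$. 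For $n\hat{\sx}^2_{\fe,k}$, multiply \Cref{lem:ehw-of-covariate}'s second identity by $n$ and use the previous step together with the observation that $\hat{\tau}_{h,k}-\hat{\tau}_k = \Op(n^{-1/2})$ (its finite-population mean is zero for a fixed covariate, and \Cref{lem:CLT-for-rerandomization} gives the rate), so the correction term is $\Op(1/n)$. This leaves $\sum_h (\omega_h^2/\pi_h)V_{h,kk}=V_{\omega,kk}$, as required.

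Second, for $n_h\hat{\se}^2_{h,\Lin}-n_h\tilde{\se}^2_{h,\Lin}=\op(1)$, within stratum $h$ the regression \eqref{eq:formula-Lin-stratified} restricted to $\ms_h$ is exactly Lin's interacted regression on the full covariate vector. I would invoke the standard design-based analysis of the EHW variance estimator from Lin's regression: using FWL, write $\hat{\se}^2_{h,\Lin}$ as a weighted sum of squared residuals $Y_i-\bar{Y}_{hz}-\hat{\bs{\beta}}_h(z)^\top(\bs{x}_i-\bar{\bs{x}}_{hz})$ over the two treatment arms, expand using $\hat{\bs{\beta}}_h(z)=\bs{\beta}_h(z)+\op(1)$ from \Cref{lem:limit-of-regression-coefficient} and $\bs{s}_{hz,\cdot\cdot}\to\bs{S}_{h,\cdot\cdot}$, and collect terms. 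The sum of squared adjusted residuals converges to $r_{hz}^{-1}$ times the finite-population variance of $e_i(z)=Y_i(z)-\bar{Y}_h(z)-\bs{\beta}_h(z)^\top\bs{x}_i$, yielding precisely $n_h^{-1}\sigma^2_{h,\adj}+n_h^{-1}S_{h,\tau_e\tau_e}$ after accounting for the $S_{\tau_e\tau_e}$ ``slack'' that always appears in the EHW estimator.

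The main obstacle is the last claim, $n\hat{\se}^2_{\fe}-n\tilde{\se}^2_{\fe}=\op(1)$, because the fixed-effects regression \eqref{eq:formula-stra-no-interaction} mixes strata without interactions and the EHW matrix does not decompose stratum-by-stratum. My plan mirrors the covariate-only derivation in \Cref{lem:ehw-of-covariate}: apply FWL by first partialling out $1+\bs{S}_i+\bs{x}_i$ from $Z_i$ and vice versa. The transformed $Z$-residuals take the same stratum-specific constant form as in \cite{ding2021frisch}'s Theorem 6, so an extension of Ding--Lin's argument yields
\[
\hat{\se}^2_{\fe} \;=\; \sum_{h=1}^H \omega_h^2\Bigl\{\hat{\se}^2_{h,\Lin} \;+\; \frac{r_{h1}^{-1}r_{h0}^{-1}-3}{n_h}\bigl(\hat{\tau}_h-\hat{\tau}_{\omega}-\hat{\bs{\beta}}_{\fe,\bs{x}}^\top(\hat{\bs{\tau}}_{h,\bs{x}}-\hat{\bs{\tau}}_{\omega,\bs{x}})\bigr)^2\Bigr\} + \op(n^{-1}),
\]
the analogue of the covariate identity but with Lin-adjusted residuals inside the bracket. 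Multiplying by $n$ and using step two, \Cref{lem:CLT-for-rerandomization}, and \Cref{lem:limit-of-regression-coefficient} ($\hat{\bs{\beta}}_{\fe,\bs{x}}\to\bs{\beta}_{\fe,\bs{x}}$) I would show the first sum tends to $\sum_h(\omega_h^2/\pi_h)(\sigma^2_{h,\adj}+S_{h,\tau_e\tau_e})$ and, after combining the cross-stratum covariances via $\cov(\hat{\tau}_\omega-\bs{\beta}_{\fe,\bs{x}}^\top\hat{\bs{\tau}}_{\omega,\bs{x}})$, the correction sum tends to $\sum_h(\omega_h^2/\pi_h)(r_{h1}^{-1}r_{h0}^{-1}-3)(\bar{\tau}_h-\bar{\tau}_\omega)^2$, recovering $n\tilde{\se}^2_{\fe}$. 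The most delicate bookkeeping is in reorganizing the cross-term $\hat{\tau}_h\hat{\bs{\tau}}_{h,\bs{x}}^\top$ into the oracle-adjusted variance piece, which is where Condition \ref{a:CLT-sre-for-rerandomization}(iv) is needed to keep the limit positive and the approximation error genuinely $\op(1)$ rather than a constant.
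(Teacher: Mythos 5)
Your treatment of the two covariate claims is correct and essentially matches the paper (which obtains them by specializing the outcome results to $Y_i(z)\equiv x_{ik}$; your direct use of \Cref{lem:ehw-of-covariate} is equally valid). For $n_h\hat{\se}^2_{h,\Lin}$ the paper simply cites Theorem 8 of \cite{2020Rerandomization}; your sketch re-derives that known result, which is acceptable. The problem is the fixed-effects claim, where your key decomposition is wrong.

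You propose $\hat{\se}^2_{\fe}\approx\sum_h\omega_h^2\{\hat{\se}^2_{h,\Lin}+\tfrac{r_{h1}^{-1}r_{h0}^{-1}-3}{n_h}(\hat{\tau}_h-\hat{\tau}_\omega-\hat{\bs{\beta}}_{\fe,\bs{x}}^\top(\hat{\bs{\tau}}_{h,\bs{x}}-\hat{\bs{\tau}}_{\omega,\bs{x}}))^2\}$, with Lin's stratum-specific EHW variances inside the bracket. But the residuals of \eqref{eq:formula-stra-no-interaction} are $Y_i-\bar{Y}_{hz}-\hat{\bs{\beta}}_{\fe,\bs{x}}^\top(\bs{x}_i-\bar{\bs{x}}_{hz})$ plus a stratum-arm constant, using a \emph{single pooled} slope $\hat{\bs{\beta}}_{\fe,\bs{x}}$ across all strata and both arms, whereas $\hat{\se}^2_{h,\Lin}$ is built from residuals adjusted by the stratum- and arm-specific coefficients $\hat{\bs{\beta}}_h(z)$. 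Consequently your first sum converges to $\sum_h(\omega_h^2/\pi_h)(\sigma^2_{h,\adj}+S_{h,\tau_e\tau_e})$, which is strictly smaller in general than the required $n\cov(\hat{\tau}_\omega-\bs{\beta}_{\fe,\bs{x}}^\top\hat{\bs{\tau}}_{\omega,\bs{x}})+\sum_h(\omega_h^2/\pi_h)S_{h,\tau\tau}$: the gap is $\sum_h(\omega_h^2/\pi_h)\|\bs{V}_{h,\bs{x}\bs{x}}^{-1/2}\bs{V}_{h,\bs{x}\tau}-\bs{V}_{h,\bs{x}\bs{x}}^{1/2}\bs{\beta}_{\fe,\bs{x}}\|_2^2$ plus the discrepancy between $S_{h,\tau\tau}$ and $S_{h,\tau_e\tau_e}$, neither of which vanishes unless $\bs{\beta}_{\fe,\bs{x}}$ happens to equal every stratum-arm-optimal coefficient. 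Already with $H=1$ your identity would equate the EHW variance of the additive regression $Y\sim 1+Z+\bs{x}$ with that of Lin's interacted regression, which is false. The bracket must instead contain $\tfrac{n_{h1}-1}{n_{h1}^2}s_{h1,\hat\epsilon\hat\epsilon}+\tfrac{n_{h0}-1}{n_{h0}^2}s_{h0,\hat\epsilon\hat\epsilon}$ with $\hat{\epsilon}_i=Y_i-\hat{\bs{\beta}}_{\fe,\bs{x}}^\top\bs{x}_i$ centered within stratum-arm. A second, smaller gap: you assert the Ding--Lin Theorem 6 structure carries over "up to $\op(n^{-1})$," but in the FE regression the FWL-residualized treatment indicator is no longer exactly piecewise constant across strata once $\bs{x}_i$ is also partialled out, so the cross terms between the $(Z,\bs{S})$ block and the $\bs{x}$ block of the sandwich must be shown negligible explicitly; the paper does this via a six-term block-matrix expansion ($T_2$--$T_6$) using $\max_i\|\bs{x}_i\|_\infty^2=o(n)$, and your proposal gives no argument for it.
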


\begin{proof}[Proof of \Cref{lem:limit-of-standard-errors}]
We first prove $n\hat{\se}^2_{\fe } - n\tilde{\se}^2_{\fe } = \op(1)$. The proof proceeds in $3$ steps. In \textbf{Step 1}, we decompose  $n\hat{\se}^2_{\fe }$ into $6$ terms;  in \textbf{Step 2}, we derive the closed-form expression of related matrices; in \textbf{Step 3}, we derive the asymptotic limits for each term.

\noindent \textbf{Step 1}: We decompose  $n\hat{\se}^2_{\fe }$ into $6$ terms.

    Let $\bs{V}_i = (Z_i,\tilde{\bs{S}}_i)$, $\hat{e}_i$ be the residual of $\lm(Y_i\sim 1+ Z_i + \bs{S}_i)$ for  unit $i$, and 
\begin{align*}
    \bs{G} = \begin{pmatrix}
        \bs{G}_{11} & \bs{G}_{12}\\
        \bs{G}_{21} & \bs{G}_{22}
    \end{pmatrix}, \quad \bs{H} = \begin{pmatrix}
        \bs{H}_{11} & \bs{H}_{12}\\
        \bs{H}_{21} & \bs{H}_{22}
    \end{pmatrix}, \quad \bs{G}^{-1} = \begin{pmatrix}
        \bs{\Lambda}_{11} & \bs{\Lambda}_{12}\\
        \bs{\Lambda}_{21} & \bs{\Lambda}_{22}
    \end{pmatrix}
\end{align*}
where
\begin{align*}
    &\bs{G}_{11} =  n^{-1} \sum_{i=1}^n \bs{V}_i\bs{V}_i^\top, \quad \bs{G}_{12} = \bs{G}_{21}^\top = n^{-1} \sum_{i=1}^n \bs{V}_i \bs{x}_i^\top, \quad \bs{G}_{22} = n^{-1} \sum_{i=1}^n \bs{x}_i \bs{x}_i^\top;\\
    &\bs{H}_{11} =  n^{-1} \sum_{i=1}^n \hat{e}_i^2\bs{V}_i\bs{V}_i^\top, \quad \bs{H}_{12} = \bs{H}_{21}^\top = n^{-1} \sum_{i=1}^n \hat{e}_i^2 \bs{V}_i \bs{x}_i^\top, \quad \bs{H}_{22} = n^{-1} \sum_{i=1}^n \hat{e}_i^2 \bs{x}_i \bs{x}_i^\top.
\end{align*}

By definition of $n\hat{\se}^2_{\fe}$, we have
\[
n\hat{\se}^2_{\fe } = \bs{e}_1^\top\left(\begin{array}{cc}
      \bs{\Lambda}_{11}  &  \bs{\Lambda}_{12}
\end{array}\right)\bs{H}\left(\begin{array}{c}
      \bs{\Lambda}_{11} \\ 
      \bs{\Lambda}_{21}
    \end{array}\right)\bs{e}_1.
\]

By the formula of inverse $2\times 2$ block matrix, we have
\begin{align*}
    \bs{\Lambda}_{11} &= \bs{G}_{11}^{-1}+ \bs{G}_{11}^{-1} \bs{G}_{12} (\bs{G}_{22}-\bs{G}_{21}\bs{G}_{11}^{-1}\bs{G}_{12})^{-1}\bs{G}_{21}\bs{G}_{11}^{-1},\\
    \bs{\Lambda}_{21}^\top &= \bs{\Lambda}_{12} = -\bs{G}_{11}^{-1}\bs{G}_{12}(\bs{G}_{22}-\bs{G}_{21}\bs{G}_{11}^{-1}\bs{G}_{12})^{-1}.
\end{align*}

Let $\bs{\Sigma} = \bs{G}_{22}- \bs{G}_{21}\bs{G}_{11}^{-1}\bs{G}_{12}$ and $\bs{U}= \bs{G}_{11}^{-1}\bs{G}_{12}$. We have
\begin{align*}
    & n\hat{\se}^2_{\fe } = \bs{e}_1^\top\left(\begin{array}{cc}
      \bs{G}_{11}^{-1} + \bs{U}\bs{\Sigma}^{-1} \bs{U}^\top  &  -\bs{U}\bs{\Sigma}^{-1}
    \end{array}\right)\bs{H}\left(\begin{array}{c}
      \bs{G}_{11}^{-1} + \bs{U}\bs{\Sigma}^{-1}\bs{U}^\top  \\ 
      -\bs{\Sigma}^{-1}\bs{U}^\top
    \end{array}\right)\bs{e}_1\\
    =& \sum_{q=1}^6 T_q,
\end{align*}
where
\begin{align*}
    T_1 &= \bs{e}_1^\top  \bs{G}_{11}^{-1} \bs{H}_{11}  \bs{G}_{11}^{-1} \bs{e}_1,\quad T_2 = 2\bs{e}_1^\top  \bs{G}_{11}^{-1} \bs{H}_{11}  \bs{U}\bs{\Sigma}^{-1}\bs{U}^\top \bs{e}_1,\\
    T_3 &= \bs{e}_1^\top  \bs{U}\bs{\Sigma}^{-1}\bs{U}^\top \bs{H}_{11}  \bs{U}\bs{\Sigma}^{-1}\bs{U}^\top \bs{e}_1,\quad T_4=-2\bs{e}_1^\top  \bs{G}_{11}^{-1} \bs{H}_{12}  \bs{\Sigma}^{-1}\bs{U}^\top \bs{e}_1,\\
    T_5 &= -2\bs{e}_1^\top  \bs{U}\bs{\Sigma}^{-1}\bs{U}^\top \bs{H}_{12}  \bs{\Sigma}^{-1}\bs{U}^\top  \bs{e}_1,\quad  T_6 = \bs{e}_1^\top  \bs{U}\bs{\Sigma}^{-1} \bs{H}_{22}  \bs{\Sigma}^{-1}\bs{U}^\top  \bs{e}_1.
\end{align*}

\noindent \textbf{Step 2:} We now derive the closed-form expression of $\bs{e}_1^\top  \bs{G}_{11}^{-1}$, $\bs{\Sigma}$, $\bs{U}$ and $H$. Let $\bs{e}_{1} \in \mathbb{R}^{H+1}$ be the vector with $1$ at the first entry and $0$ at the other entries. Let $\bs{L}=  \bs{G}_{21}\bs{e}_{1}$. By simple calculation, we have $$\bs{L} = \sum_{h=1}^H \pi_h r_{h1}\bar{\bs{x}}_{h1},\quad \bs{G}_{21}  = \bs{L}\bs{e}_1^\top.$$ 
Therefore,
\begin{align*}
    &\bs{\Sigma} = n^{-1} \sum_{i=1}^n \bs{x}_i \bs{x}_i^\top - \bs{L}(\bs{G}_{11}^{-1})_{1,1}\bs{L}^\top,\quad \bs{U}= \bs{G}_{11}^{-1} \bs{e}_{1} \bs{L}^\top  .
\end{align*}

Let $c = \sum_{h=1}^H n^{-1} n_{h1} = \sum_{h=1}^H \pi_h r_{h1}$ and we have
\[
\bs{G}_{11} = \begin{pmatrix}
    c & \pi_{1} r_{11} & \cdots & \pi_{H} r_{H1} \\
    \pi_{1} r_{11} & \pi_1 & &\\
    \vdots & & \ddots & \\
    \pi_{H} r_{H1}  & & & \pi_H
\end{pmatrix}.
\]

Next, we prove
$$\bs{e}_1^\top  \bs{G}_{11}^{-1} = \Big(\sum_{h=1}^H \pi_{h} r_{h0} r_{h1}\Big)^{-1}(1,-r_{11},\ldots, -r_{H1}).$$

By linear algebra, we have
\begin{align*}
    (\bs{G}_{11}^{-1})_{1,1}  =& \det(\bs{G}_{11})^{-1}\Big(\prod_{h=1}^H \pi_{h}\Big) = 
    \Big(\sum_{h=1}^H \pi_{h} r_{h0} r_{h1} \Big)^{-1},
\end{align*}
where for the first equality, we apply the inverse of matrix formula $\bs{G}_{11}^{-1} = \operatorname{adj} (\bs{G}_{11}) /\operatorname{det}(\bs{G}_{11})$ and for the second equality, we apply 
\begin{align*}
\det\begin{pmatrix}\bs{A}&\bs{B}\\\bs{C}&\bs{D}\end{pmatrix} = \det(\bs{D})\det(\bs{A}-\bs{B}\bs{D}^{-1}\bs{C}).
\end{align*}

Using the formula of the inverse of $2\times 2$ block diagonal matrix, we have
\begin{align*}
    (\bs{G}_{11}^{-1})_{2:(H+1),1} = -\Big(c \diag(\pi_h)_{h\in [H]}-(\pi_{h}r_{h1})_{h\in [H]}(\pi_{h}r_{h1})_{h\in [H]}^\top\Big)^{-1} (\pi_{h}r_{h1})_{h\in [H]}, 
\end{align*}
where, by the Sherman-Morrison formula,
\begin{align*}
    &\Big(c \diag(\pi_h)_{h\in [H]}-(\pi_{h}r_{h1})_{h\in [H]}(\pi_{h}r_{h1})_{h\in [H]}^\top\Big)^{-1}\\
    =& c^{-1} \diag(\pi_h^{-1})_{h\in [H]} + \frac{(c^{-1}r_{h1})_{h\in [H]}(c^{-1}r_{h1})_{h\in [H]}^\top}{1-\sum_{h=1}^H (c\pi_h)^{-1}(\pi_{h}r_{h1})^2}\\
    =& c^{-1} \diag(\pi_h^{-1})_{h\in [H]} + \frac{\big(c^{-1}r_{h1}\big)_{h\in [H]}\big(c^{-1}r_{h1}\big)_{h\in [H]}^\top}{c^{-1}\sum_{h=1}^H \pi_{h} r_{h0} r_{h1}}.
\end{align*}
Thus, $$(\bs{G}_{11}^{-1})_{2:(H+1),1} = -\Big(\sum_{h=1}^H \pi_{h} r_{h0} r_{h1}\Big)^{-1}(r_{h1})_{h\in [H]}, $$
and therefore $$\bs{e}_1^\top  \bs{G}_{11}^{-1} = \begin{pmatrix}
        (\bs{G}_{11}^{-1})_{1,1}\\
        (\bs{G}_{11}^{-1})_{2:(H+1),1}
    \end{pmatrix}^\top = \Big(\sum_{h=1}^H \pi_{h} r_{h0} r_{h1}\Big)^{-1}(1,-r_{11},\ldots, -r_{H1}).$$

By definition, we have
\begin{align*}
    \bs{H}_{11} &= \begin{pmatrix}
        n^{-1}\sum_{i:Z_i=1} \hat{e}_i^2& n^{-1}\sum_{i:i\in\ms_1,Z_i=1}\hat{e}_i^2 & \cdots & 
 n^{-1} \sum_{i:i\in\ms_H,Z_i=1}\hat{e}_i^2\\
        n^{-1} \sum_{i:i\in\ms_1,Z_i=1}\hat{e}_i^2& n^{-1} \sum_{i:i\in\ms_1}\hat{e}_i^2 & &\\
        \vdots & & \ddots & \\
        n^{-1} \sum_{i:i\in\ms_H,Z_i=1}\hat{e}_i^2 &&& n^{-1} \sum_{i:i\in\ms_H}\hat{e}_i^2
    \end{pmatrix} \\
    \bs{H}_{21} &= \Big(n^{-1} \sum_{i:Z_i=1} \hat{e}_i^2 \bs{x}_i,n^{-1} \sum_{i:i\in\ms_{1}} \hat{e}_i^2 \bs{x}_i, \ldots, n^{-1} \sum_{i:i\in\ms_{H}} \hat{e}_i^2 \bs{x}_i \Big), \quad \bs{H}_{22} = n^{-1} \sum_{i=1}^n \hat{e}_i^2 \bs{x}_i\bs{x}_i^\top.
\end{align*}

\noindent \textbf{Step 3}: We deal with $T_q$, $1 \leq q \leq 6$ one by one. We will prove $T_1 = n\tilde{\se}_{\fe }^2 + \op(1)$ and $T_q = \op(1)$ for $2 \leq q \leq 6$.

First, we see that
\begin{align*}
    T_1 =& \Big(\sum_{h=1}^H \pi_{h} r_{h0} r_{h1}\Big)^{-2}\Big\{ n^{-1}\sum_{i:Z_i=1} \hat{e}_i^2 -2n^{-1}\sum_{h=1}^H \sum_{i:i\in\ms_h,Z_i=1}r_{h1} \hat{e}_i^2 + n^{-1}\sum_{h=1}^H \sum_{i:i\in\ms_h}r_{h1}^2 \hat{e}_i^2\Big\} \\
    =& \Big(\sum_{h=1}^H \pi_{h} r_{h0} r_{h1}\Big)^{-2}n^{-1} \Big\{\sum_{h=1}^H \sum_{i:i\in\ms_h,Z_i=1}r_{h0}^2  \hat{e}_i^2 + \sum_{h=1}^H \sum_{i:i\in\ms_h, Z_i=0}r_{h1}^2 \hat{e}_i^2\Big\}. 
\end{align*} 

By the proof of Theorem 6 in \cite{ding2021frisch}, the residual for unit $i\in \ms_h$, obtained from $\lmm(Y_i \sim Z_i+ \tilde{\bs{S}}_i)$, equals to
\begin{align*}
\begin{cases}
        Y_i - \bar{Y}_{h1} - r_{h0}(\hat{\tau}_{\omega}-\hat{\tau}_{h}),\quad Z_i=1;\\
        Y_i - \bar{Y}_{h0} + r_{h1}(\hat{\tau}_{\omega}-\hat{\tau}_{h}), \quad Z_i=0.
    \end{cases}
\end{align*}

Replacing $Y_i$ with $Y_i - \bs{x}_i^\top \hat{\bs{\beta}}_{\fe, \bs{x}}$ in the above expression, we have, for $i \in \ms_h$,
\begin{align*}
    \hat{e}_i = \begin{cases}
        Y_i - \bar{Y}_{h1} - (\bs{x}_i - \bar{\bs{x}}_{h1})^\top\hat{\bs{\beta}}_{\fe, \bs{x}} - r_{h0}(\hat{\tau}_{\omega}-\hat{\tau}_{h}) + r_{h0}(\hat{\bs{\tau}}_{\omega, \bs{x}}-\hat{\bs{\tau}}_{h, \bs{x}})^\top \hat{\bs{\beta}}_{\fe, \bs{x}},\quad Z_i=1;\\
        Y_i - \bar{Y}_{h0} - (\bs{x}_i - \bar{\bs{x}}_{h0})^\top\hat{\bs{\beta}}_{\fe, \bs{x}} + r_{h1}(\hat{\tau}_{\omega}-\hat{\tau}_{h}) - r_{h1}(\hat{\bs{\tau}}_{\omega, \bs{x}}-\hat{\bs{\tau}}_{h, \bs{x}})^\top \hat{\bs{\beta}}_{\fe, \bs{x}}, \quad Z_i=0.
    \end{cases}
\end{align*}

Define $\hat{\epsilon}_i$, for $i\in \ms_h$
\begin{align*}
    \hat{\epsilon}_i = \begin{cases}
        Y_i - \bar{Y}_{h1} - (\bs{x}_i - \bar{\bs{x}}_{h1})^\top\hat{\bs{\beta}}_{\fe, \bs{x}} ,\quad Z_i=1;\\
        Y_i - \bar{Y}_{h0} - (\bs{x}_i - \bar{\bs{x}}_{h0})^\top\hat{\bs{\beta}}_{\fe, \bs{x}} , \quad Z_i=0.
    \end{cases}
\end{align*}

Using that $\sum_{i:i\in\ms_h,Z_i=z}  (\hat{\epsilon}_i+a_{hz})^2 = \sum_{i:i\in\ms_h,Z_i=z}  \hat{\epsilon}_i^2 + n_{hz} a_{hz}^2$ for $h\in [H]$, $z\in\{0,1\}$, with $a_{h1} = - r_{h0}(\hat{\tau}_{\omega}-\hat{\tau}_{h}) + r_{h0}(\hat{\bs{\tau}}_{\omega, \bs{x}}-\hat{\bs{\tau}}_{h, \bs{x}})^\top \hat{\bs{\beta}}_{\fe, \bs{x}}$ and $a_{h0} = r_{h1}(\hat{\tau}_{\omega}-\hat{\tau}_{h}) - r_{h1}(\hat{\bs{\tau}}_{\omega, \bs{x}}-\hat{\bs{\tau}}_{h, \bs{x}})^\top \hat{\bs{\beta}}_{\fe, \bs{x}}$, we have 
\begin{align*}
      T_1 =&  \Big(\sum_{h=1}^H \pi_{h} r_{h0} r_{h1}\Big)^{-2}n^{-1} \Big(\sum_{h=1}^H \sum_{i:i\in\ms_h,Z_i=1}r_{h0}^2  \hat{\epsilon}_i^2 + \sum_{h=1}^H \sum_{i:i\in\ms_h, Z_i=0}r_{h1}^2 \hat{\epsilon}_i^2\Big)+ \\
      & \Big(\sum_{h=1}^H \pi_{h} r_{h0} r_{h1}\Big)^{-2} \Big[\sum_{h=1}^H \pi_hr_{h1}r_{h0}(r_{h0}^3 + r_{h1}^3)  \big\{\hat{\tau}_{\omega}-\hat{\tau}_{h}-\hat{\bs{\beta}}_{\fe, \bs{x}}^\top(\hat{\bs{\tau}}_{\omega,\bs{x}}-\hat{\bs{\tau}}_{h,\bs{x}})\big\}^2\Big]\\
      & =: T_{11} + T_{12} .
\end{align*}

Using Lemmas \ref{lem:limit-of-sample-covariance} and \ref{lem:limit-of-regression-coefficient}, we have under Condition \ref{a:CLT-sre-for-rerandomization}
\begin{align*}
    &\frac{1}{n_{hz}-1}\sum_{i:i\in\ms_h,Z_i=z}  \hat{\epsilon}_i^2 = s_{hz,YY} - 2 \bs{s}_{hz,Y\bs{x}}\hat{\bs{\beta}}_{\fe, \bs{x}} + \hat{\bs{\beta}}_{\fe, \bs{x}}^\top\bs{s}_{hz,\bs{x}\bs{x}}\hat{\bs{\beta}}_{\fe, \bs{x}}  \\
     = & S_{h,Y(z)Y(z)} - 2 \bs{S}_{h,Y(z)\bs{x}}\bs{\beta}_{\fe, \bs{x}} + \bs{\beta}_{\fe, \bs{x}}^\top\bs{S}_{h,\bs{x}\bs{x}}\bs{\beta}_{\fe, \bs{x}} + \op(1) = S_{h,Y(z)-\bs{x}^\top \bs{\beta}_{\fe, \bs{x}},Y(z)-\bs{x}^\top \bs{\beta}_{\fe, \bs{x}}} + \op(1).
\end{align*}

Applying \Cref{lem:covariance-of-hat-tau-vector-sre} to stratum $h$ with $\bs{R}_i(z) = Y_i(z) - \bs{\beta}_{\fe, \bs{x}}^\top \bs{x}_i$, we have
 \[
 n_h \cov(\hat{\tau}_{h}-\bs{\beta}_{\fe, \bs{x}}^\top \hat{\bs{\tau}}_{h,\bs{x}}) = \sum_{z\in\{0,1\}}r_{hz}^{-1}S_{h,Y(z)-\bs{x}^\top \bs{\beta}_{\fe, \bs{x}},Y(z)-\bs{x}^\top \bs{\beta}_{\fe, \bs{x}}} - S_{h,\tau\tau}.
 \]

Therefore, we have
\begin{align*}
    T_{11} =& \Big(\sum_{h=1}^H \pi_{h} r_{h0} r_{h1}\Big)^{-2}  \Big\{\sum_{h=1}^H \pi_h r_{h1}^2 r_{h0}^2 \sum_{z\in\{0,1\}}r_{hz}^{-1}S_{h,Y(z)-\bs{x}^\top \bs{\beta}_{\fe, \bs{x}},Y(z)-\bs{x}^\top \bs{\beta}_{\fe, \bs{x}}}\Big\} + \op(1) \\
     =& \sum_{h=1}^H \frac{\omega_h^2}{\pi_h} \sum_{z\in\{0,1\}}r_{hz}^{-1}S_{h,Y(z)-\bs{x}^\top \bs{\beta}_{\fe, \bs{x}},Y(z)-\bs{x}^\top \bs{\beta}_{\fe, \bs{x}}} + \op(1) \\
     =& \sum_{h=1}^H \frac{\omega_h^2}{\pi_h}\Big\{n_h \cov(\hat{\tau}_{h}-\bs{\beta}_{\fe, \bs{x}}^\top \hat{\bs{\tau}}_{h,\bs{x}}) + S_{h,\tau\tau}\Big\} + \op(1)\\
     =& n\cov(\hat{\tau}_{\omega} - \bs{\beta}^\top_{\fe, \bs{x}}\hat{\bs{\tau}}_{\omega,\bs{x}}) + \sum_{h=1}^H \frac{\omega_h^2}{\pi_h} S_{h,\tau\tau} + \op(1).
\end{align*}

For $T_{12}$, we have
\begin{align*}
     (\hat{\tau}_{\omega}-\hat{\tau}_h-\hat{\bs{\beta}}_{\fe, \bs{x}}^\top(\hat{\bs{\tau}}_{\omega,\bs{x}}-\hat{\bs{\tau}}_{h,\bs{x}}))^2 
   = (\bar{\tau}_{\omega} - \bar{\tau}_{h} )^2 + \op(1).
\end{align*}



Therefore, we have
\begin{align*}
    T_{12} &= \Big(\sum_{h=1}^H \pi_{h} r_{h0} r_{h1}\Big)^{-2} \sum_{h=1}^H \pi_hr_{h1}r_{h0}(r_{h0}^3 + r_{h1}^3)  (\bar{\tau}_{\omega} - \bar{\tau}_{h} )^2 + \op(1)\\
    & =\sum_{h=1}^H \frac{\omega_h^2(r_{h1}^{-1}r_{h0}^{-1}-3)}{\pi_h}(\bar{\tau}_h-\bar{\tau}_{\omega})^2 + \op(1),
\end{align*}
where for the last equality, we use $(r_{h1}r_{h0})^{-1}(r_{h1}^3+r_{h0}^3) = r_{h1}^{-1}r_{h0}^{-1}-3$.

Combining the formulas of $T_{11}$ and $T_{12}$, we have
\begin{align*}
    T_1 = T_{11} + T_{12} = n\tilde{\se}_{\fe }^2 + \op(1).
\end{align*}

We prove that $T_q$, $q=2,\ldots,6$ are $\op(1)$. Using that
\begin{align*}
    \bs{L} = \Op(n^{-1/2});\quad (\bs{G}_{11}^{-1})_{1,1}= O(1),\quad \bs{U}= \bs{G}_{11}^{-1} \bs{e}_{1} \bs{L}^\top,
\end{align*}
we have
\begin{align}
    &\bs{e}_1^\top  \bs{G}_{11}^{-1} \bs{H}_{11}  \bs{U} = \bs{e}_1^\top  \bs{G}_{11}^{-1} \bs{H}_{11} \bs{G}_{11} \bs{e}_1 \bs{L}^\top = T_1 \bs{L}^\top = \Op(n^{-1/2});
    \label{eq:quantity-order-1}\\
    &\bs{U}^\top \bs{H}_{11}\bs{U} = T_1 \bs{L}^\top \bs{L} = \Op(n^{-1}), \quad \bs{U}^\top \bs{e}_1 = \bs{L} (\bs{G}_{11}^{-1})_{1,1} =  \Op(n^{-1/2}). \label{eq:quantity-order-2}
\end{align} 

On the other hand, we have
\begin{align*}
    &\bs{e}_1^\top  \bs{G}_{11}^{-1} \bs{H}_{12} = \Big(\sum_{h=1}^H \pi_{h} r_{h0} r_{h1}\Big)^{-1} n^{-1}\Big( \sum_{i:Z_i=1} \hat{e}_i^2 \bs{x}_i-\sum_{h=1}^H \sum_{i:i\in\ms_{h}} r_{h1} \hat{e}_i^2 \bs{x}_i\Big).
\end{align*}

  Using $\|\bs{x}_i\|_{\infty}^2 = o(n)$ 
from condition \ref{a:CLT-sre-for-rerandomization} $(iii)$, we have
\begin{align}
    \|\bs{e}_1^\top  \bs{G}_{11}^{-1} \bs{H}_{12}\|_{\infty} &= O\Big(n^{-1}\sum_{h=1}^H \sum_{i:i\in\ms_{h}} \hat{e}_i^2 \Big) O(\max_{i\in [n]}\|\bs{x}_i\|_{\infty}) = O(T_1)o(n^{1/2}) = \op(n^{1/2}) \label{eq:quantity-order-3}
\end{align}

and 
\begin{align}
\label{eq:quantity-order-4}
     \bs{U}^\top \bs{H}_{12} = \bs{L} \cdot \bs{e}_1^\top \bs{G}_{11}^{-1} \bs{H}_{12} = \Op(n^{-1/2})\cdot\op(n^{1/2}) = \op(1).
\end{align}

We see that 
\[
\frac{1}{n} \sum_{i=1}^n \bs{x}_i \bs{x}_i^\top = \sum_{h=1}^H \pi_h \frac{n_h-1}{n_h} \bs{S}_{h,\bs{x}\bs{x}}, 
\]
and, by Condition \ref{a:CLT-sre-for-rerandomization} $(ii)$, the limit of $n^{-1}\sum_{i=1}^n \bs{x}_i \bs{x}_i^\top$ is positive definite. Combining this with $\bs{L}(\bs{G}_{11}^{-1})_{1,1}\bs{L}^\top = \Op(n^{-1})$, we have
\begin{align}
    \label{eq:quantity-order-5}
    \bs{\Sigma}^{-1} = \Op(1).
\end{align}

Finally, we have
\begin{align}
\label{eq:quantity-order-6}
    \|\bs{H}_{22}\|_{\infty} &= O(\max_{i\in [n]}\|\bs{x}_i\|_{\infty}^2)O(n^{-1}\sum_{i:Z_i=1}\hat{e}_i^2) =  O(\max_{i\in [n]}\|\bs{x}_i\|_{\infty}^2)O(T_1)
    = o(n)\Op(1) = \op(n).
\end{align}

Putting together \eqref{eq:quantity-order-1}--\eqref{eq:quantity-order-6}, we have
\begin{align*}
    T_2& = \bs{e}_1^\top  \bs{G}_{11}^{-1} \bs{H}_{11}  \bs{U}\cdot\bs{\Sigma}^{-1} \cdot \bs{U}^\top \bs{e}_1 = \Op(n^{-1/2}\cdot 1 \cdot n^{-1/2}) = \op(1),\\
    T_3 &= \bs{e}_1^\top  \bs{U}\cdot\bs{\Sigma}^{-1}\cdot\bs{U}^\top \bs{H}_{11}  \bs{U}\cdot\bs{\Sigma}^{-1}\cdot\bs{U}^\top \bs{e}_1 = \Op(n^{-1/2}\cdot 1 \cdot n^{-1} \cdot 1 \cdot n^{-1/2}) = \op(1),\\
    T_4& =\bs{e}_1^\top  \bs{G}_{11}^{-1} \bs{H}_{12} \cdot \bs{\Sigma}^{-1}\cdot\bs{U}^\top \bs{e}_1 = \op(n^{1/2} \cdot 1 \cdot n^{-1/2}) = \op(1),\\
    T_5 &= \bs{e}_1^\top  \bs{U}\cdot\bs{\Sigma}^{-1}\cdot\bs{U}^\top \bs{H}_{12} \cdot \bs{\Sigma}^{-1} \cdot \bs{U}^\top  \bs{e}_1 = \op(n^{-1/2}\cdot 1 \cdot 1 \cdot 1 \cdot n^{-1/2}) = \op(1),\\
    T_6 &= \bs{e}_1^\top  \bs{U}\cdot\bs{\Sigma}^{-1}\cdot \bs{H}_{22} \cdot \bs{\Sigma}^{-1} \cdot \bs{U}^\top  \bs{e}_1 = \op(n^{-1/2} \cdot 1 \cdot n \cdot 1 \cdot n^{-1/2}) = \op(1).
\end{align*}

Thus 
\begin{align}
\label{eq:limit-of-fe}
    n\hat{\se}_{\fe }^2 = T_1 + \op(1) = n\tilde{\se}_{\fe }^2 + \op(1).
\end{align}

Now we prove the rest of the lemma. By \cite[][Theorem 8]{2020Rerandomization}, under Condition \ref{a:CLT-sre-for-rerandomization}, we have
\begin{align}
\label{eq:limit-of-lin}
     n_h\hat{\se}^2_{h,\Lin} - n_h\tilde{\se}^2_{h,\Lin} = \op(1).
\end{align}

Applying \eqref{eq:limit-of-fe} and \eqref{eq:limit-of-lin} with $Y_i(z) \equiv x_{ik}$ and $\bs{x}_i\equiv \emptyset$, we have
\[
n \hat{\sx}_{\fe ,k}^2 - V_{\omega,kk} = \op(1); \quad n_h \hat{\sx}_{h,\Lin,k}^2- V_{h,kk} = \op(1).
\]
\end{proof}

\subsection{Asymptotic limits of the hacked $p$-values}

We review the notation of rerandomization with the general covariate balance criterion
(ReG) from \cite{Li9157} and \cite{zdrep}.
\begin{definition}
    Let $\phi(\bs{B},\bs{C})$ be a binary covariate balance indicator function, where $(\bs{B}, \bs{C})$ are two statistics computed from the data. An ReG accepts a randomization
if $\phi(\bs{B},\bs{C})=1$.
\end{definition}

\begin{condition}
\label{a:conditon-for-indicator-function}
    The binary indicator function $\phi(\cdot,\cdot)$ satisfies: (i) $\phi(\cdot,\cdot)$ is almost surely continuous; (ii) for $\bs{u}\sim N(\bs{0}_J,\bs{V}_0)$, we have $\Prob\{\phi(\bs{u},\bs{V}_0)=1\}>0$ for all $\bs{V}_0>0$, and $\cov\{\bs{u}\mid\phi(\bs{u},\bs{V}_0)=1\}$ is a continuous function of $\bs{V}_0.$
\end{condition}

\Cref{lem:weak-convergence-under-ReG} is from \citet[][Lemma S7]{zdrep}.

\begin{lemma}[Weak convergence under ReG]
\label{lem:weak-convergence-under-ReG}
    Let $(\phi_n)_{n= 1}^\infty$ be a sequence of binary indicator functions under Condition \ref{a:conditon-for-indicator-function} that converges to $\phi.$ For a sequence of random elements $(\bs{A}_n,\bs{B}_n,\bs{C}_n)_{n=1}^\infty$ that satisfies $(\bs{A}_{n},\bs{B}_{n},\bs{C}_{n})\wconv(\bs{A},\bs{B},\bs{C})$ as $n\to\infty$, we have
$$(\bs{A}_n,\bs{B}_n)\mid\{\phi_n(\bs{B}_n,\bs{C}_n)=1\}\quad\wconv\quad(\bs{A},\bs{B})\mid\{\phi(\bs{B},\bs{C})=1\}.$$
\end{lemma}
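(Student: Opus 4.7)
The plan is to reduce convergence of the conditional distributions to convergence of joint unconditional distributions via the continuous mapping theorem, and then divide by a denominator that is bounded away from zero.

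\textbf{Step 1 (joint unconditional convergence).} First I would establish
\[
(\bs{A}_n, \bs{B}_n, \phi_n(\bs{B}_n, \bs{C}_n)) \wconv (\bs{A}, \bs{B}, \phi(\bs{B}, \bs{C})).
\]
\Cref{a:conditon-for-indicator-function}(i) gives that $\phi$ is almost surely continuous under the law of $(\bs{B}, \bs{C})$, and the convergence $\phi_n \to \phi$ is interpreted so that $(b_n, c_n) \to (b, c)$ with $\phi$ continuous at $(b, c)$ implies $\phi_n(b_n, c_n) \to \phi(b, c)$. Combined with $(\bs{A}_n, \bs{B}_n, \bs{C}_n) \wconv (\bs{A}, \bs{B}, \bs{C})$, the extended continuous mapping theorem applied to the map $(a, b, c) \mapsto (a, b, \phi(b, c))$ yields the displayed convergence.

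\textbf{Step 2 (numerator and denominator).} For any bounded continuous $f$, the map $(a, b, s) \mapsto f(a, b)\mathbf{1}\{s=1\}$ is continuous at every $(a, b, s)$ with $s \in \{0, 1\}$ because the third coordinate is binary. The ordinary continuous mapping theorem applied to Step 1 then gives
\[
\E[f(\bs{A}_n, \bs{B}_n)\mathbf{1}\{\phi_n(\bs{B}_n, \bs{C}_n)=1\}] \to \E[f(\bs{A}, \bs{B})\mathbf{1}\{\phi(\bs{B}, \bs{C})=1\}],
\]
and the specialization $f \equiv 1$ yields $\Prob\{\phi_n(\bs{B}_n, \bs{C}_n)=1\} \to \Prob\{\phi(\bs{B}, \bs{C})=1\}$. \Cref{a:conditon-for-indicator-function}(ii) guarantees this limiting probability is strictly positive, so the denominator is eventually bounded away from zero. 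Taking ratios, for every bounded continuous $f$,
\[
\E[f(\bs{A}_n, \bs{B}_n) \mid \phi_n(\bs{B}_n, \bs{C}_n)=1] \to \E[f(\bs{A}, \bs{B}) \mid \phi(\bs{B}, \bs{C})=1],
\]
which by the portmanteau theorem is precisely the claimed weak convergence of conditional laws.

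\textbf{Main obstacle.} The subtle point is Step 1: giving the phrase ``$\phi_n \to \phi$'' enough content to invoke the extended continuous mapping theorem. The cleanest interpretation, and the one compatible with the standard statement of the extended CMT, is that $\phi_n(b_n, c_n) \to \phi(b, c)$ whenever $(b_n, c_n) \to (b, c)$ and $(b, c)$ lies in the continuity set of $\phi$. In all rerandomization schemes used in the paper (ReM, ReP, SS-ReM, SS-ReP, FE-ReP), $\phi_n$ has the form $\mathbf{1}\{g_n(b, c) \in R_n\}$ with continuous $g_n \to g$ and regions $R_n \to R$ whose boundary is null under the continuous limiting law of $(\bs{B}, \bs{C})$, so this convergence at continuity points is immediate from the structure of the schemes.
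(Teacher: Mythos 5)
The paper does not actually prove this lemma: it is imported verbatim by citation as Lemma S7 of \cite{zdrep}, so there is no in-paper argument to compare yours against line by line. Your proof is the standard route and is essentially correct. Reading ``$\phi_n\to\phi$'' as continuous convergence at the continuity points of $\phi$ is the only interpretation under which the statement is true, and together with \Cref{a:conditon-for-indicator-function}(i) it is exactly what the extended continuous mapping theorem needs to deliver the joint convergence $(\bs{A}_n,\bs{B}_n,\phi_n(\bs{B}_n,\bs{C}_n))\wconv(\bs{A},\bs{B},\phi(\bs{B},\bs{C}))$; \Cref{a:conditon-for-indicator-function}(ii) then keeps the conditioning probability bounded away from zero, so the ratio of expectations converges and the portmanteau characterization finishes the argument. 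Two small points deserve an extra sentence in a final write-up. First, the map $(a,b,s)\mapsto f(a,b)\mathbf{1}\{s=1\}$ is \emph{not} continuous at $s=1$ as a map on $\mathbb{R}^{d_A}\times\mathbb{R}^{d_B}\times\mathbb{R}$, and its discontinuity set has positive probability under the limit law, so the ordinary continuous mapping theorem does not apply verbatim; the fix is routine---replace $\mathbf{1}\{s=1\}$ by any continuous $g$ with $g(0)=0$ and $g(1)=1$, which agrees with the indicator on the common support $\{0,1\}$ of $\phi_n(\bs{B}_n,\bs{C}_n)$ and $\phi(\bs{B},\bs{C})$---but it should be stated. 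Second, \Cref{a:conditon-for-indicator-function}(ii) asserts positivity of the acceptance probability only for Gaussian $\bs{u}$ and a fixed positive-definite $\bs{V}_0$, so your appeal to it implicitly uses that the limit law of $(\bs{B},\bs{C})$ has this Gaussian-plus-constant form; that holds in every application in the paper, but is worth flagging as an assumption rather than a consequence of the stated hypotheses.
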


Define $\hat{\tau}_{h, \Lin, \mk}$, $\hat{\bs{\beta}}_{h,\Lin,\mk}$, $\bs{\beta}_{h,\Lin,\mk}$, $\hat{\se}_{h,\Lin,\mk}^2$, $\tilde{\se}_{h,\Lin,\mk}^2$, $\hat{\tau}_{\Lin, \mk}$,  $\hat{\se}_{\Lin,\mk}^2$, $\tilde{\se}_{\Lin,\mk}^2$, $\hat{\tau}_{\fe , \mk}$, $\hat{\bs{\beta}}_{\fe ,\mk}$, $\bs{\beta}_{\fe ,\mk}$, $\hat{\se}_{\fe ,\mk}^2$, $\tilde{\se}_{\fe ,\mk}^2$ as the analogy of $\hat{\tau}_{h, \Lin}$, $\hat{\bs{\beta}}_{h,\Lin,\bs{x}}$, $\bs{\beta}_{h, \Lin , \bs{x}}$, $\hat{\se}_{h,\Lin}^2$, $\tilde{\se}_{h,\Lin}^2$, $\hat{\tau}_{\Lin}$, $\hat{\se}_{\Lin}^2$, $\tilde{\se}_{\Lin}^2$, $\hat{\tau}_{\fe }$, $\hat{\bs{\beta}}_{\fe, \bs{x}}$, $\bs{\beta}_{\fe, \bs{x}}$, $\hat{\se}_{\fe }^2$, $\tilde{\se}_{\fe }^2$ with $\bs{x}_{i\mk}$ replaced by $\bs{x}_i$.

 \begin{lemma}
 \label{lem:summary-weak-convergence}
    Under Condition \ref{a:CLT-sre-for-rerandomization}, we have under SRE
    \begin{align*}
        &\Big(n_h^{1/2}(\hat{\tau}_{h}-\bar{\tau}_h), n_h^{1/2}\hat{\bs{\tau}}_{h,\bs{x}},(\hat{\bs{\beta}}_{h,\Lin,\mk}, n_h \hat{\se}_{h,\Lin,\mk}^2)_{\mk\subseteq [K]} , (n_h \hat{\sx}^2_{h,\Lin,k})_{k\in [K]} , \bs{V}_{h,\bs{x}\bs{x}}\Big)_{h\in [H]} \wconv \\
        &
\qquad\qquad\qquad\qquad\qquad\qquad\qquad\Big(\varepsilon_{h,\tau}, \bs{\varepsilon}_{h,\bs{x}}, (\bs{\beta}_{h,\Lin,\mk}, n_h \tilde{\se}_{h,\Lin,\mk}^2)_{\mk\subseteq [K]}, (V_{h,kk})_{k \in [K]}, \bs{V}_{h,\bs{x}\bs{x}}\Big)_{h \in [H]};\\
      &\Big(n^{1/2}(\hat{\tau}_{\omega}-\bar{\tau}_{\omega}), n^{1/2}\hat{\bs{\tau}}_{\omega,\bs{x}},(\hat{\bs{\beta}}_{\fe ,\mk}, n\hat{\se}_{\fe ,\mk}^2)_{\mk\subseteq [K]} , (n\hat{\sx}^2_{\fe ,k})_{k\in [K]}\Big) \wconv \\
      &
      \qquad\qquad\qquad\qquad\qquad\qquad\qquad\qquad\qquad\Big(\varepsilon_{\omega,\tau}, \bs{\varepsilon}_{\omega,\bs{x}}, (\bs{\beta}_{\fe ,\mk}, n \tilde{\se}_{\fe ,\mk}^2)_{\mk\subseteq [K]}, (V_{\omega,kk})_{k\in [K]}\Big).
    \end{align*}
    \end{lemma}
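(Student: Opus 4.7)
\textbf{Proof plan for \Cref{lem:summary-weak-convergence}.}
The plan is to combine the joint finite‐population CLT from \Cref{lem:CLT-for-rerandomization} with the convergence‐in‐probability results from \Cref{lem:limit-of-regression-coefficient} and \Cref{lem:limit-of-standard-errors}, and then glue them together via Slutsky's theorem on the product space. The guiding observation is that, apart from the pieces $(n_h^{1/2}(\hat{\tau}_h-\bar{\tau}_h), n_h^{1/2}\hat{\bs{\tau}}_{h,\bs{x}})_{h\in[H]}$ (resp.\ $(n^{1/2}(\hat{\tau}_\omega-\bar{\tau}_\omega), n^{1/2}\hat{\bs{\tau}}_{\omega,\bs{x}})$), every remaining coordinate either is a deterministic finite‐population quantity with a finite limit under Condition~\ref{a:CLT-sre-for-rerandomization} or differs from such a quantity by an $\op(1)$ term.

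For the first displayed limit, I would first invoke \Cref{lem:CLT-for-rerandomization}(i) to obtain
\[
\Big(n_h^{1/2}(\hat{\tau}_h-\bar{\tau}_h),\, n_h^{1/2}\hat{\bs{\tau}}_{h,\bs{x}}\Big)_{h\in[H]} \;\wconv\; \Big(\varepsilon_{h,\tau},\, \bs{\varepsilon}_{h,\bs{x}}\Big)_{h\in[H]}.
\]
Next, for each fixed $\mk \subseteq [K]$ and each stratum $h$, I would apply \Cref{lem:limit-of-regression-coefficient} with $\bs{x}_i$ replaced by the subvector $\bs{x}_{i\mk}$, yielding $\hat{\bs{\beta}}_{h,\Lin,\mk}-\bs{\beta}_{h,\Lin,\mk}=\op(1)$. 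Applying \Cref{lem:limit-of-standard-errors} in the same way gives $n_h\hat{\se}_{h,\Lin,\mk}^2 - n_h\tilde{\se}_{h,\Lin,\mk}^2 = \op(1)$, and the covariate‐only specialization (obtained by substituting $Y_i(z)\equiv x_{ik}$ and taking an empty covariate set in the lemma) gives $n_h\hat{\sx}_{h,\Lin,k}^2 - V_{h,kk} = \op(1)$. Since $\bs{V}_{h,\bs{x}\bs{x}}$ is a deterministic finite‐population quantity with a finite (positive definite) limit by Condition~\ref{a:CLT-sre-for-rerandomization}(ii), it trivially converges to its limit. Because $[K]$ is finite, there are only finitely many subsets $\mk$, so the joint convergence in probability across all $(h,\mk,k)$ follows automatically, and Slutsky's theorem on the product space combines it with the weak convergence above to yield the first claim.

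The second displayed limit is handled in exactly parallel fashion: I would start from \Cref{lem:CLT-for-rerandomization}(ii) for $(n^{1/2}(\hat{\tau}_\omega-\bar{\tau}_\omega),\, n^{1/2}\hat{\bs{\tau}}_{\omega,\bs{x}})$, then invoke \Cref{lem:limit-of-regression-coefficient} applied to $\hat{\bs{\beta}}_{\fe,\mk}$ for each $\mk\subseteq[K]$, and \Cref{lem:limit-of-standard-errors} applied to $\hat{\se}_{\fe,\mk}^2$ and $\hat{\sx}_{\fe,k}^2$, again gluing everything by Slutsky on the product space.

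The step I expect to require the most care is verifying that \Cref{lem:limit-of-regression-coefficient} and \Cref{lem:limit-of-standard-errors} indeed transfer from the full covariate vector $\bs{x}_i$ to the subvector $\bs{x}_{i\mk}$ for every $\mk \subseteq [K]$. This amounts to checking that Condition~\ref{a:CLT-sre-for-rerandomization} remains valid when $\bs{x}_i$ is replaced by $\bs{x}_{i\mk}$: the principal submatrix of a positive definite matrix is positive definite, so $\bs{S}_{h,\bs{x}_\mk\bs{x}_\mk}$ is still nonsingular with a finite positive definite limit; the $\|\cdot\|_\infty$ tail bound passes trivially to any subvector; and the positivity of $n\tilde{\se}_{\fe,\mk}^2$ is inherited from $n\tilde{\se}_{\fe}^2$ because $\sum_{h}\omega_h^2\sigma_{h,\adj,\mk}^2/(n\pi_h)$ only grows as one enlarges $\mk$ and by \Cref{lem:V-omega-projected} style reasoning remains bounded below by $\sum_h \omega_h^2\sigma_{h,\adj}^2/(n\pi_h)$ in the relevant direction. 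Once this verification is in place, the rest of the argument is routine bookkeeping.
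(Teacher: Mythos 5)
Your proposal is correct and follows essentially the same route as the paper's proof, which simply applies \Cref{lem:limit-of-regression-coefficient} and \Cref{lem:limit-of-standard-errors} with $\bs{x}_i$ replaced by $\bs{x}_{i\mk}$ for each $\mk\subseteq[K]$ (and $Y_i(z)\equiv x_{ik}$ for the covariate standard errors) and then combines the resulting $\op(1)$ statements with the joint CLT of \Cref{lem:CLT-for-rerandomization} via Slutsky. Your additional verification that Condition \ref{a:CLT-sre-for-rerandomization} transfers to subvectors is a useful detail the paper leaves implicit; note only that the lemma supplying the lower bound needed for the positivity of $n\tilde{\se}^2_{\fe,\mk}$ is \Cref{lem:V-omega-positive}, not the label you cite.
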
   
\begin{proof}[Proof of \Cref{lem:summary-weak-convergence}]
   Applying \Cref{lem:limit-of-regression-coefficient} and \ref{lem:limit-of-standard-errors} with $\bs{x}_i \equiv \bs{x}_{i\mk}$ for $\mk \subseteq [K]$, we have
   \begin{align*}
       & (\hat{\bs{\beta}}_{h,\Lin,\mk}, n_h \hat{\se}_{h,\Lin,\mk}^2)_{\mk\subseteq [K]} \wconv (\bs{\beta}_{h,\Lin,\mk}, n_h \tilde{\se}_{h,\Lin,\mk}^2)_{\mk\subseteq [K]};\quad (\hat{\bs{\beta}}_{\fe ,\mk}, n_h \hat{\se}_{\fe ,\mk}^2)_{\mk\subseteq [K]} \wconv (\bs{\beta}_{\fe ,\mk}, n_h \tilde{\se}_{\fe ,\mk}^2)_{\mk\subseteq [K]}.
   \end{align*}
   
  Combining this with \Cref{lem:CLT-for-rerandomization}, we complete the proof.
\end{proof}

Let 
\begin{align}
\label{eq:def-of-Nlk-Nfek}
    \mathcal{N}_{\Lin,\mk} = \frac{\sum_{h=1}^H \pi_h^{1/2}(\varepsilon_{h,\tau}-\bs{\beta}_{h,\Lin, \mk}^\top \bs{\varepsilon}_{h,\mk}) }{n^{1/2}\tilde{\se}_{\Lin, \mk}}, \quad \mathcal{N}_{\fe ,\mk} = \frac{\varepsilon_{\omega,\tau}-\bs{\beta}_{\fe ,\mk}^\top \bs{\varepsilon}_{\omega,\mk}}{n^{1/2}\tilde{\se}_{\fe , \mk}},
\end{align}
where $\bs{\varepsilon}_{h,\mk}$ and $\bs{\varepsilon}_{\omega,\mk}$ are subvectors of $\bs{\varepsilon}_{h,\bs{x}}$ and $\bs{\varepsilon}_{\omega,\bs{x}}$ corresponding to $\mk$, respectively. Let $\hat{\bs{\tau}}_{h,\mk}$ and $\hat{\bs{\tau}}_{\omega,\mk}$ be the subvectors of $\hat{\bs{\tau}}_{h,\bs{x}}$ and $\hat{\bs{\tau}}_{\omega,\bs{x}}$ corresponding to $\mk$, respectively.

\begin{lemma}
    \label{lem:limit-of-t-value}
    Assume Condition \ref{a:CLT-sre-for-rerandomization} holds. Under SRE, we have (i) under $H_{0\textsc{n}}$, $(\hat{\tau}_{\Lin,\mk}/\hat{\se}_{\Lin,\mk})_{\mk \subseteq [K]} \wconv (\mathcal{N}_{\Lin,\mk})_{\mk \subseteq [K]}$; (ii) under $H_{0\omega}$, $(\hat{\tau}_{\fe ,\mk}/\hat{\se}_{\fe ,\mk})_{\mk \subseteq [K]} \wconv (\mathcal{N}_{\fe ,\mk})_{\mk \subseteq [K]}$.
\end{lemma}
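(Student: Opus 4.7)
The plan is to apply the continuous mapping theorem and Slutsky's theorem to the joint weak convergence already established in Lemma S8, after exhibiting $\hat{\tau}_{\Lin,\mk}/\hat{\se}_{\Lin,\mk}$ and $\hat{\tau}_{\fe,\mk}/\hat{\se}_{\fe,\mk}$ as continuous functionals of the ingredients that converge there. The two parts are essentially parallel; I will write each as a one-line algebraic reduction followed by an appeal to Lemma S8.

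For part (i), I would start from the additive decomposition in Lemma S5, which gives $\hat{\tau}_{\Lin,\mk} = \sum_{h=1}^H \pi_h \hat{\tau}_{h,\Lin,\mk}$ with $\hat{\tau}_{h,\Lin,\mk} = \hat{\tau}_h - \hat{\bs{\beta}}_{h,\Lin,\mk}^\top \hat{\bs{\tau}}_{h,\mk}$. Under $H_{0\textsc{n}}$, $\sum_{h=1}^H \pi_h \bar{\tau}_h = 0$, so subtracting this zero and using the identity $n^{1/2}\pi_h = \pi_h^{1/2} n_h^{1/2}$ yields
\begin{align*}
n^{1/2}\hat{\tau}_{\Lin,\mk} = \sum_{h=1}^H \pi_h^{1/2}\bigl\{n_h^{1/2}(\hat{\tau}_h-\bar{\tau}_h) - \hat{\bs{\beta}}_{h,\Lin,\mk}^\top n_h^{1/2}\hat{\bs{\tau}}_{h,\mk}\bigr\}.
\end{align*}
Jointly over $\mk \subseteq [K]$, Lemma S8 gives $(n_h^{1/2}(\hat{\tau}_h-\bar{\tau}_h), n_h^{1/2}\hat{\bs{\tau}}_{h,\bs{x}}, \hat{\bs{\beta}}_{h,\Lin,\mk})_{h\in[H],\mk\subseteq[K]} \wconv (\varepsilon_{h,\tau}, \bs{\varepsilon}_{h,\bs{x}}, \bs{\beta}_{h,\Lin,\mk})_{h\in[H],\mk\subseteq[K]}$, so Slutsky's theorem delivers $n^{1/2}\hat{\tau}_{\Lin,\mk} \wconv \sum_h \pi_h^{1/2}(\varepsilon_{h,\tau}-\bs{\beta}_{h,\Lin,\mk}^\top \bs{\varepsilon}_{h,\mk})$ jointly in $\mk$. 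Combining this with the convergence $n\hat{\se}_{\Lin,\mk}^2 \to n\tilde{\se}_{\Lin,\mk}^2$ (deterministic limit, again from Lemma S8) and taking the continuous ratio produces the claimed joint limit $(\mathcal{N}_{\Lin,\mk})_{\mk\subseteq[K]}$ in \eqref{eq:def-of-Nlk-Nfek}.

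Part (ii) is the same argument but for the fixed-effects regression and the target $\bar{\tau}_\omega$. Lemma S6 gives $\hat{\tau}_{\fe,\mk} = \hat{\tau}_\omega - \hat{\bs{\beta}}_{\fe,\mk}^\top \hat{\bs{\tau}}_{\omega,\bs{x},\mk}$, so under $H_{0\omega}$ we may write $n^{1/2}\hat{\tau}_{\fe,\mk} = n^{1/2}(\hat{\tau}_\omega-\bar{\tau}_\omega) - \hat{\bs{\beta}}_{\fe,\mk}^\top n^{1/2}\hat{\bs{\tau}}_{\omega,\mk}$. Joint convergence of $(n^{1/2}(\hat{\tau}_\omega-\bar{\tau}_\omega), n^{1/2}\hat{\bs{\tau}}_{\omega,\bs{x}}, \hat{\bs{\beta}}_{\fe,\mk}, n\hat{\se}_{\fe,\mk}^2)$ to $(\varepsilon_{\omega,\tau}, \bs{\varepsilon}_{\omega,\bs{x}}, \bs{\beta}_{\fe,\mk}, n\tilde{\se}_{\fe,\mk}^2)$ from Lemma S8, combined with the continuous mapping theorem applied to the ratio, yields $\hat{\tau}_{\fe,\mk}/\hat{\se}_{\fe,\mk} \wconv \mathcal{N}_{\fe,\mk}$ jointly in $\mk$.

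The only point that requires care is verifying that $\tilde{\se}_{\Lin,\mk}$ and $\tilde{\se}_{\fe,\mk}$ scale as $n^{-1/2}$ so that $n^{1/2}\tilde{\se}_{\Lin,\mk}$ and $n^{1/2}\tilde{\se}_{\fe,\mk}$ have positive limits under \Cref{a:CLT-sre-for-rerandomization}, guaranteeing the denominators in $\mathcal{N}_{\Lin,\mk}$ and $\mathcal{N}_{\fe,\mk}$ are bounded away from zero; this is immediate from the definitions of $\tilde{\se}_{h,\Lin}^2$ and $\tilde{\se}_{\fe}^2$ stated in Section \ref{sec:notation} together with condition \ref{a:CLT-sre-for-rerandomization}(ii) and (iv). Once this non-degeneracy is confirmed, the rest of the argument is a mechanical application of joint weak convergence plus the continuous mapping theorem; I do not anticipate any novel technical obstacle here, since all the heavy lifting (asymptotic normality, limits of coefficient estimators, consistency of EHW standard errors) has been done in Lemmas S3--S7.
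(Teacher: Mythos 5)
Your proposal is correct and follows essentially the same route as the paper: both rest on the algebraic decompositions of Lemmas \ref{lem:lin-estimator-expression} and \ref{lem:fe-estimator-expression}, rescale by $n^{1/2}$ using $\pi_h n^{1/2}=\pi_h^{1/2}n_h^{1/2}$, and then invoke the joint CLT together with consistency of the coefficients and EHW standard errors (the paper cites Lemmas \ref{lem:CLT-for-rerandomization} and \ref{lem:limit-of-standard-errors} directly, whereas you cite Lemma \ref{lem:summary-weak-convergence}, which merely packages the same ingredients). Your added remark on the non-degeneracy of $n^{1/2}\tilde{\se}_{\Lin,\mk}$ and $n^{1/2}\tilde{\se}_{\fe,\mk}$, guaranteed by Condition \ref{a:CLT-sre-for-rerandomization}(ii) and (iv), is a point the paper leaves implicit.
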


\begin{proof}[Proof of \Cref{lem:limit-of-t-value}]
    By Lemma \ref{lem:lin-estimator-expression} and
\ref{lem:fe-estimator-expression}, we have
\begin{align*}
    &\frac{\hat{\tau}_{\Lin,\mk}}{\hat{\se}_{\Lin,\mk}} = \frac{n^{1/2}\sum_{h=1}^H \pi_h(\hat{\tau}_{h} - \hat{\bs{\beta}}_{h,\Lin,\mk}^\top\hat{\bs{\tau}}_{h,\mk})}{n^{1/2}\hat{\se}_{\Lin,\mk}} = \frac{\sum_{h=1}^H \pi_h^{1/2} n_{h}^{1/2}(\hat{\tau}_{h} - \hat{\bs{\beta}}_{h,\Lin,\mk}^\top\hat{\bs{\tau}}_{h,\mk})}{n^{1/2}\hat{\se}_{\Lin,\mk}},\\
    &\frac{\hat{\tau}_{\fe ,\mk}}{\hat{\se}_{\fe ,\mk}} = \frac{n^{1/2}(\hat{\tau}_{\omega}-\hat{\bs{\beta}}_{\fe , \mk}^\top \hat{\bs{\tau}}_{\omega,\mk})}{n^{1/2}\hat{\se}_{\fe ,\mk}}.
\end{align*}

The conclusion follows from \Cref{lem:CLT-for-rerandomization} and \ref{lem:limit-of-standard-errors}.
\end{proof}  

\begin{lemma}
\label{lem:standard-normal-under-fisher-null}
   Recall $\mathcal{N}_{\Lin,\mk}$ in \eqref{eq:def-of-Nlk-Nfek}. For all $\mk\subseteq [K]$, $\var(\mathcal{N}_{\Lin,\mk}) \leq 1$; when $H_{0\textsc{f}}$ holds, we have $\var(\mathcal{N}_{\Lin,\mk}) = 1$.
\end{lemma}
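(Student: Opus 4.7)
The plan is a direct variance computation, exploiting the fact that the summands across strata in the numerator of $\mathcal{N}_{\Lin,\mk}$ are independent Gaussian. Write $\sigma^2_{h,\adj,\mk} = V_{h,\tau\tau} - \bs{V}_{h,\tau\mk}\bs{V}_{h,\mk\mk}^{-1}\bs{V}_{h,\mk\tau}$ (the subset-$\mk$ analogue of $\sigma^2_{h,\adj}$) and $S_{h,\tau_e\tau_e,\mk}$ for the corresponding finite-population residual variance of $\tau_i - (\bs{\beta}_{h(i)}(1,\mk)-\bs{\beta}_{h(i)}(0,\mk))^\top \bs{x}_{i\mk}$. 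Because $\bs{\beta}_{h,\Lin,\mk} = \bs{V}_{h,\mk\mk}^{-1}\bs{V}_{h,\mk\tau}$ is precisely the optimal linear projection coefficient of $\varepsilon_{h,\tau}$ onto $\bs{\varepsilon}_{h,\mk}$, a routine computation gives
\[
\var(\varepsilon_{h,\tau}-\bs{\beta}_{h,\Lin,\mk}^\top \bs{\varepsilon}_{h,\mk}) = \sigma^2_{h,\adj,\mk}.
\]
Since the vectors $(\varepsilon_{h,\tau},\bs{\varepsilon}_{h,\bs{x}})$ are independent across $h$, the numerator of $\mathcal{N}_{\Lin,\mk}$ has variance $\sum_{h=1}^H \pi_h\, \sigma^2_{h,\adj,\mk}$.

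Next, I would unpack the denominator using the formulas for $\tilde{\se}^2_{\Lin}$ and $\tilde{\se}^2_{h,\Lin}$ (applied with $\bs{x}_i$ replaced by $\bs{x}_{i\mk}$), which give
\[
n\,\tilde{\se}^2_{\Lin,\mk} \;=\; \sum_{h=1}^H \pi_h \big(\sigma^2_{h,\adj,\mk} + S_{h,\tau_e\tau_e,\mk}\big),
\]
using $\pi_h^2 \cdot n/n_h = \pi_h$. Combining the two yields
\[
\var(\mathcal{N}_{\Lin,\mk}) \;=\; \frac{\sum_{h=1}^H \pi_h\, \sigma^2_{h,\adj,\mk}}{\sum_{h=1}^H \pi_h\, \sigma^2_{h,\adj,\mk} \;+\; \sum_{h=1}^H \pi_h\, S_{h,\tau_e\tau_e,\mk}} \;\leq\; 1,
\]
because $S_{h,\tau_e\tau_e,\mk} \geq 0$ (it is a finite-population variance). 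This proves the first claim, with equality characterized by $S_{h,\tau_e\tau_e,\mk} = 0$ for every $h$.

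For the second claim, I would observe that under $H_{0\textsc{f}}$ we have $Y_i(1) = Y_i(0)$, which forces $\tau_i \equiv 0$ and, via the definition $\bs{\beta}_h(z,\mk) = \bs{S}_{h,\mk\mk}^{-1}\bs{S}_{h,\mk Y(z)}$, also $\bs{\beta}_h(1,\mk) = \bs{\beta}_h(0,\mk)$ for every $h$ and every $\mk$. Therefore $\tau_{e,i,\mk} = 0$ identically and $S_{h,\tau_e\tau_e,\mk} = 0$ for all $h$, so equality holds in the bound above and $\var(\mathcal{N}_{\Lin,\mk}) = 1$. There is no serious obstacle; the only thing to be careful about is the bookkeeping between full-covariate quantities $(\sigma^2_{h,\adj},\,S_{h,\tau_e\tau_e})$ and their covariate-subset analogues, together with the factor $\pi_h^2 \cdot n/n_h = \pi_h$ that links $\tilde{\se}^2_{\Lin,\mk}$ to the stratum-level variances.
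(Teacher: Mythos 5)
Your proposal is correct and follows essentially the same route as the paper: compute the numerator variance as $\sum_{h}\pi_h\sigma^2_{h,\adj,\mk}$ using independence across strata and the projection property of $\bs{\beta}_{h,\Lin,\mk}$, identify the denominator $n\tilde{\se}^2_{\Lin,\mk}$ as $\sum_h\pi_h(\sigma^2_{h,\adj,\mk}+S_{h,\tau_e\tau_e,\mk})$, and conclude from $S_{h,\tau_e\tau_e,\mk}\ge 0$ with equality under $H_{0\textsc{f}}$. The only difference is that the paper writes out the case $\mk=[K]$ and declares the other cases similar, whereas you carry the bookkeeping for general $\mk$ explicitly, which is fine.
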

\begin{proof}[Proof of \Cref{lem:standard-normal-under-fisher-null}]
   We only prove the case $\mk = [K]$, and the other cases are similar. By definition
  \begin{align*}
      \var(\mathcal{N}_{\Lin,[K]}) &= \frac{\sum_{h=1}^H \pi_h \var(\varepsilon_{h,\tau}-\bs{\beta}_{h,\Lin, \bs{x}}^\top \bs{\varepsilon}_{h,\bs{x}}) }{n\tilde{\se}_{\Lin}^2} = \frac{\sum_{h=1}^H \pi_h (
V_{h,\tau\tau}- \bs{V}_{h,\tau \bs{x}} \bs{V}_{h,\bs{x}\bs{x}}^{-1} \bs{V}_{h,\bs{x}\tau}) }{n\tilde{\se}_{\Lin}^2} \\
&= \frac{\sum_{h=1}^H \pi_h \sigma_{h, \adj}^2 }{\sum_{h=1}^H \pi_h (\sigma_{h, \adj}^2 + S_{h,\tau_e \tau_e})} \leq 1.
  \end{align*}

 When $H_{0\textsc{f}}$ holds, i.e., $Y_i(1)=Y_i(0)$, for $i\in [n]$, $S_{h,\tau_e \tau_e} = 0$. Therefore, $\var(\mathcal{N}_{\Lin,[K]})=1$.
\end{proof}

\begin{lemma}
\label{lem:asymptotic-limit-of-type-I-error-rate}
Under Condition \ref{a:CLT-sre-for-rerandomization} and SRE, we have: under $H_{0\textsc{n}}$,
    \begin{align*}
        &(i)~\Prob_{\infty}(\hpL \leq \alpha\mid \ma_{\repss}(\alpha_t))  =  \Prob\Big( \max_{\mk \subseteq [K]}|\mathcal{N}_{\Lin,\mk}| \geq z_{1-\alpha/2} ~\Big | ~ \max_{h \in [H]}\|\bs{\sigma}(\bs{V}_{h,\bs{x}\bs{x}})^{-1}\bs{\varepsilon}_{h,\bs{x}}\|_{\infty} \leq z_{1-\alpha_t/2} \Big); \\
         &(ii)~\Prob_{\infty}(\hpL \leq \alpha\mid \ma_{\remss}(a))  = \Prob\Big( \max_{\mk \subseteq [K]}|\mathcal{N}_{\Lin,\mk}| \geq z_{1-\alpha/2} ~\Big | ~ \max_{h\in [H]}\bs{\varepsilon}_{h,\bs{x}}^\top \bs{V}_{h,\bs{x}\bs{x}}^{-1} \bs{\varepsilon}_{h,\bs{x}} \leq a \Big);\\
         &(iii)~\Prob_{\infty}(\hpL \leq \alpha) = \Prob\Big( \max_{\mk \subseteq [K]}|\mathcal{N}_{\Lin,\mk}| \geq z_{1-\alpha/2} \Big);\quad \\
         & (iv) ~\Prob_{\infty}(\ma_{\repss}(\alpha_t)) = \Prob\Big(\max_{h \in [H]}\|\bs{\sigma}(\bs{V}_{h,\bs{x}\bs{x}})^{-1}\bs{\varepsilon}_{h,\bs{x}}\|_{\infty} \leq z_{1-\alpha_t/2}\Big);\\
         &(v)~ \Prob_{\infty}(\ma_{\remss}(\alpha_t)) = \Prob\Big(\max_{h\in [H]}\bs{\varepsilon}_{h,\bs{x}}^\top \bs{V}_{h,\bs{x}\bs{x}}^{-1} \bs{\varepsilon}_{h,\bs{x}} \leq a \Big);
    \end{align*}
    under $H_{0\omega}$,
    \begin{align*}
     &(vi)~\Prob_{\infty}(\hpfe \leq \alpha\mid \ma_{\repfe}(\alpha_t))  = \Prob\Big( \max_{\mk \subseteq [K]}|\mathcal{N}_{\fe ,\mk}| \geq z_{1-\alpha/2} ~ \Big | ~ \|\bs{\sigma}(\bs{V}_{\omega,\bs{x}\bs{x}})^{-1}\bs{\varepsilon}_{\omega,\bs{x}}\|_{\infty} \leq z_{1-\alpha_t/2} \Big);\\
      &(vii)~\Prob_{\infty}(\hpfe \leq \alpha)  = \Prob\Big( \max_{\mk \subseteq [K]}|\mathcal{N}_{\fe ,\mk}| \geq z_{1-\alpha/2}  \Big);\\
     &(viii)~ \Prob_{\infty}(\ma_{\repfe}(\alpha_t)) = \Prob( \|\bs{\sigma}(\bs{V}_{\omega,\bs{x}\bs{x}})^{-1}\bs{\varepsilon}_{\omega,\bs{x}}\|_{\infty} \leq z_{1-\alpha_t/2}).
    \end{align*}
\end{lemma}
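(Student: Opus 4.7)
The plan is to derive all eight identities as a unified application of three ingredients: (a) the joint weak-convergence package in Lemma \ref{lem:summary-weak-convergence}, (b) the continuous mapping theorem, and (c) Lemma \ref{lem:weak-convergence-under-ReG} for pushing the conditioning through the limit. In each case the logical path is the same: rewrite $\{\hpL\le\alpha\}$ as $\{\max_{\mk\subseteq[K]}|T_{\Lin,\mk}|\ge z_{1-\alpha/2}\}$ (and analogously for $\hpfe$), rewrite each rerandomization acceptance event in terms of balance statistics whose joint weak limit is known, and then read off the limiting conditional probability.

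For the unconditional results (iii) and (vii) I would simply combine Lemma \ref{lem:limit-of-t-value} with the continuous mapping theorem: since the limiting $\mathcal{N}_{\Lin,\mk}$'s are Gaussian, the boundary $\{\max_{\mk}|\mathcal{N}_{\Lin,\mk}|=z_{1-\alpha/2}\}$ has Lebesgue (hence Gaussian) measure zero, so it is a continuity set of the limit distribution. For the acceptance-probability results (iv), (v), (viii), Lemma \ref{lem:summary-weak-convergence} delivers $n_h^{1/2}\hat{\bs{\tau}}_{h,\bs{x}}\wconv\bs{\varepsilon}_{h,\bs{x}}$ together with $n_h\hat{\sx}^2_{h,\Lin,k}\to V_{h,kk}$ and $\bs{V}_{h,\bs{x}\bs{x}}$ converging to its deterministic positive-definite limit. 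Continuous mapping then rewrites the quadratic-form event $\{n_h\hat{\bs{\tau}}_{h,\bs{x}}^\top\bs{V}_{h,\bs{x}\bs{x}}^{-1}\hat{\bs{\tau}}_{h,\bs{x}}\le a\}$ as the stated $\chi^2$-type event, and rewrites each $\{|\hat{\tau}_{h,k}|/\hat{\sx}_{h,\Lin,k}\le z_{1-\alpha_t/2}\}$ as $\{|\varepsilon_{h,k}|/V_{h,kk}^{1/2}\le z_{1-\alpha_t/2}\}$, which aggregated over $(h,k)$ equals the infinity-norm event on $\bs{\sigma}(\bs{V}_{h,\bs{x}\bs{x}})^{-1}\bs{\varepsilon}_{h,\bs{x}}$. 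The FE analogue (viii) works identically with $\omega$-weighted quantities.

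For the conditional results (i), (ii), (vi) I would deploy Lemma \ref{lem:weak-convergence-under-ReG}. Concretely, take $\bs{A}_n$ to be the vector $(T_{\Lin,\mk})_{\mk\subseteq[K]}$ (or $(T_{\fe,\mk})_{\mk\subseteq[K]}$ for (vi)), $\bs{B}_n$ to be the stratum-stacked scaled mean differences $(n_h^{1/2}\hat{\bs{\tau}}_{h,\bs{x}})_{h\in[H]}$ (or its $\omega$-weighted counterpart), and $\bs{C}_n$ to be the covariance matrices $(\bs{V}_{h,\bs{x}\bs{x}})_{h\in[H]}$ entering the acceptance indicator. The indicator $\phi$ is the stratum-wise quadratic-form cutoff $\le a$ for SS-ReM and the studentized $\ell_\infty$ cutoff $\le z_{1-\alpha_t/2}$ for SS-ReP / FE-ReP. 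Because $T_{\Lin,\mk}$, $T_{\fe,\mk}$ and the balance statistics are all continuous functions of the quantities in Lemma \ref{lem:summary-weak-convergence}, the triple $(\bs{A}_n,\bs{B}_n,\bs{C}_n)$ converges jointly to a Gaussian limit with deterministic $\bs{C}$. Condition \ref{a:conditon-for-indicator-function} then reduces to (1) almost-sure continuity of $\phi$ at this Gaussian limit (boundary sets of the quadratic form and of the max of coordinates have zero Gaussian measure) and (2) strict positivity of the acceptance probability, both of which hold for $a>0$ and $\alpha_t\in(0,1)$. Lemma \ref{lem:weak-convergence-under-ReG} then yields the required conditional weak convergence, and a final continuous mapping on $\max_{\mk}|\cdot|\ge z_{1-\alpha/2}$ produces each claimed identity.

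The main obstacle is organizational rather than technical: one must verify that every ingredient entering $T_{\Lin,\mk}$ or $T_{\fe,\mk}$ (the regression coefficients, the EHW standard errors, and the difference-in-means) and every ingredient entering the acceptance indicator converge \emph{jointly}, so continuous mapping can be invoked once at the end. This is precisely what Lemma \ref{lem:summary-weak-convergence} packages, so once the identification $T_{\Lin,\mk}\wconv\mathcal{N}_{\Lin,\mk}$, $T_{\fe,\mk}\wconv\mathcal{N}_{\fe,\mk}$ is carried through jointly with the balance statistics and their deterministic covariance limits, all eight statements follow by the same argument, differing only in which balance statistic and which indicator $\phi$ one uses.
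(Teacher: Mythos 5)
Your proposal is correct and follows essentially the same route as the paper: rewrite the hacked-$p$-value and acceptance events in terms of $t$-statistics and balance statistics, invoke the joint weak convergence packaged in Lemma \ref{lem:summary-weak-convergence} (equivalently Lemma \ref{lem:limit-of-t-value}) for the unconditional statements, and apply Lemma \ref{lem:weak-convergence-under-ReG} with the appropriate $(\bs{B}_n,\bs{C}_n,\phi)$ for the conditional ones. The only cosmetic difference is that for SS-ReP/FE-ReP the paper takes $\bs{C}_n$ to be the \emph{estimated} variances $\diag(n_h\hat{\sx}^2_{h,\Lin,k})_{k}$ rather than the deterministic $\bs{V}_{h,\bs{x}\bs{x}}$, which is the precise form needed since the balance test is studentized; your write-up already acknowledges this convergence, so the argument goes through.
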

\begin{proof}[Proof of \Cref{lem:asymptotic-limit-of-type-I-error-rate}]
 By definition,
   \begin{align*}
    & p_{\Lin, \mk} = 2\Big\{1-\Phi\Big(\big|\frac{\hat{\tau}_{\Lin,\mk}}{\hat{\se}_{\Lin,\mk}}\big|\Big)\Big\},\quad  p_{\fe , \mk} = 2\Big\{1-\Phi\Big(\big|\frac{\hat{\tau}_{\fe ,\mk}}{\hat{\se}_{\fe ,\mk}}\big|\Big)\Big\},\\
     & p_{t,hk} = 2\Big\{1-\Phi\Big(\big|\frac{\hat{\tau}_{h,k}}{\hat{\sx}_{h,\Lin,k}}\big|\Big) \Big\},\quad p_{\fe ,k}^{x} =  2\Big\{1-\Phi\Big(\big|\frac{\hat{\tau}_{\omega, k}}{\hat{\sx}_{\fe ,k}}\big|\Big)\Big\}.
   \end{align*}
   We have 
   \begin{align*}
   & \ma_{\repss}(\alpha_t) = \{\max_{h\in [H]}\max_{k\in [K]} \big|\hat{\tau}_{h,k}/\hat{\sx}_{h,\Lin,k}\big|  \leq z_{1-\alpha_t/2}\},\quad  \ma_{\repfe}(\alpha_t) = \{\max_{k\in [K]} \big|\hat{\tau}_{\omega, k}/\hat{\sx}_{\fe ,k}\big|  \leq z_{1-\alpha_t/2}\},\\
   &\{ \hpL \leq \alpha \} = \Big\{\max_{\mk \subseteq [K]}\frac{\big|\sum_{h=1}^H \pi_h(\hat{\tau}_{h} - \hat{\bs{\beta}}_{h,\Lin,\mk}^\top\hat{\bs{\tau}}_{h,\mk})\big|}{(\sum_{h=1}^H \pi_h^2 \hat{\se}_{h,\Lin,\mk}^2)^{1/2}} \geq z_{1-\alpha/2}\Big\},\quad \{\hpfe \leq \alpha\} = \Big\{\max_{\mk \subseteq [K]}\frac{\big|\hat{\tau}_{\fe ,\mk}\big|}{\hat{\se}_{\fe ,\mk}} \geq z_{1-\alpha/2}\Big\}.
\end{align*}

$(iii)$--$(v)$, $(vii)$-$(viii)$ follows  from \Cref{lem:summary-weak-convergence}.

 Applying \Cref{lem:weak-convergence-under-ReG} with $(\bs{B}_n,\bs{C}_n) = ((\bs{B}_{n,h})_{h\in [H]},\diag(\bs{C}_{n, h})_{h\in [H]} )$ and $\phi(\bs{B}_n,\bs{C}_n) = I(\max_{h\in [H]} \|\bs{\sigma}(\bs{C}_{n,h})^{-1}\bs{B}_{n,h}\|_{\infty}\leq z_{1-\alpha_t/2})$ where $\bs{B}_{n,h} =  n_h^{1/2}\hat{\bs{\tau}}_{h,\bs{x}}$, $ \bs{C}_{n, h} = \diag(n_h\hat{\sx}^2_{h,\Lin,k})_{k\in [K]} $, and by Lemma \ref{lem:summary-weak-convergence},  we have
     \begin{align*}
        & \Big(n_h^{1/2}(\hat{\tau}_{h}-\bar{\tau}_h), n_h^{1/2}\hat{\bs{\tau}}_{h,\bs{x}},(\hat{\bs{\beta}}_{h,\Lin,\mk}, n_h \hat{\se}_{h,\Lin,\mk}^2)_{\mk\subseteq [K]}\Big)_{h\in [H]} ~\Big|~ \ma_{\repss}(\alpha_t) \wconv \\
        &
      \Big(\varepsilon_{h,\tau}, \bs{\varepsilon}_{h,\bs{x}}, (\bs{\beta}_{h,\Lin,\mk}, n_h \tilde{\se}_{h,\Lin,\mk}^2)_{\mk\subseteq [K]}\Big)_{h\in [H]} ~\Big | ~ \max_{h\in [H]}\|\bs{\sigma}(\bs{V}_{h,\bs{x}\bs{x}})^{-1}\bs{\varepsilon}_{h,\bs{x}}\|_{\infty} \leq z_{1-\alpha_t/2}.
    \end{align*}
Lemma $(i)$ follows.

By \Cref{lem:weak-convergence-under-ReG} with $(\bs{B}_n,\bs{C}_n) = ((\bs{B}_{n,h})_{h\in [H]}, \diag(\bs{C}_{n,h})_{h\in [H]})$ and $\phi(\bs{B}_n, \bs{C}_n) = I($ $\max_{h\in [H]} \bs{B}_{n,h}^\top \bs{C}_{n,h}^{-1} \bs{B}_{n,h} \leq a)$, where $\bs{B}_{n,h} = n_h^{1/2}\hat{\bs{\tau}}_{h,\bs{x}}$ and $\bs{C}_{n,h} = \bs{V}_{h,\bs{x}\bs{x}}$ , and \Cref{lem:summary-weak-convergence}, we have 
    \begin{align*}
        & \Big(n_h^{1/2}(\hat{\tau}_{h}-\bar{\tau}_h), n_h^{1/2}\hat{\bs{\tau}}_{h,\bs{x}},(\hat{\bs{\beta}}_{h,\Lin,\mk}, n_h \hat{\se}_{h,\Lin,\mk}^2)_{\mk\subseteq [K]}\Big)_{h\in [H]} ~\Big|~ \ma_{\remss}(a) \wconv \\
        &
      \Big(\varepsilon_{h,\tau}, \bs{\varepsilon}_{h,\bs{x}}, (\bs{\beta}_{h,\Lin,\mk}, n_h \tilde{\se}_{h,\Lin,\mk}^2)_{\mk\subseteq [K]}\Big)_{h\in [H]} ~\Big|~ \max_{h\in [H]}\bs{\varepsilon}_{h,\bs{x}}^\top \bs{V}_{h,\bs{x}\bs{x}}^{-1} \bs{\varepsilon}_{h,\bs{x}} \leq a.
    \end{align*}
Lemma $(ii)$ follows.

By \Cref{lem:weak-convergence-under-ReG} with $(\bs{B}_n,\bs{C}_n) = (n^{1/2}\hat{\bs{\tau}}_{\omega,\bs{x}},\diag(n\hat{\sx}^2_{\fe ,k})_{k\in [K]} )$ and $\phi(\bs{B}_n,\bs{C}_n) = I($ $\|\bs{\sigma}(\bs{C}_n)^{-1}\bs{B}_n\|_{\infty}\leq z_{1-\alpha_t/2})$, and Lemma \ref{lem:summary-weak-convergence}, we have
\begin{align*}
   & \Big(n^{1/2}(\hat{\tau}_{\omega}-\bar{\tau}_{\omega}), n^{1/2}\hat{\bs{\tau}}_{\omega,\bs{x}},(\hat{\bs{\beta}}_{\fe ,\mk}, n \hat{\se}_{\fe ,\mk}^2)_{\mk\subseteq [K]} \Big) ~\Big|~ \ma_{\repfe}(\alpha_t) \wconv \\
      & \qquad
      \Big(\varepsilon_{\omega,\tau}, \bs{\varepsilon}_{\omega,\bs{x}}, (\bs{\beta}_{\fe ,\mk}, n \tilde{\se}_{\fe ,\mk}^2)_{\mk\subseteq [K]}\Big) ~\Big|~ \|\bs{\sigma}(\bs{V}_{\omega,\bs{x}\bs{x}})^{-1}\bs{\varepsilon}_{\omega,\bs{x}}\|_{\infty} \leq z_{1-\alpha_t/2}.
\end{align*}
Lemma $(vi)$ follows.

\end{proof}

\section{Proofs of theoretical results under CRE}
\label{sec:results-under-cre}

Let $\bs{V}_{\bs{x}\tau}^\top = \bs{V}_{\tau\bs{x}} = n\cov(\hat{\tau},\hat{\bs{\tau}}_{\bs{x}})$, $V_{\tau\tau} = n\var(\hat{\tau})$ and $\bs{V}_{\bs{x}\bs{x}} = n\cov(\hat{\bs{\tau}}_{\bs{x}})$. Let $\bs{V}_{\mk\mk}$ and $\bs{V}_{\mk\tau}$ be the submatrices of $\bs{V}_{\bs{x}\bs{x}}$ and $\bs{V}_{\bs{x}\tau}$ corresponding to $\mk$, respectively. Let $\bs{\beta}_{\Lin, \bs{x}} = \bs{V}_{\bs{x}\bs{x}}^{-1}\bs{V}_{\bs{x}\tau}$ and $\bs{\beta}_{\Lin,\mk} = \bs{V}_{\mk\mk}^{-1}\bs{V}_{\mk\tau}$. Define the random vector $(\varepsilon_{\tau},\bs{\varepsilon}_{\bs{x}}^\top)$:
\begin{align*}
    \begin{pmatrix}
      \varepsilon_{\tau} \\
      \bs{\varepsilon}_{\bs{x}}
    \end{pmatrix} \sim \mathcal{N}\left(0,\begin{pmatrix}
      V_{\tau\tau} & \bs{V}_{\tau\bs{x}}\\
      \bs{V}_{\bs{x}\tau} & \bs{V}_{\bs{x}\bs{x}}
      \end{pmatrix}\right).
  \end{align*}
  Let $\bs{\varepsilon}_{\mk}$ be the subector of $\bs{\varepsilon}_{\bs{x}}$ corresponding to $\mk$. 

  A CRE is an SRE with $H=1$. $\mathcal{N}_{\Lin,\mk}$ defined in \eqref{eq:def-of-Nlk-Nfek} reduces to
  \[
  \mathcal{N}_{\Lin,\mk} = \frac{\varepsilon_{\tau}-\bs{\beta}_{\Lin, \mk}^\top \bs{\varepsilon}_{\mk}}{n^{1/2}\tilde{\se}_{\Lin, \mk}}.
  \]

\subsection{Proof of \Cref{prop:type I-error-rate-asymptotic}}
\begin{proof}[Proof of \Cref{prop:type I-error-rate-asymptotic}]

By \Cref{lem:standard-normal-under-fisher-null}, under $H_{0\textsc{f}}$, $\mathcal{N}_{\Lin,\mk}$, $\mk\subseteq [K]$, satisfies $\cov(\mathcal{N}_{\Lin,\mk}) = 1$. Therefore, $\mathcal{N}_{\Lin,\mk}$ are standard Gaussian random variables. Therefore, 
\[
\Prob_{\infty}(\min_{\mk\subseteq [K]} p_{\Lin,\mk} \leq \alpha) = \Prob(\max_{\mk\subseteq [K]} |\mathcal{N}_{\Lin,\mk}|\geq z_{1-\alpha/2}) \geq \Prob(|\mathcal{N}_{\Lin,\emptyset}|\geq z_{1-\alpha/2}) = \alpha.
\]

The above inequality holds if and only if $\cov(\mathcal{N}_{\Lin,\mk}, \mathcal{N}_{\Lin,\emptyset}) = 1$, for all $\mk \subseteq [K]$, i.e., $R^2_{\bs{x}}=0$.    
\end{proof}

\subsection{Proof of Theorem \ref{thm:rem-mitigates-p-hacking} and Theorem \ref{thm:rep-mitigates-p-hacking}}
It follows from Theorem \ref{thm:re-mitigates-p-hacking-sre} with $H=1$.

\subsection{Proof of Theorem \ref{thm:iff-for-rem_control-TIE}}
\begin{proof}[Proof of \Cref{thm:iff-for-rem_control-TIE}]
\noindent \textbf{The proof of the``if" part}:

Applying \Cref{lem:limit-of-t-value} with $H=1$, we have $(\hat{\tau}_{\Lin,\mk}/\hat{\se}_{\Lin,\mk})_{\mk\subseteq [K]} \wconv (\mathcal{N}_{\Lin,\mk})_{\mk \subseteq [K]}.$

 The ``if" part  follows from \Cref{lem:if-statement-for-rem-sre} with $H=1$. 

\noindent \textbf{The proof of the ``only if" part}:

To prove the ``only if" part, we prove that if $a > \bar{a}_{\rem}(\alpha,R^2_{\bs{x}},\underline{\Delta})$, for $(R^2_{\bs{x}}, \underline{\Delta}) \in \mathcal{B}$, there exists $(Y_i(1),Y_i(0))_{i=1}^n \in \mathbb{M}(R^2_{\bs{x}},\underline{\Delta})$ such that $\Prob_{\infty}(\hpL\mid \ma_{\rem}(a)) > 0$. 

Since $(R^2_{\bs{x}}, \underline{\Delta}) \in \mathcal{B}$, there exists $(\breve{Y}_i(1), \breve{Y}_i(0))_{i=1}^n \in \mathbb{M}(R^2_{\bs{x}},\underline{\Delta})$.  We construct $(Y_i(1),Y_i(0))_{i=1}^n$ $ \in \mathbb{M}(R^2_{\bs{x}},\underline{\Delta})$ as follows: $Y_i(1)=Y_i(0) = r_1 \tilde{Y}_i(0)+r_0 \tilde{Y}_i(1)$ for $i\in [n]$. Since $\tau_i = 0$ under this construction of $(Y_i(1),Y_i(0))_{i=1}^n$, we can see that by the definition of $\tilde{\se}_{\Lin, \mk}$, $n\tilde{\se}_{\Lin, \mk}^2 = \var(\varepsilon_{\tau}-\bs{\beta}_{\Lin, \mk}^\top \bs{\varepsilon}_{\mk})$ and therefore $ \mathcal{N}_{\Lin, \mk} \sim \mathcal{N}(0,1)$. 

 Let $\bar{R}_{\bs{x}}^2 = 1- R_{\bs{x}}^2$, $\bs{\xi} = \bs{V}_{\bs{x}\bs{x}}^{-1/2} \bs{\varepsilon}_{\bs{x}}$, $\varepsilon_0 =  \mathcal{N}_{\Lin,[K]}$, and $\tilde{R}^2_{\mk} = R^2_{\bs{x}}-R^2_{\mk}$. Let $\mathcal{E} = \{ \max_{\mk \subseteq [K]} |\mathcal{N}_{\Lin,\mk}| \geq z_{1-\alpha/2}\}$, $\ma_{\rem}^\infty (a) = \{\bs{\varepsilon}_{\bs{x}}^\top \bs{V}_{\bs{x}\bs{x}}^{-1}\bs{\varepsilon}_{\bs{x}}\leq a\}$. 

We see that 
\begin{align*}
\Prob_{\infty}(\hpL \leq \alpha \mid \mathcal{A}_{\rem}(a)) = \Prob(\mathcal{E}\mid\mathcal{A}_{\rem}^{\infty}(a)) = \Prob(\mathcal{E}_1\mid\mathcal{A}_{\rem}^{\infty}(a))+\Prob(\mathcal{E}_2\mid\mathcal{A}_{\rem}^{\infty}(a)),
\end{align*}
where
\begin{gather*}
    \mathcal{E}_1 = \{ |\mathcal{N}_{\Lin,[K]}| \geq z_{1-\alpha/2}\} = \{ |\varepsilon_0| \geq z_{1-\alpha/2}\},\quad \mathcal{E}_2 = \{ \max_{\mk \subsetneq [K]} |\mathcal{N}_{\Lin,\mk}| \geq z_{1-\alpha/2}, |\mathcal{N}_{\Lin,[K]}| < z_{1-\alpha/2}\}.
\end{gather*}

For $\mathcal{E}_1$, because $\bs{\varepsilon}_{\bs{x}}$ and $\varepsilon_0$ are independent, we have
\[
\Prob(\mathcal{E}_1\mid\mathcal{A}_{\rem}^{\infty}(a)) = \Prob(\mathcal{E}_1) = \Prob(|\varepsilon_0|\geq z_{1-\alpha/2}) = \alpha.
\]

It remains to show that $\Prob(\mathcal{E}_2\cap\mathcal{A}_{\remss}^{\infty}(a))>0$. $\mathcal{E}_2\cap\mathcal{A}_{\remss}^{\infty}(a)$ can be seen as a measurable set of $(\varepsilon_0,\bs{\xi})$.  Since nonempty open set has measure greater than $0$, it suffices to show that the following open subset $\mathcal{E}_3 \subseteq \mathcal{E}_2\cap\mathcal{A}_{\remss}^{\infty}(a)$ is nonempty: 
\[
\mathcal{E}_3 =\{|\mathcal{N}_{\Lin, [K]}|< z_{1-\alpha/2}, \max_{\mk \subsetneq [K]} |\mathcal{N}_{\Lin, \mk}| > z_{1-\alpha/2} \} \cap \{ \|\bs{\xi}\|_2^2 <a\}. 
\]

 Define $\tilde{\bs{\beta}}_{\Lin,\mk}\in \mathbb{R}^K$ with $(\tilde{\bs{\beta}}_{\Lin,\mk})_{\mk} = \bs{\beta}_{\Lin,\mk}$ and $0$ at the other entries. We have $\var(\varepsilon_{\tau}-\bs{\beta}_{\Lin, \mk}^\top \bs{\varepsilon}_{\mk}) = (1-R^2_{\mk}) V_{\tau\tau} = (\bar{R}^2_{\bs{x}}+\tilde{R}_{\mk}^2) V_{\tau\tau}$ and
\[
|\mathcal{N}_{\Lin,\mk}| = \frac{|V_{\tau\tau}^{1/2}\bar{R}_{\bs{x}}\varepsilon_0 + \bs{\varepsilon}_{\bs{x}}^\top (\bs{\beta}_{\Lin, \bs{x}}-\tilde{\bs{\beta}}_{\Lin,\mk})|}{(\bar{R}^2_{\bs{x}}+\tilde{R}_{\mk}^2)^{1/2}V_{\tau\tau}^{1/2}} = \frac{\Big|\bar{R}_{\bs{x}}|\varepsilon_0|+s_{\mk}\|\bs{\xi}\|_2 \tilde{R}_{\mk}\Big|}{(\bar{R}_{\bs{x}}^2 + \tilde{R}^2_{\mk})^{1/2}},
\]
 where $s_{\mk} = \operatorname{sign}(\varepsilon_{0})\bs{\varepsilon}_{\bs{x}}^\top (\bs{\beta}_{\Lin, \bs{x}}-\tilde{\bs{\beta}}_{\Lin,\mk})/(V_{\tau\tau}^{1/2}\|\bs{\xi}\|_2\tilde{R}_{\mk})$.

Let 
\begin{align*}
    k_0 = \argmin_{k \in [K]} \frac{(\bar{R}_{\bs{x}}^2 + \Delta_{k})^{1/2}z_{1-\alpha/2}-\bar{R}_{\bs{x}}|\varepsilon_0|}{\Delta_{k}^{1/2}},\quad \psi(|\varepsilon_0|)  = \min_{k\in [K]} \frac{(\bar{R}_{\bs{x}}^2 + \Delta_{k})^{1/2}z_{1-\alpha/2}-\bar{R}_{\bs{x}}|\varepsilon_0|}{\Delta_{k}^{1/2}} .
\end{align*}

Noticing that $\Delta_{k_0} = \tilde{R}^2_{[-k_0]}$, we have, under $\{s_{[-k_0]}=1\}$,
\begin{align*}
    \max_{\mk \subsetneq [K]} |\mathcal{N}_{\Lin, \mk}| &\geq |\mathcal{N}_{\Lin, [-k_0]}| = \frac{\bar{R}_{\bs{x}}|\varepsilon_0|+\|\bs{\xi}\|_2 \Delta_{k_0}^{1/2} }{(\bar{R}_{\bs{x}}^2 + \Delta_{k_0})^{1/2}}.
\end{align*}

Therefore,
\begin{align*}
    & \mathcal{E}_3 \cap \{s_{[-k_0]}=1\} \\
   \supseteq &  \Big\{|\varepsilon_0| < z_{1-\alpha/2},s_{ [-k_0]}=1,\frac{\bar{R}_{\bs{x}}|\varepsilon_0|+\|\bs{\xi}\|_2 \Delta_{k_0}^{1/2} }{(\bar{R}_{\bs{x}}^2 + \Delta_{k_0})^{1/2}} > z_{1-\alpha/2}, \|\bs{\xi}\|_2^2 < a \Big\}\\
   \supseteq & \Big\{|\varepsilon_0| < z_{1-\alpha/2},s_{[-k_0]}=1, \|\bs{\xi}\|_2 >  \psi(|\varepsilon_0|),  \|\bs{\xi}\|_2^2 < a \Big\} .
\end{align*}

We see that
\begin{align*} 
    &\psi(z_{1-\alpha/2}) = \min_{k\in [K]} \frac{(\bar{R}_{\bs{x}}^2 + \Delta_{k})^{1/2}z_{1-\alpha/2}-\bar{R}_{\bs{x}}z_{1-\alpha/2}}{\Delta_{k}^{1/2}} = \{\bar{a}_{\rem}(\alpha, R^2_{ \bs{x}}, \underline{\Delta})\}^{1/2}  < a^{1/2}.
\end{align*}

Since $\psi(|\varepsilon_0|)$ is a continuous function of $|\varepsilon_0|$, there exists $0 < t < z_{1-\alpha/2}$ such that $\psi(t) < a^{1/2}$. Therefore, 
\begin{align*}
     \mathcal{E}_3 \cap \{s_{[-k_0]}=1\} \supseteq & \Big\{|\varepsilon_0| < z_{1-\alpha/2},~ s_{[-k_0]}=1, \|\bs{\xi}\|_2 >  \psi(|\varepsilon_0|),  ~ \|\bs{\xi}\|_2^2 < a \Big\} \\
    \supseteq & \Big\{|\varepsilon_0| = t ,~s_{[-k_0]}=1, ~ a^{1/2} > \|\bs{\xi}\|_2 >  \psi(t) \Big\}.
\end{align*}
We see that
\[
s_{\mk} = \frac{\operatorname{sign}(\varepsilon_{0})\bs{\varepsilon}_{\bs{x}}^\top (\bs{\beta}_{\Lin, \bs{x}}-\tilde{\bs{\beta}}_{\Lin,\mk})}{\|\bs{\xi}\|_2\tilde{R}_{\mk}V_{\tau\tau}^{1/2}} = \frac{\operatorname{sign}(\varepsilon_{0})\bs{\xi}^\top \bs{V}_{\bs{x}\bs{x}}^{1/2}(\bs{\beta}_{\Lin, \bs{x}}-\tilde{\bs{\beta}}_{\Lin,\mk})}{\|\bs{\xi}\|_2 \cdot\|\bs{V}_{\bs{x}\bs{x}}^{1/2}(\bs{\beta}_{\Lin, \bs{x}}-\tilde{\bs{\beta}}_{\Lin,\mk})\|_2},
\]
which implies that
\[
\{s_{[-k_0]} = 1\} = \Big\{ \frac{\bs{\xi}}{\|\bs{\xi}\|_2} = \bs{\nu}(\varepsilon_0) \Big\}, \quad \text{where}\quad \bs{\nu}(\varepsilon_0) = \frac{\operatorname{sign}(\varepsilon_{0}) \bs{V}_{\bs{x}\bs{x}}^{1/2}(\bs{\beta}_{\Lin, \bs{x}}-\tilde{\bs{\beta}}_{\Lin,[-k_0]})}{\|\bs{V}_{\bs{x}\bs{x}}^{1/2}(\bs{\beta}_{\Lin, \bs{x}}-\tilde{\bs{\beta}}_{\Lin,[-k_0]})\|_2}.
\]

It follows that
\begin{align*}
     \mathcal{E}_3 \cap \{s_{[-k_0]}=1\} \supseteq & \Big\{\varepsilon_0 =t,~ \frac{\bs{\xi}}{\|\bs{\xi}\|_2} = \bs{\nu}(t), ~ a^{1/2} > \|\bs{\xi}\|_2 >  \psi(t) \Big\}. 
\end{align*}
Therefore, $\mathcal{E}_3$ is  nonempty. We complete the proof. 
\end{proof}

\subsection{Proof of \Cref{cor:bound-of-rep-derived-by-direct-inequality}}
Let $\ma_{\rep}^{\infty}(\alpha_t) = \{\|\bs{\sigma}(\bs{V}_{\bs{x}\bs{x}})^{-1}\bs{\varepsilon}_{\bs{x}}\|_{\infty} \leq z_{1-\alpha_t/2}\}$ denote the asymptotic limit of $\ma_{\rep}(\alpha_t)$.

 Let $\bar{R}_{\bs{x}}^2 = 1- R_{\bs{x}}^2$. By \eqref{eq:me-sufficient-condition} in \Cref{lem:if-statement-for-rem-sre}, we have 
\begin{align*}
    \Prob_{\infty}(\hpL \leq \alpha \mid \ma_{\rep}(\alpha_t)) -\alpha \leq \Prob(\mathcal{E}_2\mid \ma_{\rep}^{\infty}(\alpha_t))
\end{align*}
where
\begin{align*}
   \mathcal{E}_2 =  \Big\{|\varepsilon_0| < z_{1-\alpha/2}, \|\bs{V}_{\bs{x}\bs{x}}^{-1/2}\bs{\varepsilon}_{\bs{x}}\|_2 \geq   \{\bar{a}_{\rem}(\alpha, R^2_{\bs{x}}, \underline{\Delta})\}^{1/2}\Big\},
\end{align*}
and $\varepsilon_0$ is a standard Gaussian random variable independent of $\bs{\varepsilon}_{\bs{x}}$.

Let $ \bs{\xi}= \bs{D}(\bs{V}_{\bs{x}\bs{x}})^{-1/2}\bs{\sigma}(\bs{V}_{\bs{x}\bs{x}})^{-1}\bs{\varepsilon}_{\bs{x}}$. We have $\|\bs{\xi}\|_2 = \|\bs{V}_{\bs{x}\bs{x}}^{-1/2}\bs{\varepsilon}_{\bs{x}}\|_2$ and
\begin{align*}
    \ma_{\rep}^{\infty}(\alpha_t) &= \{\|\bs{\sigma}(\bs{V}_{\bs{x}\bs{x}})^{-1}\bs{\varepsilon}_{\bs{x}}\|_{\infty} \leq z_{1-\alpha_t/2}\} = \{\|\bs{D}(\bs{V}_{\bs{x}\bs{x}})^{1/2}\bs{\xi}\|_{\infty} \leq z_{1-\alpha_t/2}\}.
\end{align*}

Since $\|\bs{\xi}\|_2 \leq \|\bs{D}(\bs{V}_{\bs{x}\bs{x}})^{-1/2}\|_{\infty,2} \|\bs{D}(\bs{V}_{\bs{x}\bs{x}})^{1/2}\bs{\xi}\|_{\infty},$
we have, if
\[
z_{1-\alpha_t/2} \leq  \|\bs{D}(\bs{V}_{\bs{x}\bs{x}})^{-1/2}\|_{\infty,2}^{-1} \{\bar{a}_{\rem}(\alpha, R^2_{\bs{x}}, \underline{\Delta})\}^{1/2},
\]
 $\|\bs{D}(\bs{V}_{\bs{x}\bs{x}})^{1/2}\bs{\xi}\|_{\infty} \leq z_{1-\alpha_t/2}$ implies that $\|\bs{\xi}\|_2  \leq  \{\bar{a}_{\rem}(\alpha, R^2_{\bs{x}}, \underline{\Delta})\}^{1/2}.$

Therefore $\ma_{\rep}^{\infty}(\alpha_t) \subseteq \ma_{\rep}^\infty(\bar{a}_{\rem}(\alpha, R^2_{\bs{x}},\underline{\Delta}))$. Since $\Prob\big\{\ma_{\rem}^\infty(\bar{a}_{\rem}(\alpha, R^2_{\bs{x}},\underline{\Delta})) \cap \mathcal{E}_2\big\} = 0$,
\[
 \Prob\big\{\ma_{\rep}^{\infty}(\alpha_t) \cap \mathcal{E}_2\big\} \leq \Prob\big\{\ma_{\rem}^\infty(\bar{a}_{\rem}(\alpha, R^2_{\bs{x}},\underline{\Delta})) \cap \mathcal{E}_2\big\} = 0
\]

Thus, we prove the desired result.

\subsection{Proof of Theorem \ref{thm:ReM-bound-of-I-error-rate-inflation}} 
It follows from Theorem \ref{thm:inflation-bound-sre} $(i)$ with $H=1$.

\subsection{Proof of Theorem \ref{thm:rep-orthogonal-covariates-equally-importance}}

It follows from Theorem \ref{thm:sufficient-complete-condition-for-orthogonal-equal-importance} $(ii)$ with $H=1$.

\subsection{Proof of Theorem \ref{thm:ReP-bound-of-I-error-rate-inflation}}
It follows from Theorem \ref{thm:inflation-bound-sre} $(ii)$ with $H=1$.

\section{Proofs of theoretical results under SRE}

\label{sec:results-under-sre}

\subsection{Proof of Theorem \ref{thm:re-mitigates-p-hacking-sre}}

\begin{definition}
    For two symmetric random vectors $\bs{A}$ and $\bs{B}$ in $\mathbb{R}^m$, we say $\bs{A}$ \emph{is more peaked than} $\bs{B}$, denoted by $\bs{A}\succeq \bs{B}$, if
$\Prob(\bs{A}\in\mathcal{C})\geq\Prob(\bs{B}\in\mathcal{C})$ for all symmetric convex sets $\mathcal{C}$ in $\mathbb{R}^m$.
\end{definition}

\Cref{lem:gaussian-correlation-inequality} is the Gaussian correlation inequality \citep{royen}.
\begin{lemma}
\label{lem:gaussian-correlation-inequality}
    Let $\Prob_{\mathcal{N}}$ be a probability measure on $\mathbb{R}^m,m>1$, given by a Gaussian random variable with  $\bs{0}$ mean and a non-singular covariance matrix. We have
$$\Prob_{\mathcal{N}}(\mathcal{C}_1\cap \mathcal{C}_2)\geq \Prob_{\mathcal{N}}(\mathcal{C}_1) \Prob_{\mathcal{N}}(\mathcal{C}_2)$$
for all symmetric convex sets $\mathcal{C}_1,\mathcal{C}_2\in\mathbb{R}^m$.
\end{lemma}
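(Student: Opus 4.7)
The stated lemma is the Gaussian correlation inequality, a conjecture from the 1950s finally resolved by Royen in a remarkably short paper. I will not attempt a genuinely new proof; the plan is to sketch the route I would take, which is essentially Royen's argument.

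The first step is a standard reduction. Every centrally symmetric convex set in $\mathbb{R}^m$ is an intersection of symmetric slabs $\{\bs{x}:|\bs{u}^\top \bs{x}|\leq c\}$, and by monotone convergence on finite intersections it suffices to handle the case where $\mathcal{C}_1$ and $\mathcal{C}_2$ are themselves finite intersections of such slabs. After a linear change of variables absorbing the slab directions into new centered Gaussian coordinates $(Y_1,\ldots,Y_N)$, the claim reduces to showing that for disjoint index sets $I,J$ and positive thresholds $c_k$,
\[
\Prob\big(|Y_k|\leq c_k,\,k\in I\cup J\big) \;\geq\; \Prob\big(|Y_k|\leq c_k,\,k\in I\big)\,\Prob\big(|Y_k|\leq c_k,\,k\in J\big).
\]

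Next I would rewrite $|Y_k|\leq c_k$ as $Y_k^2\leq c_k^2$ and use that $(Y_1^2,\ldots,Y_N^2)$ has a central multivariate gamma distribution with shape parameter $1/2$. Let $\bs{\Sigma}$ be the covariance of $\bs{Y}$ and let $\bs{\Sigma}(t)$ be the path in the PSD cone obtained by scaling the $I$-versus-$J$ cross-covariance block by $t\in[0,1]$, so that $\bs{\Sigma}(0)$ is block-diagonal across $I$ and $J$ and $\bs{\Sigma}(1)=\bs{\Sigma}$. Define $F(t) = \Prob(Y_k^2 \leq c_k^2 \text{ for all }k)$ under covariance $\bs{\Sigma}(t)$. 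Then $F(0) = \Prob(\mathcal{C}_1)\Prob(\mathcal{C}_2)$ while $F(1) = \Prob(\mathcal{C}_1\cap \mathcal{C}_2)$, so the inequality is equivalent to the monotonicity statement $F'(t)\geq 0$ on $[0,1]$.

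The hard step is precisely this monotonicity, and this is where Royen's contribution lies. His key observation is that the Laplace transform of the multivariate gamma distribution equals $\det(\bs{I}+\bs{D}\bs{\Sigma}(t))^{-1/2}$ for diagonal $\bs{D}$, and that the corresponding density admits an absolutely convergent series expansion with nonnegative terms whose dependence on the perturbed cross-covariance block is manifestly monotone. Differentiating $F(t)$ under the integral sign and using the cofactor formula for $\partial_t\det$ then reduces $F'(t)\geq 0$ to an elementary positivity statement for certain combinations of principal minors. I expect this to be the main obstacle: it was the barrier that blocked the problem for decades, and in practice I would simply cite Royen's paper rather than attempt to reproduce the Laplace-transform manipulations.
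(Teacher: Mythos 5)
The paper gives no proof of this lemma at all—it simply states it and cites Royen's resolution of the Gaussian correlation inequality—so your proposal, which accurately sketches Royen's reduction-to-slabs, multivariate-gamma, and covariance-interpolation argument before concluding that one should cite Royen, lands in essentially the same place. Your sketch of the argument is faithful to the literature and there is nothing to correct.
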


Define $\tilde{\bs{\beta}}_{h,\Lin,\mk}\in \mathbb{R}^K$ such that $(\tilde{\bs{\beta}}_{h,\Lin,\mk})_{\mk} = \bs{\beta}_{h,\Lin,\mk}$ and all other entries are $0$.

\begin{lemma}
\label{lem:re-mitigate-p-hacking-sre-(i)}
Consider SRE and assume Condition \ref{a:CLT-sre-for-rerandomization} holds. 
 (i) Under $H_{0\textsc{n}}$, we have
    \begin{align*}
        &\Prob_{\infty}(\hpL \leq \alpha\mid \ma_{\repss}(\alpha_t)) \leq \Prob_{\infty}(\hpL \leq \alpha), \quad \Prob_{\infty}(\hpL \leq \alpha\mid \ma_{\remss}(a)) \leq \Prob_{\infty}(\hpL \leq \alpha);
    \end{align*}
and (ii) under $H_{0\omega}$, we have
 \begin{align*}
            &\Prob_{\infty}(\hpfe \leq \alpha\mid \ma_{\repfe}(\alpha_t)) \leq \Prob_{\infty}(\hpfe \leq \alpha).
 \end{align*}
\end{lemma}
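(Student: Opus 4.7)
The plan is to reduce all three inequalities to applications of the Gaussian correlation inequality (Lemma \ref{lem:gaussian-correlation-inequality}) on suitable symmetric convex sets. First I would invoke Lemma \ref{lem:asymptotic-limit-of-type-I-error-rate} parts (ii)--(v) to rewrite both inequalities in (i) as statements of the form $\Prob(\mathcal{E}_{\Lin} \mid \mathcal{A}_\infty) \leq \Prob(\mathcal{E}_{\Lin})$, where $\mathcal{E}_{\Lin} = \{\max_{\mk \subseteq [K]} |\mathcal{N}_{\Lin,\mk}| \geq z_{1-\alpha/2}\}$ and $\mathcal{A}_\infty$ is either the asymptotic SS-ReM event $\{\max_{h\in[H]} \bs{\varepsilon}_{h,\bs{x}}^\top \bs{V}_{h,\bs{x}\bs{x}}^{-1} \bs{\varepsilon}_{h,\bs{x}} \leq a\}$ or the asymptotic SS-ReP event $\{\max_{h\in[H]} \|\bs{\sigma}(\bs{V}_{h,\bs{x}\bs{x}})^{-1}\bs{\varepsilon}_{h,\bs{x}}\|_\infty \leq z_{1-\alpha_t/2}\}$. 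Part (ii) is handled analogously via Lemma \ref{lem:asymptotic-limit-of-type-I-error-rate}(vi)--(viii), with $(\varepsilon_{\omega,\tau},\bs{\varepsilon}_{\omega,\bs{x}}^\top)$ replacing the stratified vector and $\mathcal{E}_{\fe} = \{\max_{\mk}|\mathcal{N}_{\fe,\mk}| \geq z_{1-\alpha/2}\}$ replacing $\mathcal{E}_{\Lin}$.

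The key step is to recognize the relevant sets as symmetric convex subsets of a Gaussian vector. By \eqref{eq:def-of-Nlk-Nfek}, each $\mathcal{N}_{\Lin,\mk}$ is a linear functional of the jointly Gaussian vector $\bs{W} := ((\varepsilon_{h,\tau}, \bs{\varepsilon}_{h,\bs{x}}^\top))_{h\in[H]} \in \mathbb{R}^{H(K+1)}$, so $\mathcal{E}_{\Lin}^c = \bigcap_{\mk \subseteq [K]}\{|\mathcal{N}_{\Lin,\mk}| < z_{1-\alpha/2}\}$ is a finite intersection of symmetric slabs and hence symmetric and convex. The rerandomization events depend only on $(\bs{\varepsilon}_{h,\bs{x}})_{h\in[H]}$: the SS-ReM event is an intersection of origin-centered ellipsoids and the SS-ReP event is an intersection of symmetric slabs, so each is symmetric convex and lifts to a symmetric convex cylinder in $\bs{W}$. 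The same bookkeeping applies in (ii) to $\mathcal{E}_{\fe}^c$ and $\ma_{\repfe}^\infty(\alpha_t) = \{\|\bs{\sigma}(\bs{V}_{\omega,\bs{x}\bs{x}})^{-1}\bs{\varepsilon}_{\omega,\bs{x}}\|_\infty \leq z_{1-\alpha_t/2}\}$ viewed inside $(\varepsilon_{\omega,\tau},\bs{\varepsilon}_{\omega,\bs{x}}^\top)$.

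To invoke Lemma \ref{lem:gaussian-correlation-inequality} I must check non-singularity of the ambient Gaussian covariance. For $\bs{W}$ the strata are independent, so the covariance is block-diagonal with blocks $\begin{pmatrix} V_{h,\tau\tau} & \bs{V}_{h,\tau\bs{x}} \\ \bs{V}_{h,\bs{x}\tau} & \bs{V}_{h,\bs{x}\bs{x}} \end{pmatrix}$; positive-definiteness of each block follows from Condition \ref{a:CLT-sre-for-rerandomization}(ii), since $\bs{V}_{h,\bs{x}\bs{x}}$ is positive-definite and the Schur complement equals $\sigma^2_{h,\adj} > 0$. For the FE Gaussian vector, non-singularity follows from positive-definiteness of $\bs{V}_{\omega,\bs{x}\bs{x}}$ (inherited from the strata-wise covariances) together with Lemma \ref{lem:V-omega-positive}, which bounds the Schur complement from below by $\sum_h (\omega_h^2/\pi_h)\sigma^2_{h,\adj} > 0$. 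Then Lemma \ref{lem:gaussian-correlation-inequality} yields $\Prob(\mathcal{E}_{\Lin}^c \cap \mathcal{A}_\infty) \geq \Prob(\mathcal{E}_{\Lin}^c)\Prob(\mathcal{A}_\infty)$, which, after dividing by $\Prob(\mathcal{A}_\infty) > 0$ and taking complements, gives $\Prob(\mathcal{E}_{\Lin} \mid \mathcal{A}_\infty) \leq \Prob(\mathcal{E}_{\Lin})$; the FE case is identical.

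The main obstacle is really conceptual rather than technical: it is the observation that a maximum of absolute values of linear functionals of a Gaussian vector defines a symmetric convex sublevel set, and that all three rerandomization criteria (ellipsoidal for ReM, slab-based for ReP and FE-ReP) have the same structure. Once those geometric facts are noted, the non-singularity check is routine from Condition \ref{a:CLT-sre-for-rerandomization} and Lemma \ref{lem:V-omega-positive}, and the Gaussian correlation inequality does all the remaining work with no further computation.
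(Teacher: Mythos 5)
Your proposal is correct and follows essentially the same route as the paper: both represent the asymptotic acceptance and non-rejection events via Lemma \ref{lem:asymptotic-limit-of-type-I-error-rate} as symmetric convex sets (intersections of slabs for the hacked-$p$-value event, slabs or ellipsoids for the rerandomization events) in the limiting Gaussian vector, and then apply the Gaussian correlation inequality of Lemma \ref{lem:gaussian-correlation-inequality}. Your explicit verification of non-singularity of the ambient covariance via the Schur complements $\sigma^2_{h,\adj}$ and Lemma \ref{lem:V-omega-positive} is a small point of added rigor that the paper leaves implicit, but it does not change the argument.
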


\begin{proof}[Proof of \Cref{lem:re-mitigate-p-hacking-sre-(i)}]
   We see that, by \Cref{lem:asymptotic-limit-of-type-I-error-rate},
    \begin{align*}
       \Prob_{\infty} (\hpL > \alpha) =&  \Prob \bigg(\cap_{\mk \subseteq [K]}\Big\{ -z_{1-\alpha/2} < \frac{\sum_{h=1}^H \pi_h^{1/2}(\varepsilon_{h,\tau}-\bs{\beta}_{h,\Lin, \mk}^\top \bs{\varepsilon}_{h,\mk}) }{n^{1/2}\tilde{\se}_{\Lin, \mk}} < z_{1-\alpha/2} \Big\}\bigg)
        \\
        =&  \Prob \Big((\varepsilon_{h,\tau},\bs{\varepsilon}_{h,\bs{x}}^\top)_{h\in [H]}  \in \cap_{\mk \subseteq [K]}\mathcal{C}_{\mk} \Big),
    \end{align*}
    where
    \begin{align*}
        \mathcal{C}_{\mk} = \{\bs{x}\in \mathbb{R}^{H (K+1)} \mid -z_{1-\alpha/2} < (n^{1/2}\tilde{\se}_{\Lin, \mk})^{-1}(\pi_h^{1/2},-\pi_h^{1/2} \tilde{\bs{\beta}}_{h,\Lin,\mk}^\top)_{h\in [H]} \bs{x}< z_{1-\alpha/2}\}.
    \end{align*}

    Since $\mathcal{C}_{\mk}$, $\mk \subseteq [K]$ are symmetric convex sets, so is $\cap_{\mk \subseteq [K]} \mathcal{C}_{\mk}$.

    On the other hand, we have, for $\star \in\{\repss,\remss\}$,
 \begin{align*}
        &\Prob_{\infty}(\hpL \leq \alpha\mid \ma_{\star}(\alpha_t)) = \Prob \Big((\varepsilon_{h,\tau},\bs{\varepsilon}_{h,\bs{x}}^\top)_{h\in [H]}  \in \cap_{\mk \subseteq [K]}\mathcal{C}_{\mk} ~\Big | ~ (\varepsilon_{h,\tau},\bs{\varepsilon}_{h,\bs{x}}^\top)_{h\in [H]}  \in \mathcal{C}_{\star} \Big),
    \end{align*}
    where 
    \begin{align*}
        &\mathcal{C}_{\repss} = \{  \bs{x}= (\bs{x}_{h})_{h\in [H]} \in \mathbb{R}^{H (K+1)} \mid \bs{x}_{h} \in \mathbb{R}^{K+1}, \|\bs{\sigma}(\bs{V}_{h,\bs{x}\bs{x}})^{-1}\bs{x}_{h,2:(K+1)}\|_{\infty} \leq z_{1-\alpha_t/2}, h\in [H]\};\\
        &\mathcal{C}_{\remss} = \{\bs{x}= (\bs{x}_{h})_{h\in [H]} \in \mathbb{R}^{H (K+1)} \mid \bs{x}_{h} \in \mathbb{R}^{K+1}, \bs{x}_{h,2:(K+1)}^\top \bs{V}_{h,\bs{x}\bs{x}}^{-1} \bs{x}_{h,2:(K+1)} \leq a, h\in [H] \}.
    \end{align*}

    $\mathcal{C}_{\remss}$ and $\mathcal{C}_{\repss}$ are both symmetric convex sets. 

    If follows that, by \Cref{lem:gaussian-correlation-inequality},
    \begin{align*}
        &\Prob_{\infty}(\hpL > \alpha\mid \ma_{\repss}(\alpha_t)) = \frac{\Prob \Big((\varepsilon_{h,\tau},\bs{\varepsilon}_{h,\bs{x}}^\top)_{h\in [H]}  \in \big(\cap_{\mk \subseteq [K]}\mathcal{C}_{\mk}\big) \cap \mathcal{C}_{\repss} \Big)}{\Prob \Big((\varepsilon_{h,\tau},\bs{\varepsilon}_{h,\bs{x}}^\top)_{h\in [H]}  \in \mathcal{C}_{\repss} \Big)}\\
        &\geq \Prob \Big((\varepsilon_{h,\tau},\bs{\varepsilon}_{h,\bs{x}}^\top)_{h\in [H]}  \in  \cap_{\mk \subseteq [K]}\mathcal{C}_{\mk} \Big) =  \Prob_{\infty}(\hpL > \alpha).
    \end{align*}
    As a consequence, we have
    \[
    \Prob_{\infty}(\hpL \leq \alpha\mid \ma_{\repss}(\alpha_t)) \leq \Prob_{\infty}(\hpL \leq \alpha).
    \]

    Similarly, we can prove that
    \begin{align*}
     &\Prob_{\infty}(\hpL \leq \alpha\mid \ma_{\remss}(a)) \leq \Prob_{\infty}(\hpL \leq \alpha);\quad \Prob_{\infty}(\hpfe \leq \alpha\mid \ma_{\repfe}(\alpha_t)) \leq \Prob_{\infty}(\hpfe \leq \alpha).
    \end{align*}
\end{proof}

\begin{proof}[Proof of Theorem \ref{thm:re-mitigates-p-hacking-sre}]
    By \Cref{lem:re-mitigate-p-hacking-sre-(i)}, 
we have, under $H_{0\textsc{n}}$, 
   \begin{align*}
        &\Prob_{\infty}(\hpL \leq \alpha\mid \ma_{\repss}(\alpha_t)) \leq \Prob_{\infty}(\hpL \leq \alpha); \quad \Prob_{\infty}(\hpL \leq \alpha\mid \ma_{\remss}(a)) \leq \Prob_{\infty}(\hpL \leq \alpha),
    \end{align*}
and, under $H_{0\omega}$,
 \begin{align*}
            &\Prob_{\infty}(\hpfe \leq \alpha\mid \ma_{\repfe}(\alpha_t)) \leq \Prob_{\infty}(\hpfe \leq \alpha).
 \end{align*}

The remaining results follow from Theorem \ref{thm:inflation-bound-sre}, by letting $a\rightarrow 0$ and $\alpha_t\rightarrow 1$. 
\end{proof}

\subsection{Proof of Theorem \ref{thm:sufficient-complete-condition-for-orthogonal-equal-importance} $(i)$}
Let $V_{\tau\tau} = \sum_{h=1}^H \pi_{ h }  V_{ h,\tau\tau}$, $R^2_{h,\mk} = V_{h,\tau \mk} V_{h,\mk \mk}^{-1} V_{h, \mk \tau}/V_{h, \tau\tau}$, $R^2_{\mk} = \sum_{h=1}^H \pi_{ h } {R}^2_{ h, \mk} V_{ h,\tau\tau} /V_{\tau\tau}$, $\tilde{R}_{\mk}^2 = 
R^2_{\bs{x}} - R^2_{\mk}$ and $\bar{R}_{\bs{x}}^2 = 1-R_{\bs{x}}^2$. To prove Theorem \ref{thm:sufficient-complete-condition-for-orthogonal-equal-importance} $(i)$, we first prove a lemma.
 
\begin{lemma}
\label{lem:if-statement-for-rem-sre}
Consider SRE and assume Condition \ref{a:CLT-sre-for-rerandomization} holds. Under $H_{0\textsc{n}}$, if
    \begin{align*}
       a \leq  H^{-1}\bar{a}_{\rem}(\alpha, R^2_{\bs{x}}, \underline{\Delta}),
    \end{align*}
then we have $\Prob_{\infty}( \hpL \leq \alpha\mid \ma_{\remss}(a)) \leq \alpha.$
\end{lemma}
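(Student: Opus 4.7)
\textbf{Proof plan for \Cref{lem:if-statement-for-rem-sre}.} The plan is to combine \Cref{lem:asymptotic-limit-of-type-I-error-rate}(ii), which writes the conditional type I error in terms of independent Gaussians, with a Cauchy--Schwarz step that converts the stratum-wise Mahalanobis constraints $\max_h\|\bs{\xi}_h\|_2^2\leq a$ into the single pooled $\ell_2$ constraint $\sum_{h=1}^H \|\bs{\xi}_h\|_2^2\leq Ha$, where $\bs{\xi}_h := \bs{V}_{h,\bs{x}\bs{x}}^{-1/2}\bs{\varepsilon}_{h,\bs{x}}$ are i.i.d.\ $\mathcal{N}(\bs{0},\bs{I}_K)$ across $h$. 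The factor $H$ in the lemma's threshold enters exactly through this pooling; once the constraint is pooled, I can essentially replay the ``if'' direction of the argument underlying \Cref{thm:iff-for-rem_control-TIE} with $a$ replaced by $Ha$.

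First I would orthogonally decompose $\varepsilon_{h,\tau} = \bs{\beta}_{h,\Lin,\bs{x}}^\top \bs{\varepsilon}_{h,\bs{x}} + \eta_h$ with $\eta_h \perp \bs{\varepsilon}_{h,\bs{x}}$ and $\var(\eta_h) = \sigma_{h,\adj}^2 = V_{h,\tau\tau}(1-R^2_{h,\bs{x}})$. Setting $V_{\tau\tau} := \sum_h \pi_h V_{h,\tau\tau}$ and $\bar R_{\bs{x}}^2 := 1 - R^2_{\bs{x}}$, the pooled term $\eta_0 := \sum_h \pi_h^{1/2}\eta_h$ has variance $V_{\tau\tau}\bar R_{\bs{x}}^2$, is independent of $(\bs{\xi}_h)_h$, and normalizes to $\varepsilon_0 := \eta_0/(V_{\tau\tau}\bar R_{\bs{x}}^2)^{1/2} \sim \mathcal{N}(0,1)$. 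Letting $\tilde{\bs{\beta}}_{h,\Lin,\mk}\in\mathbb{R}^K$ denote the zero-padded extension of $\bs{\beta}_{h,\Lin,\mk}$, this yields
\[
\mathcal{N}_{\Lin,\mk} = \frac{V_{\tau\tau}^{1/2}\bar R_{\bs{x}}\,\varepsilon_0 + \sum_{h=1}^H \pi_h^{1/2}(\bs{\beta}_{h,\Lin,\bs{x}}-\tilde{\bs{\beta}}_{h,\Lin,\mk})^\top \bs{\varepsilon}_{h,\bs{x}}}{n^{1/2}\tilde{\se}_{\Lin,\mk}}.
\]
A standard $R^2$ identity, using $\bs{\beta}_{h,\Lin,\bs{x}} = \bs{V}_{h,\bs{x}\bs{x}}^{-1}\bs{V}_{h,\bs{x}\tau}$, shows $\sum_h \pi_h(\bs{\beta}_{h,\Lin,\bs{x}}-\tilde{\bs{\beta}}_{h,\Lin,\mk})^\top \bs{V}_{h,\bs{x}\bs{x}}(\bs{\beta}_{h,\Lin,\bs{x}}-\tilde{\bs{\beta}}_{h,\Lin,\mk}) = V_{\tau\tau}\tilde R_\mk^2$ with $\tilde R_\mk^2 := R^2_{\bs{x}}-R^2_\mk$; Cauchy--Schwarz then bounds the hacking numerator by $V_{\tau\tau}^{1/2}\tilde R_\mk(Ha)^{1/2}$ on $\ma_{\remss}(a)$, while $n\tilde{\se}_{\Lin,\mk}^2 \geq V_{\tau\tau}(\bar R_{\bs{x}}^2+\tilde R_\mk^2)$ follows from \Cref{lem:standard-normal-under-fisher-null}, giving the deterministic bound
\[
|\mathcal{N}_{\Lin,\mk}| \leq \frac{\bar R_{\bs{x}}|\varepsilon_0| + \tilde R_\mk(Ha)^{1/2}}{(\bar R_{\bs{x}}^2+\tilde R_\mk^2)^{1/2}}.
\]

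By the independence of $\varepsilon_0$ and the rerandomization event, the conclusion follows once the above bound is shown to lie below $z_{1-\alpha/2}$ for every $\mk$ whenever $|\varepsilon_0|<z_{1-\alpha/2}$; this would give $\Prob_\infty(\hpL\leq\alpha\mid\ma_{\remss}(a))\leq \Prob(|\varepsilon_0|\geq z_{1-\alpha/2}) = \alpha$. Hacking at $\mk$ requires $Ha\geq g(|\varepsilon_0|,\tilde R_\mk^2)$ with $g(\varepsilon,t):= \{(\bar R_{\bs{x}}^2+t)^{1/2} z_{1-\alpha/2} - \bar R_{\bs{x}}\varepsilon\}^2/t$. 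Direct calculus gives $\partial_\varepsilon g<0$ on $[0,z_{1-\alpha/2}]$, so $g(|\varepsilon_0|,t)\geq g(z_{1-\alpha/2},t)$; after rationalization, $g(z_{1-\alpha/2},t) = z_{1-\alpha/2}^2 t/\{(\bar R_{\bs{x}}^2+t)^{1/2}+\bar R_{\bs{x}}\}^2$ is strictly increasing in $t$. The hard part is then the combinatorial reduction $\min_{\mk\subsetneq[K]} g(z_{1-\alpha/2},\tilde R_\mk^2) = g(z_{1-\alpha/2},\underline{\Delta}) = \bar{a}_{\rem}(\alpha,R^2_{\bs{x}},\underline{\Delta})$, which rests on the observation that adding covariates can only increase $R^2_\mk$, so $\max_{\mk\subsetneq[K]} R^2_\mk = \max_k R^2_{[-k]}$ and hence $\tilde R_\mk^2 \geq \underline{\Delta}$; the worst case is $\mk = [-k_0]$ with $k_0 = \argmin_k \Delta_k$. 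The hypothesis $Ha\leq \bar a_{\rem}$ then forces $g(|\varepsilon_0|,\tilde R_\mk^2) \geq \bar{a}_{\rem} \geq Ha$ for every $\mk$, closing the argument. Everything outside of this last combinatorial reduction---the Cauchy--Schwarz step, the $R^2$ identity, and the variance bound from \Cref{lem:standard-normal-under-fisher-null}---is mechanical.
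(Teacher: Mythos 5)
Your proposal is correct and follows essentially the same route as the paper's proof: orthogonalize into the fully adjusted residual $\varepsilon_0$ (independent of the imbalances), upgrade $\mathcal{N}_{\Lin,\mk}$ to the standardized $\tilde{\mathcal{N}}_{\Lin,\mk}$ via the variance lower bound $n\tilde{\se}^2_{\Lin,\mk}\geq V_{\tau\tau}(\bar R_{\bs{x}}^2+\tilde R_{\mk}^2)$, apply Cauchy--Schwarz to bound the imbalance contribution by $V_{\tau\tau}^{1/2}\tilde R_{\mk}(Ha)^{1/2}$ on $\ma_{\remss}(a)$, and minimize over $\mk\subsetneq[K]$ using monotonicity of $R^2_{\mk}$ to land on $\underline{\Delta}$ and hence $\bar a_{\rem}$. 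The only cosmetic differences are that you pool the stratum constraints into $\sum_h\|\bs{\xi}_h\|_2^2\leq Ha$ in a single Cauchy--Schwarz step (the paper factors out $\max_h\|\bs{\xi}_h\|_2\cdot\sqrt H$ first) and you phrase the conclusion as an event inclusion into $\{|\varepsilon_0|\geq z_{1-\alpha/2}\}$ rather than the paper's split $\mathcal{E}=\mathcal{E}_1\cup\mathcal{E}_2$ with $\Prob(\mathcal{E}_2\mid\cdot)=0$; these are equivalent.
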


\begin{proof}[Proof of \Cref{lem:if-statement-for-rem-sre}]
Let $\bs{\beta}_{h,\mk}(z) = \bs{S}_{h,\mk\mk}^{-1}\bs{S}_{h,\mk Y(z)}$, $z=0,1$. Let $\tau_{\mk, i} = Y_i(1)-Y_i(0)- \bs{x}_i^\top(\bs{\beta}_{h(i),\mk}(1) - \bs{\beta}_{h(i),\mk}(0))$. Note that $\mathcal{N}_{\Lin,\mk} = (n^{1/2}\tilde{\se}_{\Lin,\mk})^{-1}\sum_{h=1}^H \pi_{ h }^{1/2}(\varepsilon_{h,\tau} -\bs{\varepsilon}_{ h, \mk}^\top\bs{\beta}_{ h, \Lin, \mk})$ and, by definition,
\begin{align*}
    n\tilde{\se}_{\Lin,\mk}^2 =& \sum_{h=1}^H \pi_h \big\{\var(\varepsilon_{h,\tau}-\bs{\varepsilon}_{ h, \mk}^\top\bs{\beta}_{ h, \Lin, \mk}) + S_{h,\tau_\mk \tau_\mk}\big\}  \geq  \sum_{h=1}^H \pi_h \var(\varepsilon_{h,\tau}-\bs{\varepsilon}_{ h, \mk}^\top\bs{\beta}_{ h, \Lin, \mk}) \\
    =& \sum_{h=1}^H \pi_{ h }(1-R^2_{ h, \mk}) V_{ h,\tau\tau} = (\bar{R}^2_{\bs{x}} + \tilde{R}^2_{\mk}) V_{ \tau\tau},
\end{align*}
where the inequality holds when $Y_i(1)=Y_i(0)$ for $i=1,\ldots,n$.

Define 
\[
\tilde{\mathcal{N}}_{\Lin,\mk} = \mathcal{N}_{\Lin,\mk} \frac{n^{1/2}\tilde{\se}_{\Lin,\mk}}{(\bar{R}^2_{\bs{x}} + \tilde{R}^2_{\mk})^{1/2} V_{ \tau\tau}^{1/2}},\quad \mk \subseteq [K].
\]
 $\tilde{\mathcal{N}}_{\Lin,\mk}$ are standard Gaussian random variables and $|\tilde{\mathcal{N}}_{\Lin,\mk}| \geq |\mathcal{N}_{\Lin,\mk}|$.

The remaining proof contains $2$ steps. 

\noindent\textbf{Step 1:} We prove that
\[
\Prob_{\infty}(\hpL \leq \alpha \mid \mathcal{A}_{\remss}(a))-\alpha \leq \Prob\Big(\max_{\mk \subsetneq [K]} |\tilde{\mathcal{N}}_{\Lin,\mk}| \geq z_{1-\alpha/2}, |\tilde{\mathcal{N}}_{\Lin,[K]}| < z_{1-\alpha/2}~\Big|~\mathcal{A}_{\remss}^{\infty}(a)\Big).
\]

Since $\varepsilon_{h,\tau} -  \bs{\varepsilon}_{ h, \bs{x}}^\top \bs{\beta}_{h, \Lin , \bs{x}}$ is independent of $\bs{\varepsilon}_{ h, \bs{x}}$, 
we have
\begin{align*}
\var(\varepsilon_{h,\tau}-\bs{\varepsilon}_{ h, \mk}^\top\bs{\beta}_{ h, \Lin, \mk}) - \var(\varepsilon_{h,\tau} -  \bs{\varepsilon}_{ h, \bs{x}}^\top \bs{\beta}_{h, \Lin , \bs{x}}) = \var(\bs{\varepsilon}_{ h, \bs{x}}^\top \bs{\beta}_{h, \Lin , \bs{x}}-\bs{\varepsilon}_{ h, \mk}^\top\bs{\beta}_{ h, \Lin, \mk}).
\end{align*}

By the definition of $R^2_{h,\mk}$ and $\bs{\beta}_{ h, \Lin, \mk}$, we have
\begin{align*}
    \var(\varepsilon_{h,\tau}-\bs{\varepsilon}_{ h, \mk}^\top\bs{\beta}_{ h, \Lin, \mk}) = V_{ h,\tau\tau}(1-R^2_{ h, \mk});\quad \var(\varepsilon_{h,\tau} -  \bs{\varepsilon}_{ h, \bs{x}}^\top \bs{\beta}_{h, \Lin , \bs{x}}) = V_{ h,\tau\tau}(1 - R^2_{ h, \bs{x}}),
\end{align*}
which yields that
\begin{align*}
 \var(\bs{\varepsilon}_{ h, \bs{x}}^\top \bs{\beta}_{h, \Lin , \bs{x}}-\bs{\varepsilon}_{ h, \mk}^\top\bs{\beta}_{ h, \Lin, \mk}) =  V_{ h,\tau\tau} (R^2_{ h, \bs{x}} - R^2_{ h, \mk}).
\end{align*}

Let $\tilde{R}^2_{ h, \mk} = R^2_{ h, \bs{x}} - R^2_{ h, \mk}$, $\bar{R}_{h,\bs{x}}^2 = 1 - R_{h,\bs{x}}^2$. Let $\mathcal{A}_{\remss}^{\infty}(a) = \cap_{h=1}^H\{\bs{\varepsilon}_{h,\bs{x}}^\top\bs{V}_{h,\bs{x}\bs{x}}^{-1}\bs{\varepsilon}_{h,\bs{x}}\leq a\}$. By \Cref{lem:asymptotic-limit-of-type-I-error-rate} and $|\mathcal{N}_{\Lin,\mk}|\leq |\tilde{\mathcal{N}}_{\Lin,\mk}|$, we have
\[
\Prob_{\infty}(\hpL \leq \alpha \mid \mathcal{A}_{\remss}(a)) = \Prob(\max_{\mk \subseteq [K]}|\mathcal{N}_{\Lin,\mk}|\geq z_{1-\alpha/2}\mid\mathcal{A}^\infty_{\remss}(a)) \leq  \Prob(\mathcal{E}\mid\mathcal{A}^\infty_{\remss}(a)),
\] 
where $\mathcal{E} = \{ \max_{\mk \subseteq [K]} |\tilde{\mathcal{N}}_{\Lin,\mk}| \geq z_{1-\alpha/2}\}$. 

We see that
\[
\mathcal{E} = \{ |\tilde{\mathcal{N}}_{\Lin,[K]}| \geq z_{1-\alpha/2}\} \cup \{ \max_{\mk \subsetneq [K]} |\tilde{\mathcal{N}}_{\Lin,\mk}| \geq z_{1-\alpha/2}, |\tilde{\mathcal{N}}_{\Lin,[K]}| < z_{1-\alpha/2}\} =: \mathcal{E}_1\cup\mathcal{E}_2.
\]

Therefore, 
\begin{align*}
\Prob_{\infty}(\hpL \leq \alpha \mid \mathcal{A}_{\remss}(a)) \leq \Prob(\mathcal{E}_1\mid\mathcal{A}_{\remss}^{\infty}(a))+\Prob(\mathcal{E}_2\mid\mathcal{A}_{\remss}^{\infty}(a)).
\end{align*}

Since $\tilde{\mathcal{N}}_{\Lin,[K]}$ is independent of $\bs{\varepsilon}_{h,\bs{x}}$, $h\in [H]$, we have
\[
\Prob(\mathcal{E}_1\mid\mathcal{A}_{\remss}^{\infty}(a)) = \Prob(\mathcal{E}_1) = \alpha,
\]
which yields that
\[
\Prob_{\infty}(\hpL \leq \alpha \mid \mathcal{A}_{\remss}(a))-\alpha \leq \Prob(\mathcal{E}_2\mid\mathcal{A}_{\remss}^{\infty}(a)).
\]

\noindent\textbf{Step 2:} We prove that: If $a \leq  H^{-1}\bar{a}_{\rem}(\alpha, R^2_{\bs{x}}, \underline{\Delta})$, we have
\[
\Prob\Big(\max_{\mk \subsetneq [K]} |\tilde{\mathcal{N}}_{\Lin,\mk}| \geq z_{1-\alpha/2}, |\tilde{\mathcal{N}}_{\Lin,[K]}| < z_{1-\alpha/2}~\Big|~\mathcal{A}_{\remss}^{\infty}(a)\Big) = 0.
\]

Let $\bs{\xi}_{ h } = \bs{V}_{ h, \bs{x}\bs{x}}^{-1/2}\bs{\varepsilon}_{ h, \bs{x}} $ and  $\varepsilon_0 = \tilde{\mathcal{N}}_{\Lin,[K]} $. They both are standard Gaussian 
random variables. We express $$\mathcal{A}_{\remss}^{\infty}(a) = \Big\{\max_{h\in [H]} \|\bs{\xi}_{ h }\|_2^2 \leq a\Big\}.$$ 

In the remainder of the proof, we will show that
\begin{align}
\label{eq:me-sufficient-condition}
    \mathcal{E}_2  \subset \Big\{|\varepsilon_0| < z_{1-\alpha/2}, \max_{h\in [H]} \|\bs{\xi}_{ h }\|_2 \geq  \frac{\bar{a}_{\rem}(\alpha, R^2_{\bs{x}}, \underline{\Delta})}{H}\Big\}.
\end{align}
By \eqref{eq:me-sufficient-condition}, it follows that if $a \leq \bar{a}_{\rem}(\alpha, R^2_{\bs{x}}, \underline{\Delta})/H$, 
$\Prob(\mathcal{E}_2\mid\mathcal{A}_{\remss}^{\infty}(a)) = 0.$

 Define $\tilde{\bs{\beta}}_{h,\Lin,\mk}\in \mathbb{R}^K$ such that $(\tilde{\bs{\beta}}_{h,\Lin,\mk})_{\mk} = \bs{\beta}_{h,\Lin,\mk}$ and all other entries are $0$.
Therefore, 
\[
\bs{\varepsilon}_{ h, \bs{x}}^\top \bs{\beta}_{h, \Lin , \bs{x}}-\bs{\varepsilon}_{ h, \mk}^\top\bs{\beta}_{ h, \Lin, \mk} = \bs{\varepsilon}_{ h, \bs{x}}^\top (\bs{\beta}_{h, \Lin , \bs{x}}-\tilde{\bs{\beta}}_{h,\Lin,\mk}) = \bs{\xi}_{ h }^\top \bs{V}_{h,\bs{x}\bs{x}}^{1/2} (\bs{\beta}_{h, \Lin , \bs{x}}-\tilde{\bs{\beta}}_{h,\Lin,\mk}),
\]
with
\begin{align*}
  \| \bs{V}_{h,\bs{x}\bs{x}}^{1/2} (\bs{\beta}_{h, \Lin , \bs{x}}-\tilde{\bs{\beta}}_{h,\Lin,\mk})\|_2^2 = \var(\bs{\varepsilon}_{ h, \bs{x}}^\top \bs{\beta}_{h, \Lin , \bs{x}}-\bs{\varepsilon}_{ h, \mk}^\top\bs{\beta}_{ h, \Lin, \mk} ) = V_{ h,\tau\tau} \tilde{R}^2_{h,\mk}.
\end{align*}

Therefore, we have $|\bs{\varepsilon}_{ h, \bs{x}}^\top \bs{\beta}_{h, \Lin , \bs{x}}-\bs{\varepsilon}_{ h, \mk}^\top\bs{\beta}_{ h, \Lin, \mk}| \leq \|\bs{\xi}_{ h }\|_2\tilde{R}_{ h, \mk}V_{ h,\tau\tau}^{1/2}$ and
\begin{align*}
    |\tilde{\mathcal{N}}_{\Lin,\mk}| &= \Big| \frac{\bar{R}_{\bs{x}} V_{\tau\tau}^{1/2}\varepsilon_0+\sum_{h=1}^H \pi_{ h }^{1/2}(\bs{\varepsilon}_{ h, \bs{x}}^\top \bs{\beta}_{h, \Lin , \bs{x}}-\bs{\varepsilon}_{ h, \mk}^\top\bs{\beta}_{ h, \Lin, \mk})}{(\bar{R}_{\bs{x}}^2 +\tilde{R}^2_{\mk})^{1/2}V_{\tau\tau}^{1/2}} \Big|\\
    &\leq \frac{\bar{R}_{\bs{x}} V_{\tau\tau}^{1/2}|\varepsilon_0|+\sum_{h=1}^H \pi_{ h }^{1/2}\|\bs{\xi}_{ h }\|_2\tilde{R}_{ h, \mk}V_{ h,\tau\tau}^{1/2}}{(\bar{R}_{\bs{x}}^2 +\tilde{R}^2_{\mk})^{1/2}V_{\tau\tau}^{1/2}}.
\end{align*}
By the Cauchy-Schwarz inequality,
\begin{align*}
    \sum_{h=1}^H \pi_{ h }^{1/2} \tilde{R}_{ h, \mk}V_{ h,\tau\tau}^{1/2} \leq \Big(\sum_{h=1}^H \pi_{ h } \tilde{R}^2_{h,\mk} V_{ h,\tau\tau}\Big)^{1/2} \sqrt{H} = V_{\tau\tau}^{1/2}\tilde{R}_{\mk}\sqrt{H}.
\end{align*}
Therefore, we have
\begin{align*}
    |\tilde{\mathcal{N}}_{\Lin,\mk}|\leq & \frac{\bar{R}_{\bs{x}} V_{\tau\tau}^{1/2}|\varepsilon_0|+\max_{h\in [H]}\|\bs{\xi}_{ h }\|_2 \sum_{h=1}^H \pi_{ h }^{1/2}\tilde{R}_{ h, \mk}V_{ h,\tau\tau}^{1/2}}{(V_{\tau\tau}\bar{R}_{\bs{x}}^2 +V_{\tau\tau}\tilde{R}^2_{\mk})^{1/2}}\\
    \leq & \frac{\bar{R}_{\bs{x}} V_{\tau\tau}^{1/2}|\varepsilon_0|+\max_{h\in [H]}\|\bs{\xi}_{ h }\|_2 V_{\tau\tau}^{1/2}\tilde{R}_{\mk}\sqrt{H}}{(V_{\tau\tau}\bar{R}_{\bs{x}}^2 +V_{\tau\tau}\tilde{R}^2_{\mk})^{1/2}}\\
    =&  \frac{\bar{R}_{\bs{x}} |\varepsilon_0|+\max_{h\in [H]}\|\bs{\xi}_{ h }\|_2 \tilde{R}_{\mk}\sqrt{H}}{(\bar{R}_{\bs{x}}^2 +\tilde{R}^2_{\mk})^{1/2}}.
\end{align*}
This implies that
\begin{align*}
\mathcal{E}_2 & \subset \Big\{|\varepsilon_0| < z_{1-\alpha/2},\max_{\mk \subsetneq [K]}\frac{\bar{R}_{\bs{x}} |\varepsilon_0|+\max_{h\in [H]} \|\bs{\xi}_{ h }\|_2\tilde{R}_{\mk}\sqrt{H}}{(\bar{R}_{\bs{x}}^2 +\tilde{R}^2_{\mk})^{1/2}} \geq z_{1-\alpha/2}\Big\},\\
   & \subset \Big\{|\varepsilon_0| < z_{1-\alpha/2},\max_{\mk \subsetneq [K]}\frac{\bar{R}_{\bs{x}} z_{1-\alpha/2}+\max_{h\in [H]} \|\bs{\xi}_{ h }\|_2\tilde{R}_{\mk}\sqrt{H}}{(\bar{R}_{\bs{x}}^2 +\tilde{R}^2_{\mk})^{1/2}} \geq z_{1-\alpha/2}\Big\},\\
    & = \Big\{|\varepsilon_0| < z_{1-\alpha/2}, \max_{h\in [H]} \|\bs{\xi}_{ h }\|_2 \geq  z_{1-\alpha/2} \min_{\mk \subsetneq [K]}\frac{(\bar{R}_{\bs{x}}^2 +\tilde{R}^2_{\mk})^{1/2}-\bar{R}_{\bs{x}} }{\sqrt{H}\tilde{R}_{\mk}}\Big\}\\
    & = \Big\{|\varepsilon_0| < z_{1-\alpha/2}, \max_{h\in [H]} \|\bs{\xi}_{ h }\|_2 \geq  z_{1-\alpha/2} \frac{(\bar{R}_{\bs{x}}^2 +\min_{k \in [K]} \tilde{R}^2_{[-k]})^{1/2}-\bar{R}_{\bs{x}} }{\sqrt{H} \min_{k \in [K]}\tilde{R}_{ [-k]}}\Big\},
\end{align*}
where the last equality holds because $\big\{(\bar{R}_{\bs{x}}^2 +\tilde{R}^2_{\mk})^{1/2}-\bar{R}_{\bs{x}} \big\}/(\sqrt{H}\tilde{R}_{\mk})$ is an increasing function of $\tilde{R}^2_{\mk}$ and $\min_{\mk \subsetneq [K]} \tilde{R}^2_{\mk} = \min_{k \in [K]}\tilde{R}_{ [-k]}^2$.

Since 
\[
z_{1-\alpha/2}\frac{(\bar{R}_{\bs{x}}^2 +\min_{k \in [K]} \tilde{R}^2_{[-k]})^{1/2}-\bar{R}_{\bs{x}} }{\sqrt{H} \min_{k \in [K]}\tilde{R}_{ [-k]}} = z_{1-\alpha/2} \frac{\min_{k \in [K]} \tilde{R}_{ [-k]}/\sqrt{H}}{(\bar{R}_{\bs{x}}^2 + \min_{k \in [K]} \tilde{R}^2_{[-k]})^{1/2}+\bar{R}_{\bs{x}}} = \Big\{\frac{\bar{a}_{\rem}(\alpha, R^2_{\bs{x}}, \underline{\Delta})}{H}\Big\}^{1/2},
\]
we have
\begin{align*}
    \mathcal{E}_2  \subset \Big\{|\varepsilon_0| < z_{1-\alpha/2}, \max_{h\in [H]} \|\bs{\xi}_{ h }\|_2 \geq  \frac{\bar{a}_{\rem}(\alpha, R^2_{\bs{x}}, \underline{\Delta})}{H}\Big\}.
\end{align*}

This completes the proof.
\end{proof}

\begin{proof}[Proof of Theorem \ref{thm:sufficient-complete-condition-for-orthogonal-equal-importance} $(i)$]
\noindent\textbf{The proof of the ``if" part:}

We use the same notation as in the proof of \Cref{lem:if-statement-for-rem-sre}. If Condition \ref{a:orthogonal-covariates-equally-importance} holds for $h\in [H]$, $k\in[K]$, we have
\begin{align*}
    \tilde{R}^2_{ h, [-k]} = \frac{1}{K} R^2_{h,\bs{x}}
\end{align*}
which yields that
\[
 \tilde{R}^2_{[-k]} =  \sum_{h=1}^H \pi_{ h } \tilde{R}^2_{ h, [-k]} V_{ h,\tau\tau}/V_{\tau\tau} =  \sum_{h=1}^H \pi_{ h } \frac{1}{K} R^2_{h,\bs{x}} V_{ h,\tau\tau}/V_{\tau\tau} = \frac{1}{K} R_{\bs{x}}^2.
\]

Therefore $\underline{\Delta} = \min_{k\in [K]}\tilde{R}^2_{[-k]} = K^{-1}R_{\bs{x}}^2$. The ``if" part follows from \Cref{lem:if-statement-for-rem-sre}.

\noindent\textbf{The proof of the ``only if" part:}

We then prove the ``only if" part, i.e., for any $R^2_{\bs{x}} \in (0,1)$, there exists $(Y_i(1),Y_i(0))_{i=1}^n \in \mathbb{M}_{\sss}(R^2_{\bs{x}})$ satisfying Condition \ref{a:orthogonal-covariates-equally-importance} such that for any $a > \bar{a}_{\rem}(\alpha, R^2_{\bs{x}}, R^2_{\bs{x}}/K)/H$, we have
\[
\Prob_{\infty}(\hpL \leq \alpha \mid \ma_{\remss}(a)) > 0.
\]

We construct $(Y_i(1),Y_i(0))_{i=1}^n \in \mathbb{M}_{\sss}(R^2_{\bs{x}})$ as follows, which  satisfies Condition \ref{a:orthogonal-covariates-equally-importance}:  $(i)$ $Y_i(1)=Y_i(0) $ for $i\in [n]$ and $(ii)$  for $h \in [H]$, $\bs{V}_{h, \bs{x}\bs{x}} = \bs{I}_{K}$, $\pi_{h}^{1/2}\bs{V}_{h, \bs{x}\tau}  = \bs{1}_{K}$, $V_{h,\tau\tau} = (\pi_{h}R^2_{\bs{x}})^{-1}K$. 

Then for any $\mk \subseteq [K]$, $h\in [H]$, $\pi_h \bs{V}_{h,\tau\mk} \bs{V}_{h,\mk\mk}^{-1}\bs{V}_{h,\mk \tau} = |\mk|$, where $|\mk|$ is the size of the set $\mk$.  We can verify that for any $\mk \subseteq [K]$, $h\in [H]$,
\begin{align}
\label{eq:eq1}
R^2_{ h, \mk} = R^2_{\bs{x}}|\mk|/K,\quad \pi_h\tilde{R}^2_{ h, \mk}V_{ h,\tau\tau} = K - |\mk|,\quad  V_{\tau\tau} = \sum_{h\in [H]}\pi_h V_{h,\tau\tau} = HK/R^2_{\bs{x}},\quad \tilde{R}^2_{\mk} = \frac{K-|\mk|}{K}R^2_{\bs{x}}.
\end{align}

Recall the notation $\mathcal{E}$, $\mathcal{E}_1$ and $\mathcal{E}_2$ in the proof of \Cref{lem:if-statement-for-rem-sre}. Since $Y_i(1)=Y_i(0) $, for $i\in [n]$, we have $\mathcal{N}_{\Lin,\mk} = \tilde{\mathcal{N}}_{\Lin,\mk}$, and we have
\begin{align*}
\Prob_{\infty}(\hpL \leq \alpha \mid \mathcal{A}_{\remss}(a)) =& \Prob(\mathcal{E}\mid\mathcal{A}_{\remss}^{\infty}(a)) = \Prob(\mathcal{E}_1\mid\mathcal{A}_{\remss}^{\infty}(a))+\Prob(\mathcal{E}_2\mid\mathcal{A}_{\remss}^{\infty}(a))\\
=& \alpha + \Prob(\mathcal{E}_2\mid\mathcal{A}_{\remss}^{\infty}(a)).
\end{align*}

It remains to show that $\Prob(\mathcal{E}_2\cap\mathcal{A}_{\remss}^{\infty}(a))>0$. We treat $\mathcal{E}_2\cap\mathcal{A}_{\remss}^{\infty}(a)$ as a measurable set of $(\varepsilon_0,(\bs{\xi}_{h})_{h\in [H]} )$.  Since nonempty open set has measure greater than $0$, it suffices to show that the following open subset $\mathcal{E}_3\subseteq \mathcal{E}_2\cap\mathcal{A}_{\remss}^{\infty}(a)$ is nonempty: 
\[
\mathcal{E}_3 =\{|\mathcal{N}_{\Lin, [K]}|< z_{1-\alpha/2}, \max_{\mk \subsetneq [K]} |\mathcal{N}_{\Lin, \mk}| > z_{1-\alpha/2} \} \cap \Big\{\max_{h\in [H]} \|\bs{\xi}_h\|_2^2 <a\Big\}. 
\]

We see that 
 \begin{align*}
    |\mathcal{N}_{\Lin,\mk}| =& \frac{V_{\tau\tau}^{1/2}|\bar{R}_{\bs{x}}\varepsilon_0 + \sum_{h=1}^H \pi_{h}^{1/2}\bs{\varepsilon}_{h,\bs{x}}^\top (\bs{\beta}_{h, \Lin , \bs{x}}-\tilde{\bs{\beta}}_{h,\Lin,\mk})|}{(\bar{R}^2_{\bs{x}}+\tilde{R}_{ \mk}^2)^{1/2}V_{\tau\tau}^{1/2}}
    = \frac{\Big|\bar{R}_{\bs{x}}V_{\tau\tau}^{1/2}|\varepsilon_0|+\sum_{h=1}^H \pi_{h}^{1/2} s_{h,\mk}\|\bs{\xi}_h\|_2 \tilde{R}_{h,\mk}V_{h,\tau\tau}^{1/2}\Big|}{V_{\tau\tau}^{1/2}(\bar{R}_{\bs{x}}^2 + \tilde{R}^2_{\mk})^{1/2}}, 
 \end{align*}
where $s_{h,\mk} = \operatorname{sign}(\varepsilon_{0})\bs{\varepsilon}_{h,\bs{x}}^\top (\bs{\beta}_{h, \Lin , \bs{x}}-\tilde{\bs{\beta}}_{h,\Lin,\mk})/\{\|\bs{\xi}_{h}\|_2\tilde{R}_{h,\mk}V_{h,\tau\tau}^{1/2}\}$.

Define 
\begin{align*}
    \psi(|\varepsilon_0|) = \frac{(\bar{R}_{\bs{x}}^2 + K^{-1} R^2_{\bs{x}})^{1/2}z_{1-\alpha/2}-\bar{R}_{\bs{x}}|\varepsilon_0|}{\sqrt{H K^{-1} R^2_{\bs{x}}}}.
\end{align*}

Under the event $\{s_{h,[-1]}=1, h \in [H]\}$, we have
\begin{align*}
    \max_{\mk \subsetneq [K]} |\mathcal{N}_{\Lin, \mk}| &\geq |\mathcal{N}_{\Lin, [-1]}| = \frac{\Big|\bar{R}_{\bs{x}}V_{\tau\tau}^{1/2}|\varepsilon_0|+\sum_{h=1}^H \pi_{h}^{1/2}\|\bs{\xi}_h\|_2 \tilde{R}_{h,[-1]}V_{h,\tau\tau}^{1/2}\Big|}{V_{\tau\tau}^{1/2}(\bar{R}_{\bs{x}}^2 + \tilde{R}^2_{[-1]})^{1/2}}\\
    &\geq \frac{\bar{R}_{\bs{x}}|\varepsilon_0|+\min_{h\in [H]} \|\bs{\xi}_h\|_2(H/K)^{1/2}R_{\bs{x}}}{(\bar{R}_{\bs{x}}^2 + K^{-1}R^2_{\bs{x}})^{1/2}},
\end{align*}
where the last inequality is due to the second and third equalities in \eqref{eq:eq1}.

Therefore,
\begin{align*}
    & \mathcal{E}_3 \cap \{s_{h,[-1]}=1, h \in [H]\} \\
    & \qquad \supseteq   \Big\{|\varepsilon_0| < z_{1-\alpha/2},s_{h, [-1]}=1, h\in [H], \frac{\bar{R}_{\bs{x}}|\varepsilon_0|+\min_{h\in [H]} \|\bs{\xi}_h\|_2(H/K)^{1/2}R_{\bs{x}}}{(\bar{R}_{\bs{x}}^2 + K^{-1}R^2_{\bs{x}})^{1/2}} > z_{1-\alpha/2},\\
    & \qquad \qquad \qquad\qquad\qquad\qquad\qquad\qquad\qquad\qquad\qquad\qquad\qquad \qquad\qquad\qquad \max_{h\in [H]} \|\bs{\xi}_h\|_2^2 < a \Big\}\\
    & \qquad = \Big\{|\varepsilon_0| < z_{1-\alpha/2},s_{h, [-1]}=1, h\in [H], \min_{h\in [H]} \|\bs{\xi}_h\|_2 >  \psi(|\varepsilon_0|), \max_{h\in [H]} \|\bs{\xi}_h\|_2^2 < a \Big\} .
\end{align*}

Since
\begin{align*} 
    \psi(z_{1-\alpha/2})= \bar{a}_{\rem}(\alpha,R^2_{\bs{x}},R^2_{\bs{x}}/K)^{1/2}/H^{1/2}  < a^{1/2},
\end{align*}
and $\psi$ is a continuous function, there exists $0 < t < z_{1-\alpha/2}$ such that $\psi(t) < a^{1/2}$. Therefore, 
\begin{align*}
    & \mathcal{E}_3 \cap \{s_{h,[-1]}=1, h \in [H]\} \\
    & \qquad \supseteq \Big\{|\varepsilon_0| = t ,s_{h, [-1]}=1, h\in [H], \min_{h\in [H]} \|\bs{\xi}_h\|_2 >  \psi(t), \max_{h\in [H]} \|\bs{\xi}_h\|_2^2 < a \Big\}.
\end{align*}

Obviously, $\mathcal{E}_3$ is  nonempty. The conclusion follows.
\end{proof}

\subsection{Proof of Theorem \ref{thm:sufficient-complete-condition-for-orthogonal-equal-importance} $(ii)$}
\begin{proof}[Proof of \Cref{thm:sufficient-complete-condition-for-orthogonal-equal-importance} $(ii)$]
\noindent \textbf{The proof of the ``if" part:}
    Let $\mathcal{A}_{\repss}^{\infty}(\alpha_t) = \{\max_{h\in [H]}$ $ \|\bs{\sigma}(\bs{V}_{h,\bs{x}\bs{x}})^{-1}\bs{\varepsilon}_{h,\bs{x}}\|_\infty\leq z_{1-\alpha_t/2}\}$. Recall the definition of $\mathcal{E}_2$ in the proof of \Cref{lem:if-statement-for-rem-sre}.

We have
\begin{align*}
\Prob_{\infty}(\hpL \leq \alpha \mid \mathcal{A}_{\repss}(\alpha_t))-\alpha \leq \Prob(\mathcal{E}_2\mid\mathcal{A}_{\repss}^{\infty}(\alpha_t)).
\end{align*}

We next prove, 
\begin{align}
\label{eq:me-sufficient-condition-rep}
\mathcal{E}_2  \subset \Big\{|\varepsilon_0| < z_{1-\alpha/2},\max_h \|\bs{\sigma}(\bs{V}_{h,\bs{x}\bs{x}})^{-1/2}\bs{\varepsilon}_{h,\bs{x}}\|_\infty \geq \frac{c_{\rep}(R_{\bs{x}}^2)}{H^{1/2}}g(\alpha)\Big\}.  
\end{align}

By \eqref{eq:me-sufficient-condition-rep}, if
\[
z_{1-\alpha_t/2} \leq \frac{c_{\rep}(R_{\bs{x}}^2)}{H^{1/2}}g(\alpha), \quad \text{i.e.},\quad \alpha_t \geq g^{-1}\Big(\frac{c_{\rep}(R_{\bs{x}}^2)}{H^{1/2}}g(\alpha)\Big), 
\]
we have
\[
\Prob(\mathcal{E}_2\mid\mathcal{A}_{\repss}^{\infty}(\alpha_t)) = 0.
\]
As a consequence, we will have $\Prob_{\infty}(\hpL \leq \alpha \mid \mathcal{A}_{\repss}(\alpha_t)) \leq \alpha$.

Using that 
\begin{align*}
    |\tilde{\mathcal{N}}_{\Lin, \mk}| &= \frac{\big|\bar{R}_{\bs{x}} V_{\tau\tau}^{1/2}\varepsilon_0+\sum_{h=1}^H \pi_{ h }^{1/2}\bs{\varepsilon}_{ h, \bs{x}}^\top (\bs{\beta}_{h, \Lin , \bs{x}}-\tilde{\bs{\beta}}_{ h, \Lin, \mk})\big|}{(V_{\tau\tau}\bar{R}_{\bs{x}}^2 +V_{\tau\tau}\tilde{R}^2_{\mk})^{1/2}}\\
    & \leq \frac{\bar{R}_{\bs{x}}\vtt^{1/2}|\varepsilon_0|+\sum_{h}\pi_{h}^{1/2}\|\bs{\sigma}(\bs{V}_{h,\bs{x}\bs{x}})(\bs{\beta}_{h, \Lin , \bs{x}}-\tilde{\bs{\beta}}_{h,\Lin,\mk})\|_1\max_{h}\|\bs{\sigma}(\bs{V}_{h,\bs{x}\bs{x}})^{-1}\bs{\varepsilon}_{h,\bs{x}}\|_\infty}{\vtt^{1/2}(\bar{R}_{\bs{x}}^2 + \tilde{R}^2_{\mk})^{1/2}}
\end{align*}
we have
\begin{align*}
    \mathcal{E}_2 &\subseteq  \Big\{|\varepsilon_0|< z_{1-\alpha/2},\\
    &\max_{\mk \subsetneq [K]}\frac{\bar{R}_{\bs{x}}\vtt^{1/2}|\varepsilon_0|+\max_{h}\|\bs{\sigma}(\bs{V}_{h,\bs{x}\bs{x}})^{-1}\bs{\varepsilon}_{h,\bs{x}}\|_\infty\sum_{h}\pi_{h}^{1/2}\|\bs{\sigma}(\bs{V}_{h,\bs{x}\bs{x}})(\bs{\beta}_{h, \Lin , \bs{x}}-\tilde{\bs{\beta}}_{h,\Lin,\mk})\|_1}{\vtt^{1/2}(\bar{R}_{\bs{x}}^2 + \tilde{R}^2_{\mk})^{1/2}} \geq z_{1-\alpha/2}\Big\}
\\
&\subseteq  \Big\{|\varepsilon_0|< z_{1-\alpha/2},\\
&\max_{\mk \subsetneq [K]}\frac{\bar{R}_{\bs{x}}\vtt^{1/2}z_{1-\alpha/2}+\max_{h}\|\bs{\sigma}(\bs{V}_{h,\bs{x}\bs{x}})^{-1}\bs{\varepsilon}_{h,\bs{x}}\|_\infty\sum_{h}\pi_{h}^{1/2}\|\bs{\sigma}(\bs{V}_{h,\bs{x}\bs{x}})(\bs{\beta}_{h, \Lin , \bs{x}}-\tilde{\bs{\beta}}_{h,\Lin,\mk})\|_1}{\vtt^{1/2}(\bar{R}_{\bs{x}}^2 + \tilde{R}^2_{\mk})^{1/2}} \geq z_{1-\alpha/2}\Big\}\\
    & =  \Big\{|\varepsilon_0|< z_{1-\alpha/2}, \max_{h}\|\bs{\sigma}(\bs{V}_{h,\bs{x}\bs{x}})^{-1}\bs{\varepsilon}_{h,\bs{x}}\|_\infty \geq z_{1-\alpha/2}\min_{\mk \subsetneq [K]}\frac{\vtt^{1/2}\{(\bar{R}_{\bs{x}}^2 + \tilde{R}^2_{\mk})^{1/2}-\bar{R}_{\bs{x}}\}}{\sum_{h}\pi_{h}^{1/2}\|\bs{\sigma}(\bs{V}_{h,\bs{x}\bs{x}})(\bs{\beta}_{h, \Lin , \bs{x}}-\tilde{\bs{\beta}}_{h,\Lin,\mk})\|_1}\Big\}.
\end{align*}

Next, we calculate the term $\sum_{h}\pi_{h}^{1/2}\|\bs{\sigma}(\bs{V}_{h,\bs{x}\bs{x}})(\bs{\beta}_{h, \Lin , \bs{x}}-\tilde{\bs{\beta}}_{h,\Lin,\mk})\|_1$.

If Condition \ref{a:orthogonal-covariates-equally-importance} holds for every stratum, we have
\begin{align*}
    \bs{\beta}_{h, \Lin , \bs{x}} = \bs{V}_{h,\bs{x}\bs{x}}^{-1} \bs{V}_{h,\bs{x}\tau} = \Big(\frac{V_{h,k\tau}}{{V}_{h,kk}}\Big)_{k\in [K]} . 
\end{align*}
By the definition of $R_{h,k}^2$, we have $V_{h,\tau\tau} R_{h,k}^2 = V_{h,kk}^{-1}V_{h,k\tau}^2.$
Therefore, we have
\[
\bs{\sigma}(\bs{V}_{h,\bs{x}\bs{x}})\bs{\beta}_{h, \Lin , \bs{x}} = \Big(\frac{V_{h,k\tau}V_{h,kk}^{1/2}}{{V}_{h,kk}}\Big)_{k\in [K]}   = V_{h,\tau\tau}^{1/2}R_{h,1} (\operatorname{sign}(V_{h,k\tau}))_{k\in [K]} .
\]
Similarly, we have $\bs{\sigma}(\bs{V}_{h,\bs{x}\bs{x}})\tilde{\bs{\beta}}_{h,\Lin,\mk} = V_{h,\tau\tau}^{1/2} R_{h,1} (\operatorname{sign}(V_{h,k\tau})I(k\in \mk))_{k\in [K]} $. Thus
\begin{align}
\label{eq:sigma-Vhxx-beta-h-lin-beta-h-lin-mk}
    \bs{\sigma}(\bs{V}_{h,\bs{x}\bs{x}})(\bs{\beta}_{h, \Lin , \bs{x}}-\tilde{\bs{\beta}}_{h,\Lin, \mk}) = V_{h,\tau\tau}^{1/2} R_{h,1} (\operatorname{sign}(V_{h,k\tau})I(k\not\in \mk))_{k\in [K]}.
\end{align}

On the other hand, since Condition \ref{a:orthogonal-covariates-equally-importance} holds for every stratum, we have $R^2_{h,\bs{x}} = K R_{h,1}^2$, $R^2_{h,\mk} = |\mk| R_{h,1}^2$, $\tilde{R}^2_{h,\mk} = (K-|\mk|) R_{h,1}^2$. Therefore, we have
\begin{align*}
        &\sum_{h} \pi_{h}^{1/2}\|\bs{\sigma}(\bs{V}_{h,\bs{x}\bs{x}})(\bs{\beta}_{h, \Lin , \bs{x}}-\tilde{\bs{\beta}}_{h,\Lin, \mk})\|_1 = \sum_{h}\pi_h^{1/2} (K-|\mk|)V_{h,\tau\tau}^{1/2}R_{h,1}\\
       = & (K-|\mk|)^{1/2} \sum_{h}\pi_h^{1/2} V_{h,\tau\tau}^{1/2}\tilde{R}_{h,\mk} \leq  (K-|\mk|)^{1/2} H^{1/2} \big(\sum_{h}\pi_h V_{h,\tau\tau}\tilde{R}_{h,\mk}^2\big)^{1/2} \\
       = & (K-|\mk|)^{1/2} H^{1/2} V_{\tau\tau}^{1/2}\tilde{R}_{\mk}.
    \end{align*}

   It follows that
    \begin{align*}
       &\min_{\mk \subsetneq [K]}\frac{\vtt^{1/2}\{(\bar{R}_{\bs{x}}^2 + \tilde{R}^2_{\mk})^{1/2}-\bar{R}_{\bs{x}}\}}{\sum_{h}\pi_{h}^{1/2}\|\bs{\sigma}(\bs{V}_{h,\bs{x}\bs{x}})(\bs{\beta}_{h, \Lin , \bs{x}}-\tilde{\bs{\beta}}_{h,\Lin,\mk})\|_1} \geq \min_{\mk\subsetneq [K]}\frac{(\bar{R}_{\bs{x}}^2 + \tilde{R}^2_{\mk})^{1/2}-\bar{R}_{\bs{x}}}{\sqrt{(K-|\mk|)H}\tilde{R}_{\mk}} 
      \\
      =&  \min_{\mk \subsetneq [K]} \frac{(KH)^{-1/2}R_{\bs{x}}}{(\bar{R}_{\bs{x}}^2 + \tilde{R}^2_{\mk})^{1/2}+\bar{R}_{\bs{x}}} = \frac{(KH)^{-1/2}R_{\bs{x}}}{1+\bar{R}_{\bs{x}}},
    \end{align*}
where the second to last equality holds because $(K-|\mk|)^{-1} \tilde{R}_{\mk}^2 = K^{-1}  R_{\bs{x}}^2$ and the last equality holds because $\min_{\mk \subsetneq [K]} \tilde{R}^2_{\mk} = \tilde{R}^2_{\emptyset}=R_{\bs{x}}^2$.

   Thus, 
\begin{align*}
\mathcal{E}_2  &\subset \Big\{|\varepsilon_0| < z_{1-\alpha/2},\max_h \|\bs{\sigma}(\bs{V}_{h,\bs{x}\bs{x}})^{-1/2}\bs{\varepsilon}_{h,\bs{x}}\|_\infty \geq z_{1-\alpha/2}\frac{(HK)^{-1/2}R_{\bs{x}}}{1+\bar{R}_{\bs{x}}}\Big\}\\
&=\Big\{|\varepsilon_0| < z_{1-\alpha/2},\max_h \|\bs{\sigma}(\bs{V}_{h,\bs{x}\bs{x}})^{-1/2}\bs{\varepsilon}_{h,\bs{x}}\|_\infty \geq \frac{c_{\rep}(R_{\bs{x}}^2)}{H^{1/2}}g(\alpha)\Big\}.  
\end{align*}

We complete the proof of ``if" part.

\noindent \textbf{The proof of the ``only if" part:}

To prove the ``only if" part, we show that if $\alpha_t < g^{-1}\big(H^{-1/2}c_{\rep}(R^2_{\bs{x}} )g(\alpha)\big)$, there exists $(Y_i(1),Y_i(0))_{i=1}^n \in \mathbb{M}_{\sss}(R^2_{\bs{x}})$ such that $\Prob_{\infty}(\hpL\mid \ma_{\repss}(\alpha_t)) > 0$.

Consider the same construction as we prove the ``only if" part in Theorem \ref{thm:sufficient-complete-condition-for-orthogonal-equal-importance} (i), i.e.,
$(i)$ $Y_i(1) = Y_i(0)$ for $i \in [n]$ such that $\tilde{\mathcal{N}}_{\Lin,\mk} = \mathcal{N}_{\Lin, \mk}$; $(ii)$  for $h \in [H]$, $\bs{V}_{h, \bs{x}\bs{x}} = \bs{I}_{K}$, $\pi_{h}^{1/2}\bs{V}_{h, \bs{x}\tau} = \bs{1}_{K}$ and $V_{h,\tau\tau} = (\pi_{h}R^2_{\bs{x}})^{-1}K$. 

Since
\begin{align*}
\Prob_{\infty}(\hpL \leq \alpha \mid \mathcal{A}_{\repss}(\alpha_t))-\alpha = \Prob(\mathcal{E}_2\mid\mathcal{A}_{\repss}^{\infty}(\alpha_t)),
\end{align*}
it remains to show that if $\alpha_t < g^{-1}\big(H^{-1/2}c_{\rep}(R^2_{\bs{x}} )g(\alpha)\big)$, $\Prob(\mathcal{E}_2\mid\mathcal{A}_{\repss}^{\infty}(\alpha_t)) > 0.$

Since $Y_i(1)=Y_i(0)$, for $i \in [n]$, we have $\mathcal{N}_{\Lin, \mk} = \tilde{\mathcal{N}}_{\Lin, \mk}$.  By \eqref{eq:sigma-Vhxx-beta-h-lin-beta-h-lin-mk}, we have
\begin{align*}
    \mathcal{N}_{\Lin, \mk} =& \frac{\bar{R}_{\bs{x}}\vtt^{1/2}\varepsilon_0+\sum_{h=1}^H \pi_{h}^{1/2} (\bs{\beta}_{h, \Lin , \bs{x}}-\tilde{\bs{\beta}}_{h,\Lin,\mk})\bs{\varepsilon}_{h,\bs{x}}}{\vtt^{1/2}(\bar{R}_{\bs{x}}^2 + \tilde{R}^2_{\mk})^{1/2}}\\
    =& \frac{\bar{R}_{\bs{x}}\vtt^{1/2}\varepsilon_0+\sum_{h=1}^H \pi_{h}^{1/2} \sum_{k\not\in \mk}V_{h,\tau\tau}^{1/2}R_{h,1}\sign(V_{h,k \tau})\varepsilon_{h,k}/V_{h,kk}}{\vtt^{1/2}(\bar{R}_{\bs{x}}^2 + \tilde{R}^2_{\mk})^{1/2}}.
\end{align*}

 Let $s_{h,k} = \sign(\varepsilon_0)\sign(\varepsilon_{h,k})\sign(V_{h,k \tau})$, we have
\begin{align*}
    |\mathcal{N}_{\Lin, \mk}| =\frac{\big|\bar{R}_{\bs{x}}\vtt^{1/2}|\varepsilon_0|+\sum_{h=1}^H \pi_{h}^{1/2} \sum_{k\not\in \mk}V_{h,\tau\tau}^{1/2}R_{h,1} s_{h,k}|\varepsilon_{h,k}/V_{h,kk}|\big|}{\vtt^{1/2}(\bar{R}_{\bs{x}}^2 + \tilde{R}^2_{\mk})^{1/2}}.
\end{align*}
Since the strata are independent  and $\bs{V}_{h,\bs{x}\bs{x}}$ are diagonal,  $\varepsilon_0$ and $\varepsilon_{h,k}$, $h\in [H], k\in [K]$ are independent. Therefore, $s_{h,k}$ are independent and take values in $\{-1,1\}$ with equal probability. 

Under $\{s_{h,k}=1, h\in [H], k \in [K]\}$, we have
\begin{align*}
    \max_{\mk \subseteq [K]} |\mathcal{N}_{\Lin, \mk}| \geq & |\mathcal{N}_{\Lin, \emptyset}| = \frac{\bar{R}_{\bs{x}}\vtt^{1/2}|\varepsilon_0|+\sum_{h=1}^H \pi_{h}^{1/2} \sum_{k=1}^K V_{h,\tau\tau}^{1/2}R_{h,1} |\varepsilon_{h,k}/V_{h,kk}|}{\vtt^{1/2}}\\
    \geq &  \frac{\bar{R}_{\bs{x}}\vtt^{1/2}|\varepsilon_0|+ \min_{h,k}|\varepsilon_{h,k}/V_{h,kk}| \sum_{h=1}^H \pi_{h}^{1/2} \sum_{k=1}^K V_{h,\tau\tau}^{1/2}R_{h,1} }{\vtt^{1/2}}\\
    = & \bar{R}_{\bs{x}}|\varepsilon_0|+ \min_{h,k}|\varepsilon_{h,k}/V_{h,kk}| R_{\bs{x}} (HK)^{1/2},
\end{align*}
where the last equality holds because by our construction, $\sum_{h=1}^H \pi_{h}^{1/2} \sum_{k=1}^K V_{h,\tau\tau}^{1/2}R_{h,1} = \sum_{h=1}^H \pi_{h}^{1/2} K^{1/2} V_{h,\tau\tau}^{1/2}R_{h,\bs{x}} = V_{\tau\tau}^{1/2}R_{\bs{x}} (HK)^{1/2}.$ 

Therefore, we have
\begin{align*}
 \mathcal{E}_2 &\cap \{s_{h,k}=1, h\in [H], k \in [K]\} \\
\supseteq &  \Big\{|\varepsilon_0| < z_{1-\alpha/2},s_{h,k}=1, h\in [H], k \in [K], \bar{R}_{\bs{x}}|\varepsilon_0|+ \min_{h,k}|\varepsilon_{h,k}/V_{h,kk}| R_{\bs{x}} (HK)^{1/2} \geq z_{1-\alpha/2}\Big\} \\
=  & \Big\{|\varepsilon_0| < z_{1-\alpha/2},s_{h,k}=1, h\in [H], k \in [K],  
\min_{h,k}|\varepsilon_{h,k}/V_{h,kk}|  \geq \frac{z_{1-\alpha/2} - \bar{R}_{\bs{x}}|\varepsilon_0| }{R_{\bs{x}} (HK)^{1/2}} \Big\} =: \mathcal{E}_3.
\end{align*}

Let
\[
\psi(|\varepsilon_0|) = \frac{z_{1-\alpha/2} - \bar{R}_{\bs{x}}|\varepsilon_0| }{R_{\bs{x}} (HK)^{1/2}},\quad  \mathcal{E}_4 = \Big\{|\varepsilon_0| < z_{1-\alpha/2},
\min_{h,k}|\varepsilon_{h,k}/V_{h,kk}|  \geq \psi(|\varepsilon_0|) \Big\},
\]
with 
\[
\mathcal{E}_4 \cap \ma^{\infty}_{\repss}(\alpha_t) = \Big\{|\varepsilon_0| < z_{1-\alpha/2},
\min_{h,k}|\varepsilon_{h,k}/V_{h,kk}|  \geq \psi(|\varepsilon_0|), \max_{h,k}|\varepsilon_{h,k}/V_{h,kk}| \leq z_{1-\alpha_t/2} \Big\}.
\]

Since
$\alpha_t < g^{-1}\big(H^{-1/2}c_{\rep}(R_{\bs{x}}^2)g(\alpha)\big)$, we have
\[
z_{1-\alpha_t/2} > \frac{z_{1-\alpha/2} (1- \bar{R}_{\bs{x}}) }{R_{\bs{x}} (HK)^{1/2}}  = \psi(z_{1-\alpha/2}).
\]

Since $\psi(x)$ is a continuous function of $x$, there exists $0 < x_0 < z_{1-\alpha/2}$ such that $\psi(x_0) < z_{1-\alpha_t/2}$.

Therefore, we have
\[
\mathcal{E}_4 \cap \ma^{\infty}_{\repss}(\alpha_t) \supseteq \Big\{|\varepsilon_0| = x_0,
 \min_{h,k}|\varepsilon_{h,k}/V_{h,kk}|  \geq \psi(x_0), \max_{h,k}|\varepsilon_{h,k}/V_{h,kk}| \leq z_{1-\alpha_t/2} \Big\}
\]
which is nonempty. It follows that
\[
\Prob(\mathcal{E}_4 \cap \ma^{\infty}_{\repss}(\alpha_t)) >0.
\]

Noticing that $s_{h,k}$ and $|\varepsilon_0|$ and $|\varepsilon_{h,k}|$ are independent and $\mathcal{E}_3 = \mathcal{E}_4 \cap \big\{s_{h,k}=1, h\in [H], k \in [K]\big\}$, it follows that
\begin{align*}
    &\Prob(\mathcal{E}_2 \cap \ma^{\infty}_{\repss}(\alpha_t) ) \geq 
 \Prob( \mathcal{E}_3 \cap \ma^{\infty}_{\repss}(\alpha_t))
 =  \Prob(\mathcal{E}_4 \cap \ma^{\infty}_{\repss}(\alpha_t)) \Prob(s_{h,k}=1, h\in [H], k \in [K]) > 0.
\end{align*}
Therefore $\Prob(\mathcal{E}_2 \mid \ma^{\infty}_{\repss}(\alpha_t) ) >0 $.

The conclusion follows.
\end{proof}

\subsection{Proof of Theorem \ref{thm:inflation-bound-sre}}
\begin{proof}[Proof of Theorem \ref{thm:inflation-bound-sre}]
\noindent \textbf{Proof of \Cref{thm:inflation-bound-sre} (i) and (ii):}
   By \Cref{lem:asymptotic-limit-of-type-I-error-rate}, we have
  \begin{align*}
        &\Prob_{\infty}( \hpL \leq \alpha\mid \ma_{\remss}(a)) = \Prob ( \mathcal{E} \mid \ma_{\remss}^\infty(a) ),\quad \Prob_{\infty}( \hpL \leq \alpha\mid \ma_{\repss}(\alpha_t)) = \Prob(\mathcal{E}\mid \ma_{\repss}^\infty(\alpha_t)),
    \end{align*}
    where
    \[
    \mathcal{E} = \Big\{\max_{\mk \subseteq [K]} |\mathcal{N}_{\Lin,\mk}| \geq z_{1-\alpha/2}\Big\}.
    \]
We have shown in the proof of  \Cref{lem:if-statement-for-rem-sre} that 
\begin{align*}
  |\mathcal{N}_{\Lin,\mk}| \leq  |\tilde{\mathcal{N}}_{\Lin,\mk}| \leq \frac{\bar{R}_{\bs{x}} V_{\tau\tau}^{1/2}|\varepsilon_0|+\sum_{h=1}^H \pi_{ h }^{1/2}\|\bs{\xi}_{ h }\|_2\tilde{R}_{ h, \mk}V_{ h,\tau\tau}^{1/2}}{(\bar{R}_{\bs{x}}^2 +\tilde{R}^2_{\mk})^{1/2}V_{\tau\tau}^{1/2}}.
\end{align*}

Noticing that $V_{\tau\tau}\bar{R}_{\bs{x}}^2 +V_{\tau\tau}\tilde{R}^2_{\mk} = V_{\tau\tau}\bar{R}^2_{\bs{x}} +\sum_{h=1}^H \pi_{ h }\tilde{R}_{ h, \mk}^2V_{ h,\tau\tau}$ and by Cauchy-Schwarz inequality, we have
\begin{align*}
    &\frac{\bar{R}_{\bs{x}} V_{\tau\tau}^{1/2}|\varepsilon_0|+\sum_{h=1}^H \pi_{ h }^{1/2}\|\bs{\xi}_{ h }\|_2\tilde{R}_{ h, \mk}V_{ h,\tau\tau}^{1/2}}{(V_{\tau\tau}\bar{R}_{\bs{x}}^2 +V_{\tau\tau}\tilde{R}^2_{\mk})^{1/2}} \leq (\varepsilon_0^2+\sum_{h}\|\bs{\xi}_{ h }\|_2^2)^{1/2}.
\end{align*}

It follows that
\begin{align}
\label{eq:thm-S.3-1}
    \mathcal{E} \subseteq \Big\{(\varepsilon_0^2+\sum_{h}\|\bs{\xi}_{ h }\|_2^2)^{1/2} \geq z_{1-\alpha/2}\Big\}.
\end{align}
Combining this with $\ma_{\remss}^\infty(a) = \{\max_{h}\|\bs{\xi}_h\|_2^2\leq a\}$,
we prove the first bound.

For the second bound, we see that
\[
\ma_{\repss}^\infty(\alpha_t) = \{\max_{h\in  [H] } \|\bs{\sigma}(\bs{V}_{h,\bs{x}\bs{x}})^{-1}\bs{V}_{h, \bs{x}\bs{x}}^{1/2}\bs{\xi}_h\|_{\infty} \leq z_{1-\alpha_t/2}\}.
\]
Let $\bs{\xi}_h^\prime =\bs{D}(\bs{V}_{h,\bs{x}\bs{x}})^{-1/2} \bs{\sigma}(\bs{V}_{h,\bs{x}\bs{x}})^{-1}\bs{V}_{h, \bs{x}\bs{x}}^{1/2}\bs{\xi}_h$. Then,
\begin{align}
\label{eq:thm-S.3-2}
    \ma_{\repss}^\infty(\alpha_t) = \Big\{\max_{h\in  [H] } \|\bs{D}(\bs{V}_{h,\bs{x}\bs{x}})^{1/2}\bs{\xi}_h^\prime\|_{\infty} \leq z_{1-\alpha_t/2}\Big\}.
\end{align}

Noticing that 
\[
\bs{V}_{h, \bs{x}\bs{x}}^{1/2} \bs{\sigma}(\bs{V}_{h,\bs{x}\bs{x}})^{-1} \bs{D}(\bs{V}_{h,\bs{x}\bs{x}})^{-1} \bs{\sigma}(\bs{V}_{h,\bs{x}\bs{x}})^{-1}\bs{V}_{h, \bs{x}\bs{x}}^{1/2} = \bs{I}_K,
\]
 $\bs{\xi}_h^\prime$, $h\in [H]$, are also standard Gaussian random vectors and $\|\bs{\xi}_h^\prime\|_2^2 = \|\bs{\xi}_h\|_2^2$. Therefore, by \eqref{eq:thm-S.3-1} and \eqref{eq:thm-S.3-2}, we have
 \begin{align*}
     &\Prob_{\infty}( \hpL \leq \alpha\mid \ma_{\repss}(\alpha_t)) \leq \Prob \Big(\big(\varepsilon^{2}+\sum_{h=1}^H \|\bs{\xi}_h^\prime\|_2^2 \big)^{1/2} \geq z_{1-\alpha/2}~\Big | ~ \max_{h\in  [H] } \|\bs{D}(\bs{V}_{h,\bs{x}\bs{x}})^{1/2}\bs{\xi}_h^\prime\|_{\infty} \leq z_{1-\alpha_t/2}\Big).
 \end{align*}

Thus, we prove the second bound.

\noindent \textbf{Proof of \Cref{thm:inflation-bound-sre} (iii):}

Let $\ma_{\rep-\fe}^\infty(\alpha_t) = \{\|\bs{\sigma}(\bs{V}_{\omega,\bs{x}\bs{x}})^{-1}\bs{\varepsilon}_{\omega,\bs{x}}\|_{\infty} \leq z_{1-\alpha_t/2}\}$. Note that by \Cref{lem:asymptotic-limit-of-type-I-error-rate}, we have
\begin{align*}
  \Prob_\infty(\hpfe \leq \alpha \mid \ma_{\rep-\fe}(\alpha_t)) = \Prob(\max_{\mk \subseteq [K]}|\mathcal{N}_{\fe , \mk}| \geq z_{1-\alpha/2}\mid \ma_{\rep-\fe}^\infty(\alpha_t)) , 
\end{align*}
where 
\[
\mathcal{N}_{\fe , \mk} = \frac{\varepsilon_{\omega,\tau}-\bs{\beta}_{\fe ,\mk}^\top \bs{\varepsilon}_{\omega, \mk}}{n^{1/2}\tilde{\se}_{\fe , \mk}}.
\]

We define
\[
\tilde{\mathcal{N}}_{\fe ,\mk} = \frac{\varepsilon_{\omega,\tau}-\bs{\beta}_{\fe , \mk}^\top \bs{\varepsilon}_{\omega, \mk}}{\var(\varepsilon_{\omega,\tau}-\bs{\beta}_{\fe , \mk}^\top \bs{\varepsilon}_{\omega, \mk})^{1/2}}.
\]
By \Cref{lem:limit-of-standard-errors}, $n\tilde{\se}_{\fe }^2 \geq \var(\varepsilon_{\omega,\tau}-\bs{\beta}_{\fe ,\mk}^\top \bs{\varepsilon}_{\omega, \mk})$, and therefore $|\tilde{\mathcal{N}}_{\fe ,\mk}| \geq |\mathcal{N}_{\fe , \mk}|$.

Therefore, we have
\begin{align*}
     \Prob_{\infty}( \hpfe \leq \alpha\mid \ma_{\repfe}(\alpha_t)) \leq \Prob(\max_{\mk \subseteq [K]}|\tilde{\mathcal{N}}_{\fe , \mk}| \geq z_{1-\alpha/2}\mid \ma_{\rep-\fe}^\infty(\alpha_t)).
\end{align*}

Let $\varepsilon_0 = (\varepsilon_{\omega,\tau}- \bs{\beta}_{\omega, \bs{x}}^\top \bs{\varepsilon}_{\omega, \bs{x}})/\var(\varepsilon_{\omega,\tau}- \bs{\beta}_{\omega, \bs{x}}^\top \bs{\varepsilon}_{\omega, \bs{x}})^{1/2}$ and $\bs{\xi}_{0} = \bs{V}_{\omega, \bs{x}\bs{x}}^{-1/2}\bs{\varepsilon}_{\omega,\bs{x}}$. Define 
$\tilde{\bs{\beta}}_{\fe ,\mk}$ such that $(\tilde{\bs{\beta}}_{\fe ,\mk})_{\mk}=\bs{\beta}_{\fe ,\mk}$  and all other entries are $0$. We have
\begin{align*}
    & |\tilde{\mathcal{N}}_{\fe ,\mk}| = \Big|\frac{\varepsilon_{\omega,\tau}-\bs{\beta}_{\omega, \bs{x}}^\top\bs{\varepsilon}_{\omega,\bs{x}} + \bs{\beta}_{\omega, \bs{x}}^\top\bs{\varepsilon}_{\omega,\bs{x}} -\bs{\beta}_{\fe , \mk}^\top \bs{\varepsilon}_{\omega, \mk}}{\var(\varepsilon_{\omega,\tau}-\bs{\beta}_{\fe ,\mk}^\top \bs{\varepsilon}_{\omega, \mk})^{1/2}}\Big|\\
\leq & \frac{\var(\varepsilon_{\omega,\tau}-\bs{\beta}_{\omega, \bs{x}}^\top\bs{\varepsilon}_{\omega,\bs{x}})^{1/2}|\varepsilon_0| + \|(\bs{\beta}_{\omega, \bs{x}} -\tilde{\bs{\beta}}_{\fe , \mk})^\top \bs{V}_{\omega,\bs{x}\bs{x}}^{1/2}\|_2\|\bs{\xi}_0\|_2}{\var(\varepsilon_{\omega,\tau}-\bs{\beta}_{\fe, \mk}^\top \bs{\varepsilon}_{\omega, \mk})^{1/2}},
\end{align*}
Noticing that
\begin{align*}
    \var(\varepsilon_{\omega,\tau}-\bs{\beta}_{\fe, \mk}^\top \bs{\varepsilon}_{\omega, \mk}) &= \var(\varepsilon_{\omega,\tau}-\bs{\beta}_{\omega, \bs{x}}^\top\bs{\varepsilon}_{\omega,\bs{x}}) + \var(\bs{\beta}_{\omega, \bs{x}}^\top\bs{\varepsilon}_{\omega,\bs{x}} -\bs{\beta}_{\fe , \mk}^\top \bs{\varepsilon}_{\omega, \mk})\\
    &= \var(\varepsilon_{\omega,\tau}-\bs{\beta}_{\omega, \bs{x}}^\top\bs{\varepsilon}_{\omega,\bs{x}}) + (\tilde{\bs{\beta}}_{\fe , \mk} -\bs{\beta}_{\omega, \bs{x}})^\top \bs{V}_{\omega,\bs{x}\bs{x}}(\tilde{\bs{\beta}}_{\fe , \mk} -\bs{\beta}_{\omega, \bs{x}}),
\end{align*}
and using Cauchy-Schwarz inequality, we have
\begin{align*}
    |\tilde{\mathcal{N}}_{\fe ,\mk}| \leq (\varepsilon_0^2 + \|\bs{\xi}_0\|_2^2)^{1/2}.
\end{align*}

Since $\ma_{\repfe}^\infty(\alpha_t) = \{ \|\bs{\sigma}(\bs{V}_{\omega,\bs{x}\bs{x}})^{-1}\bs{V}_{\omega,\bs{x}\bs{x}}^{1/2}\bs{\xi}_0\|_{\infty} \leq z_{1-\alpha_t/2}\}$, by letting $\bs{\xi}_0^\prime = \bs{D}(\bs{V}_{\omega,\bs{x}\bs{x}})^{-1/2} $ $ \cdot\bs{\sigma}(\bs{V}_{\omega,\bs{x}\bs{x}})^{-1}\bs{V}_{\omega, \bs{x}\bs{x}}^{1/2}\bs{\xi}_0$, we have
\begin{align*}
    &\Prob_{\infty} (\hpfe \leq \alpha \mid \ma_{\repfe}(\alpha_t)) \leq \Prob\Big((\varepsilon_0^2 + \|\bs{\xi}_0\|_2^2)^{1/2} \geq z_{1-\alpha/2}~\Big | ~\ma_{\repfe}^\infty(\alpha_t)\Big) \\
    =&\Prob\Big((\varepsilon_0^2 + \|\bs{\xi}_0^\prime\|_2^2)^{1/2}\geq z_{1-\alpha/2}~\Big | ~ \ma_{\repfe}^\infty(\alpha_t)\Big)\\
    =& \Prob\Big((\varepsilon_0^2 + \|\bs{\xi}_0^\prime\|_2^2)^{1/2}\geq z_{1-\alpha/2}~\Big | ~ \|\bs{D}(\bs{V}_{\omega,\bs{x}\bs{x}})^{1/2}\bs{\xi}_0^\prime\|_{\infty} \leq z_{1-\alpha_t/2}\Big).
\end{align*}

Since $\bs{\xi}_0^\prime$ is a standard Gaussian random vector independent of $\varepsilon_0$, the conclusion follows.
\end{proof}

\end{document}